\documentclass[11pt]{article}
\usepackage{amsmath}
\usepackage{amsthm}
\usepackage{amsfonts}
\usepackage{makecell,booktabs}
\usepackage{cite}
\usepackage{curves,url}
\usepackage{enumerate}
\usepackage{graphpap}
\usepackage{longtable,booktabs}
\usepackage{makeidx}
\usepackage{mathrsfs}
\usepackage{tcolorbox}
\usepackage{float}
\usepackage[colorinlistoftodos]{todonotes}
\newcommand{\circlesize}{1.3}
\numberwithin{equation}{section}
\makeindex
\newcommand{\Epi}{\mathrm{Epi}}

\def\ov{\overline}



\newcommand{\bz}{\mathbf{z}}

\newcommand{\etal}{\textit{et~al}}
\newcommand{\infb}{in\-her\-ently non-finitely based}
\newcommand{\ib}{identity basis} 

\newcommand{\ibs}{identity bases}
\newcommand{\Ibs}{Identity bases}

\def\ov{\overline}

\def\tk1{{\mbox{\tiny $K_1$}}}

\renewcommand{\wr}{\mathbin{\mathrm{wr}}}

\newcommand{\V}{\operatorname{V}}
\newcommand{\var}{\operatorname{var}}
\newcommand{\vect}{{\vec{\tt v}}}
\newcommand{\dual}{\overleftarrow}



\newcommand{\Atwo}{A_2}
\newcommand{\Az}{A_0}

\newcommand{\Btwo}{B_2}
\newcommand{\Bz}{B_0}
\newcommand{\Ffour}{F_4}
\newcommand{\Gfour}{G_4}
\newcommand{\Nil}{N}
\newcommand{\Niltwo}{\Nil_2}
\newcommand{\Nilthree}{\Nil_3}
\newcommand{\Nilfour}{\Nil_4}
\newcommand{\Niltwoi}{\Nil_2^1}
\newcommand{\Nilthreei}{\Nil_3^1}
\newcommand{\Otwo}{O_2}
\newcommand{\Otwodual}{\dual{O_2}}
\newcommand{\SL}{S\ell_2}
\newcommand{\JI}{J} 
\newcommand{\JIi}{J^1}
\newcommand{\JIdual}{\dual{J}} 
\newcommand{\JIidual}{\dual{J^1}}
\newcommand{\nfbL}{L_3}
\newcommand{\LZ}{LZ_2}
\newcommand{\LZi}{LZ_2^1}
\newcommand{\Ptwo}{P_2}
\newcommand{\Ptwodual}{\dual{P_2}}

\newcommand{\RZ}{RZ_2}
\newcommand{\RZi}{RZ_2^1}
\newcommand{\Ztwo}{\mathbb{Z}_2}
\newcommand{\Zthree}{\mathbb{Z}_3}
\newcommand{\Zfour}{\mathbb{Z}_4}

\newcommand{\bu}{\mathbf{u}}
\newcommand{\bv}{\mathbf{v}}
\newcommand{\bw}{\mathbf{w}}
\newcommand{\ttG}{\texttt{G}}
\newcommand{\ttH}{\texttt{H}}
\newcommand{\ttI}{\texttt{I}}

\newcommand{\bB}{\mathbf{B}}
\newcommand{\bSL}{\mathbf{SL}}
\newcommand{\bT}{\mathbf{0}}
\newcommand{\bLN}{\mathbf{LN}}
\newcommand{\bLZ}{\mathbf{LZ}}
\newcommand{\bRB}{\mathbf{RB}}
\newcommand{\bRN}{\mathbf{RN}}
\newcommand{\bRZ}{\mathbf{RZ}}
\newcommand{\bE}{\mathbf{E}}
\newcommand{\bM}{\mathbf{M}}
\newcommand{\bN}{\mathbf{N}}
\newcommand{\bU}{\mathbf{U}}
\newcommand{\bV}{\mathbf{V}}
\newcommand{\bW}{\mathbf{W}}

\newcommand{\sL}{\mathscr{L}}
\newcommand{\sX}{\mathscr{X}}

\newcommand{\con}{\mathsf{con}}
\newcommand{\head}{\mathsf{h}}
\newcommand{\ini}{\mathsf{ini}}
\newcommand{\occ}{\mathsf{occ}}
\newcommand{\tail}{\mathsf{t}}
\newcommand{\up}{\textup}
\newcommand{\vquad}{\quad}

\newcommand{\GAP}{\textsf{GAP}}
\newcommand{\Smallsemi}{\textsf{Smallsemi}}
\allowdisplaybreaks


\newtheorem{theorem}{Theorem}[section]
\newtheorem{proposition}[theorem]{Proposition}
\newtheorem{lemma}[theorem]{Lemma}
\newtheorem{corollary}[theorem]{Corollary}

\theoremstyle{definition}

\newtheorem{prob}[theorem]{Problem}
\newtheorem{variety}{Variety}
\newtheorem*{variety*}{Variety~\texttt{N}}
\newtheorem*{varexampleA*}{Variety~43}
\newtheorem*{varexampleB*}{Variety~78}





\newcommand{\address}[2][]{
  \expandafter\gdef\expandafter\@address\expandafter{%
    \@address
  \setlength{\@textwidthminusparindent}{\textwidth}
  \setlength{\@saveparindent}{\parindent}
  \addtolength{\@textwidthminusparindent}{-\parindent}
  \footnotesize
  \begin{block}
    \begin{minipage}{\@textwidthminusparindent}%
     \setlength{\parindent}{\@saveparindent}%
      \noindent
      #2\ifthenelse{\equal{#1}{}}{}{\\[2pt]E-mail: #1}
    \end{minipage}
\end{block}}} 


\address[jjaraujo@fc.ul.pt]{J. Ara\'ujo, Departamento de Matem\'atica da Universidade Nova, CMA and CEMAT-CI\^{E}NCIAS, 2829--516 Caparica, Portugal}
\address[joao.p.araujo@tecnico.ulisboa.pt]{J. P. Ara\'ujo, Instituto Superior T\'ecnico, Av. Rovisco Pais, 1, 1049--001 Lisboa, Portugal}
\address[pjc20@st-andrews.ac.uk]{P. J. Cameron, University of St Andrews, KY16 9AJ, UK}
\address[edmond.lee@nova.edu]{E. W. H. Lee, Department of Mathematics, Nova Southeastern University, 3301 College Avenue, Fort Lauderdale, FL~33314, USA}
\address[raminhosj@gmail.com]{J. Raminhos, Universidade Aberta and CEMAT-CI\^{E}NCIAS, Universidade de Lisboa, Campo Grande, C6, 1749-016 Lisboa, Portugal}


\setcounter{tocdepth}{1}

\title{A Survey on Varieties Generated by Small Semigroups and a Companion Website}

\author{
Jo\~ao Ara\'{u}jo\thanks{Supported by the Funda\c{c}\~{a}o para a Ci\^{e}ncia e Tecnologia (Portuguese Foundation for Science and Technology) through the project CEMAT-CI\^{E}NCIAS UID/Multi/ 04621/2013, and through project Hilbert's 24th problem� PTDC/MHC-FIL/2583/2014.}
\and
Jo\~ao Pedro Ara\'{u}jo
\and
Peter J. Cameron\thanks{Supported by the Funda\c{c}\~{a}o para a Ci\^{e}ncia e Tecnologia (Portuguese Foundation for Science and Technology) through the project CEMAT-CI\^{E}NCIAS UID/Multi/ 04621/2013}
\and
Edmond W. H. Lee 
\and
Jorge Raminhos
}

\begin{document}
\maketitle

\setcounter{tocdepth}{2}

\tableofcontents


\begin{abstract}
{\bf Abstract} The aim of this paper is to provide an atlas of identity bases for varieties generated by small semigroups and groups.
To help the working mathematician easily find information, we provide a companion website that runs in the background automated reasoning tools, finite model builders, and GAP, so that the user has an automatic \textit{intelligent} guide on the literature.

This paper is mainly a survey of what is known about identity bases for semigroups or groups of small orders, and we also mend some gaps left unresolved by previous authors.
For instance, we provide the first complete and justified list of identity bases for the varieties generated by a semigroup of order up to~$4$, and the website contains the list of varieties generated by a semigroup of order up to~$5$.

 The website also provides identity bases for several types of semigroups or groups, such as bands, commutative groups, and metabelian groups.
 On the inherently non-finitely based finite semigroups side, the website can decide if a given finite semigroup possesses this property or not.
 We provide some other functionalities such as a tool that outputs the multiplication table of a semigroup given by a $C$-presentation, where~$C$ is any class of algebras defined by a set of first order formulas.


The companion website can be found here

\url{http://sgv.pythonanywhere.com}

Please send any comments/suggestions to \url{jj.araujo@fct.unl.pt}
\end{abstract}








\section{Introduction}

We assume familiarity with the general theory of varieties, semigroups, and groups.
For general references, we suggest the monographs of Almeida~\cite{Alm94}, Burris and Sankappanavar~\cite{BS81}, Howie~\cite{How95}, McKenzie {\etal}.~\cite{MMT87}, and H. Neumann~\cite{NeuH67}.

Studying the lattice of varieties of semigroups is an old area of research, but given its complexity, this topic remains very active up to the present and certainly will continue into the foreseeable future.
There are several very good surveys, such as Evans~\cite{Eva71}, Shevrin {\etal}.~\cite{SVV09}, and Vernikov~\cite{Ver15}, that allow the reader to become familiar with the main results and problems; our goal is different. {\color{black} We aim at a living survey powered by a companion computational tool that helps the working mathematician finding either new results or locate old ones in the literature.}

As an illustration, suppose that we are researchers in some area of mathematics who, for some reason, need to investigate semigroups satisfying the implication \[ xy \approx yx \,\ \Longrightarrow \,\ x \approx y, \] objects we might call \textit{anti-commutative semigroups}.
To understand their properties, we could use {\GAP} to find some small models, as for example, the semigroup~$U_1$ in Table~\ref{Tab: U1 U2}.
At a certain point, we observe that all elements of~$U_1$ are idempotents---such a semigroup satisfies the idempotency identity $x^2 \approx x$ and is commonly called a \textit{band}---and searching for varieties of bands we find a reference \cite{Fen71} that contains the lattice $\sL(\bB)$ of varieties of bands, as shown in Figure~\ref{F: bands}.

\begin{table}[ht]  \centering 
\begin{tabular}[t]{c|cccc}
$U_1$ & 1 & 2 & 3 & 4 \\ \hline
    1 & 1 & 1 & 3 & 3 \\
    2 & 2 & 2 & 4 & 4 \\
    3 & 1 & 1 & 3 & 3 \\
    4 & 2 & 2 & 4 & 4
\end{tabular}
\quad
\begin{tabular}[t]{c|ccccc}
$U_2$ & 1 & 2 & 3 & 4 & 5 \\ \hline
    1 & 1 & 1 & 1 & 1 & 5 \\
    2 & 1 & 2 & 3 & 4 & 5 \\
    3 & 1 & 3 & 4 & 2 & 5 \\
    4 & 1 & 4 & 2 & 3 & 5 \\
    5 & 1 & 5 & 5 & 5 & 5
\end{tabular}
\caption{The semigroups $U_1$ and $U_2$}
\label{Tab: U1 U2}
\end{table}

\begin{figure}[hbt]
\setlength{\unitlength}{3mm}
\begin{center}
\begin{picture}(34,37)
\put(9,36){\makebox(0,0){\mbox{$\bB$}}}
\put(5,33.5){\makebox(0,0){\vdots}}
\put(13,33.5){\makebox(0,0){\vdots}}
\put(5,30.5){\makebox(0,0){\mbox{\scriptsize $[\ov{\ttG_5}\ttG_4 \approx \ov{\ttH_5}\ttI_4]_\bB$}}}
\put(13,30.5){\makebox(0,0){\mbox{\scriptsize $[\ov{\ttG_4}\ttG_5 \approx \ov{\ttI_4}\ttH_5]_\bB$}}}
\put(5,24.5){\makebox(0,0){\mbox{\scriptsize $[\ov{\ttG_4}\ttG_4 \approx \ov{\ttI_4}\ttH_4]_\bB$}}}
\put(13,24.5){\makebox(0,0){\mbox{\scriptsize $[\ov{\ttG_4}\ttG_4 \approx \ov{\ttH_4}\ttI_4]_\bB$}}}
\put(5,18.5){\makebox(0,0){\mbox{\scriptsize $[\ov{\ttG_4}\ttG_3 \approx \ov{\ttH_4}\ttI_3]_\bB$}}}
\put(13,18.5){\makebox(0,0){\mbox{\scriptsize $[\ov{\ttG_3}\ttG_4 \approx \ov{\ttI_3}\ttH_4]_\bB$}}}
\put(5,12.5){\makebox(0,0){\mbox{\scriptsize $[\ov{\ttG_3}\ttG_3 \approx \ov{\ttI_3}\ttH_3]_\bB$}}}
\put(13,12.5){\makebox(0,0){\mbox{\scriptsize $[\ov{\ttG_3}\ttG_3 \approx \ov{\ttH_3}\ttI_3]_\bB$}}}
\put(1,27.5){\makebox(0,0){\mbox{\scriptsize $[\ttG_4 \approx \ttH_4]_\bB$}}}
\put(9,27.5){\makebox(0,0){\mbox{\scriptsize $[\ov{\ttG_4}\ttG_4 \approx \ov{\ttI_4}\ttI_4]_\bB$}}}
\put(17,27.5){\makebox(0,0){\mbox{\scriptsize $[\ov{\ttG_4} \approx \ov{\ttH_4}]_\bB$}}}
\put(1,21.5){\makebox(0,0){\mbox{\scriptsize $[\ttG_3 \approx \ttI_3]_\bB$}}}
\put(9,21.5){\makebox(0,0){\mbox{\scriptsize $[\ov{\ttG_4}\ttG_4 \approx \ov{\ttH_4}\ttH_4]_\bB$}}}
\put(17,21.5){\makebox(0,0){\mbox{\scriptsize $[\ov{\ttG_3} \approx \ov{\ttI_3}]_\bB$}}}
\put(1,15.5){\makebox(0,0){\mbox{\scriptsize $[\ttG_3 \approx \ttH_3]_\bB$}}}
\put(9,15.5){\makebox(0,0){\mbox{\scriptsize $[\ov{\ttG_3}\ttG_3 \approx \ov{\ttI_3}\ttI_3]_\bB$}}}
\put(17,15.5){\makebox(0,0){\mbox{\scriptsize $[\ov{\ttG_3} \approx \ov{\ttH_3}]_\bB$}}}
\put(1,9.5){\makebox(0,0){\mbox{\scriptsize $[\ttG_2 \approx \ttI_2]_\bB$}}}
\put(9,9.5){\makebox(0,0){$\bN$}}
\put(17,9.5){\makebox(0,0){\mbox{\scriptsize $[\ov{\ttG_2} \approx \ov{\ttI_2}]_\bB$}}}
\put(5,6.5){\makebox(0,0){$\bLN$}}
\put(9,6.5){\makebox(0,0){$\bRB$}}
\put(13,6.5){\makebox(0,0){$\bRN$}}
\put(5,3.5){\makebox(0,0){$\bLZ$}}
\put(9,3.5){\makebox(0,0){$\bSL$}}
\put(13,3.5){\makebox(0,0){$\bRZ$}}
\put(9,0.5){\makebox(0,0){$\bT$}}
\put(24,17){\makebox(0,0)[r]{$\bB$}} \put(24.3,17){\makebox(0,0)[l]{$= [x^2 \approx x]$}}
\put(24,15){\makebox(0,0)[r]{$\bN$}} \put(24.3,15){\makebox(0,0)[l]{$= [xyzx \approx xzyx]_\bB$}}
\put(24,13){\makebox(0,0)[r]{$\bLN$}} \put(24.3,13){\makebox(0,0)[l]{$= [xyz \approx xzy]_\bB$}}
\put(24,11){\makebox(0,0)[r]{$\bRN$}} \put(24.3,11){\makebox(0,0)[l]{$= [xyz \approx zyx]_\bB$}}
\put(24,9){\makebox(0,0)[r]{$\bSL$}} \put(24.3,9){\makebox(0,0)[l]{$= [xy \approx yx]_\bB$}}
\put(24,7){\makebox(0,0)[r]{$\bRB$}} \put(24.3,7){\makebox(0,0)[l]{$= [xyx \approx x]$}}
\put(24,5){\makebox(0,0)[r]{$\bLZ$}} \put(24.3,5){\makebox(0,0)[l]{$= [xy \approx x]$}}
\put(24,3){\makebox(0,0)[r]{$\bRZ$}} \put(24.3,3){\makebox(0,0)[l]{$= [xy \approx y]$}}
\put(24,1){\makebox(0,0)[r]{$\bT$}} \put(24.3,1){\makebox(0,0)[l]{$= [x \approx y]$}}
\put(2,28){\line(1,1){2}}
\put(6,30){\line(1,-1){2}}
\put(10,28){\line(1,1){2}}
\put(14,30){\line(1,-1){2}}
\put(2,22){\line(1,1){2}}
\put(6,24){\line(1,-1){2}}
\put(10,22){\line(1,1){2}}
\put(14,24){\line(1,-1){2}}
\put(2,16){\line(1,1){2}}
\put(6,18){\line(1,-1){2}}
\put(10,16){\line(1,1){2}}
\put(14,18){\line(1,-1){2}}
\put(2,10){\line(1,1){2}}
\put(6,12){\line(1,-1){2}}
\put(10,10){\line(1,1){2}}
\put(14,12){\line(1,-1){2}}
\put(2,27){\line(1,-1){2}}
\put(6,25){\line(1,1){2}}
\put(10,27){\line(1,-1){2}}
\put(14,25){\line(1,1){2}}
\put(2,21){\line(1,-1){2}}
\put(6,19){\line(1,1){2}}
\put(10,21){\line(1,-1){2}}
\put(14,19){\line(1,1){2}}
\put(2,15){\line(1,-1){2}}
\put(6,13){\line(1,1){2}}
\put(10,15){\line(1,-1){2}}
\put(14,13){\line(1,1){2}}
\put(2,9){\line(1,-1){2}}
\put(6,7){\line(1,1){2}}
\put(10,9){\line(1,-1){2}}
\put(14,7){\line(1,1){2}}
\put(6,4){\line(1,1){2}}
\put(10,4){\line(1,1){2}}
\put(6,6){\line(1,-1){2}}
\put(10,6){\line(1,-1){2}}
\put(5,4){\line(0,1){2}} \put(9,7){\line(0,1){2}}
\put(13,4){\line(0,1){2}}
\put(9,1){\line(0,1){2}}
\put(6,3){\line(1,-1){2}}
\put(10,1){\line(1,1){2}}
\put(21,0){\line(0,1){18}}
\end{picture}
\caption{
The lattice $\sL(\bB)$ of varieties of bands, where $[\bu \approx \bv]_\bB = \bB \cap [\bu \approx \bv]$
and details on the words $\ttG_n, \ttH_n, \ttI_n, \ov{\ttG_n}, \ov{\ttH_n}, \ov{\ttI_n}$ are given in Subsection~\ref{subsec: L(B)}.
}
\label{F: bands}
\end{center}
\end{figure}

Again we could use {\GAP} to see that our semigroup~$U_1$ violates the identities $xy \approx x$ and $xy \approx y$ but satisfies the identity $xyx \approx x$.
Therefore the variety $\var\{U_1\}$ generated by~$U_1$ is contained in the variety of bands defined by the identity $xyx \approx x$---the variety $\bRB$ of \textit{rectangular bands}---but is excluded from its two maximal subvarieties~$\bLZ$ and~$\bRZ$, whence $\var\{U_1\} = \bRB$.
Now an easy exercise shows that a semigroup is anti-commutative if and only if it satisfies the identity $xyx \approx x$, and from here we get access to an enormous amount of literature on our original object.
The key steps in the above process were the observation that~$U_1$ is a band and the complete knowledge of the lattice of varieties of bands.

Now suppose that we are working with a different theory and our test semigroup is~$U_2$ in Table~\ref{Tab: U1 U2}.
Since~$U_2$ is not a band, there is no general lattice, similar to Figure~\ref{F: bands}, that allows us to repeat what we have done with~$U_1$.
It turns out that the variety $\var\{U_2\}$ is defined by the identities $\{ x^4 \approx x, \, xyx \approx yx^2 \}$, but only a substantial search would allow us to locate a reference \cite[Proposition 3.16]{Tis07}.

In general, given a semigroup~$S$ of order up to~$6$, there is still a good chance that information on the variety $\var\{S\}$ and its subvarieties can be found in the literature,
since such varieties have received much attention over the years \cite{ELL10,Lee04,Lee06,Lee07a,Lee07b,Lee08a,Lee08b,Lee09,Lee10,Lee11,Lee12,Lee18,LV07,LV11,Mel72,Tis07,Vol05,ZL09},
especially in the investigation of the finite basis problem for small semigroups \cite{Edm77,Edm80,Lee13,LL11,LZ15,Per69,Sap87a,Sap87b,Tis80,Tra81,Tra87,Tra91,Tra94,ZL11}.
The first goal of this survey is to provide such information, but we go far beyond that.
The overall aim is to provide a survey on {\ibs} defining varieties generated by finite semigroups and set up a companion website, running {\GAP} and automated reasoning tools in the background, that will be continuously updated to better assist the working mathematician.
Resources provided by the present survey and the website so far are described as follows.
\begin{enumerate}
\item {\Ibs}, and corresponding proofs or references, for all varieties generated by a semigroup of order up to~$4$.
This survey is the first source providing this information.

\item {\Ibs} for many varieties generated by semigroups of higher orders, including all semigroups of order~$5$, the proofs of which will be disseminated elsewhere.

\begin{figure}[hbt]
  \includegraphics[width=\textwidth]{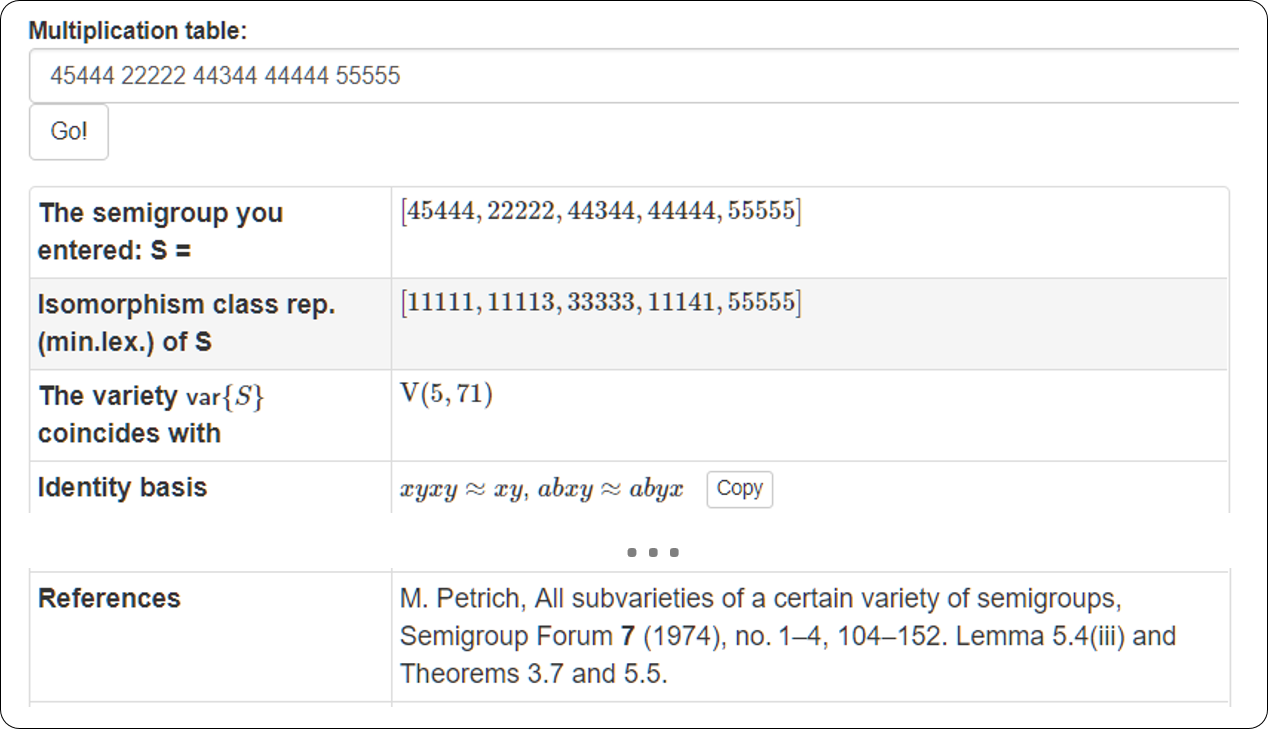}
  \caption{Companion website: example of a reference given for an order 5 semigroup}
    \label{F: 5ref}
\end{figure}

\item {\color{black}{\Ibs} for all varieties generated by a group that has abelian normal and factor subgroups $N$ and $G/N$ such that $\gcd(|N|,|G/N|)=1$.}%



\item For some classes of semigroups, including bands and some classes of groups, the website finds {\ibs} for varieties generated by arbitrarily large finite models; see Figure~\ref{F: Bands} on page~\pageref{F: Bands}.

\begin{figure}
  \includegraphics[width=\textwidth]{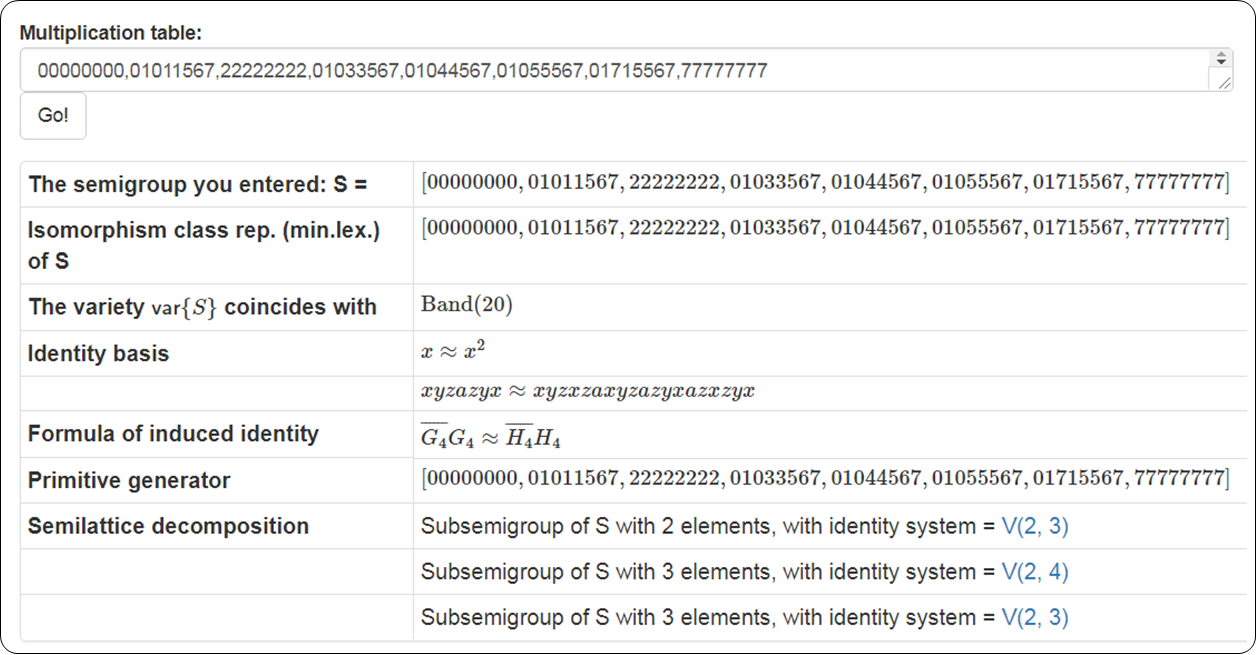}
 \caption{Companion website: the variety generated by an order $8$ band}
    \label{F: Bands}
\end{figure}

\item For a given finite semigroup~$S$, the companion website gives bibliographic information about the variety $\var\{S\}$, its prime decomposition, varieties that cover it, and a generator for $\var\{S\}$ of minimal order; see Figure~\ref{F: varinf} on page~\pageref{F: varinf}.

\begin{figure}[hbt]
  \includegraphics[width=\textwidth]{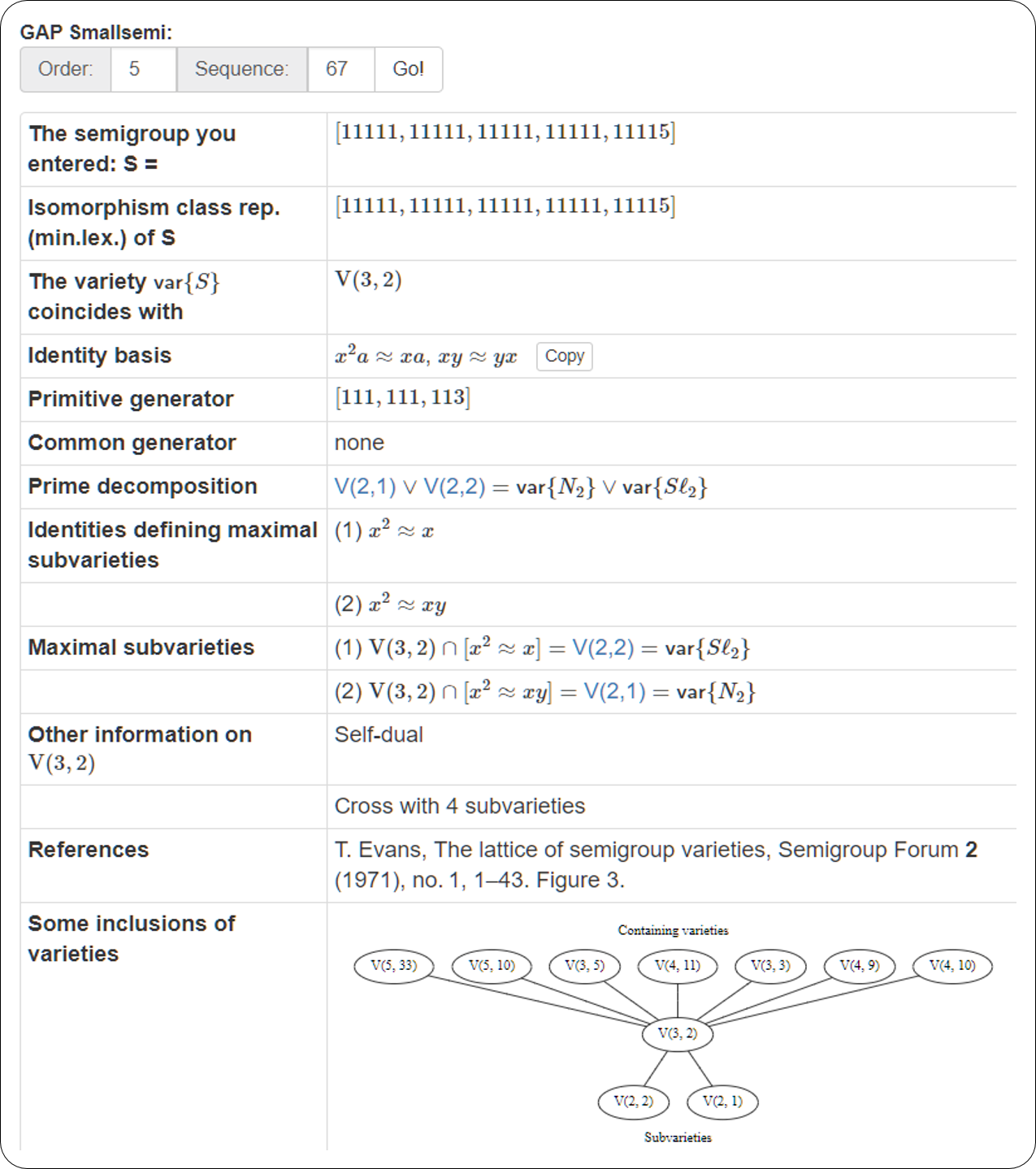}
  \caption{Companion website: example of information displayed if the {\ib} for the variety generated by the given semigroup is found}
  \label{F: varinf}
\end{figure}

\item The \textit{vector} of a semigroup~$S$ of order~$n$, denoted by $\vect(S)$, is the vector of dimension~$n^2$ that is formed by concatenating the~$n$ rows of the Cayley table of~$S$.
For example, the vector $\vect(\JI)$ of the semigroup~$\JI$ in Table~\ref{Tab: U} is $[1,1,1,1,1,1,1,2,3]$; it is unambiguous, and in fact clearer, to only use commas to separate different rows, that is, \[\vect(\JI)=[111,111,123].\]
The isomorphic copies of a given semigroup can then be lexicographically ordered as vectors; for example, the semigroup~$\JI'$ in Table~\ref{Tab: U} is isomorphic to~$\JI$, but since \[ \vect(\JI) = [111,111,123] <_\mathrm{lex} [333,123,333] = \vect(\JI'), \] we place~$\JI$ before~$\JI'$.
The companion website finds the smallest element in each isomorphism class and this is the standard form of the output; of course, this is an expensive feature that can only be applied to semigroups of relatively small order (up to~$11$).
See Figure~\ref{F: minlex} on page~\pageref{F: minlex}.

\begin{table}[ht] \centering
\begin{tabular}{c|ccc}
$\JI$ & 1 & 2 & 3 \\ \hline
    1 & 1 & 1 & 1  \\
    2 & 1 & 1 & 1  \\
    3 & 1 & 2 & 3
\end{tabular}
\quad
\begin{tabular}{c|ccc}
$\JI'$ & 1 & 2 & 3 \\ \hline
    1 & 3 & 3 & 3  \\
    2 & 1 & 2 & 3  \\
    3 & 3 & 3 & 3
\end{tabular}
\caption{The semigroups $\JI$ and $\JI'$}
\label{Tab: U}
\end{table}

\begin{figure}[ht]
  \includegraphics[width=\textwidth]{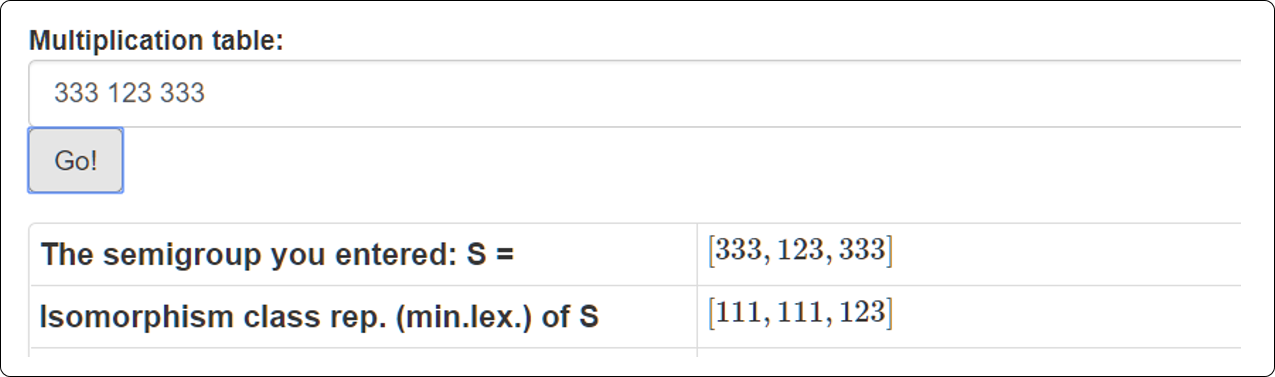}
  \caption{Companion website: computation of the smallest element in the
   isomorphism class of [333,123,333]}
  \label{F: minlex}
\end{figure}

\item For any finitely generated variety~$\bV$, there exist only finitely many non-isomorphic generators of minimal order, say $S_1,S_2,\ldots,S_k$ with \[\vect(S_1) <_\mathrm{lex} \vect(S_2) <_\mathrm{lex} \cdots <_\mathrm{lex} \vect(S_k).\]
Then~$S_1$ is called the \textit{primitive generator} of~$\bV$.
Each variety has a label $\V(n,k)$, where~$n$ is the order of its primitive generator~$S$ and~$k$ is the number of primitive semigroups of order~$n$ for other varieties that lexicographically precede~$S$.
For example, the variety $\var\{\JI\}$ is defined by the identities $\{ x^2a \approx xa, xy^2 \approx yx^2 \}$ and is labeled $\V(3,3)$, meaning that its primitive generator has order~$3$---which happens to be~$\JI$---and there are two semigroups of order~$3$ with vectors preceding $\vect(\JI)$ that are primitive generators for two other distinct varieties, namely $\V(3,1)$ and $\V(3,2)$.
See Figure~\ref{F: v31v32v33} on page~\pageref{F: v31v32v33}.

\begin{figure}[ht]
  \includegraphics[width=\textwidth]{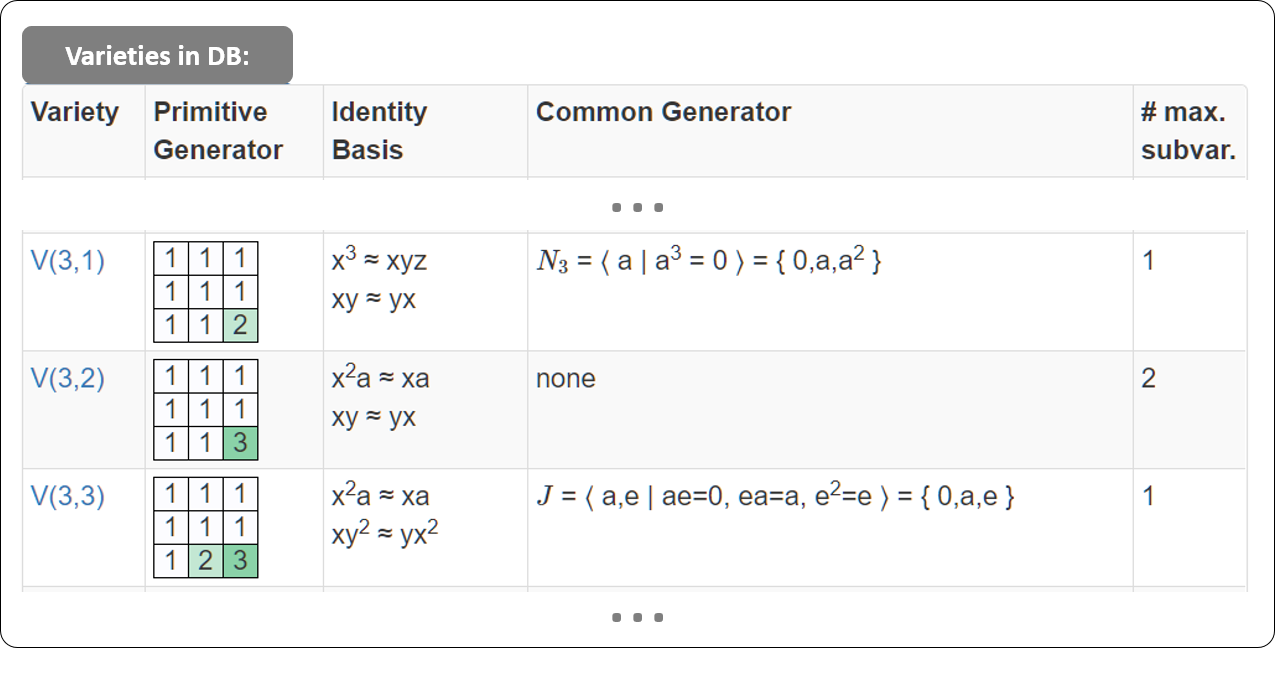}
  \caption{Companion website: varieties $\V(3,1)$, $\V(3,2)$, and $\V(3,3)$ in the database}
  \label{F: v31v32v33}
\end{figure}

\item In many cases, the website provides a presentation for the primitive generator of the given variety.
Conversely, the user can introduce a semigroup as a semigroup presentation in any variety or quasi-variety.
For instance, Kiselman~\cite{Kis02} considered the semigroup with the presentation
\[	
\left\langle c,\ell,m \ \bigg| \begin{array}[c]{l} c^2=c, \, \ell^2=\ell, \, m^2=m, \, c\ell c=\ell c, \, \ell c\ell=\ell c, \\ cmc=mc, \, mcm=mc, \, \ell m\ell=m\ell, \, m\ell m=m\ell \end{array} \! \right\rangle
\]
while investigating some operators in convexity theory.
In less than a second the website shows that this semigroup has 17~elements as a semigroup presentation (as shown in Kiselman~\cite{Kis02}), 7~elements as a band presentation, and 3~elements as a left cancellative semigroup presentation, etc.
See Figure~\ref{F: presentation} on page~\pageref{F: presentation}.


\begin{figure}[ht]
 \includegraphics[width=\textwidth]{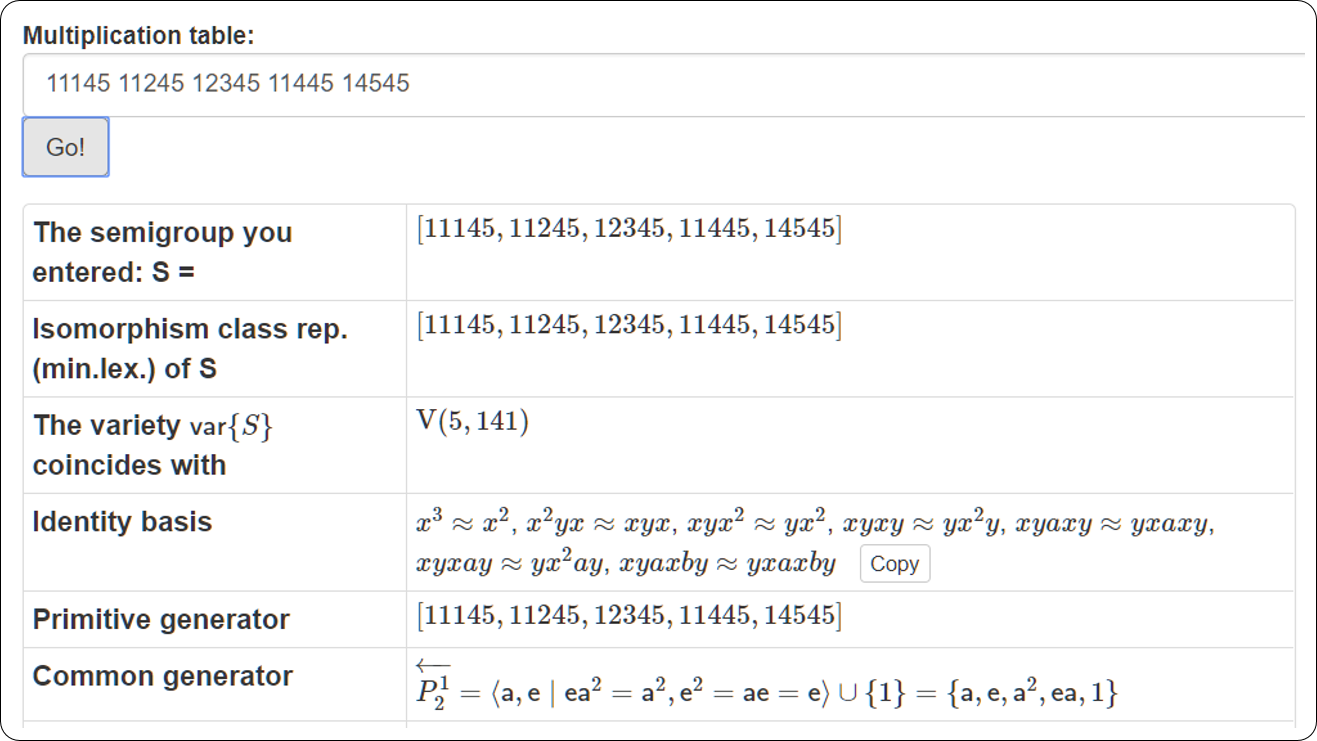}
  \caption{Companion website: example of a presentation provided}
  \label{F: presentation}
\end{figure}

\item The website does not provide an {\ib} for the variety generated by the Kiselman semigroup of order~$17$ computed above.
However, it will say that the Kiselman semigroup of order~$7$ generates the variety of semilattices whose primitive generator is the chain of length two.
If a given semigroup~$S$ of arbitrarily finite order generates a variety whose primitive generator has order~$5$ or less, then the website will automatically provide an {\ib} for the variety $\var\{S\}$.
See Figure~\ref{F: kiselman} on page~\pageref{F: kiselman}.

\begin{figure}[ht]
  \includegraphics[width=\textwidth]{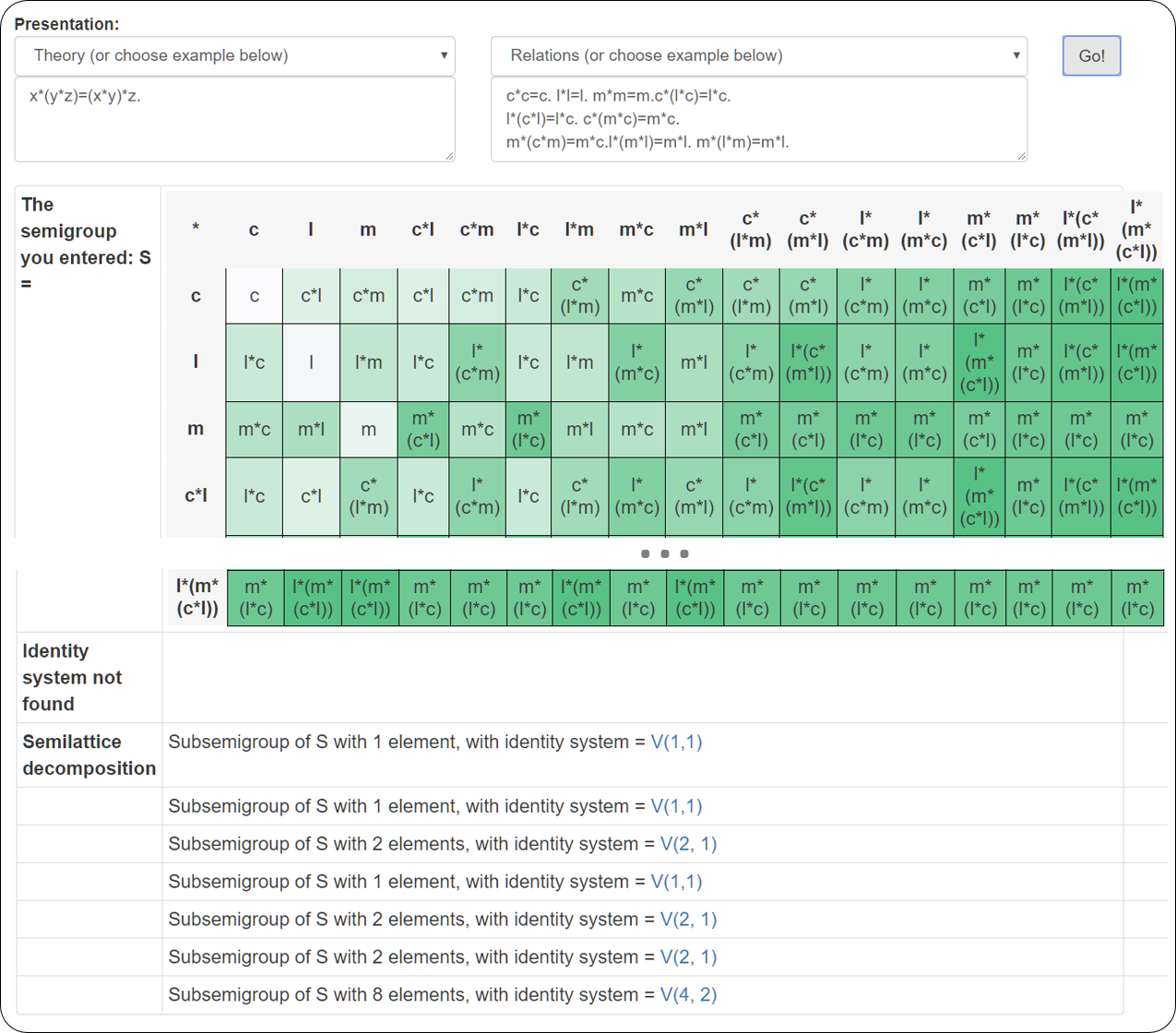}
  \caption{Companion website: Kiselman semigroup entered as a presentation}
  \label{F: kiselman}
\end{figure}

\item In addition to presentations, the user can input semigroups by giving the Cayley table, with several formats and on different sets that one can define, or by introducing identification numbers in the {\GAP} libraries of small groups or small semigroups.
See Figure~\ref{F: input} on page~\pageref{F: input}.


\begin{figure}[ht]
  \includegraphics[width=\textwidth]{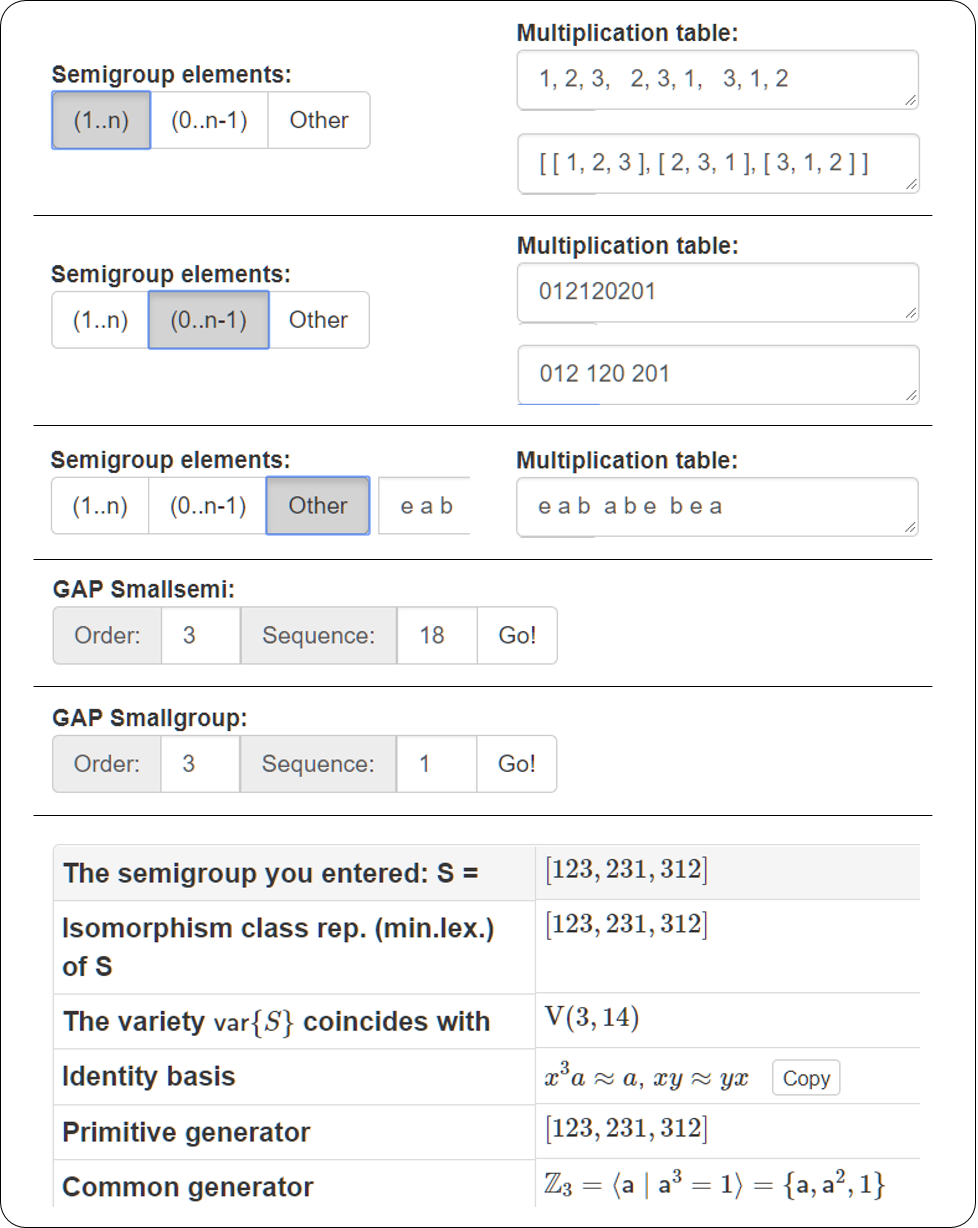}
  \caption{Companion website: examples of input alternatives for semigroup
  [123,231,312]}
  \label{F: input}
\end{figure}

\item The companion website also provides information on dual varieties or self-dual ones when applicable.
Recall that a variety of semigroups is \textit{self-dual} if it is closed under anti-isomorphism.

%

\item Let $\bE_n$ denote the variety of unary semigroups defined by the identities
%
\[ xx^* \approx x^*x, \quad x(x^*)^2 \approx x^*, \quad x^{n+1}x^* \approx x^n. \]
Then the proper inclusions $\bE_1 \subset \bE_2 \subset \bE_3 \subset \cdots$ hold, and for any finite semigroup~$S$, there exist a unary operation~$^*$ on~$S$ and some minimal $n \geq 1$ such that $(S,\,^*\,)$ is a unary semigroup in~$\bE_n$; the companion website finds this natural number~$n$.
For more information on the operation~$^*$ and the varieties~$\bE_n$, see Subsection~\ref{subsec: epigroups} and Shevrin~\cite{She05}.



\item A semilattice~$Y$ is a partially ordered set in which every pair $i,j\in Y$ of elements has a greatest lower bound $i\wedge j$, called the \textit{meet} of~$i$ and~$j$.
A semigroup~$S$ is a \textit{semilattice of semigroups} if there exist a semilattice $(Y,\le)$ and a family $\{ S_i \}_{i\in Y}$ of semigroups indexed by~$Y$ such that $S=\bigcup_{i\in Y}S_i$ and $S_iS_j \subseteq S_{i\wedge j}$.
Every semigroup can be decomposed as a semilattice of semigroups $\{ S_i \}_{i\in Y}$ with each $S_i$ being semilattice indecomposable~\cite{Tam56}.
Based on results from Tamura~\cite{Tam72}, the companion website finds the largest semilattice decomposition of a given semigroup~$S$ into semilattice indecomposable semigroups $\{ S_i \}_{i\in Y}$, and provides the variety generated by each~$S_i$.
This tool can be used on a relatively large semigroup~$S$, even when we cannot determine an {\ib} for the variety $\var\{S\}$.


\begin{figure}[ht]
  \includegraphics[width=\textwidth]{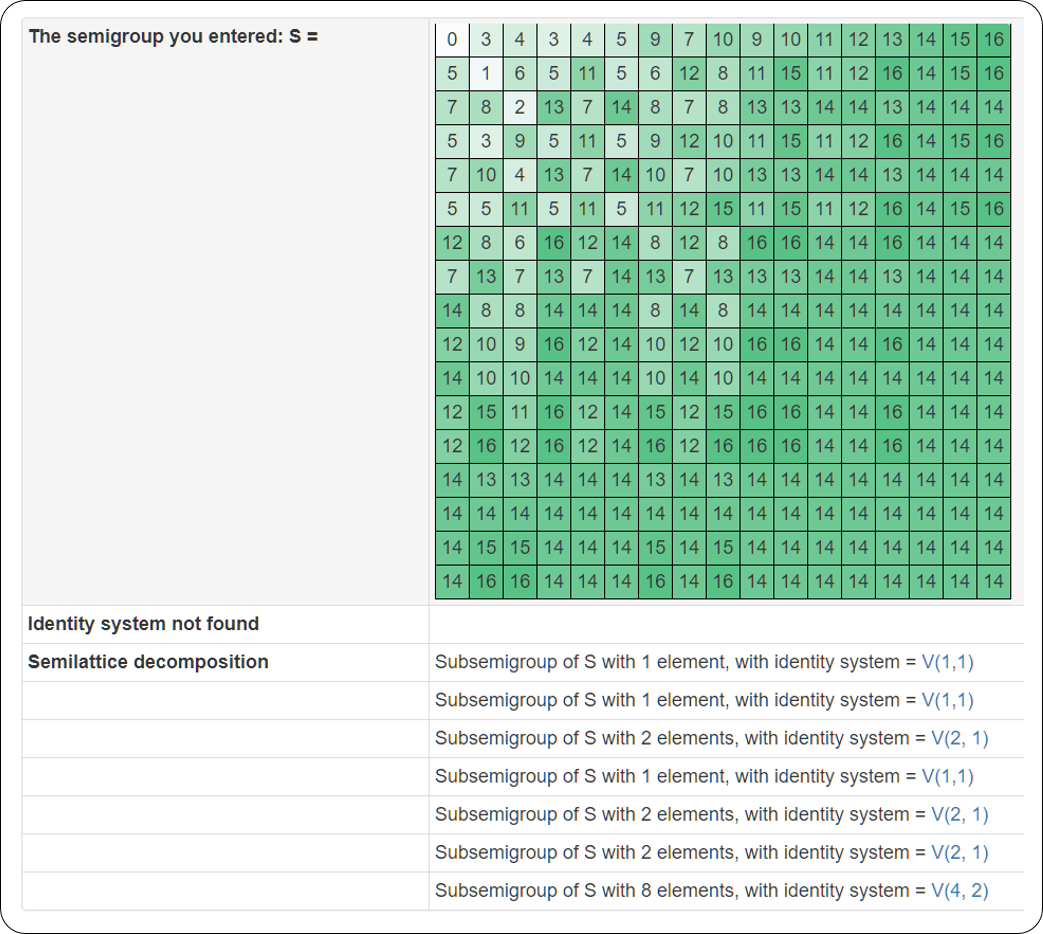}
  \caption{Companion website: example of semitalattice decomposition of a
  order 17 semigroup}
  \label{F: semilattices}
\end{figure}

\item Let~$\Sigma$ be some given first order theory.
The companion website can find all the varieties~$\bV$ in the database such that $\Sigma \vdash \bV$ or $\bV \models \Sigma$.


\begin{figure}[H]
    \includegraphics[width=10cm,height=5cm]{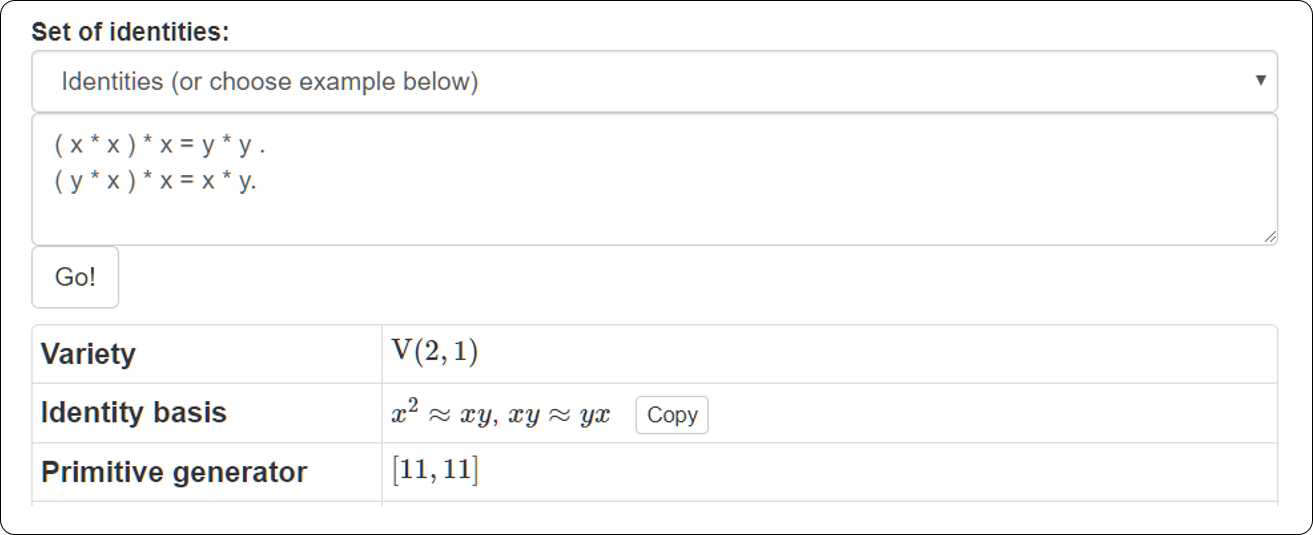}
  \caption{Companion website: a set of identities entered
  by the user is found to be equivalent to an {\ib} for a variety in the database}
  \label{F: setidt}
\end{figure}

\begin{figure}[H]
  \includegraphics[width=10cm,height=8cm]{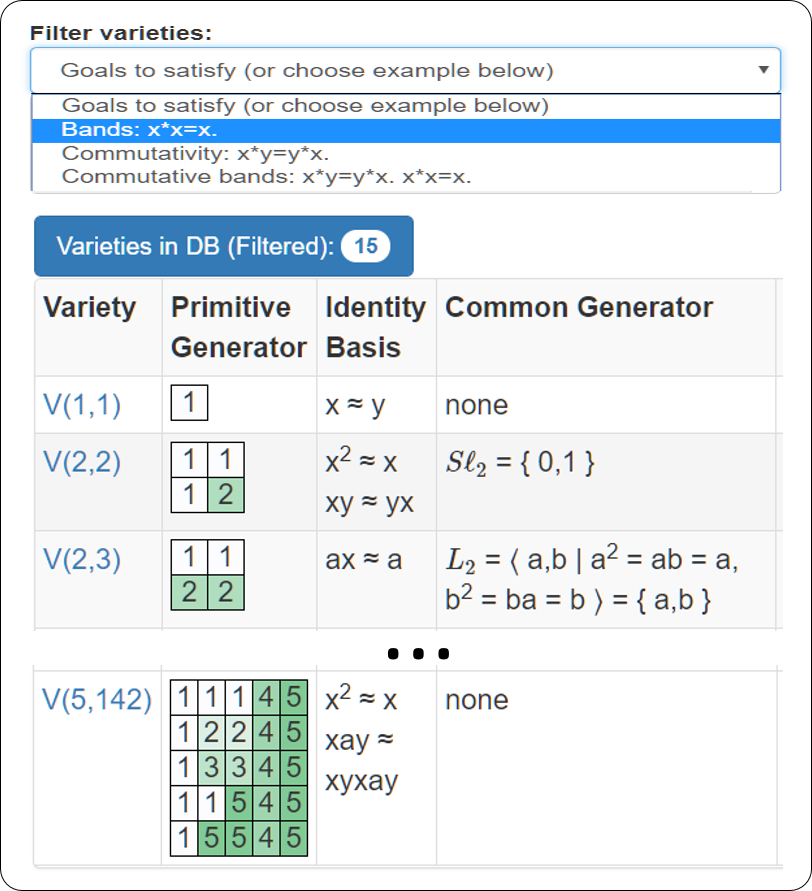}
  \caption{Companion website: finding all varieties in the database that satisfy some given conditions}
  \label{F: filtervar}
\end{figure}

\item  The companion website can provide conjectures for the variety generated by a large semigroup, using an algorithm that gave the correct result on all semigroups up to order~$5$.
However, given the computational cost of this algorithm, anyone interested should first contact one of the authors.

{\color{black} Let $S$ be the semigroup with universe $\{1,\ldots,5\}$ and whose  Cayley table has the following rows:  $11111,11111,11113,44444,12345$. The variety generated by $S$ is defined by the identities:
 \[x^3=x^2\hspace{0.4cm} x^2yx=xyx\hspace{0.4cm} xyxz=x^2yz\hspace{0.4cm}  xyz^2x=x^2yz^2.\]  Our algorithm produced the following candidate:  
 \[x^3=x^2\hspace{0.5cm} x^2yx=xyx\hspace{0.4cm} xyxz=x^2yz\hspace{0.5cm} xy^2x=x^2y^2.\]  

It is easy to see that the two sets are equivalent and hence our candidate base is in fact a base for the variety generated by $S$. Note that the two bases differ only on the last identity, with the elegance prize going to the one found by the computer.}

\item In particular, the companion website can give some information about user's conjectures.
Suppose that we have a semigroup~$S$ and guess that a certain set~$\Sigma$ of identities is an {\ib} for $\var\{S\}$.
Then the website will try to see if $\var\{S\}$ is in the database; if yes, it will try to prove if the stored {\ib} is equivalent to the given one and return the result; it is very unlikely that no result is returned in such a case.
If $\var\{S\}$ does not belong to the database, then the website will try to find identities holding in~$S$, but not provable from~$\Sigma$.
If some are found, then the result is returned.
Otherwise, the user's conjecture is returned as a reasonable one.
%
%
\item We will keep the website updated with new discovered results in order to have a state of the art tool assisting the work of mathematicians.
\end{enumerate}

In Section~\ref{preliminaries}, we give some background material on varieties of groups and of semigroups, the lattice of varieties of bands, varieties with infinitely many subvarieties, an infinite chain of varieties of epigroups, and semilattice decompositions of semigroups.
Section~\ref{sec: groups} is dedicated to a survey of some known results on varieties generated by small groups; it consists of mostly old material and we collect the main results here to highlight the gaps waiting to be filled.
It is our conviction that the topic was more or less \textit{abandoned}, 
not because everything was too easy, but exactly the opposite.
Given the classification of finite simple groups, perhaps it is time for group theorists to start looking into varieties of groups again.
In addition, for experts in semigroup theory, it might be useful to know to which varieties of groups belong the maximal subgroups (the ${\mathcal H}$-classes) of the semigroup.
Section~\ref{smallsemigroups} deals with varieties generated by semigroups of order~$5$, and also treats the case of inherently non-finitely based finite semigroups.
Section~\ref{companion} introduces the features of the companion website and explains how to use it.
Section~\ref{sec: varieties small semigroups} provides the database of varieties generated by semigroups of orders up to~$4$.
Then we have a section on problems, and three appendix sections providing justifications of results in Section~\ref{sec: varieties small semigroups}.

\section{Preliminaries}\label{preliminaries}
\subsection{Isomorphic semigroups and lexicographic minimum}

Two algebras~$A$ and~$B$ of the same type are said to be \textit{isomorphic}, indicated by $A \cong B$, if there exists an isomorphism between them.
The relation~$\cong$ is an equivalence relation on any class of algebras of the same type.
Occasionally, given a finite algebra~$A$, it is practical to have a canonical representative of the equivalence class $[A]_{\cong}$.
For a semigroup~$S$, an obvious choice for the representative of the class $[S]_{\cong}$ is the semigroup whose vector lexicographically precedes the vectors of all other semigroups in $[S]_{\cong}$.
For instance, consider the semigroup
\[
P = \langle a,b \,|\, ab=a, \, ba = 0, \, b^2=b \rangle = \{ 0,a,b\}.
\]
Then there are six semigroups on the set $\{1,2,3\}$ that are isomorphic to~$P$, as shown in Table~\ref{Tab: P}.
Since $\vect(S_1) \leq_\mathrm{lex} \vect(S_i)$ for all $i \neq 1$, the semigroup~$S_1$ is the representative of the class $[P]_{\cong}$.

\begin{table}[ht] \centering
\begin{tabular}{c|ccc}
$S_1$ & 1 & 2 & 3 \\ \hline 1 & 1 & 1 & 1 \\ 2 & 1 & 1 & 2 \\ 3 & 1 & 1 & 3
\end{tabular}
\quad
\begin{tabular}{c|ccc}
$S_2$ & 1 & 2 & 3 \\ \hline 1 & 1 & 1 & 1 \\ 2 & 1 & 2 & 1 \\ 3 & 1 & 3 & 1
\end{tabular}
\quad
\begin{tabular}{c|ccc}
$S_3$ & 1 & 2 & 3 \\ \hline 1 & 1 & 2 & 2 \\ 2 & 2 & 2 & 2 \\ 3 & 3 & 2 & 2
\end{tabular}
\\[0.08in]
\begin{tabular}{c|ccc}
$S_4$ & 1 & 2 & 3 \\ \hline 1 & 1 & 3 & 3 \\ 2 & 2 & 3 & 3 \\ 3 & 3 & 3 & 3
\end{tabular}
\quad
\begin{tabular}{c|ccc}
$S_5$ & 1 & 2 & 3 \\ \hline 1 & 2 & 2 & 1 \\ 2 & 2 & 2 & 2 \\ 3 & 2 & 2 & 3
\end{tabular}
\quad
\begin{tabular}{c|ccc}
$S_6$ & 1 & 2 & 3 \\ \hline 1 & 3 & 1 & 3 \\ 2 & 3 & 2 & 3 \\ 3 & 3 & 3 & 3
\end{tabular}
\caption{Semigroups isomorphic to~$P$}
\label{Tab: P}
\end{table}

The \textit{dual} of a semigroup~$S$, denoted by~$\dual{S}$, is the semigroup obtained from~$S$ by reversing its operation, that is, for any $a,b \in \dual{S} = S$, the product~$ab$ in~$\dual{S}$ is equal to the product~$ba$ in~$S$.
The Cayley table of~$\dual{S}$ is obtained simply by transposing the Cayley table of~$S$.
For instance, the semigroup~$\dual{S_1}$ is isomorphic to the semigroup~$\JI$ in Table~\ref{Tab: U}.
The \textit{dual} of a variety~$\bV$ is the variety $\dual{\bV} = \{ \dual{S} \,|\, S \in \bV \}$.
A variety~$\bV$ is \textit{self-dual} if $\bV = \dual{\bV}$.

Two semigroups~$S$ and~$T$ are \textit{equivalent} if either $S \cong T$ or $\dual{S} \cong T$.
In the {\GAP} package {\Smallsemi}, semigroups are stored up to equivalence but not up to isomorphism, a decision not without some disadvantages.
In this paper, unless otherwise stated, we work with semigroups up to isomorphism.

%

\subsection{Varieties of semigroups} 

The variety generated by an algebra~$A$, denoted by $\var\{A\}$, is the smallest class of algebras of the same type containing~$A$ that is closed under the formation of homomorphic images, subalgebras, and arbitrary direct products.
Since a variety $\var\{A\}$ coincides with the class of all algebras that satisfy the identities of~$A$, two algebras generate the same variety if and only if they satisfy the same identities.
It is clear that if~$A$ and~$B$ are isomorphic algebras, then $\var\{A\} = \var\{B\}$; however, the converse does not hold in general, even if the algebras~$A$ and~$B$ have the same order.
For example, the dihedral group~$D_4$ and the quaternion group~$Q$ are groups of order~$8$ that generate the same variety~\cite{Wei62}, but they are not isomorphic.

Up to isomorphism, the number of semigroups of order up to five is~2,133 \cite[A027851]{OEIS}, while the number of varieties generated by these semigroups is only~218.

A \textit{\ib} for a variety~$\bV$ is a set of identities holding in~$\bV$ from which all other identities of~$\bV$ can be deduced.
A variety is \textit{finitely based} if it possesses a finite {\ib}.
Since a semigroup satisfies the same identities as the variety it generates, it is unambiguous to define an \textit{\ib} for a semigroup~$S$ to be an {\ib} for $\var\{S\}$, and say that~$S$ is \textit{finitely based} whenever $\var\{S\}$ is finitely based.
Every variety generated by a semigroup of order at most~$5$ is finitely based, but up to isomorphism, precisely four semigroups of order~$6$ are non-finitely based~\cite{LLZ12}; see Subsection~\ref{subsec: NFB order 6}


\subsection{Varieties of groups} \label{subsec:L(G)}

For a general reference on varieties of groups, we recommend the monograph of H. Neumann~\cite{NeuH67}.
Unlike what happens in semigroups, every variety generated by a finite group has a finite {\ib}, and in group theory, every finite set of identities is equivalent to a single identity.
Therefore every variety generated by a finite group can be defined by a single identity.
We will see a similar phenomenon in the variety of bands below.
More details on varieties of groups can be found in Section~\ref{sec: groups}.

\subsection{The lattice of varieties of bands} \label{subsec: L(B)}

A description of the lattice $\sL(\bB)$ of varieties of bands can be found in Birjukov~\cite{Bir70}, Fennemore~\cite{Fen71}, Gerhard~\cite{Ger70}, Gerhard and Petrich~\cite{GP89}, and Howie~\cite{How95}; see Figure~\ref{F: bands}.
At the very top of the lattice is the variety $\bB = [x^2 \approx x]$ of all bands.
In the lower region is the sublattice $\sL(\bN)$ of $\sL(\bB)$ consisting of eight varieties:
\begin{align*}
\bN & = [xyzx \approx xzyx]_\bB, & & \text{normal bands}; \\
\bLN & = [xyz \approx xzy]_\bB, & & \text{left normal bands}; \\
\bRN & = [xyz \approx yxz]_\bB, & & \text{right normal bands}; \\
\bSL & = [xy \approx yx]_\bB, & & \text{semilattices}; \\
\bRB & = [xyx \approx x], & & \text{rectangular bands}; \\
\bLZ & = [xy \approx x], & & \text{left zero bands}; \\
\bRZ & = [xy \approx y], & & \text{right zero bands}; \\
\bT & = [x \approx y], & & \text{trivial bands}.
\end{align*}
The remaining varieties in the lattice $\sL(\bB)$ are defined by identities that are formed by the words $\{ \ttG_n,\ttH_n,\ttI_n \,|\, n \geq 2 \}$ inductively defined as follows:
\begin{align*}
& & \ttG_2&= x_2x_1, & \ttH_2 & = x_2, & \ttI_2 & = x_2x_1x_2,\\
& \text{and} & \ttG_n&= x_n\ov{\ttG_{n-1}}, & \ttH_n &= \ttG_nx_n\ov{\ttH_{n-1}}, & \ttI_n & = \ttG_nx_n\ov{\ttI_{n-1}}, \qquad \text{for all } n\geq3,
\end{align*}
where~$\ov{X}$ is the word~$X$ written in reverse.
For example,
\[
[\ttG_3 \approx \ttH_3]_\bB = [x_3x_1x_2 \approx x_3x_1x_2x_3x_2, \, x^2 \approx x].
\]
By simple inspection of the identities in Figure~\ref{F: bands}, it is clear that the varieties in column~3 are self-dual, the varieties in columns~1 and~5 are dual to each other, and the varieties in columns~2 and~4 are dual to each other.

The variety generated by a band~$B$ is the variety~$\bV$ of bands that satisfies both of the following properties: $B$ belongs to~$\bV$ and~$B$ is excluded from every maximal subvariety of~$\bV$.
When a semigroup~$S$ is entered into the companion website, there is a first test to check if~$S$ is a band.
In the affirmative case, the website crawls up the lattice in Figure~\ref{F: bands}; the first identity satisfied by~$S$ defines the variety $\var\{S\}$.


\subsection{Varieties with infinitely many subvarieties}

A variety that contains only finitely many subvarieties is said to be \textit{small}.
It easily follows from the well-known theorem of Oates and Powell~\cite{OP64} that every finite group generates a small variety of semigroups.
But this result does not hold in general.
A small counterexample is the monoid $\Niltwo^1$ obtained by adjoining an identity element to the nilpotent semigroup $\Niltwo = \langle a \,|\, a^2=0 \rangle$ of order~$2$; see Figure~\ref{F: N2i} on page~\pageref{F: N2i}.
Not only is the variety $\mathbf{\Niltwo^1} = \var\{\Niltwo^1\}$ not small~\cite{Eva71}, it is the only non-small variety among all varieties generated by a semigroup of order~$3$ or less; see Section~\ref{sec: varieties small semigroups}.

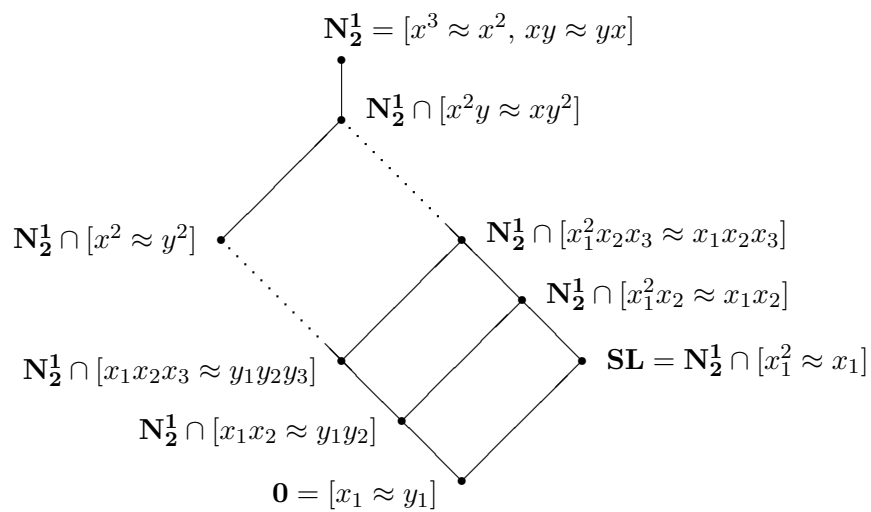
\begin{figure}[ht]
\begin{center}
\begin{picture}(210,200)(00,10) \setlength{\unitlength}{0.8mm}
\put(30,80){\circle*{\circlesize}}
\put(30,70){\circle*{\circlesize}}
\put(10,50){\circle*{\circlesize}} \put(50,50){\circle*{\circlesize}}
\put(60,40){\circle*{\circlesize}}
\put(30,30){\circle*{\circlesize}} \put(70,30){\circle*{\circlesize}}
\put(40,20){\circle*{\circlesize}}
\put(50,10){\circle*{\circlesize}}
\put(10,50){\line(1,1){20}} \put(30,30){\line(1,1){20}} \put(40,20){\line(1,1){20}} \put(50,10){\line(1,1){20}}
\put(50,10){\line(-1,1){22.75}} \put(70,30){\line(-1,1){22.75}}
\put(30,70){\line(0,1){10}}
\put(27,85){\makebox(0,0)[l]{$\mathbf{\Niltwo^1}= [x^3 \approx x^2, \, xy \approx yx]$}}
\put(34,69){\makebox(0,0)[bl]{$\mathbf{\Niltwo^1} \cap [x^2y \approx xy^2]$}}
\put(54,51){\makebox(0,0)[l]{$\mathbf{\Niltwo^1} \cap [x_1^2x_2x_3 \approx x_1x_2x_3]$}}
\put(64,41){\makebox(0,0)[l]{$\mathbf{\Niltwo^1} \cap [x_1^2x_2 \approx x_1x_2]$}}
\put(74,30){\makebox(0,0)[l]{$\bSL = \mathbf{\Niltwo^1} \cap [x_1^2 \approx x_1]$}}
\put(06,50){\makebox(0,0)[r]{$\mathbf{\Niltwo^1} \cap [x^2 \approx y^2]$}}
\put(26,31){\makebox(0,0)[tr]{$\mathbf{\Niltwo^1} \cap [x_1x_2x_3 \approx y_1y_2y_3]$}}
\put(36,21){\makebox(0,0)[tr]{$\mathbf{\Niltwo^1} \cap [x_1x_2 \approx y_1y_2]$}}
\put(46,10){\makebox(0,0)[tr]{$\bT = [x_1 \approx y_1]$}}
\thicklines
\qbezier[12](10,50)(18,42)(26,34) \qbezier[12](30,70)(38,62)(46,54)
\end{picture}
\end{center}
\caption{The lattice of subvarieties of $\mathbf{N_2^1} = \var\{N_2^1\}$ }
\label{F: N2i}
\end{figure}

As for the variety generated by a semigroup of order greater than~$3$, properties more extreme than being non-small can be satisfied.
For instance, there exist
\begin{itemize}
\item semigroups of order~$4$ that generate varieties that are \textit{finitely universal}~\cite{Lee07a} in the sense that their lattices of subvarieties each embeds all finite lattices;
\item semigroups of order~$6$ that generate varieties with continuum many subvarieties~\cite{ELL10,Jac00}.
\end{itemize}
All examples of varieties with continuum many subvarieties discovered so far are also finitely universal.
It is unknown if there exists a variety with continuum many subvarieties that is not finitely universal.
Refer to Shevrin {\etal}.~\cite{SVV09} for a survey of results regarding other properties satisfied by lattices of varieties.

Given a finite semigroup, it is of natural interest to determine if it generates a small variety.
Whether or not smallness of a variety is decidable remains open, but some special case has been found.
Recall that an identity of the form \[ x_1 x_2 \cdots x_n \approx x_{\pi(1)} x_{\pi(2)} \cdots x_{\pi(n)}, \] where~$\pi$ is some nontrivial permutation on $\{1,2,\ldots,n\}$, is called a \textit{permutation identity}, while a nontrivial identity of the form \[ x_1 x_2 \cdots x_n \approx \bw \] that is not a permutation identity is said to be \textit{diverse}.

\begin{proposition}[Malyshev~\cite{Mal81}]
Any variety that satisfies some permutation identity and some diverse identity is small\up.
\end{proposition}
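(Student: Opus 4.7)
\bigskip

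\noindent\emph{Proof plan.} The plan is to exploit the two hypotheses combinatorially: the permutation identity provides \textbf{horizontal freedom} (rearrangement of letters) and the diverse identity provides \textbf{vertical compression} (shortening or collapsing of words), and together they force only boundedly many essentially different words to survive modulo the laws of the variety.

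First, I would extract from the permutation identity a useful adjacent-swap rule. If $\bV$ satisfies $x_1 x_2 \cdots x_n \approx x_{\pi(1)} x_{\pi(2)} \cdots x_{\pi(n)}$ with $\pi \neq \mathrm{id}$, then by the standard reduction of a nontrivial permutation to a product of adjacent transpositions, one obtains after substitutions an identity of the form
\[
u \, x \, y \, v \approx u \, y \, x \, v
\]
where $u, v$ are words whose length is bounded by a function of $n$. Thus in any sufficiently long word, any two consecutive letters can be transposed, provided there is enough \emph{context} on either side. By iterating, one shows that letters in the \q{interior} of a long word commute freely modulo the laws of $\bV$.

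Next, I would analyse the diverse identity $x_1 x_2 \cdots x_n \approx \bw$. Since $\bw$ is not a permutation of $x_1, \ldots, x_n$, one of two things happens: either some $x_i$ occurs in $\bw$ strictly more than once (yielding, in combination with the interior commutativity from step~1, an identity of the form $x^{k+1} \approx x^k$ after isolating $x_i$ in the interior and substituting all other variables suitably), or some $x_i$ is absent from $\bw$ (yielding an \emph{absorption} identity that allows one interior occurrence of a variable to be deleted). In the first case $\bV$ is periodic of bounded index; in the second case the number of interior occurrences of any fixed variable in a canonical representative is bounded. The substitution arguments need to be done carefully so that the prefix/suffix context coming from step~1 is preserved.

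Combining the two reductions, I would define a normal form for words in the free $\bV$-algebra on $k$ generators: a bounded prefix, a middle block in which the variables appear in a fixed canonical order and each variable occurs at most a bounded number of times (depending only on the identities, not on $k$), and a bounded suffix. Counting normal forms shows that the free $\bV$-algebra on $k$ generators has cardinality bounded by a polynomial in $k$, so $\bV$ is locally finite of uniformly bounded \q{width.} Finally, to pass from local finiteness to smallness, I would invoke (or reprove by a direct Higman-type well-quasi-order argument) the fact that such a uniformly locally finite variety, in which normal forms of words live in a fixed finitely generated free structure, admits only finitely many subvarieties: each subvariety is determined by which normal-form words it identifies, and this amounts to a choice from a finite lattice.

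The main obstacle I anticipate is the coordination of the two steps: the adjacent-swap rule from the permutation identity only applies in the interior of a word, so when applying the diverse identity one must ensure there is always enough context on both sides to enable the rearrangement, and one must handle the boundary cases (short words, words with few distinct letters) separately. Keeping track of the bounds carefully, so that the final count of normal forms does not depend on the number of generators, is where the argument becomes delicate.
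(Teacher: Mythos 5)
The paper states this result with only a citation to Malyshev, so there is no in-paper proof to compare against; judged on its own terms, your sketch has a genuine gap at the final step. The first two reductions are sound in outline: a nontrivial permutation identity does yield an interior-swap identity $u\,xy\,v \approx u\,yx\,v$ with context words of bounded length, and the diverse identity, after substituting away any variables of $\bw$ not occurring on the left-hand side, does split into the ``repeated variable'' and ``missing variable'' cases; together these plausibly give a normal form of the kind you describe. The problem is the passage from ``the $k$-generated $\bV$-free algebra is finite (of polynomial size) for every $k$'' to ``$\bV$ is small.'' That implication is false, and the claim that each subvariety is determined by ``a choice from a finite lattice'' of identified normal forms fails because the identifications may involve unboundedly many distinct variables: a subvariety corresponds to a fully invariant congruence on the free algebra of countably infinite rank, not on any finite-rank one. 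The paper itself contains counterexamples to the inference you invoke: $\var\{\Niltwo^1\}$ is generated by a three-element semigroup, hence has finite free algebras of every finite rank, yet it has infinitely many subvarieties, and the paper also records order-six semigroups generating locally finite varieties with continuum many subvarieties. So your argument, as written, establishes at most local finiteness, which is strictly weaker than smallness.

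What is actually needed --- and what must be the heart of any proof of Malyshev's theorem --- is a uniform bound $N$, depending only on the two hypothesised identities, such that every identity holding in a subvariety $\bW \subseteq \bV$ is deducible, modulo the identities of $\bV$, from the identities of $\bW$ in at most $N$ variables. This is where your normal form has to earn its keep: an identity between two long normal-form words must be shown to \emph{localise}, e.g.\ because the commutative, exponent-bounded middle blocks reduce the comparison to finitely many statements about bounded prefixes, bounded suffixes, and per-variable exponent data, each witnessed by an identity in boundedly many letters. Once such an $N$ exists, every subvariety is determined by a subset of $F_{\bV}(N) \times F_{\bV}(N)$ with $F_{\bV}(N)$ finite, and smallness follows. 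A smaller point: your closing remark that the count of normal forms should ``not depend on the number of generators'' cannot be realised literally, since the generators themselves are pairwise inequivalent; the independence you need is in the number of variables required to axiomatise a subvariety, not in the size of the free algebras.
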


\subsection{Epigroups} \label{subsec: epigroups}


Let~$S$ be a semigroup.
An element $a\in S$ is an \textit{epigroup element} if there exists an integer $n \geq 1$ such that $a^n$ belongs to a subgroup of~$S$, that is, the $\mathcal{H}$-class $H_{a^n}$ of $a^n$ is a group; if $n=1$, then~$a$ is said to be \textit{completely regular}.
If we denote by~$e$ the identity element of $H_{a^n}$, then $ae$ is in $H_{a^n}$ and we define the \textit{pseudo-inverse}~$a'$ of~$a$ by $a'=(ae)^{-1}$, where $(ae)^{-1}$ denotes the inverse of $ae$ in the group $H_{a^n}$ \cite[Subsection~2.1]{She05}.
An \textit{epigroup} is a semigroup consisting entirely of epigroup elements, and a \textit{completely regular semigroup} is a semigroup whose elements are all completely regular.
The important fact for us is that all finite semigroups are examples of epigroups.
Following Petrich and Reilly~\cite{PR99} for completely regular semigroups and Shevrin~\cite{She05} for epigroups, it is now customary to consider an epigroup or a completely regular semigroup $(S,\,\cdot\,)$ as a \textit{unary} semigroup $(S,\,\cdot\,,\,'\,)$, where $x\mapsto x'$ is the map sending each element to its pseudo-inverse.

For any semigroup~$S$, let $\Epi(S)$ denote the set of all epigroup elements of~$S$ and let $\Epi_n(S)$ denote the subset of $\Epi(S)$ consisting of elements of index bounded by~$n$.
Then the inclusions \[\Epi_1(S) \subseteq \Epi_2(S) \subseteq \cdots \subseteq \bigcup_{n\geq 1} \Epi_n(S) = \Epi(S) \] hold, where $\Epi_1(S)$ consists of completely regular elements of~$S$, and $\Epi(S) = S$ if and only if~$S$ is an epigroup.

For any $a\in \Epi_n(S)$, let~$e_a$ denote the identity element of the group~$H_{a^n}$.
Then $ae_a = e_aa$ is in~$H_{a^n}$ and the definition of \textit{pseudo-inverse} introduced above leads to a characterization of the epigroup elements of the semigroup: $a\in \Epi(S)$ if and only if there exist some $n \geq 1$ and some (necessarily unique) element $a'\in S$ such that
\begin{equation}\label{etfh}
a'aa' = a',\quad aa'=a'a,\quad a^{n+1} a' = a^n;
\end{equation}
see Shevrin~\cite[Section 2]{She05}.
If~$a$ is an epigroup element, then so is~$a'$ with $a'' = aa'a$.
The element~$a''$ is always completely regular and $a''' = a'$.
A standard notation in finite semigroup theory is to write $a^\omega = aa'$ for an epigroup element~$a$; see, for example, Almeida~\cite{Alm94}.
Then \[ a^\omega = a''a'=a'a'', \quad (a')^\omega = (a'')^\omega = a^\omega, \] and more generally, for any $m \geq 1$, \[ a^\omega = (aa')^m=(a')^m a^m = a^m(a')^m. \]

For each $n \geq 1$, the class $\bE_n$ consisting of all epigroups~$S$ such that $S = \Epi_n(S)$ is a variety; in particular, $\bE_1$ is the class of completely regular semigroups.
The chain $\bE_1 \subset \bE_2 \subset \bE_3 \subset \cdots$ of varieties has the following property~\cite{She05}: for any variety~$\bV$ of epigroups, there exists a smallest $n \geq 1$ such that $\bV \subseteq \bE_n$.
Given a finite semigroup~$S$, the companion website finds the smallest~$n$ such that $S \in \bE_n$.
This gives some occasionally useful information about the given semigroup, but of course it does not match knowing an {\ib} for $\var\{S\}$.

\subsection{Semilattice decompositions of semigroups}
There are many ways that a semigroup can be decomposed into smaller subsemigroups, for example, direct products, subdirect products, and Zappa--Sz\'{e}p extensions.
Some has the property that each component cannot be further decomposed using the same tool, in which case the decomposition is said to be \textit{atomic}.
An obvious example of atomic decompositions for finite algebras is the direct product decomposition as, resorting on an argument similar to the one used to prove that every natural number is a product of prime numbers, we can easily show that every finite algebra can be decomposed in a direct product  of directly indecomposable algebras.
Finding atomic decompositions of infinite semigroups is more difficult; according to Bogdanovi\'{c} {\etal}.~\cite{BCP11}, there are only five known atomic decompositions of general semigroups: semilattice decompositions~\cite{Tam56}, ordinal decomposition~\cite{Lya60}, $U$-decomposition \cite{She61}, orthogonal decomposition~\cite{BC95}, and the general subdirect decomposition whose atomicity was proved by Birkhoff.
%

Here we will concentrate on semilattice decompositions of semigroups.
We saw above that a semilattice is a commutative band.
It is easy to prove that every semilattice~$Y$ induces a partially ordered set in which every pair $i,j\in Y$ of elements has a meet $i \wedge j$; conversely, every such partially ordered set induces a semilattice.
Therefore, the term \textit{semilattice} is commonly used to refer to a commutative band or a partially ordered set admitting meet of every pair of elements.
In this subsection it is more convenient to use it in the latter sense.

A semigroup~$S$ is a \textit{semilattice of semigroups} if there exist a semilattice $(Y,\le)$ and a collection $\{S_i\}_{i\in Y}$ of semigroups indexed by~$Y$ such that $S=\bigcup_{i\in Y}S_i$ and $S_iS_j\subseteq S_{i\wedge j}$.
Every semigroup can be decomposed as a semilattice $\{S_i\}_{i\in Y}$ of semigroups~$S_i$ that are semilattice indecomposable~\cite{Tam56}.

In Tamura~\cite{Tam72}, two equivalent ways of finding the smallest semilattice congruence are provided.
For any semigroup~$S$, let~$S^1$ denote the smallest monoid containing~$S$, that is,
\[
S^1 =
\begin{cases}
\, S & \text{if $S$ is not a monoid}, \\
\, S \cup \{1\} & \text{otherwise}.
\end{cases}
\]
Then the smallest semilattice decomposition of~$S$ is the smallest partition containing the sets
\[
\big\{(x,y)\in S^1\times S^1 \,\big|\, \{xy,yx,xyx\}\big\}.
\]

The companion website finds the largest semilattice decomposition of a given semigroup~$S$ into semilattice indecomposable semigroups $\{S_i\}_{i\in Y}$, and provides the variety generated by each~$S_i$.
This tool can be used on a semigroup~$S$ of relatively large order, even when we cannot determine an {\ib} for the variety $\var\{S\}$.


\begin{figure}[H]
  \includegraphics[width=\textwidth]{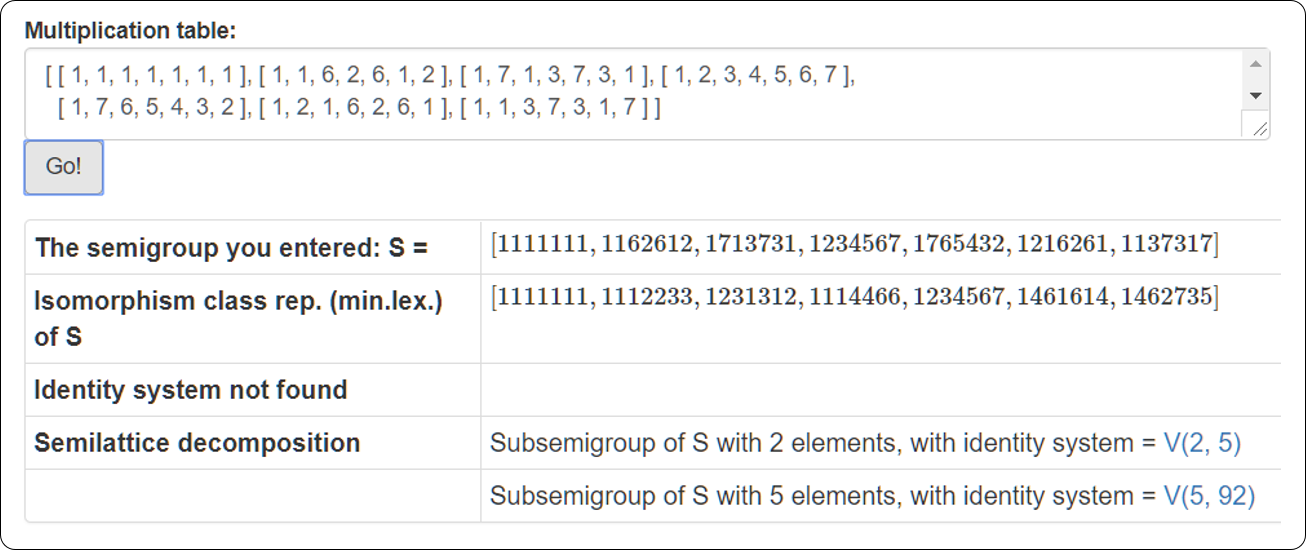}
  \caption{Companion website: semilattice decomposition of an order 7 semigroup}
  \label{F: 7n715832}
\end{figure}

\section{Varieties of groups} \label{sec: groups}

%

The theory of varieties of groups differs from that of semigroups in several ways, which will be briefly mentioned here.
In particular, after a decade of activity, the monograph~\cite{NeuH67} of H.~Neumann was published; this is still the best reference for the subject.
Also, the notation used in H.~Neumann~\cite{NeuH67} became standard among group theorists: we will point out some of the differences.
In particular, varieties of groups are typically denoted by Fraktur capital letters, such as~$\mathfrak{A}$ for the variety of abelian groups; following the usage established earlier, we will use bold-face letters such as~$\mathbf{A}$ instead.

\subsection{The basics}\label{groups}

As briefly noted in Section~\ref{subsec:L(G)}, every group identity can be put into the form $\bw \approx 1$, where~$\bw$ is a word in the variables and their inverses.
We can regard~$\bw$ as an element of the free group $F(X)$ over a countable set~$X$ of variables.
The identities satisfied by a variety~$\mathbf{V}$ form a \textit{fully invariant subgroup} of $F(X)$, one mapped into itself by all endomorphisms of the group.
Thus there is a bijection between varieties of groups and fully invariant subgroups of $F(X)$.

Each finite nontrivial group with finite exponent $e \geq 2$ satisfies the identity $x^e \approx 1$ and so also the identity $x^{e-1} \approx x^{-1}$.
Therefore any identity of a finite group is equivalent to one of the form $\bw \approx 1$, where~$\bw$ is a semigroup word.
In fact, a more specific result holds.
Recall that a \textit{commutator word} is an element of the derived subgroup of the free group. {\color{black} Alternatively, a commutator word can 
be described as one in which the sum of the exponents of every variable} 
is $0$.

\begin{theorem}[B. H. Neumann~\cite{NeuB37}]
Every identity of a finite group with exponent~$e$ is equivalent to $\{ x^e \approx 1, \, \bw \approx 1\}$ for some commutator word~$\bw$\up.
\end{theorem}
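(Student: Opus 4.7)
The plan is to build the desired basis in three moves. First, I would invoke the Oates--Powell theorem to obtain a finite identity basis $\{\bu_1 \approx 1, \ldots, \bu_k \approx 1\}$ for $\var\{G\}$; since $x^e \approx 1$ already holds in $G$, I may enlarge the basis to $\{x^e \approx 1, \bu_1 \approx 1, \ldots, \bu_k \approx 1\}$. Second, I would rewrite each $\bu_i$ as a commutator word modulo $x^e \approx 1$. Third, I would collapse the resulting commutator identities into a single one $\bw \approx 1$.

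For the rewriting step, let $s_{ij}$ denote the exponent sum of the variable $x_j$ in $\bu_i$. The key observation is that $e \mid s_{ij}$: substituting $x_l \mapsto 1$ for every $l \neq j$ in $\bu_i \approx 1$ yields the identity $x_j^{s_{ij}} \approx 1$, which must hold in $G$, and this forces $s_{ij}$ to be a multiple of the exponent $e$. Now view $\bu_i$ as an element of the free group $F(X)$. Since $F(X)/F(X)'$ is the free abelian group on $X$, the element $\bu_i \cdot \bigl(\prod_j x_j^{s_{ij}}\bigr)^{-1}$ lies in $F(X)'$, so $\bu_i = c_i \cdot \prod_j x_j^{s_{ij}}$ for some commutator word $c_i$. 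Because each $x_j^{s_{ij}}$ is a consequence of $x^e \approx 1$, the identities $\bu_i \approx 1$ and $c_i \approx 1$ become equivalent in the presence of $x^e \approx 1$. Hence $\{x^e \approx 1, c_1 \approx 1, \ldots, c_k \approx 1\}$ is an identity basis for $\var\{G\}$ whose non-exponent part consists entirely of commutator identities.

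Finally, to reduce the commutator identities to a single one, let $c_i'$ be obtained from $c_i$ by renaming its variables with fresh ones so that distinct $c_i'$ have disjoint variable sets, and put $\bw = c_1' c_2' \cdots c_k'$. A product of commutator words is a commutator word, so $\bw$ qualifies; and $\bw \approx 1$ is equivalent to the conjunction of the $c_i \approx 1$, since for each $i$ the implication $\bw \approx 1 \Rightarrow c_i \approx 1$ follows by specializing every variable outside the $i$-th block to $1$ (any word evaluates to $1$ when all its variables are set to $1$, so the other factors disappear and only $c_i$ survives). This yields the desired basis $\{x^e \approx 1, \bw \approx 1\}$. The only substantive step is the divisibility $e \mid s_{ij}$; the rest is standard bookkeeping with the abelianization map and with variable renaming.
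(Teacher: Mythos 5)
Your argument is correct, but note that the survey gives no proof of this theorem (it only cites Neumann's 1937 paper), and the route you take is genuinely different from the one the statement requires. The essential content is entirely in your middle paragraph: for a single identity $\bu \approx 1$ holding in $G$, the substitution sending every variable except $x_j$ to $1$ shows $e \mid s_j$ for each exponent sum $s_j$, the abelianization of the free group splits $\bu$ as a commutator word $\bw$ times a product of the $x_j^{s_j}$, and hence $\{x^e \approx 1,\ \bu \approx 1\}$ and $\{x^e \approx 1,\ \bw \approx 1\}$ are interdeducible. That is already the theorem, provided ``equivalent'' is read as ``equivalent in the presence of the exponent law'' --- a reading that is forced anyway, since taken absolutely literally the claim fails (the trivial identity $x \approx x$ holds in $G$ but does not imply $x^e \approx 1$). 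Your detour through Oates--Powell proves instead that the entire equational theory of $G$ admits a two-element basis $\{x^e \approx 1,\ \bw \approx 1\}$: a true and useful corollary, and your reduction of finitely many commutator identities to a single one (disjoint variables, multiply, kill the other blocks by specializing to $1$) is the standard trick and is carried out correctly. But this imports a far deeper theorem, stated separately in the survey and postdating Neumann by almost three decades, that is not needed for the per-identity statement, and it silently changes ``every identity'' into ``the set of all identities.'' In short, the direct route buys an elementary, self-contained proof of what Neumann actually proved; the Oates--Powell route buys the stronger conclusion that a single finite commutator word $\bw$ serves the whole variety.
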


A \textit{factor} of a group~$G$ is a quotient of a subgroup of~$G$, that is, $H/K$ where $K\unlhd H\le G$; it is \textit{proper} unless $H=G$ and $K=1$.
%
A \textit{chief factor} is one where $K\unlhd G$ and $H/K$ is a minimal normal subgroup of $G/K$; {\color{black} a \textit{composition factor} is a factor $H/K$, when~$H$ and~$K$ subnormal in~$G$ (that is, terms in a descending series in which each term is normal in its predecessor) and $K$ is a maximal normal subgroup of $H$}.

If~$A$ and~$B$ are subgroups of~$G$, then $[A,B]$ is the subgroup generated by the commutators in $\{ [a,b] \,|\, a\in A, \, b\in B \}$.
The \textit{lower central series} is the descending series $G = G_1 > G_2 > \cdots$ with $G_{i+1}=[G_i,G]$; $G$ is \textit{nilpotent of class~$c$} if $G_{c+1}=1$ (and~$c$ is minimal subject to this).
The \textit{derived series} is the descending series $G=G^{(0)}>G^{(1)}>\cdots$ with $G^{(i+1)}=[G^{(i)},G^{(i)}]$; $G$ is \textit{solvable} \textit{of derived length~$\ell$} if $G^{(\ell)}=1$ (and~$\ell$ is minimal subject to this).

The \textit{product} $\bU\bV$ of varieties~$\bU$ and~$\bV$ consists of all groups~$G$ which are \textit{extensions} of a group $H \in \bU$ by a group $K \in \bV$, that is, $G$ has a normal subgroup isomorphic to~$H$ with quotient isomorphic to~$K$.
The product of two varieties is a variety, and the product operation is associative.
But product varieties are not usually generated by finite groups.

\begin{theorem}[\v{S}mel'ken~\cite{Sme64}]
A product of three or more nontrivial varieties is not generated by a finite group\up.
A product~$\bU\bV$ is generated by some finite group if and only if~$\bU$ and~$\bV$ have coprime exponents\up, $\bU$ is nilpotent\up, and $\bV$ is abelian\up.
\end{theorem}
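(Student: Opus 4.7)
The plan is to establish the characterization (second sentence) first, then deduce the triple-product claim (first sentence) as a corollary. Throughout I would lean on the Oates--Powell theorem, which guarantees that every finite group generates a Cross variety (locally finite, bounded exponent, finitely many subvarieties, only finitely many critical groups).

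For the easy direction of the characterization, suppose $\bU$ is nilpotent of class $c$ and exponent $m$, $\bV$ is abelian of exponent $n$, and $\gcd(m,n)=1$. Each of $\bU$ and $\bV$ is generated by a single finite group, because the free group in a variety of nilpotent groups of bounded class and bounded exponent is finite. Let $A$ generate $\bU$ and $B$ generate $\bV$. I would show that the standard wreath product $W:=A\wr B$ (or a suitable direct power of it) generates $\bU\bV$. The inclusion $\mathrm{var}(W)\subseteq\bU\bV$ is immediate from the split structure $A^{|B|}\rtimes B$. For the reverse inclusion, every finite $G\in\bU\bV$ fits into an extension $1\to H\to G\to K\to 1$ with $H\in\bU=\mathrm{var}(A)$ and $K\in\bV=\mathrm{var}(B)$; here coprimality of exponents invokes Schur--Zassenhaus to split the extension, after which a standard Kaloujnine--Krasner embedding realises $G$ as a subgroup of a quotient of a direct power of $W$, giving $G\in\mathrm{var}(W)$.

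For the harder \emph{only if} direction, suppose $\bU\bV=\mathrm{var}(G)$ for a finite group $G$. By Oates--Powell, $\bU\bV$ has bounded exponent $e$, so both $\bU$ and $\bV$ do. I would then establish the three structural constraints in turn:
\begin{enumerate}
\item $\bU$ is nilpotent: if $\bU$ contained a non-nilpotent finite group $H$, combining $H$ with the regular action of a nontrivial $K\in\bV$ would produce finite groups in $\bU\bV$ of Fitting length strictly greater than that of $G$, contradicting that all such groups lie in $\mathrm{var}(G)$;
\item $\bV$ is abelian: for any non-abelian $K\in\bV$ and nontrivial $H\in\bU$, iterated wreath-product constructions built from many copies of $H$ and $K$ remain in $\bU\bV$ but have unbounded derived length, contradicting the bounded derived length of groups in $\mathrm{var}(G)$;
\item $\exp(\bU)$ and $\exp(\bV)$ are coprime: if a prime $p$ divided both, one exhibits non-split $p$-extensions in $\bU\bV$ whose Sylow $p$-subgroups have unboundedly complicated structure, again contradicting that $\mathrm{var}(G)$ is Cross.
\end{enumerate}

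Finally, the triple-product claim is a corollary. If $\bU\bV\bW=\mathrm{var}(G)$ with nontrivial $\bU,\bV,\bW$ and finite $G$, then writing this as $(\bU\bV)\bW$, the characterization just proved forces $\bU\bV$ to be nilpotent. But a product of two nontrivial group varieties is never nilpotent: picking $A\in\bU$ and $B\in\bV$ of coprime prime orders $p,q$, the wreath product $A\wr B\cong A^q\rtimes B$ lies in $\bU\bV$, has non-central Sylow subgroups, and is visibly not nilpotent. The main obstacle will be the \emph{only if} direction, particularly items (b) and (c), which require delicate wreath-product arithmetic, Schur--Zassenhaus-type splitting arguments, and careful analysis via the critical groups of $\mathrm{var}(G)$ of exactly which extensions of $\bU$-groups by $\bV$-groups are realised inside the variety of a single finite group.
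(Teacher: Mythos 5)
The paper offers no proof of this statement---it is quoted verbatim from \v{S}mel'kin's paper---so there is nothing internal to compare your argument against; judged on its own terms, your sketch names the right tools (Oates--Powell, wreath products, Schur--Zassenhaus) but the decisive steps fail. In the ``if'' direction, Kaloujnine--Krasner and Schur--Zassenhaus do give $G\le H\wr K$ with $H\in\var\{A\}$ and $K\in\var\{B\}$, but you then need $H\wr K\in\var\{A\wr B\}$, and this does \emph{not} follow from membership of the two factors: the wreath product is not monotone in the top group under varietal closure. Concretely, $\mathbb{Z}_2\wr\mathbb{Z}_2^2$ has nilpotency class $3$ and therefore does not lie in $\var\{\mathbb{Z}_2\wr\mathbb{Z}_2\}=\var\{D_8\}$, which has class $2$, even though both factors lie in $\mathbf{A}_2$. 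Proving $\bU\bV\subseteq\var\{A\wr B\}$ is precisely the hard content of the theorem; coprimality must enter well beyond splitting the extension (via \v{S}mel'kin's verbal wreath product embedding, or the Houghton--Mikaelian analysis of when $\var\{A\}\var\{B\}=\var\{A\wr B\}$ quoted in Subsection~3.3), and your reduction skips exactly this.

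In the ``only if'' direction, item (b) rests on a false mechanism: every group in $\bU\bV$ has derived length at most $\mathrm{dl}(\bU)+\mathrm{dl}(\bV)$, so derived length stays bounded whether or not $\bV$ is abelian; moreover your iterated wreath products leave $\bU\bV$ after one iteration, since $H\wr K\wr K$ is an extension of a $\bU$-group by a group in $\bV\bV$, which need not lie in $\bV$. The genuine obstruction when $\bV$ is non-abelian is that finite groups in $\bU\bV$ acquire chief factors of unbounded order (non-abelian top groups of growing order have irreducible $\mathbb{F}_p$-modules of unbounded dimension), violating the bound $m$ in the class $\mathbf{C}(e,m,c)$ of Subsection~3.6; similarly, for (c) the clean contradiction when a prime $p$ divides both exponents is that $\mathbf{A}_p\mathbf{A}_p\subseteq\bU\bV$ contains the nilpotent groups $\mathbb{Z}_p\wr\mathbb{Z}_p^{\,n}$, whose class $n(p-1)+1$ is unbounded, violating the bound $c$---not some vaguely ``complicated'' Sylow structure. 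Item (a) has the same defect: Fitting length in $\bU\bV$ grows by at most one over that of $\bU$, so a single wreath product cannot exceed any fixed bound, and iteration again exits $\bU\bV$. Finally, in the corollary you cannot always choose $A\in\bU$ and $B\in\bV$ of \emph{coprime} prime orders (take $\bU=\bV=\mathbf{A}_2$); in the equal-prime case $\bU\bV$ fails to be nilpotent because it contains nilpotent groups of unbounded class, not because any single wreath product is visibly non-nilpotent.
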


The variety~$\bU\bV$ has an {\ib} of the form $\bu(\bv_1,\bv_2,\ldots,\bv_n) \approx 1$, where $\bu(x_1,x_2,\ldots,x_n) \approx 1$ is an identity of $\bU$ and each $\bv_i \approx 1$ is an identity of~$\bV$. {\color{black} (Note that, even if for some cases we can do better, usually all identities of $\bV$ are needed, not just an {\ib}.)} 

Many further results about varieties of groups are known, but the interest of the present survey lies in those that are finitely generated.

The most important result about varieties of finite groups is the \textit{Oates--Powell Theorem}, asserting that, for any finite group~$G$, the variety $\var\{G\}$ is finitely based.
Actually it is a little stronger.
A variety of groups is \textit{Cross} if it is finitely based, finitely generated, and small. {\color{black}(Recall that a variety of algebras of any type---in particular, groups---is \textit{finitely generated} if it is generated by one of its finite algebras.)}

\begin{theorem}[Oates and Powell~\cite{OP64}] \label{T: Oates Powell}
The variety generated by any finite group is Cross\up.
\end{theorem}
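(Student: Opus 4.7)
The plan is to verify each of the three defining properties of a Cross variety in turn: finite generation, smallness, and finite basis. Finite generation is immediate since $G$ itself generates $\var\{G\}$. The remaining two properties will both follow from a single key technical ingredient: a uniform bound $M=M(G)$ on the orders of the \emph{critical} groups in $\var\{G\}$, where a finite group $C$ is called critical if $C$ does not lie in the variety generated by its proper factors.

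First I would establish local finiteness of $\var\{G\}$. For any $r$, the relatively free group $F_r(\var\{G\})$ embeds in the direct power $G^{|G|^r}$ via the product of all homomorphisms to $G$, hence has order at most $|G|^{|G|^r}$ and in particular is finite. This is crucial, since a locally finite variety of groups is generated by its critical groups, and every subvariety of such a variety is determined uniquely by the set of critical groups it contains.

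The principal step---and the chief obstacle---is to bound the orders of the critical groups in $\var\{G\}$ by a constant $M$ depending only on $G$. I would assemble several ingredients: every $C\in\var\{G\}$ satisfies $x^{\exp(G)}\approx 1$, so its exponent is uniformly bounded; a critical group has a unique minimal normal subgroup (its \emph{monolith}), and a careful analysis via the Frattini subgroup forces the number of generators of $C$ to be bounded by a function of $|G|$; and B.\,H.~Neumann's theorem reduces the remaining identity content to commutator identities, which together with the exponent bound force a uniform bound on the nilpotency class of the key normal sections, hence on the derived length and ultimately on $|C|$.

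With $M$ in hand the two remaining properties follow. For \emph{smallness}: only finitely many isomorphism classes of groups have order at most $M$, so $\var\{G\}$ contains only finitely many critical groups, and since each subvariety is determined by the critical groups it contains, the subvariety lattice is finite. For \emph{finite basis}: let $k$ be an upper bound (say $k\leq\log_2 M$) on the number of generators of any critical group in $\var\{G\}$. A standard argument then shows that every identity of $\var\{G\}$ is a consequence of its identities in at most $k$ variables; hence the fully invariant subgroup $V\trianglelefteq F_k$ corresponding to $\var\{G\}$ in the absolutely free group $F_k$ has finite index, since $F_k/V\cong F_k(\var\{G\})$ is finite. By the Nielsen--Schreier theorem $V$ is then finitely generated as a subgroup, and \emph{a fortiori} finitely generated as a fully invariant subgroup, yielding a finite identity basis.
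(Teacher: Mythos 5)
The paper does not actually prove this theorem: it is quoted from Oates and Powell, and Subsection~\ref{subsec: C(e,m,c)} only sketches the skeleton of their argument via the class $\mathbf{C}(e,m,c)$ of groups with bounded exponent, bounded chief factors, and bounded nilpotency class of nilpotent factors. Your outline follows the same classical skeleton (local finiteness, a uniform bound on the orders of critical groups, smallness because a locally finite variety and all its subvarieties are determined by their critical groups, and a finite basis via reduction to boundedly many variables). The parts you actually carry out are correct: the embedding $F_r(\var\{G\})\hookrightarrow G^{|G|^r}$ does give local finiteness, and smallness does follow once there are only finitely many critical groups.

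However, two of your steps are not arguments but restatements of the theorem's hard content. First, the bound $M(G)$ on the orders of critical groups is dispatched in one sentence; as the paper itself records, the case of a non-abelian monolith yields $|C|\le m!$ easily, but the abelian-monolith case requires bounding $|C/\Phi(C)|$ through a delicate analysis of a chain of subgroups between $\Phi(C)$ and $C$, and none of this follows from ``B.~H.~Neumann's theorem plus the exponent bound.'' Second, and more seriously, the claim that ``a standard argument shows that every identity of $\var\{G\}$ is a consequence of its identities in at most $k$ variables'' is not standard and does not follow from your stated ingredients. The variety $\bW_k$ defined by the $k$-variable laws of $\var\{G\}$ satisfies $x^e\approx 1$ but need not be locally finite --- free Burnside groups of finite exponent can be infinite --- so $\bW_k$ is not generated by its critical groups and cannot be compared with $\var\{G\}$ critical group by critical group; arguing instead with the critical groups of $\var\{G\}$ itself is circular, since one does not yet know that a group satisfying only the $k$-variable laws lies in $\var\{G\}$. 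Closing exactly this gap is the bulk of the Oates--Powell paper. The Nielsen--Schreier endgame is fine once the $k$-variable reduction is in hand, but as written the proposal assumes the theorem's two essential difficulties rather than resolving them.
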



A group is \textit{critical} if it does not lie in the variety generated by all of its proper factors.
It is known that, if two non-isomorphic critical groups generate the same variety, then they have abelian monoliths.
Hence non-isomorphic finite simple groups generate different varieties.

\subsection{Abelian groups}

The structure of varieties generated by abelian groups is very simple.
The class $\mathbf{A}$ of all abelian groups is the variety defined by the identity $[x,y] \approx 1$; for each integer $m \geq 1$, the class $\mathbf{A}_m$ of abelian groups of exponent~$m$ is the variety defined by the commutator identity and the identity $x^m \approx 1$.
Hence the lattice of varieties of abelian groups is isomorphic to the set of positive integers ordered by divisibility, with a top element added.
We remark that \textsf{GAP} includes commands \verb!IsAbelian! and \verb!Exponent!, so these conditions are easily checked.

Inclusions in the other direction are more problematic.
For sufficiently large~$m$, there are uncountably many varieties of groups covering~$\mathbf{A}_m$ \cite{Koz12,IS04}.

\subsection{Metabelian groups} \label{subsec: metabelian}

A group is \textit{metabelian} if it lies in the product variety $\mathbf{AA}$, that is, it has an abelian normal subgroup with abelian quotient.
Among small groups, many are metabelian; for example, 1,005 of the 1,048 groups of order up to 100 are metabelian.
The smallest non-metabelian groups are the groups~$S_4$ and $\mathrm{SL}(2,3)$ of order~$24$.

A finite metabelian group lies in the variety $\mathbf{A}_m\mathbf{A}_n$ for some $m,n \geq 1$.
The smallest subgroup of a group~$G$ whose quotient is abelian of exponent dividing~$n$ is generated by the $n$th powers and commutators in~$G$, so the variety $\mathbf{A}_m\mathbf{A}_n$ is defined by the identities
\[x^{mn} \approx [x,y]^m \approx [x^n,y^n] \approx \big[x^n,[y,z]\big] \approx \big[[x,y],[z,w]\big] \approx 1.\]
However, finding an {\ib} for individual finite metabelian groups is more difficult.

Higman~\cite{Hig59} showed that for each prime~$p$ and $n \geq 1$, the proper subvarieties of $\mathbf{A}_p\mathbf{A}_n$ containing $\mathbf{A}_{pn}$ are characterized by an identity of the form
\[ [x^n, y^{d_1}, y^{d_2}, \ldots,y^{d_k}] \approx 1,\]
where $d_1>d_2>\cdots>d_k \geq 1$ are divisors of~$n$ such that~$d_i$ does not divide~$d_j$ whenever $i>j$.

As an example which we will examine later, consider the subvariety $\var\{A_4\}$ of $\mathbf{A}_2\mathbf{A}_3$.
The only possible Higman identity is $[x^3,y] \approx 1$, which does not hold in~$A_4$.
Therefore $\var\{A_4\} = \mathbf{A}_2\mathbf{A}_3$.

H. Neumann~\cite[p.179]{NeuH67} quotes a generalization of this, an unpublished result of C.~H.~Houghton according to which, assuming that $\gcd(m,n)=1$, any such variety lies between $\mathbf{A}_{rs}$ and $\mathbf{A}_r\mathbf{A}_s$ for some $r,s \geq 1$ such that $r$ divides~$m$ and $s$ divides~$n$.
Moreover, such a variety is defined by identities of the form \[[x^s,y^{d_1},\ldots,y^{d_k}]^t \approx 1,\] where~$t$ is a divisor of~$r$ and $d_1>d_2>\cdots>d_k \geq 1$ are divisors of~$n$ such that~$d_i$ does not divide~$d_j$ whenever $i>j$.

{\color{black}
Houghton did not publish the proof of his result.
The proof, and a generalization that determines when the equality $\var\{A\}\var\{B\}=\var\{A\wr B\}$ holds for abelian groups~$A$ and~$B$, can be found in Mikaelian~\cite{Mik07}.}

There are also some results for the case when the condition $\gcd(m,n)=1$ is relaxed.

For an example, consider \verb!SmallGroup(12,1)! in \textsf{GAP} with presentation \[ \langle a,b \,|\, a^3=1, \, b^4=1, \, b^{-1}ab=a^2 \rangle. \]
Clearly, this group lies in $\mathbf{A}_3\mathbf{A}_4$ ({\color{black} as $\gcd(m,n)=1$, this group can be handled with Higman's Theorem}), and the possible Higman identities are $[x^4,y] \approx 1$ and $[x^4,y^2] \approx 1$.
It is readily shown that the second is satisfied but the first is not.
Adding $[x^4,y^2] \approx 1$ to the {\ib} we see that the identity $[x^4,y^4] \approx 1$ is now redundant and can be discarded.
Further reductions are possible, but we do not strive for the simplest {\ib}.

{\color{black} A result of Kov\'acs \cite{Kov89} describes the variety generated by a finite
dihedral group. We have restated his theorem in a way which is more useful
for us.

\begin{theorem}[Kov\'acs] Let $D_{2n}$ denote the dihedral group of order
$2n$, where $n=2^dm$ and $m$ is odd.
\begin{enumerate}
\item If $d\le 1$, then $\var(D_{2n})=\mathbf{A}_m\mathbf{A}_2$.
\item If $m=1$ and $d>2$, then
$\var(D_{2n})=\mathbf{A}_{2^{d-1}}\mathbf{A}_2 \cap \mathbf{N}_d$,
where $\mathbf{N}_d$ is the variety of nilpotent groups of class at most $d$.
\item If $m>1$ and $d>2$, then $\var(D_{2n})=\var(D_{2m},D_{2^{d+1}})$.
\end{enumerate}
\end{theorem}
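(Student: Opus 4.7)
The plan is to treat each of the three parts separately, in each case establishing membership of $D_{2n}$ in the stated variety and then the reverse inclusion. The structural observation used throughout is that $D_{2n}$ is metabelian with cyclic rotation subgroup $\langle r\rangle$ of order $n$ and reflection $s$ inverting $r$, so $[r,s]=r^{-2}$ and the derived subgroup is $\langle r^2\rangle$. More generally, $[r^k,s]=r^{-2k}$, giving the lower central series $\gamma_{k+1}(D_{2n})=\langle r^{2^k}\rangle$.

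For part (a), to show $D_{2n}\in\mathbf{A}_m\mathbf{A}_2$: when $d=0$, $\langle r\rangle$ itself is abelian of odd exponent $m$ with quotient $\mathbb{Z}_2$; when $d=1$, $n=2m$ and the characteristic subgroup $\langle r^2\rangle$ has odd order $m$, while the quotient is generated by the images of $r$ and $s$, each of order $2$ and commuting modulo $\langle r^2\rangle$, hence isomorphic to $V_4\in\mathbf{A}_2$. For the reverse inclusion, $\gcd(m,2)=1$ allows Houghton's result (quoted in Section~\ref{subsec: metabelian}) to apply: since $\mathbb{Z}_{2m}\in\var(D_{2n})$ (as a direct product of $\mathbb{Z}_m\le D_{2n}$ with the abelianisation $\mathbb{Z}_2$), every proper subvariety containing $D_{2n}$ would be defined by an identity $[x^s,y^{d_1},\ldots,y^{d_k}]^t\approx 1$ with $s\mid 2$, $t$ a proper divisor of $m$, and $d_i\mid 2$. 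Specialising $x=r$, $y=s$ in the simplest case $s=d_1=1$ reduces the identity to $r^{-2t}=1$, which forces $m\mid t$ and contradicts $t<m$; the remaining cases reduce analogously via $[r^2,s]=r^{-4}$, whose order is again $m$. Hence no proper subvariety of $\mathbf{A}_m\mathbf{A}_2$ contains $D_{2n}$ and equality follows.

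For part (c), the Chinese Remainder Theorem yields $\langle r\rangle\cong\langle r^m\rangle\times\langle r^{2^d}\rangle$ (using $\gcd(m,2^d)=1$), exhibiting characteristic subgroups $\langle r^m\rangle$ of order $2^d$ and $\langle r^{2^d}\rangle$ of order $m$. The quotient $D_{2n}/\langle r^m\rangle$ has order $2m$ and is isomorphic to $D_{2m}$, while $D_{2n}/\langle r^{2^d}\rangle$ has order $2^{d+1}$ and is isomorphic to $D_{2^{d+1}}$, so both $D_{2m}$ and $D_{2^{d+1}}$ are quotients of $D_{2n}$ and hence lie in $\var(D_{2n})$. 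Conversely, the diagonal of the two projections $D_{2n}\to D_{2m}\times D_{2^{d+1}}$ has trivial kernel $\langle r^m\rangle\cap\langle r^{2^d}\rangle = 1$ by coprimality, exhibiting $D_{2n}$ as a subgroup of $D_{2m}\times D_{2^{d+1}}$ and giving $\var(D_{2n})\subseteq\var(D_{2m},D_{2^{d+1}})$.

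For part (b), membership is again direct: $\langle r^2\rangle$ is characteristic of exponent $2^{d-1}$ with quotient $V_4\in\mathbf{A}_2$, and the lower-central computation above shows $\gamma_{d+1}(D_{2n})=\langle r^{2^d}\rangle=1$ while $\gamma_d\neq 1$, so $D_{2n}\in\mathbf{A}_{2^{d-1}}\mathbf{A}_2\cap\mathbf{N}_d$ with class exactly $d$. The reverse inclusion is the main obstacle and cannot be extracted from Higman or Houghton because the two ``prime factors'' coincide at $p=2$. The approach I would adopt is to apply the Oates--Powell theorem (Theorem~\ref{T: Oates Powell}), concluding that $\mathbf{A}_{2^{d-1}}\mathbf{A}_2\cap\mathbf{N}_d$ is Cross and thus generated by its finitely many critical groups, and then to enumerate those critical groups using the structure theory of finite $2$-groups of bounded nilpotency class---in particular, the classification of $2$-groups of maximal class. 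The crux is to show that the two sporadic $2$-groups of maximal class, the semidihedral group $SD_{2^{d+1}}$ and the generalised quaternion group $Q_{2^{d+1}}$, both of which also lie in the intersection, are already in $\var(D_{2^{d+1}})$; this requires constructing explicit realisations of each as a section of a suitable direct power of $D_{2^{d+1}}$. Establishing this comparison is the hardest ingredient of the whole theorem.
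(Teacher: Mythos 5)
First, a point of comparison: the paper does not prove this theorem at all --- it is quoted from Kov\'acs \cite{Kov89} (``We have restated his theorem in a way which is more useful for us''), so there is no in-paper argument to measure yours against; what follows assesses whether your proposal would stand on its own. Your parts (a) and (c) essentially do. Part (c) is complete and elementary: the two quotients give $\var(D_{2m},D_{2^{d+1}})\subseteq\var(D_{2n})$, and the embedding $D_{2n}\hookrightarrow D_{2m}\times D_{2^{d+1}}$ via the coprime decomposition of $\langle r\rangle$ gives the reverse inclusion. Part (a) is correct modulo its reliance on Houghton's description of the subvarieties of $\mathbf{A}_m\mathbf{A}_2$, which the paper only quotes as unpublished (a proof is in Mikaelian \cite{Mik07}); granting that, your bookkeeping works, since $\mathbb{Z}_{2m}\in\var(D_{2n})$ pins down $r=m$, $s=2$, the candidate identity $[x^2,y^2]^t\approx 1$ already holds throughout $\mathbf{A}_m\mathbf{A}_2$, and $[x^2,y]^t\approx 1$ fails in $D_{2n}$ unless $m\mid t$.

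The genuine gap is the reverse inclusion in part (b), which you state as a plan rather than a proof, and the plan as described would not close it. Oates--Powell (together with H.~Neumann's 52.23) does reduce $\mathbf{A}_{2^{d-1}}\mathbf{A}_2\cap\mathbf{N}_d\subseteq\var(D_{2^{d+1}})$ to checking finitely many critical groups, since the variety is locally finite of exponent $2^d$, class at most $d$, and has chief factors of order $2$. But those critical groups are \emph{not} confined to the $2$-groups of maximal class of order $2^{d+1}$: a critical group of this variety may have order far exceeding $2^{d+1}$, need not have maximal class relative to its order, and need not even be generated by two elements, so verifying that $SD_{2^{d+1}}$ and $Q_{2^{d+1}}$ lie in $\var(D_{2^{d+1}})$ --- true, and itself nontrivial for $d>2$ --- does not exhaust the cases. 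Identifying and disposing of all critical groups of $\mathbf{A}_{2^{d-1}}\mathbf{A}_2\cap\mathbf{N}_d$ is exactly the content of Kov\'acs's paper and is the step your argument is missing; as written, your case (b) establishes only $\var(D_{2n})\subseteq\mathbf{A}_{2^{d-1}}\mathbf{A}_2\cap\mathbf{N}_d$.
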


Now it follows from our general remarks on metabelian groups that an {\ib} for $\mathbf{A}_n\mathbf{A}_2$is given by $x^{2n}=[x^2,y^2]=1$.
(For a group lies in this variety if and only if the squares
commute and have orders dividing $n$.) An {\ib}
for $\mathbf{N}_d$ is given by the \emph{left-normed commutator}
$[x_1,x_2,\ldots,x_{d+1}]=1$ (this means
$[[\ldots[[x_1,x_2],x_3],\ldots],x_{d+1}]=1$). Given varieties $\mathbf{V}$ and
$\mathbf{W}$, an {\ib} for $\mathbf{V} \cap \mathbf{W}$ consists of the union of
the {\ibs} for $\mathbf{V}$ and $\mathbf{W}$.
Finally, the identities of $\var(G,H)$ consist of all products of an
identity for $G$ and an identity for $H$. So the identities for varieties of
dihedral groups can be described explicitly.}

\subsection{Other groups}\label{other}

Apart from the above, results about particular finite groups are fairly scarce.
Cossey and Macdonald~\cite{CM68} and Cossey {\etal}.~\cite{CMS70} found explicit {\ibs} for the varieties $\var\{G\}$, where $G \in \{ S_4, A_5, \mathrm{PSL}(2,7) \}$; they also found identities that hold in $\mathrm{PSL}(2,p^m)$ with prime~$p$, but without proof that these identities form an {\ib}.
In the case $p=2$, an {\ib} was found by Southcott~\cite{Sou74}.

Such cases are best dealt with by database lookup.

Description for the identities of the groups $\mathrm{SL}(2,q)$ in some cases---when $q=9$ or $q = p^m$ for some odd prime $p \not\equiv\pm1 \pmod {16}$ and odd $m \geq 1$---are also available.
In these cases, the identities are of the form $[\bw,x] \approx 1$ and $\bw^2 \approx 1$, where $\bw \approx 1$ ranges over an {\ib} for $\mathrm{PSL}(2,q)$ and~$x$ is a variable not occurring in~$\bw$.

In particular, this result holds for $\mathrm{SL}(2,3)$ and $\mathrm{PSL}(2,3) \cong A_4$, where identities of the latter group have been described in Subsection~\ref{subsec: metabelian}.

\subsection{Non-metabelian groups of order~$24$}

As noted earlier, $S_4$ and $\mathrm{SL}(2,3)$ are the only non-metabelian groups of order~$24$.
An {\ib} for the variety $\var\{S_4\}$ can be found in Cossey {\etal}.~\cite{CMS70}: \[x^{12} \approx \big((x^3y^3)^4[x^3,y^6]^3\big)^3 \approx [x^2,y^2]^2 \approx [x,y]^6 \approx [x^6,y^6] \approx \big[[x,y]^3,y^3,y^2\big] \approx 1.\]

The goal of this subsection is to describe the subvarieties of the varieties $\var\{S_4\}$ and $\var\{\mathrm{SL}(2,3)\}$, and to show that their proper subvarieties are all metabelian.


\begin{lemma} \label{L: nonabelian in varS3 implies S3}
Let $G$ be any non-abelian group in $\var\{S_3\}$\up.
Then~$G$ has a subgroup isomorphic to $S_3$\up.
\end{lemma}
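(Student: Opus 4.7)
The plan is to exploit two identities that are satisfied throughout $\var\{S_3\}$: the exponent identity $x^6 \approx 1$, and the identity $[x^2, y^2] \approx 1$, which holds in $S_3$ because every square lies in the abelian subgroup $A_3$. From the latter I would first extract the crucial consequence that any two elements of order dividing $3$ in $G$ commute: if $x^3 = y^3 = 1$, then $x^2 = x^{-1}$ and $y^2 = y^{-1}$, so $[x^{-1},y^{-1}] = 1$ forces $[x,y]=1$.

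Since $G$ is non-abelian, I would pick $g, h \in G$ with $[g,h] \neq 1$. Every element of $G$ has order dividing $6$, and since $\gcd(2,3)=1$, each decomposes as $g = g^3 \cdot g^4 = g^4 \cdot g^3$, with $g^3$ of order dividing $2$ and $g^4$ of order dividing $3$, and similarly $h = h^3 \cdot h^4$. Because the order-$3$ components commute by the preceding step, $[g,h] \neq 1$ forces some other component pair to fail to commute, reducing the problem to either (i) two non-commuting involutions $a, b \in G$, or (ii) an involution $a$ and an order-$3$ element $c$ with $[a,c] \neq 1$.

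In case (i), the subgroup $\langle a, b \rangle$ is dihedral of order $2n$, where $n$ is the order of $ab$. The exponent constraint gives $n \mid 6$, and the subcases $n \in \{1,2\}$ are excluded because they would contradict $[a,b] \neq 1$, leaving $n \in \{3,6\}$; both $D_6 \cong S_3$ and $D_{12}$ contain a copy of $S_3$. In case (ii), set $d = aca$; since $d$ has order $3$, the key consequence above forces $\langle c,d \rangle$ to be abelian of exponent $3$, hence isomorphic to $\mathbb{Z}_3$ or $\mathbb{Z}_3 \times \mathbb{Z}_3$. If $\langle c,d \rangle \cong \mathbb{Z}_3$ then $d = c^{-1}$ (the alternative $d = c$ would give $[a,c]=1$), so $a$ inverts $c$ and $\langle a,c \rangle \cong S_3$; if $\langle c,d \rangle \cong \mathbb{Z}_3 \times \mathbb{Z}_3$, a brief calculation shows $a(cd^{-1})a = (cd^{-1})^{-1}$, so that $\langle a, cd^{-1} \rangle \cong S_3$.

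The main obstacle is identifying $[x^2,y^2] \approx 1$ as the correct identity to exploit: its consequence, that all order-$3$ elements in $G$ commute, is exactly what excludes pathological $3$-groups such as $B(2,3)$ from $\var\{S_3\}$ and makes the subsequent case analysis go through. I should also note that the statement does not assume $G$ is finite, but the argument above is entirely element-wise and requires no finiteness hypothesis.
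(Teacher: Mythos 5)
Your proof is correct and follows essentially the same route as the paper's: both hinge on the identity $[x^2,y^2]\approx 1$ forcing order-$3$ elements to commute, then build an $S_3$ either from two non-commuting involutions (via a dihedral subgroup $D_6$ or $D_{12}$) or from an involution $a$ and an order-$3$ element $c$ by showing $a$ inverts $cd^{-1}$ with $d=aca$, which is exactly the paper's computation with $b,c$ renamed. The only difference is organizational: you dispose of order-$6$ elements up front by splitting each element into commuting components of orders dividing $2$ and $3$, whereas the paper argues by contradiction and treats order-$6$ elements in later cases; both are sound.
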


\begin{proof}
We know that~$G'$ is a nontrivial elementary abelian 3-group while $G/G'$ is an abelian group that is a direct product of elementary abelian 2-groups and 3-groups.
Since~$G$ is non-abelian, there must be elements $a,b \in G$ that fail to commute.
We consider various cases, assuming that there is no subgroup isomorphic to $S_3$ and aiming for a contradiction.
Note that any two elements of order~$3$ commute, since $[x^2,y^2] \approx 1$ is an identity of $S_3$.
\begin{itemize}\itemsep0pt
\item $a$ and~$b$ have order~$2$.
Then $\langle a,b\rangle$ is a dihedral group of order~$6$ or~$12$ and so contains a subgroup isomorphic to~$S_3$.
So we may assume that involutions commute.
\item $a$ has order~$2$ and~$b$ has order~$3$.
Then $c=b^a$ is another element of order~$3$ and~$c$ commutes with~$b$.
Since $(bc^{-1})^a=cb^{-1}=(bc^{-1})^{-1}$, the subgroup $\langle a,b\rangle$ is isomorphic to~$S_3$.
Hence we can assume that elements of prime orders commute.
\item $a$ has order~$2$ or~$3$ and~$b$ has order~$6$.
Then~$a$ commutes with $b^2$ and $b^3$, and so with~$b$.
\item $a$ and~$b$ have order~$6$.
Then $a^2$ and $a^3$ both commute with~$b$, so that~$a$ and~$b$ commute.
\end{itemize} The proof is thus complete.\end{proof}



\begin{theorem}
Let $G$ be any critical group in $\var\{S_4\}$ that is not metabelian\up.
Then $\var\{G\}=\var\{S_4\}$\up.
\end{theorem}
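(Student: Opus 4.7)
The plan is to deduce $\var\{G\}=\var\{S_4\}$ by exhibiting a section of $G$ isomorphic to $S_4$; the inclusion $\var\{G\}\subseteq\var\{S_4\}$ is automatic from $G\in\var\{S_4\}$.

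Since $G\in\var\{S_4\}$ embeds in some power $S_4^n$ and $S_4$ has derived length $3$, the group $G$ is solvable of derived length at most $3$; the non-metabelian hypothesis forces derived length exactly $3$, so $G''\neq 1=G'''$. Moreover, $G''\leq V_4^n$, and hence $G''$ is elementary abelian of exponent~$2$. By criticality, $G$ has a unique minimal normal subgroup $M$ (its monolith); since $G''$ is a nontrivial normal subgroup of $G$, minimality forces $M\leq G''$, and $M$ is therefore an elementary abelian $2$-group on which $G$ acts irreducibly by conjugation.

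Next, I would analyze the quotient $\bar G := G/C_G(M)$, which embeds faithfully into $\mathrm{GL}(M)$. Since $G''\leq C_G(M)$ (because $G''$ is abelian and contains $M$), $\bar G$ is a quotient of the metabelian group $G/G''$ and is hence itself metabelian. A faithful irreducible $\mathbb F_2$-representation cannot have a $2$-group as its image, so $\bar G$ has an element of order divisible by~$3$. Using the exponent identity $x^{12}\approx 1$ of $\var\{S_4\}$ together with the non-triviality of the $2$-part of $\bar G$, I would produce an involution and an order-$3$ element in $\bar G$ that do not commute, thereby obtaining a subgroup isomorphic to $S_3$ inside $\bar G$ by an elementary argument in the spirit of Lemma~\ref{L: nonabelian in varS3 implies S3}.

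Finally, the unique faithful irreducible $\mathbb F_2[S_3]$-module is the $2$-dimensional natural module (which is $V_4$ as a module for $S_3=S_4/V_4$), so irreducibility forces $M\cong V_4$ as an $S_3$-module. Because $H^2(S_3,V_4)=0$, lifting this $S_3$ back to $G$ and combining with $M$ produces a section of $G$ isomorphic to the split extension of $V_4$ by $S_3$, namely $S_4$, whence $S_4\in\var\{G\}$. The principal technical obstacle is the middle step: ensuring that $\bar G$ really contains a copy of $S_3$ (rather than merely $\mathbb Z_3$, which would only yield $A_4$ as a section), and that $M$ is forced to be $2$-dimensional; here criticality of $G$ is essential, since it rules out $M$ being a larger irreducible module that could arise from a larger metabelian quotient.
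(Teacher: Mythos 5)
Your overall strategy---pass to the monolith $M$, analyse $\bar G=G/C_G(M)$ as a faithful irreducible linear group over $\mathbb F_2$, and rebuild $S_4$ by a cohomological splitting---is a reasonable alternative to the paper's route through the $\var\{S_3\}$-verbal subgroup $N$, but as written it has two genuine gaps, and they sit exactly where the paper's proof does its real work. First, you never establish that $\bar G$ contains a copy of $S_3$. Your observation that a faithful irreducible $\mathbb F_2$-representation cannot have $2$-group image does not exclude $\bar G=1$ (central monolith) nor $\bar G\cong\mathbb Z_3$, and you concede that the latter case is unresolved; moreover your proposed fix presupposes that the $2$-part of $\bar G$ is nontrivial, which is part of what must be proved. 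The gap is fillable but not for free: writing $G'=Q\rtimes R$ with $Q=O_2(G')$ elementary abelian and $R$ an elementary abelian $3$-group (using $G'\in\var\{A_4\}=\mathbf{A}_2\mathbf{A}_3$), coprime action gives $G''=[Q,R]$ and $C_{G''}(R)=1$, so $R\not\le C_G(M)$ and $\bar G$ is nonabelian of order divisible by $6$; then $O_2(\bar G)=1$ (its fixed points would be a nonzero proper submodule of $M$), a Sylow $2$-subgroup acts faithfully on the abelian group $O_3(\bar G)$, and an involution inverts a nontrivial $3$-element, yielding $S_3\le\bar G$. Note that Lemma~\ref{L: nonabelian in varS3 implies S3} does not apply directly to your $\bar G$, which is only metabelian in $\var\{S_4\}$ and need not lie in $\var\{S_3\}$; the paper sidesteps this by quotienting by the verbal subgroup $N$, so that $G/N\in\var\{S_3\}$ by construction.

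Second, ``irreducibility forces $M\cong V_4$ as an $S_3$-module'' is a non sequitur: $M$ is irreducible as a $G$-module, not as a module for the subgroup $S_3\le\bar G$, and $\mathbb F_2[S_3]$ is not semisimple. For instance, if $\bar G\cong S_3\times S_3$ then $M$ may be the $4$-dimensional tensor product of two natural modules, whose restriction to either factor is decomposable. What you actually need is an $S_3$-invariant subgroup of order~$4$ on which $S_3$ acts faithfully, and producing it is precisely the orbit computation occupying the second half of the paper's proof (take a $2$-dimensional $\langle u\rangle$-irreducible $W$ inside $[M,u]$ for $u$ of order $3$, then use $W$ itself or a diagonal inside $W\oplus W^t$). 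Once that is done, your appeal to $H^2(S_3,V_4)=0$ to split the lift is correct and is in fact more explicit than the paper's closing sentence. A smaller slip: a critical group in $\var\{S_4\}$ need not embed in a power of $S_4$ (it is only a quotient of a subgroup of one---compare $Q_8\in\var\{D_8\}$), though the consequences you draw, namely derived length exactly $3$ and $G''$ elementary abelian of exponent~$2$, do hold because they are consequences of identities of $S_4$.
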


\begin{proof}
Let~$N$ be the verbal subgroup of~$G$ corresponding to the identities of $\var\{S_3\}$, that is, the subgroup generated by values in~$G$ of the identities of~$S_3$.
Then~$N$ is an elementary abelian 2-group, and it is nontrivial because $1\ne G''\le N$.
Further, $G/N$ belongs to $\var\{S_3\}$.

If~$G/N$ is abelian, then $G' \leq N$, so that the contradiction $G''=1$ is deduced.
Therefore $G/N$ is non-abelian.
Further, $G/N$ has order divisible by~3, since otherwise~$G$ is a 2-group; but 2-groups in $\var\{S_4\}$ belong to $\var\{D_8\}$ and so are metabelian.
Therefore by Lemma~\ref{L: nonabelian in varS3 implies S3}, the group $G/N$ must contain a subgroup~$K$ isomorphic to $S_3$.

Moreover, such a subgroup in $G/N$ cannot centralize~$N$.
For if it did, then $C_G(N)$ (and hence~$G$) would have a normal 3-subgroup; but~$G$ is critical and therefore monolithic (it contains a unique minimal normal subgroup, which is a 2-group) \cite[51.32]{NeuH67}.

An orbit of~$K$ on~$N$ has order at most~6, and so generates a subgroup of order at most~$2^6$.
We show there must be such a subgroup of order~$2^2$.
First, consider the action of an element of order~$3$ in~$K$; let $\{x_1,x_2,x_3\}$ be an orbit.
The subgroup $\langle x_1,x_2,x_3\rangle$ has order~$2^2$ or~$2^3$; in the latter case, the subgroup $\langle x_1x_2,x_2x_3,x_3x_1\rangle$ has order~$2^2$.

If such a subgroup $\{1,y_1,y_2,y_3\}$ of order~$2^2$ is invariant under an element~$t$ of order~$2$ in~$K$, our claim is proved; so suppose not.
Let $z_i=y_i^t$ where $i=1,2,3$.
Then the group generated by the $y$s and $z$s has order $2^4$ and is invariant under $S_3$.
We can assume that conjugation by an element~$u$ of order~$3$ in~$K$ induces the permutation $(y_1,y_2,y_3)(z_1,z_3,z_2)$ (since~$t$ inverts~$u$).
Then the subgroup $\langle y_1z_1,y_2z_3,y_3z_2\rangle$ has order $2^2$ and is $S_3$-invariant.

Now the group generated by~$K$ together with this $K$-invariant subgroup of~$N$ is isomorphic to~$S_4$, and belongs to $\var\{G\}$.
So $\var\{S_4\}\subseteq\var\{G\}$, and we have equality as required.
\end{proof}


\begin{corollary}
Any proper subvariety of $\var\{S_4\}$ is metabelian.
\end{corollary}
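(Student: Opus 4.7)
The plan is to reduce the Corollary to the Theorem just proved by invoking the general structural fact that a Cross variety of groups is determined (and generated) by its finite critical groups, and that metabelianness is a varietal property.

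First I would set up a proof by contradiction: assume that $\bV \subsetneq \var\{S_4\}$ is a proper subvariety that fails to be metabelian, so that some group in $\bV$ violates the identity $[[x,y],[z,w]] \approx 1$. Since $\var\{S_4\}$ is finitely generated, the Oates--Powell Theorem (Theorem~\ref{T: Oates Powell}) guarantees that it is Cross, and in particular so is any subvariety $\bV$. It is a standard consequence of the Oates--Powell analysis (see H.~Neumann~\cite{NeuH67}, \S51) that every Cross variety is generated by its finitely many critical groups.

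Next I would argue that if every critical group of $\bV$ were metabelian, then $\bV$ would be contained in the variety $\mathbf{AA}$ of metabelian groups, contradicting our assumption. Hence $\bV$ must contain a critical group $G$ that is not metabelian. At this point the Theorem just proved kicks in: such a $G$ satisfies $\var\{G\} = \var\{S_4\}$. But $G \in \bV$ forces $\var\{G\} \subseteq \bV \subsetneq \var\{S_4\}$, which is the desired contradiction.

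The only non-trivial ingredient outside the Theorem is the reduction to critical groups inside a Cross variety; this is standard but worth stating explicitly, since without it one cannot pass from the existence of some non-metabelian group in $\bV$ to the existence of a non-metabelian \emph{critical} one in $\bV$. There is no real obstacle here: the argument is essentially a one-line bookkeeping deduction from the Theorem, and the main conceptual step has already been done in the proof of the Theorem itself.
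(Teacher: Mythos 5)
Your proof is correct and is essentially the argument the paper intends: the corollary is stated without proof as an immediate consequence of the preceding theorem, via exactly the reduction you spell out (Oates--Powell makes $\var\{S_4\}$ and all its subvarieties Cross, a Cross variety is generated by its critical groups, and a non-metabelian subvariety would therefore contain a non-metabelian critical group, forcing it to be all of $\var\{S_4\}$). Your version is simply more explicit about the standard bookkeeping the paper leaves to the reader.
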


The analogous result for $\mathrm{SL}(2,3)$ is similar but easier to establish.
We have noted in {\color{black} Subsection \ref{other}}
that the identities of $\mathrm{SL}(2,3)$ have the form $[\bw,x]\approx\bw^2\approx1$, where $\bw \approx 1$ ranges over the identities of~$A_4$ and~$x$ is a variable not in~$\bw$.

\begin{theorem}
Let~$G$ be any critical group in $\var\{\mathrm{SL}(2,3)\}$ that is not metabelian\up.
Then $\var\{G\}=\var\{\mathrm{SL}(2,3)\}$\up.
\end{theorem}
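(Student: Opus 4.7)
The plan is to mirror the proof for $S_4$ almost step for step, with the role of $S_3$ replaced by $A_4$, exploiting the fact that the identities of $\mathrm{SL}(2,3)$ which are not identities of $A_4$ have the restricted forms $[\bw,x]\approx 1$ and $\bw^2\approx 1$. This restriction is precisely what makes this case ``easier''.

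First, let $N$ be the verbal subgroup of $G$ generated by all values in $G$ of identities of $A_4$. The identities $[\bw,x]\approx 1$ force $N\le Z(G)$, and $\bw^2\approx 1$ forces every element of $N$ to have order dividing $2$; hence $N$ is a central elementary abelian $2$-subgroup, and $G/N\in\var\{A_4\}$. Since $\var\{A_4\}=\mathbf{A}_2\mathbf{A}_3$ is metabelian, $G''\le N$, and because $G$ is not metabelian, $G''\neq 1$, so $N\neq 1$. As $G$ is critical, it is monolithic, and the monolith must be an order-$2$ subgroup of the central $2$-group $N$. Moreover, if $G/N$ were abelian then $G'\le N\le Z(G)$ would be central, making $G$ nilpotent of class at most $2$ and hence metabelian, contradicting the hypothesis.

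Next, I would prove an analog of Lemma~\ref{L: nonabelian in varS3 implies S3}: every non-abelian group in $\var\{A_4\}=\mathbf{A}_2\mathbf{A}_3$ contains a subgroup isomorphic to $A_4$. This follows by taking the normal elementary abelian $2$-subgroup $M$ with $(G/N)/M$ abelian of exponent $3$, choosing an element $u$ of order $3$ in $G/N$ that does not centralize $M$, and forming the subgroup generated by $u$ together with an orbit of $\langle u\rangle$ of length $3$ on $M$. If this orbit generates $V_4$ we are done; otherwise it spans $C_2^3$ and, as in the $S_4$ argument, we pass to the $\langle u\rangle$-invariant order-$4$ subgroup generated by pairwise products. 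Applied to $G/N$, this yields a subgroup $K/N$ of $G/N$ isomorphic to $A_4$.

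Lift $K/N$ to a subgroup $K\le G$ and consider the central extension $1\to K\cap N\to K\to A_4\to 1$. Since the Schur multiplier of $A_4$ is $C_2$, every central $2$-extension of $A_4$ is a quotient of a product of copies of $A_4\times C_2$ and $\mathrm{SL}(2,3)$; in particular, $K$ has either $A_4$ or $\mathrm{SL}(2,3)$ as a quotient by a suitable central subgroup of $K\cap N$. The main obstacle is to rule out the former, i.e.\ to show the extension is non-split. Here I would use criticality: if the extension split, then $K\cong A_4\times(K\cap N)$ and all of $K$'s contribution to $G''$ would vanish, so $G''$ would be forced into the centralizer of $K$, and a monolithic argument (the monolith lies in every nontrivial normal subgroup and is centralized by all of $K$) would show that $G$ is itself metabelian, a contradiction. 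Hence $\mathrm{SL}(2,3)$ appears as a section of $K\le G$, so $\mathrm{SL}(2,3)\in\var\{G\}$, yielding $\var\{G\}=\var\{\mathrm{SL}(2,3)\}$.
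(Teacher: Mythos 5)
Your overall route is the same as the paper's: use the special form of the identities of $\mathrm{SL}(2,3)$ to force the $A_4$-verbal subgroup $N$ (equivalently $G''$) to be a central elementary abelian $2$-group, invoke monolithicity, find a subgroup isomorphic to $A_4$ in the metabelian quotient via the analogue of Lemma~\ref{L: nonabelian in varS3 implies S3}, and lift it. Everything up to and including the extraction of $K/N\cong A_4$ is sound, and you are right that the whole weight of the proof rests on showing that the central extension $1\to N\to K\to A_4\to 1$ cannot be split (the paper disposes of this with ``it is easy to see that this lifts to a subgroup of $G$ isomorphic to $\mathrm{SL}(2,3)$'').

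However, the argument you offer for non-splitness does not work as stated. You write that if $K\cong A_4\times(K\cap N)$ then ``$G''$ would be forced into the centralizer of $K$'' --- but $G''\le Z(G)$ already, so $G''\le C_G(K)$ holds unconditionally and carries no information; and the concluding ``monolithic argument'' showing $G$ metabelian is never actually given. The logical gap is that $G''$ is generated by commutators $[[a,b],[c,d]]$ ranging over \emph{all} of $G$, so the metabelian-ness of the one subgroup $K$ you happened to produce (equivalently, $K''=1$) implies nothing about $G''$: a priori the witnesses to $G''\ne 1$ need not lie in $K$ or in any conjugate of it. What is actually needed is an argument that \emph{some} $A_4$-section of $G/N$ has a non-metabelian preimage --- for instance by exploiting that $G'$ is a non-abelian $2$-group of class two (since $O_3(G)=1$ for a monolithic group with $2$-group monolith) on which elements of order $3$ act, and building the $A_4$ around elements that already fail to satisfy $[[x,y],[z,w]]\approx 1$, or by choosing the orbit in the preliminary lemma inside $G'N/N$ rather than arbitrarily. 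Until the split case is genuinely excluded, the proof is incomplete at exactly the point the paper leaves implicit.
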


\begin{proof}
The preliminary result, that a non-abelian group in $\var\{A_4\}$ contains a subgroup isomorphic to~$A_4$, is proved similarly to the analogous result for~$S_3$.

Now let $G\in\var\{\mathrm{SL}(2,3)\}$ and suppose that~$G$ is critical and not metabelian.
Then~$G''$ is an elementary abelian 2-group and is contained in $Z(G)$, so all its subgroups are normal in~$G$.
Since~$G$ is monolithic, we find that $|G''|=2$.
Now $G/G''$ has a subgroup isomorphic to~$A_4$, and it is easy to see that this lifts to a subgroup of~$G$ isomorphic to $\mathrm{SL}(2,3)$.
\end{proof}


\begin{corollary}
Any proper subvariety of $\var\{\mathrm{SL}(2,3)\}$ is metabelian\up.
\end{corollary}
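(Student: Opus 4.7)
The plan is to derive the corollary directly from the preceding theorem by a standard argument using the Oates--Powell theorem, essentially in contrapositive form: I will assume that some subvariety $\mathbf{V}\subseteq\var\{\mathrm{SL}(2,3)\}$ fails to be metabelian and conclude that it must equal $\var\{\mathrm{SL}(2,3)\}$.

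First I would observe that, by the Oates--Powell theorem (Theorem~\ref{T: Oates Powell}), $\var\{\mathrm{SL}(2,3)\}$ is Cross, and in particular small; hence $\mathbf{V}$ is itself Cross. The key structural fact I need is that every Cross variety of groups is generated by its (finitely many, up to isomorphism) critical groups. This is standard and can be found in H.~Neumann~\cite{NeuH67}: any finite group in $\mathbf{V}$ is obtained as a homomorphic image of a subgroup of a finite direct product of critical groups in $\mathbf{V}$, and Oates--Powell ensures that finite groups generate $\mathbf{V}$.

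Next, I would exploit the fact that the class of metabelian groups is itself a variety (defined by $\big[[x,y],[z,w]\big]\approx 1$), so it is closed under the formation of subgroups, quotients, and direct products. Consequently, if every critical group of $\mathbf{V}$ were metabelian, then the whole of $\mathbf{V}$ would be metabelian. Therefore, under the assumption that $\mathbf{V}$ is not metabelian, there must exist some critical group $G\in\mathbf{V}$ that is not metabelian.

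Finally, I apply the preceding theorem to this $G$: since $G$ is a critical group in $\var\{\mathrm{SL}(2,3)\}$ that is not metabelian, we conclude that $\var\{G\}=\var\{\mathrm{SL}(2,3)\}$. Combining with $\var\{G\}\subseteq\mathbf{V}\subseteq\var\{\mathrm{SL}(2,3)\}$, this forces $\mathbf{V}=\var\{\mathrm{SL}(2,3)\}$, contradicting properness. I do not expect any serious obstacle here: the only nontrivial input is the Oates--Powell machinery (already invoked in the excerpt) ensuring that Cross varieties are generated by their critical groups, and the variety-theoretic closure of metabelianness, both of which are classical facts.
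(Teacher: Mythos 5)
Your argument is correct and is precisely the derivation the paper leaves implicit: the corollary is stated without proof as an immediate consequence of the preceding theorem, and the intended route is exactly yours --- a non-metabelian subvariety of the Cross variety $\var\{\mathrm{SL}(2,3)\}$ must contain a non-metabelian critical group (since metabelian groups form a variety and a locally finite variety is generated by its critical groups), and the theorem then forces that subvariety to be all of $\var\{\mathrm{SL}(2,3)\}$. No gaps; the auxiliary facts you invoke (Oates--Powell, closure of Cross varieties under subvarieties, generation by critical groups) are all classical and available in H.~Neumann's monograph cited by the paper.
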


\subsection{Toward an explicit bound} \label{subsec: C(e,m,c)}

It follows from Theorem~\ref{T: Oates Powell}---the Oates--Powell Theorem---that the variety generated by a finite group is finitely based and small.
Can explicit bounds {\color{black} for the orders of critical groups in such a variety} be extracted from the proof of this result?

The proof of the Oates--Powell Theorem rests on three lemmas, of which the third concerns the class $\mathbf{C}(e,m,c)$ of finite groups~$G$ such that
\begin{itemize}
\item $G$ has exponent dividing~$e$;
\item the order of any chief factor of~$G$ is at most~$m$;
\item the nilpotency class of any nilpotent factor of~$G$ is at most~$c$.
\end{itemize}
Then $\mathbf{C}(e,m,c)$ is a class of finite groups in a variety,
whence if $G\in\mathbf{C}(e,m,c)$ then every critical group in $\var\{G\}$
belongs to $\mathbf{C}(e,m,c)$.


\begin{lemma}[H. Neumann~{\cite[52.23]{NeuH67}}]
The class $\mathbf{C}(e,m,c)$ contains only a finite number of \up(non-isomorphic\up) critical groups\up.
\end{lemma}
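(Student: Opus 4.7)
The plan is to show that every critical group $G\in\mathbf{C}(e,m,c)$ has order bounded by some function of $e,m,c$; the conclusion then follows immediately, since only finitely many isomorphism types of groups of any given bounded order can exist. Throughout, I would work with the monolith of~$G$: since $G$ is critical, it is monolithic (as already invoked in Subsection~3.5 via~\cite[51.32]{NeuH67}), so it has a unique minimal normal subgroup~$M$. Being a chief factor of~$G$, one has $|M|\le m$, and the conjugation action gives an embedding $G/C_G(M)\hookrightarrow\Aut(M)$; thus $|G/C_G(M)|$ is already bounded by a function of~$m$ alone, and the task reduces to bounding $|C_G(M)|$.

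Next, I would split on whether the monolith~$M$ is abelian. If $M$ is non-abelian, it is a direct product of isomorphic non-abelian simple groups of order at most~$m$, so $C_G(M)\cap M=Z(M)=1$; then $C_G(M)$ is a normal subgroup of~$G$ meeting the unique minimal normal subgroup trivially, forcing $C_G(M)=1$ and $|G|\le|\Aut(M)|$. Hence the substantive case is that $M$ is elementary abelian, say a $p$-group. Then $M$ is nilpotent and normal, so $M\le F:=F(G)$; and because $M\cap Z(F)$ is a nontrivial normal subgroup of~$G$ contained in~$M$, minimality gives $M\le Z(F)$. Consequently $F\le C_G(M)$, and the standard self-centralizing property of the Fitting subgroup in a monolithic group with abelian socle yields $C_G(M)\le F$, so in fact $C_G(M)=F$.

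It then remains to bound $|F|$. As a normal subgroup (and hence a factor $F/1$) of~$G$, the subgroup~$F$ is nilpotent of class at most~$c$, has exponent dividing~$e$, and admits a chief series of~$G$ refined inside it whose factors are elementary abelian of order at most~$m$. I would argue that a nilpotent group of class $\le c$ and exponent $\le e$ is generated by boundedly many elements whenever its chief factors as a normal subgroup of an ambient group are bounded: each Sylow subgroup of~$F$ is itself nilpotent of class $\le c$ and exponent dividing~$e$, and Burnside's basis theorem together with the commutator collection process bounds its order in terms of the rank of its Frattini quotient, which is controlled by the chief-factor bound~$m$ and the number of primes dividing~$e$. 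Combining these ingredients gives $|F|\le h(e,m,c)$ for an explicit function~$h$, and hence $|G|\le|\Aut(M)|\cdot h(e,m,c)$.

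The principal obstacle is the last step: a priori a nilpotent group of class~$c$ and exponent~$e$ can be arbitrarily large (already elementary abelian groups are unbounded), so the finiteness must come from the global constraint that~$F$ sits inside an ambient group in $\mathbf{C}(e,m,c)$ with chief factors of bounded order. Turning that global constraint into a quantitative rank bound on the Frattini quotient of each Sylow subgroup of~$F$ is the real technical heart of the argument; the rest of the proof is bookkeeping in~$\Aut(M)$ and in the extension $1\to F\to C_G(M)\to C_G(M)/F\to 1$.
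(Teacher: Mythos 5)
Your reduction to the abelian-monolith case is sound, and your treatment of the non-abelian monolith matches the paper's (that case is exactly the cited Lemma 52.5, giving $|G|\le m!$). But the abelian case contains two genuine problems. First, the identity $C_G(M)=F(G)$ is false. The self-centralizing property of the Fitting subgroup is $C_G(F(G))\le F(G)$; since $M\le F(G)$, one only gets $C_G(F(G))\le C_G(M)$, so the inclusion you need, $C_G(M)\le F(G)$, points the wrong way. Concretely, $G=\mathrm{SL}(2,3)$ is monolithic with abelian monolith $M=Z(G)$ of order $2$, so $C_G(M)=G$ while $F(G)=Q_8$.

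Second, and more fundamentally, your argument uses criticality only to obtain monolithicity, and monolithicity together with membership in $\mathbf{C}(e,m,c)$ does not bound $|G|$. For odd $p$ the extraspecial groups of exponent $p$ and order $p^{2n+1}$ are all monolithic (the centre of order $p$ is the unique minimal normal subgroup), have class $2$, exponent $p$, and all chief factors of order $p$, so they form an infinite family of non-isomorphic monolithic members of $\mathbf{C}(p,p,2)$. They are of course not critical -- each contains the extraspecial group of order $p^3$ as a proper factor generating the same variety -- and that is precisely the point: the rank bound on the Frattini quotients that you defer to ``the real technical heart'' cannot be extracted from the chief-factor condition alone; it requires the full critical hypothesis (non-membership in the variety generated by the proper factors). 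This is where Neumann's argument does its real work: as the paper's own sketch indicates, one first bounds $|G/\Phi(G)|$ via a series $\Phi(G)<F<C<G$ using criticality throughout, and only then recovers a bound on $|\Phi(G)|$ from Schreier's formula together with nilpotency of class at most $c$ and exponent dividing $e$. As written, your proposal establishes the easy half of the lemma and names, but does not close, the hard half.
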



\begin{lemma}[H. Neumann~{\cite[52.5]{NeuH67}}]
If $G\in\mathbf{C}(e,m,c)$ is critical and has non-abelian monolith\up, then $|G| \le m!$\up.
\end{lemma}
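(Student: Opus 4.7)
Let $M$ denote the monolith of $G$, that is, the unique minimal normal subgroup (which exists since critical groups are monolithic, as noted earlier). The first step is to bound $|M|$: since $1 \lhd G$ and $M/1$ is a minimal normal subgroup of $G/1 = G$, the quotient $M$ is a chief factor of $G$, so by the defining condition of $\mathbf{C}(e,m,c)$ we have $|M| \le m$. The second ingredient is the structure of $M$: as a minimal normal subgroup that is assumed non-abelian, $M$ is a direct product of isomorphic non-abelian finite simple groups, and in particular $Z(M) = 1$.

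Next I would examine the centralizer $C = C_G(M)$. Since $Z(M) = 1$ we have $C \cap M \le Z(M) = 1$. But $C$ is normal in $G$, so if $C \neq 1$ it would contain a minimal normal subgroup of $G$, which by uniqueness of the monolith would have to be $M$ itself, contradicting $C \cap M = 1$. Therefore $C_G(M) = 1$, which is the key step.

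The conclusion now follows from the permutation action. Because $C_G(M) = 1$, conjugation gives a faithful action of $G$ on the set $M$. This embeds $G$ into $\Sym(M)$, so
\[
|G| \le |M|! \le m!,
\]
as required. The main substantive point is the structural observation that the monolith being non-abelian forces $Z(M)=1$ and hence $C_G(M)=1$; once this is in place the bound is immediate from the faithful conjugation action. No delicate argument is needed here, because the non-abelian case is much more rigid than the abelian one (where the monolith can centralize itself and one must work inside $\mathrm{Aut}(M)$ to bound $|G|$).
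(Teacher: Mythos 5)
Your proof is correct and is essentially the classical argument behind H.~Neumann's 52.5, which the paper cites without reproducing: the non-abelian monolith $M$ is a chief factor, so $|M|\le m$; monolithicity together with $Z(M)=1$ forces $C_G(M)=1$; and the resulting faithful conjugation action embeds $G$ in $\Sym(M)$, giving $|G|\le|M|!\le m!$. Nothing is missing, and note that (as your argument makes visible) only the chief-factor bound $m$ is used, not the parameters $e$ or $c$.
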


The abelian monolith case is much harder.
Neumann {\color{black}\cite{NeuH67}}
says:
\begin{quote}
\item If a bound for the index of $\Phi(G)$ in~$G$ is found, then a bound for $|G|$ can be derived.
For, since $\Phi(G)$ consists of all non-generators of~$G$, the number of elements needed to generate~$G$ can be at most $|G/\Phi(G)|$.
But from bounds for the number of generators of~$G$ and the index of $\Phi(G)$ in~$G$, one obtains a bound for the number of generators of $\Phi(G)$ by means of Schreier's formula.
As $\Phi(G)$ is nilpotent, of class at most~$c$ and exponent dividing~$e$, this leads to a bound for the order of $\Phi(G)$, and so for the order of~$G$.
\end{quote}

Suppose that we can show that $|G/\Phi(G)|\le b$.
Then~$G$ has at most $\log_2b$ generators, so our bound for the number of generators of $\Phi(G)$ is $(b-1)\log_2b+1$, or in broad brush terms, $d\le b\log b$.
This gives a bound for the order of $\Phi(G)$ which is roughly $e^{d+d^2+\cdots+d^c}$, since the lower central factors are generated by commutators.

A small improvement is possible.
If $\Phi(G)$ is not a $p$-group, then it is the direct product of its Sylow $p$-subgroups, each of which contains a nontrivial normal subgroup of~$G$, contradicting the fact that~$G$ is monolithic.
So we can replace~$e$ in the above bound by the largest prime divisor of~$e$.

Continuing, the proof considers a series
\[\Phi(G)<F<C<G,\]
and shows that $|G/C|\le(m!)^c$ and $|F/\Phi(G)|\le m^c$, while $|C/F|\le(m!)^t$, where $t\le 1+ce(m!)$.
The bound for~$b$ is the product of these numbers.

Even for very moderate values of~$e$, $m$, and~$c$, the resulting bound is going to be rather large!

\section{The database of varieties generated by small semigroups}\label{smallsemigroups}

\subsection{The library of varieties generated by a semigroup of order up to~$5$}

We produced a database containing all the semigroups up to order~$5$ and an {\ib} for the variety generated by each of them.
All the proofs regarding semigroups up to order~$4$ appear (or are referred to) in this paper.
The proofs regarding semigroups of order~$5$ will be published elsewhere.

\begin{table}[h]
\centering
\begin{tabular}{r|r|r|r}
\multicolumn{1}{c|}{$n$} & \multicolumn{1}{c|}{\begin{tabular}[c]{@{}c@{}}Number of semi-\\groups of order~$n$,\\ up to equivalence \end{tabular}} & \multicolumn{1}{c|}{\begin{tabular}[c]{@{}c@{}}Number of semi-\\groups of order~$n$,\\up to isomorphism\end{tabular}} & \multicolumn{1}{c}{\begin{tabular}[c]{@{}c@{}}Number of varieties\\ with a primitive\\generator of order~$n$\end{tabular}} \\
\hline
1 & 1 & 1 & 1 \\
2 & 4 & 5 & 5 \\
3 & 18 & 24 & 14 \\
4 & 126 & 188 & 53 \\
5 & 1,160 & 1,915 & 145 \\
6 & 15,973 & 28,634 & At least 461 \\
7 & 836,021 & 1,627,672 & Unknown \\
8 & 1,843,120,128 & 3,684,030,417 & Unknown \\
9 & 52,989,400,714,478 & 105,978,177,936,292 & Unknown \\
\end{tabular}
\caption{Some numerical data}
\end{table}

\subsection{Non-finitely based varieties generated by a semigroup of order~$6$} \label{subsec: NFB order 6}

Every variety generated by a semigroup of order up to~$5$ is finitely based~\cite{Lee13,Tra83,Tra91}.
Among all varieties generated by a semigroup of order~$6$, precisely four are non-finitely based~\cite{LL11,LZ15}; these varieties are generated by the following semigroups:
\begin{itemize}
\item the monoid~$\Btwo^1$ obtained from the Brandt semigroup \[ \Btwo = \langle a,b \,|\, a^2=b^2=0,\,aba=a,\,bab=b \rangle = \{ 0,a,b,ab,ba\}; \]
\item the monoid~$\Atwo^1$ obtained from the 0-simple semigroup \[ \Atwo = \langle a,b \,|\, a^2=aba=a,\,bab=b,\,b^2=0 \rangle = \{ 0,a,b,ab,ba\}; \] 
\item the semigroup~$\Atwo^g$ obtained by adjoining a new element~$g$ to~$\Atwo$ with $g^2 = 0$ and $g\Atwo = \Atwo g = \{g\}$;
\item the $\mathscr{J}$-trivial semigroup \[ \nfbL = \langle a,b \,|\, a^2=a, \, b^2=b, \, aba=0 \rangle = \{ 0,a,b,ab,ba,bab\}. \]
\end{itemize}
The Cayley tables of these semigroups are given in Table~\ref{Tab: NFB order 6};
refer to Lee {\etal}.~\cite{LLZ12} for more historical information on their discovery.

\begin{table}[ht] \centering \addtolength{\tabcolsep}{-2pt}
\begin{tabular}[t]{c|cccccc}
$\Btwo^1$ & 1 & 2 & 3 & 4 & 5 & 6 \\ \hline
        1 & 1 & 1 & 1 & 1 & 1 & 1 \\
        2 & 1 & 1 & 1 & 2 & 2 & 3 \\
        3 & 1 & 2 & 3 & 1 & 3 & 1 \\
        4 & 1 & 1 & 1 & 4 & 4 & 6 \\
        5 & 1 & 2 & 3 & 4 & 5 & 6 \\
        6 & 1 & 4 & 6 & 1 & 6 & 1
\end{tabular}
\quad
\begin{tabular}[t]{c|cccccc}
$\Atwo^1$ & 1 & 2 & 3 & 4 & 5 & 6 \\ \hline
        1 & 1 & 1 & 1 & 1 & 1 & 1 \\
        2 & 1 & 1 & 1 & 2 & 2 & 3 \\
        3 & 1 & 2 & 3 & 2 & 3 & 3 \\
        4 & 1 & 1 & 1 & 4 & 4 & 6 \\
        5 & 1 & 2 & 3 & 4 & 5 & 6 \\
        6 & 1 & 4 & 6 & 4 & 6 & 6
\end{tabular}
\\[0.08in]
\begin{tabular}[t]{c|cccccc}
$\Atwo^g$ & 1 & 2 & 3 & 4 & 5 & 6 \\ \hline
        1 & 1 & 1 & 1 & 1 & 1 & 6 \\
        2 & 1 & 1 & 1 & 2 & 3 & 6 \\
        3 & 1 & 2 & 3 & 2 & 3 & 6 \\
        4 & 1 & 1 & 1 & 4 & 5 & 6 \\
        5 & 1 & 4 & 5 & 4 & 5 & 6 \\
        6 & 6 & 6 & 6 & 6 & 6 & 1
\end{tabular}
\quad
\begin{tabular}[t]{c|cccccc}
$\nfbL$ & 1 & 2 & 3 & 4 & 5 & 6 \\ \hline
      1 & 1 & 1 & 1 & 1 & 1 & 1 \\
      2 & 1 & 1 & 1 & 1 & 1 & 2 \\
      3 & 1 & 1 & 1 & 1 & 1 & 3 \\
      4 & 1 & 1 & 2 & 1 & 4 & 2 \\
      5 & 1 & 1 & 3 & 1 & 5 & 3 \\
      6 & 1 & 2 & 2 & 4 & 4 & 6
\end{tabular}
\caption{Non-finitely based semigroups of order~$6$}
\label{Tab: NFB order 6}
\end{table}

Besides the four non-finitely based semigroups of order six, many other non-finitely based finite semigroups have been discovered since the 1970s; see the survey by Volkov~\cite{Vol01}.
But explicit {\ibs} have not been found for varieties generated by most of these semigroups because the task is neither necessary (in establishing the non-finite basis property) nor trivial.
Nevertheless, explicit {\ibs} are available for a few  non-finitely based varieties.

\begin{proposition}[Jackson~{\cite[Proposition~4.1]{Jac05a}}]
The identities
\begin{gather*}
x^4 \approx x^3, \quad x^3y \approx yx^3, \quad x^2yx \approx x^3y, \quad xyx^2 \approx x^3y, \quad xyxzx \approx x^3yz, \\
\bigg(\prod_{i=1}^m x_i \bigg) \bigg(\prod_{i=m}^1 x_i \bigg) y^2 \approx y^2 \bigg(\prod_{i=1}^m x_i \bigg) \bigg(\prod_{i=m}^1 x_i \bigg), \quad m= 1,2,3,\ldots
\end{gather*}
constitute an {\ib} for a non-finitely based variety generated by a certain semigroup of order~$211$\up.
\end{proposition}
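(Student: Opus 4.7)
The plan is to verify three claims in turn: that the given identities all hold in some explicit semigroup $S$ of order $211$; that every identity satisfied by $S$ is deducible from the listed identities; and that no finite subfamily of the basis suffices, so the variety $\var\{S\}$ is non-finitely based.

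First I would either construct or exhibit the semigroup $S$ following Jackson's construction in \cite{Jac05a}, and check that each listed identity holds in $S$. The identities $x^4\approx x^3$ and $x^3y\approx yx^3$ say that cubes are stable and central, while $x^2yx\approx x^3y\approx xyx^2$ and $xyxzx\approx x^3yz$ impose strong flattening whenever a variable is repeated sandwiching one or two others. These should reduce to direct computations using the multiplication table of $S$. The palindromic identities for each $m$ assert that words of the form $w\bar{w}$, where $w=x_1\cdots x_m$, commute with every square; this should follow structurally from the way $S$ encodes \q{balanced} strings in its combinatorial model.

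Next I would address the completeness of the basis. Using the five \q{short} identities, I would establish a normal form theorem: every term can be rewritten either as a product $x^3 v$ with $v$ in a canonical linear form, exploiting centrality of cubes, or as a short word of controlled shape. The first five identities together with a few elementary consequences should handle all terms except those built from palindromic blocks $w\bar{w}$; the infinite family is needed precisely to collapse distinct palindromic-block expressions that are equal in $S$. After fixing the normal forms, one checks that inequivalent normal forms evaluate to distinct elements of $S$ under a suitable universal substitution, confirming that no further identities are needed.

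For the non-finite basis property, I would fix any $n\geq 1$ and exhibit a finite witness semigroup $S_n$ that satisfies every identity listed in the basis except the $m=n$ instance of the palindromic identity
\[
\Big(\prod_{i=1}^{n} x_i\Big)\Big(\prod_{i=n}^{1} x_i\Big)\, y^2 \approx y^2\, \Big(\prod_{i=1}^{n} x_i\Big)\Big(\prod_{i=n}^{1} x_i\Big).
\]
If a finite subset of the full basis sufficed, then choosing $n$ larger than the lengths of all palindromic instances appearing in that finite subset would give an $S_n$ satisfying the finite subset but failing an identity of $S$, a contradiction. Constructing this family $\{S_n\}$ is typically done via labelled graphs or balanced configurations parametrised by $n$, extending the combinatorial model underlying $S$.

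The hard part will be completeness: controlling the rewrite system obtained from the five short identities together with the palindromic family, showing that normal forms exist, are unique, and separate elements of $S$. The non-finite basis step is comparatively routine once one has a sequence of witness semigroups $S_n$; the real subtlety lies in ensuring each $S_n$ violates only the intended palindromic identity and no unintended consequence of the short identities, so that the contradiction argument goes through cleanly.
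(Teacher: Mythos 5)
The survey does not prove this proposition at all; it is quoted verbatim from Jackson~\cite[Proposition~4.1]{Jac05a}, so there is no in-paper argument to compare against. Judged on its own terms, what you have written is a research plan rather than a proof: every load-bearing object is named but never produced. You do not exhibit the order-$211$ semigroup $S$ (nor explain where the number $211$ comes from), you do not carry out the normal-form analysis that would establish completeness of the basis, and you do not construct the witness semigroups $S_n$ --- you only assert that they are ``typically done via labelled graphs or balanced configurations.'' Since you yourself identify completeness as ``the hard part,'' and that part is entirely absent, the proposal cannot be accepted as a proof of the statement; it is an accurate description of the shape a proof would take, which is a different thing.

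Two smaller points. First, your derivation of non-finite basability shows only that no finite \emph{subset of the given basis} suffices; to conclude that $\var\{S\}$ has no finite basis whatsoever you need the standard compactness step (any finite basis would be deducible from finitely many members of the listed family, which would then themselves form a finite basis), and you should say so explicitly. Second, the result you are reproving is really about an infinite \emph{irredundant} basis: the content of Jackson's proposition is that each palindromic identity is independent of all the others together with the five short identities, which is exactly what your family $\{S_n\}$ would certify --- but again, only once those semigroups are actually built and shown to satisfy every listed identity except the $m=n$ instance. That verification (``no unintended consequence of the short identities'' fails in $S_n$) is precisely where such arguments usually break, and it is left as an acknowledged hope rather than an argument.
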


\begin{proposition}[Lee and Volkov~{\cite[Section~1]{LV11}}]
For each $n \geq 2$\up, the identities
\begin{gather*}
x^{n+2} \approx x^2, \quad (xy)^{n+1}x \approx xyx, \quad xyxzx \approx xzxyx, \\
\bigg(\prod_{i=1}^m x_i^n \bigg)^3 \approx \bigg(\prod_{i=1}^m x_i^n \bigg)^2, \quad m = 2,3,4,\ldots
\end{gather*}
constitute an {\ib} for the non-finitely based variety $\var\{\Atwo,\mathbb{Z}_n\}$\up.
In particular\up, $\var\{\Atwo,\mathbb{Z}_2\} = \var\{\Atwo^g\}$\up.
\end{proposition}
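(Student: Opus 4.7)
The plan is to establish four things: soundness of the identities, completeness (that they form an identity basis for $\var\{\Atwo,\mathbb{Z}_n\}$), non-finite basedness, and, when $n=2$, the coincidence $\var\{\Atwo,\mathbb{Z}_2\} = \var\{\Atwo^g\}$. The first step is a direct verification. In $\mathbb{Z}_n$, the identity $x^n \approx 1$ makes (1), (2), and every instance of (4) trivial, while commutativity yields (3). In $\Atwo$, the key observation is that $x^3 \approx x^2$ holds, so (1) is immediate for all $n\ge 2$; (2) and (3) reduce to checking the few non-trivial substitutions into $\{0,a,b,ab,ba\}$ using $aba=a$ and $bab=b$; and (4) holds because $\prod x_i^n$ already evaluates in $\Atwo$ to an idempotent (or $0$), so raising to any power $\ge 2$ gives the same value.

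For completeness, my approach is to build a normal form for words modulo the given identities. Using (1) I reduce each variable's exponent in any maximal power block to lie in $\{2,3,\ldots,n+1\}$ whenever it is repeated, and to $1$ otherwise. Using the exchange identity (3), together with (2) to clear prefixes and suffixes of power blocks adjacent to a linear (single-occurrence) letter, I permute the ``repeated'' blocks so that they are ordered according to, say, the position of their first occurrence. The result is a normal form determined by (a) the \emph{linear skeleton} — the subword of letters occurring exactly once, in order — (b) the first and last occurrence of each non-linear variable relative to the linear skeleton, and (c) the reduced exponents of each non-linear variable. I then show that two words with different normal forms are separated either by $\mathbb{Z}_n$ (which detects exponent sums modulo $n$) or by $\Atwo$ (which detects the content, the relative order of first/last occurrences, and the structure of the linear skeleton). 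The infinite family (4) is precisely what is needed when every variable appears with exponent at least $n$: in this regime the skeleton degenerates and one must collapse cubes of ``all-heavy'' blocks to squares, a reduction that cannot be effected from (1)--(3) plus any bounded number of instances of (4).

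The main obstacle is the non-finite basedness claim, i.e.\ showing that no finite subset of the listed identities is already a basis. I would establish this by exhibiting, for each $k\ge 2$, a semigroup $S_k$ that satisfies (1), (2), (3) and every instance of (4) with $m<k$, yet fails the $m=k$ instance. A natural candidate is a Rees matrix-style semigroup built over $\mathbb{Z}_n$ and designed so that substituting distinct generators for $x_1,\ldots,x_k$ into $\bigl(\prod_{i=1}^k x_i^n\bigr)^3$ yields an element different from the value of $\bigl(\prod_{i=1}^k x_i^n\bigr)^2$, while shorter products collapse. Proving that $S_k$ satisfies all the earlier identities, and in particular the earlier instances of (4), is the delicate bookkeeping step; one would use a content/syntactic argument to reduce to substitutions into short words and verify by cases.

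Finally, to see $\var\{\Atwo,\mathbb{Z}_2\} = \var\{\Atwo^g\}$, note that $\Atwo$ is a subsemigroup of $\Atwo^g$, while the subsemigroup $\{0,g\}$ and the fact that $g$ centralises $\Atwo$ via $g\Atwo = \Atwo g = \{g\}$ supply a homomorphism onto a copy of $\mathbb{Z}_2$ with zero adjoined (which lies in $\var\{\mathbb{Z}_2\}$); together with the retraction $\Atwo^g\to \Atwo$ collapsing $g$ to $0$, this gives a subdirect embedding of $\Atwo^g$ into $\Atwo\times(\mathbb{Z}_2)^0$, yielding $\Atwo^g\in\var\{\Atwo,\mathbb{Z}_2\}$. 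The converse inclusion follows because $\Atwo\le\Atwo^g$ and $\mathbb{Z}_2$ is recovered as a divisor of $\Atwo^g$ via the $\{0,g\}$ component modulo the appropriate congruence, so both generating semigroups lie in $\var\{\Atwo^g\}$.
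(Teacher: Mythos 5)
The paper does not actually prove this proposition; it quotes it from Lee and Volkov~\cite{LV11}, so there is no in-paper argument to measure you against, only what a complete proof would require. Of the four tasks you set yourself, only the first and the last are essentially done. Your soundness check is fine. For $\var\{\Atwo,\mathbb{Z}_2\}=\var\{\Atwo^g\}$ your conclusion is correct but the route has a slip: the congruence on $\Atwo^g$ that collapses $\Atwo$ to a point has quotient $\mathbb{Z}_2$ itself (the class of $\Atwo$ acts as the identity and $\{g\}$ squares into it), not $\mathbb{Z}_2$ with a zero adjoined; moreover $(\mathbb{Z}_2)^0$ does \emph{not} lie in $\var\{\mathbb{Z}_2\}$, since it violates $x^2a\approx a$ at $x=0$, so your parenthetical justification is false as written. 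The correct statement is that this congruence and the one collapsing $\{0,g\}$ (whose quotient is $\Atwo$) intersect trivially, giving a subdirect embedding of $\Atwo^g$ into $\Atwo\times\mathbb{Z}_2$.

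The genuine gap is in completeness. You claim a normal form determined by the linear skeleton, the reduced exponents, and the placement of the first and last occurrences of the repeated variables, and you assert that words agreeing on these data are identified by the basis because $\Atwo$ and $\mathbb{Z}_n$ detect nothing finer. That is false. Take $\bu=xyxzyxz$ and $\bv=xyzxyxz$: both have content $\{x,y,z\}$ with $x$ occurring three times and $y,z$ twice each, and deleting the single middle occurrence of $x$ from either word yields the same word $xyzyxz$, so the entire first/last-occurrence pattern, the (empty) linear skeleton, and all exponents coincide. Yet under $x\mapsto a$, $y\mapsto b$, $z\mapsto ba$ one computes $\bu=a$ and $\bv=0$ in $\Atwo$, so no identities valid in $\var\{\Atwo,\mathbb{Z}_n\}$ can join $\bu$ to $\bv$: the position of a middle occurrence relative to the first occurrences of other variables is an invariant of $\Atwo$ that your data ignore, so the rewriting cannot reach the claimed normal form and the separation step collapses. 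The non-finite-basedness half is likewise only a programme: the semigroups $S_k$ are never constructed, and verifying that a candidate satisfies the first three identities and every instance of the fourth with $m<k$ while violating the instance $m=k$ is precisely the hard content of Lee and Volkov's argument, which you acknowledge but defer. As it stands the proposal establishes soundness and the final sentence of the proposition, but neither the basis property nor the non-finite basedness.
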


\begin{proposition}[Lee~{\cite[Corollary~3.5]{Lee15}}]
For each $n \geq 1$\up, the identities
\begin{gather*}
x^{n+2} \approx x^2, \quad x^{n+1}yx^{n+1} \approx xyx, \quad xhykxty \approx yhxkytx, \\
x \bigg(\prod_{i=1}^m (y_ih_iy_i) \bigg) x \approx x \bigg(\prod_{i=m}^1 (y_ih_iy_i) \bigg) x, \quad m= 2,3,4,\ldots
\end{gather*}
constitute an {\ib} for the non-finitely based variety $\var\{\nfbL,\mathbb{Z}_n\}$\up.
\end{proposition}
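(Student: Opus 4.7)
The plan is to establish an identity basis by proving the two standard inclusions: every identity listed, call the list $\Sigma_n$, holds in both $\nfbL$ and $\mathbb{Z}_n$, and conversely every identity of $\var\{\nfbL,\mathbb{Z}_n\}$ is a consequence of $\Sigma_n$. The first inclusion is a direct verification. In $\mathbb{Z}_n$, all identities in $\Sigma_n$ reduce immediately to trivialities because $x^n = 1$: the period--index relations become $x^2 = x^2$ and $xyx = xyx$, while the swap identity $xhykxty \approx yhxkytx$ and every member of the infinite family collapse to obviously valid commutative rearrangements of the same multiset of letters. In $\nfbL$, a short case analysis on the six elements $\{0,a,b,ab,ba,bab\}$ suffices; one uses that $x^3 = x^2$ holds throughout $\nfbL$, that the relation $aba = 0$ forces many sandwiched products to vanish, and that $\nfbL$ is $\mathscr{J}$-trivial, so the swap and reversal identities collapse to equal products after the ``active'' letters have been identified.

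For the converse inclusion, the plan is to reduce every semigroup word to a canonical form modulo $\Sigma_n$ and then exhibit, for any two distinct canonical forms, an evaluation in $\nfbL$ or $\mathbb{Z}_n$ that separates them. The period--index identity $x^{n+2} \approx x^2$ bounds every exponent block by $n+1$, and $x^{n+1}yx^{n+1} \approx xyx$ further reduces $x^{n+1}$ down to $x$ whenever it flanks a context that already contains another occurrence of a repeated letter. The swap identity handles the interchange of pairs of multiply-occurring variables separated by simple letters, while the infinite family governs the order of blocks $y_ih_iy_i$ nested between copies of a common variable, allowing any permutation of such blocks. Together these rules should push every word to a uniquely defined normal form specified by (i)~the order of first occurrences of the repeated variables, (ii)~a fixed ordering of the blocks lying between consecutive occurrences of a given letter, and (iii)~minimal exponents throughout.

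The main obstacle is completeness: proving that two words reaching distinct normal forms are actually separated inside $\var\{\nfbL,\mathbb{Z}_n\}$. Here $\mathbb{Z}_n$ detects the total exponent of each variable modulo $n$, so it distinguishes normal forms differing in letter multiplicities, while $\nfbL$, thanks to the relation $aba = 0$, is sensitive to how repeated variables enclose other letters and in what order. The delicate point is that the infinite family of identities interacts nontrivially with the swap identity: one needs an inductive argument, in the style of Jackson~\cite{Jac05a} and Lee and Volkov~\cite{LV11}, on either the number of repeated variables or the depth of nesting, to show that any failure of two normal forms to agree is witnessed by an explicit substitution into $\nfbL$ which sends the two sides to different $\mathscr{J}$-classes---typically by forcing one side but not the other into the ideal generated by $0$---or by an evaluation in $\mathbb{Z}_n$ detecting a residue mismatch.
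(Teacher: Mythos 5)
First, note that the paper does not prove this proposition at all: it is quoted from Lee~\cite[Corollary~3.5]{Lee15}, so there is no in-paper argument to measure your proposal against. Judged on its own terms, your outline follows the only sensible strategy (soundness by direct verification in $\nfbL$ and $\mathbb{Z}_n$, completeness by a normal-form-plus-separation argument), and the soundness half is essentially fine --- although the assertion that $\mathscr{J}$-triviality makes the swap and reversal identities ``collapse'' in $\nfbL$ is not an argument: these are identities in five and arbitrarily many variables that must actually be checked, either by exhausting substitutions into the six-element semigroup or by a structural description of products in $\nfbL$ (which elements lie in the ideal generated by $0$, and when a product equals $ab$, $ba$, or $bab$).

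The genuine gap is in the completeness half. Everything that is hard about this theorem is concentrated in two steps you leave unexecuted: (i) an exact characterization of which identities $\bu \approx \bv$ hold in $\var\{\nfbL,\mathbb{Z}_n\}$ (equivalently, a solution of the word problem in the relatively free semigroup), and (ii) a proof that your rewriting rules are confluent and terminate in a form realizing precisely that characterization. You state neither. The proposed normal form --- first-occurrence order, ``a fixed ordering of the blocks lying between consecutive occurrences of a given letter,'' minimal exponents --- is not well defined: $x^{n+1}yx^{n+1}\approx xyx$ lowers exponents only inside one specific sandwich context, the swap identity exchanges two multiply-occurring letters only in one particular seven-letter pattern, and it is exactly the interaction of these partial rules with the infinite family that has to be controlled; nothing in the proposal shows that two $\Sigma_n$-inequivalent ``normal forms'' cannot still be equal in both $\nfbL$ and $\mathbb{Z}_n$. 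Saying that ``one needs an inductive argument, in the style of Jackson and Lee--Volkov'' names the difficulty rather than resolving it. As it stands this is a plausible plan, not a proof; to complete it you would need to reproduce (or replace) Lee's combinatorial description of $\var\{\nfbL,\mathbb{Z}_n\}$-equivalence of words and then verify both directions of the equivalence between that description and derivability from $\Sigma_n$.
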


\subsection{Inherently non-finitely based finite semigroups} \label{subsec: INFB}

The \textit{finite basis problem}---first posed by Tarski~\cite{Tar68} in the 1960s as a decision problem---questions which finite algebras are finitely based.
This problem is undecidable for general algebras~\cite{Mck96} but remains open for finite semigroups.
In contrast, it is decidable if a finite semigroup~$S$ is \textit{\infb} in the sense that every locally finite variety containing~$S$ is non-finitely based.
This result follows from the work of Sapir~\cite{Sap87a,Sap87b}, a description of which requires the following concepts:
\begin{itemize}
\item the \textit{period} of a semigroup~$S$ is the least number~$d$ such that~$S$ satisfies the identity $x^{m+d} \approx x^m$ for some $m \geq 1$;
\item the \textit{upper hypercenter} of a group~$G$, denoted by $\Gamma(G)$, is the last term in the upper central series of~$G$;
\item a word~$\bw$ is an \textit{isoterm} for a semigroup~$S$ if~$S$ violates every nontrivial identity of the form $\bw \approx \bw'$;
\item the \textit{Zimin words} $\bz_1, \bz_2, \bz_3, \ldots$ are words over the variables $\{x_1,x_2,x_3,\ldots \}$ defined inductively by $\bz_1 = x_1$ and $\bz_{k+1} = \bz_kx_{k+1} \bz_k$ for each $k \geq 1$.
\end{itemize}

\begin{theorem}[Sapir~{\cite[Theorem~3.6.34]{Sap14}}] \label{T: INFB Sapir}
\begin{enumerate}[\ \rm(i)]
\item A finite semigroup~$S$ is {\infb} if and only if there exists some idempotent $e \in S$ such that the submonoid $eSe$ of~$S$ is {\infb}\up.
\item A finite monoid~$M$ with period~$d$ is {\infb} if and only if there exist $a \in M$ and an idempotent $e \in MaM$ such that the elements $eae$ and $ea^{d+1}e$ do not belong to the same coset of the maximal subgroup~$M_e$ of~$M$ containing~$e$ with respect to the upper hypercenter $\Gamma(M_e)$\up.
\item A finite semigroup~$S$ is {\infb} if and only if the Zimin words $\bz_1,\bz_2,\ldots, \bz_m$\up, where $m = |S|^3$\up, are isoterms for~$S$\up.
\end{enumerate}
\end{theorem}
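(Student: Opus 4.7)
The plan is to establish (iii) as the central characterization and then derive (i) and (ii) from it by structural reductions. The Zimin word criterion is the deepest part and will be the main obstacle.

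For (iii), the forward direction—that INFB implies $\bz_1,\ldots,\bz_m$ are isoterms for $S$ with $m=|S|^3$—is the more tractable half, best handled by contraposition. Assume some $\bz_k$ with $k\le m$ fails to be an isoterm, so $S$ satisfies a nontrivial identity $\bz_k\approx \bw$ with $\bw\neq\bz_k$. I would show that this identity, combined with the standard bounding identities satisfied by $S$ (in particular $x^{n+p}\approx x^n$ for $n,p$ determined by $|S|$), defines a locally finite variety containing $S$. Zimin words enjoy a Ramsey-type universality—every sufficiently complicated word contains a $\bz_k$ pattern as a factor—so collapsing $\bz_k$ forces enough identities to bound the free objects. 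The choice $m=|S|^3$ furnishes enough Zimin nesting to make this argument go through for any $S$ of the given order, and the resulting finitely based locally finite variety contradicts INFB.

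For the reverse direction of (iii)—$\bz_1,\ldots,\bz_m$ isoterms implies $S$ is INFB—the hard part is to produce, for every locally finite variety $\mathbf{V}\supseteq\var\{S\}$, an infinite family of identities $\sigma_k\in\mathrm{Id}(\mathbf{V})$ such that, for every finite subset $\Sigma\subseteq\mathrm{Id}(\mathbf{V})$, only finitely many $\sigma_k$ are consequences of $\Sigma$. The candidates for $\sigma_k$ are built from $\bz_k$ by substituting variables with carefully constructed words whose Zimin skeleton is preserved precisely because $\bz_k$ is an isoterm for $S$; local finiteness of $\mathbf{V}$ provides bounded period and a uniform bound on free generators, which is what lets one formalize the deduction-strength gap between $\Sigma$ and $\{\sigma_k\}$. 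Formalizing the combinatorial obstruction—showing that no bounded-rank deduction can reach $\sigma_k$ for all large $k$—is the central technical difficulty and rests on the substitution-rigidity that isoterms afford.

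For (i), the easy direction is immediate: if $eSe$ is INFB, then $eSe$ is a divisor of $S$, so any locally finite variety containing $S$ contains $eSe$ and is non-finitely based. The converse uses (iii): if $S$ is INFB, then $\bz_m$ is an isoterm for $S$, and a combinatorial pigeonhole argument on the substitutions witnessing isoterm-status locates a principal two-sided ideal $SeS$ (with $e$ idempotent) in which $\bz_{m'}$ is an isoterm for $eSe$ with $m'=|eSe|^3$, whence $eSe$ is INFB by (iii). Finally, (ii) is obtained by applying (iii) to a monoid $M$ and translating the isoterm condition into the multiplicative language of $M$: the failure of $\bz_k\approx\bw$ becomes, after substituting $a$ in a distinguished position and $e$ in padding positions, a statement that $eae$ and $ea^{d+1}e$ represent different cosets modulo a normal subgroup of $M_e$ that absorbs all ``central'' substitution effects. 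That invariant subgroup is exactly $\Gamma(M_e)$, because central elements are precisely those that cannot be detected by iterated Zimin nesting.
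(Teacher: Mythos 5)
First, a point of reference: the paper does not prove this theorem at all; it is quoted from Sapir's monograph \cite{Sap14} (with the original sources \cite{Sap87a,Sap87b}), so there is no in-paper argument to measure yours against. What you have written is a roadmap rather than a proof, and it stops being a proof precisely at the points where the theorem is genuinely hard. In the forward direction of (iii) you assert that adjoining a nontrivial identity $\bz_k \approx \bw$ to the periodicity identity $x^{n+p}\approx x^n$ yields a locally finite (hence finitely based, locally finite) variety containing $S$, justified by a ``Ramsey-type universality'' of Zimin words. That local finiteness claim is itself a substantial theorem --- it is where Zimin's characterization of unavoidable words meets Burnside-type problems --- and nothing in your sketch establishes it; as written it is an appeal to the conclusion. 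In the reverse direction of (iii) you explicitly defer ``the central technical difficulty,'' namely showing that no finite subset of the identities of a locally finite variety containing $S$ derives all of the $\sigma_k$; without that step the direction that actually certifies the {\infb} property is unproved.

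The derivations of (i) and (ii) from (iii) also do not go through as described. For (ii), the sentence ``central elements are precisely those that cannot be detected by iterated Zimin nesting'' is not an argument: the equivalence between the isoterm condition and the coset condition modulo the upper hypercenter $\Gamma(M_e)$ is a theorem about finite nilpotent groups and the local structure of $M$ around the idempotent $e$, and in Sapir's development it is proved by direct structural analysis rather than by translating substitutions into cosets --- indeed the logical order there is essentially the reverse of yours, with (ii) established structurally and (i) reducing the semigroup case to the monoid case. For the converse of (i), the ``combinatorial pigeonhole argument on the substitutions witnessing isoterm-status'' is never described and it is not clear what it would be. The one step that is complete and correct is the easy half of (i): $eSe$ is a subsemigroup of $S$, so every locally finite variety containing $S$ contains $eSe$, and if $eSe$ is {\infb} then that variety is non-finitely based.
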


The non-finitely based semigroups~$\Atwo^g$ and~$\nfbL$ are not {\infb} because they satisfy the identities $\bz_2 \approx x_1(x_2x_1)^3$ and $\bz_2 \approx x_1x_2x_1^2$, respectively.
On the other hand, the semigroups~$\Atwo^1$ and~$\Btwo^1$ are {\infb} since all Zimin words are isoterms \cite[Lemma~3.7]{Sap87b}.
It follows that a finite semigroup~$S$ is {\infb} if either $\Atwo^1 \in \var\{S\}$ or $\Btwo^1 \in \var\{S\}$.
Observe that the condition in Theorem~\ref{T: INFB Sapir}(ii) can hold in a trivial way, namely when $eae$ or $ea^{d+1}e$ does not belong to $M_e$, so that both elements do not belong to the same coset of $M_e$.
This is the case for $\Btwo^1$; see, for example, Volkov and Gol'berg \cite[observation after Proposition 1]{VG03}.

For certain finite monoids~$M$, the condition $\Btwo^1 \in \var\{M\}$ is not only sufficient, but also necessary for~$M$ to be {\infb}.

\begin{lemma} \label{L: INFB 4 conditions}
Let~$M$ be any finite monoid that satisfies the identity $x^{2n} \approx x^n$ for some $n \geq 2$\up.
Suppose that~$M$ satisfies at least one of the following four conditions\up: $|M| \leq 55$\up, $M$ is regular\up, the idempotents of~$M$ form a submonoid\up, and all subgroups of~$M$ are nilpotent\up.
Then the following conditions are equivalent\up:
\begin{enumerate}[\ \ \rm(a)]
\item $M$ is {\infb}\up;
\item $\Btwo^1 \in \var\{M\}$\up;
\item $M$ violates the identity \begin{equation} \big((xy)^n(yx)^n(xy)^n\big)^n \approx (xy)^n. \label{id: excl B2} \end{equation}
\end{enumerate}
\end{lemma}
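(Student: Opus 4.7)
My plan is to set up the cycle (b) $\Rightarrow$ (a) $\Rightarrow$ (c) $\Rightarrow$ (b), which, together with the quick direct check (b) $\Rightarrow$ (c), gives the full equivalence. The two implications out of (b) require no extra hypotheses. For (b) $\Rightarrow$ (a): as noted in the excerpt following Theorem~\ref{T: INFB Sapir}, all Zimin words are isoterms for $\Btwo^1$, so $\Btwo^1$ is INFB, and any locally finite variety containing $M$ also contains $\Btwo^1$ and is therefore non-finitely based. For (b) $\Rightarrow$ (c), substituting $x=a$, $y=b$ in $\Btwo^1$ and using $aba=a$, $bab=b$, $a^2=b^2=0$ yields $(ab)^n = ab$, $(ba)^n = ba$, and $(ab)(ba)(ab) = a(b^2)ab = 0$, so the left-hand side of~\eqref{id: excl B2} evaluates to $0$ while the right-hand side is $ab$, and the identity fails in $\Btwo^1$ (hence in any variety containing it).

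The main technical work is (c) $\Rightarrow$ (b). Assuming $M$ violates~\eqref{id: excl B2} at some pair $a,b\in M$, I would set $u = (ab)^n$ and $v = (ba)^n$. The hypothesis $x^{2n} \approx x^n$ makes $u$ and $v$ idempotents (as $u^2 = (ab)^{2n} = (ab)^n = u$), and the failure of the identity reads $(uvu)^n \neq u$, placing $uvu$ strictly below $u$ in the natural order of idempotents. My aim is to exhibit inside $\var\{M\}$ (equivalently, inside a finite power of $M$) a Rees matrix quotient $M(G; I, \Lambda; P)$ with $|I| = |\Lambda| = 2$, trivial group $G$, and a sandwich matrix of $\Btwo$-type, so that adjoining an external identity yields $\Btwo^1$. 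The four side conditions enter as the mechanism guaranteeing this extraction: in the regular case I would use the Rees representation of the $\mathscr{D}$-class of $u$; in the orthodox case the band of idempotents contains the required two-by-two rectangular sub-band directly; in the nilpotent-subgroups case one first collapses each $\mathscr{H}$-class to a point, which is legitimate because the combinatorics of~\eqref{id: excl B2} is insensitive to central elements of the group factors, reducing to the regular case; for $|M| \le 55$ I would finish by an explicit \GAP/\Smallsemi\ enumeration or by invoking the classification of small INFB monoids from Lee--Li--Zhang and Volkov--Gol'berg.

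The closing implication (a) $\Rightarrow$ (c) uses Theorem~\ref{T: INFB Sapir}(ii): being INFB yields an idempotent $e\in M$ and an element $a\in M$ such that $eae$ and $ea^{d+1}e$ lie in different cosets of $\Gamma(M_e)$; translated into the local monoid $eMe$, this cosetwise obstruction can be witnessed by a substitution into~\eqref{id: excl B2}. The principal obstacle is the case-splitting on the four structural hypotheses: in each case \eqref{id: excl B2} must be shown to be precisely the $\Btwo^1$-excluder within the ambient class, and the combinatorial extraction of a $\Btwo^1$-image proceeds along genuinely different lines (Rees matrix analysis versus orthodox decomposition versus $\mathscr{H}$-class collapse versus computer verification). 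The subtlest is the nilpotent-subgroups case, where one has to show that nilpotence---rather than merely commutativity---of the subgroups suffices to eliminate an ``$\Atwo^1$-type'' obstruction in favor of the $\Btwo^1$-exclusion captured by~\eqref{id: excl B2}.
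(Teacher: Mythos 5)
Your implications out of (b) are fine: the isoterm argument gives (b)$\Rightarrow$(a), and your computation $(ab)(ba)(ab)=ab^2a\cdot b=0$ while $(ab)^n=ab$ correctly shows $\Btwo^1$ violates \eqref{id: excl B2}, giving (b)$\Rightarrow$(c). The problem lies in where you have located the difficulty and in the step you leave unproved. First, you place the four structural hypotheses inside (c)$\Rightarrow$(b), but that implication needs none of them: for a periodic semigroup, violation of \eqref{id: excl B2} already forces $\Btwo\in\var\{M\}$ by a theorem of Sapir and Suhanov, and Jackson's lemma upgrades this to $\Btwo^1\in\var\{M\}$ for a monoid. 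This is how the paper disposes of that direction, unconditionally and in two lines; your proposed Rees-matrix extraction with a four-way case split there is both unnecessary and, as sketched, far from a proof.

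The genuine gap is the remaining leg of your cycle, (a)$\Rightarrow$(c). You assert that the coset obstruction from Theorem~\ref{T: INFB Sapir}(ii) ``can be witnessed by a substitution into~\eqref{id: excl B2},'' but no such translation is given, and none is known in general: whether every {\infb} finite monoid has $\Btwo^1$ in its variety (equivalently, given the unconditional (c)$\Rightarrow$(b), whether it must violate \eqref{id: excl B2}) is precisely the question that remains open without extra hypotheses. The four conditions in the lemma exist exactly to make this direction true, and the content behind them is not a routine case analysis but the cited theorems of Jackson (for $|M|\le 55$, $M$ regular, and idempotents forming a submonoid) and Sapir (for nilpotent subgroups). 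Your one-sentence treatment of (a)$\Rightarrow$(c), together with the misplaced case split, means the hard half of the equivalence is not established. The fix is structural: prove (a)$\Leftrightarrow$(b) by invoking Jackson's Theorems~1.4 and~2.2 and Sapir's Theorem~2 (this is where the four hypotheses are consumed), and prove (c)$\Rightarrow$(b) unconditionally via Sapir--Suhanov and Jackson, keeping your verification of (b)$\Rightarrow$(c) as is.
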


\begin{proof}
${\rm(a)\Leftrightarrow\rm(b)}$:
This holds by Jackson~\cite[Theorems~1.4 and~2.2]{Jac02} and Sapir~\cite[Theorem~2]{Sap87a}.

\noindent${\rm(c)\Rightarrow\rm(b)}$:
If~$M$ violates the identity~\eqref{id: excl B2}, then $\Btwo \in \var\{M\}$ by Sapir and Suhanov \cite[Theorem~1]{SS81}, so that $\Btwo^1 \in \var\{M\}$ by Jackson \cite[Lemma~1.1]{Jac05b}.

\noindent${\rm(b)\Rightarrow\rm(c)}$:
It is routinely verified that~$\Btwo^1$ violates the identity~\eqref{id: excl B2}.
Therefore if~$M$ satisfies the identity~\eqref{id: excl B2}, then $\Btwo^1 \notin \var\{M\}$.
\end{proof}

There is yet another method to check if a finite monoid is {\infb}.
For each $n \geq 2$, define the words $[x,y]^n_1,[x,y]^n_2,[x,y]^n_3,\ldots$ over $\{x,y\}$ inductively by $[x,y]^n_1 = x^{n-1}y^{n-1}xy$ and $[x,y]^n_{k+1} = \big[[x,y]^n_k,y\big]^n_1$ for each $k \geq 1$.
Then for any variety~$\bV$ generated by a finite semigroup that satisfies the identity $x^{2n} \approx x^n$, the subsequence $\{ [x,y]^n_{k!} \}$ converges in the $\bV$-free semigroup over $\{x,y\}$; let $[x,y]^n_\infty$ denote the limit of this subsequence \cite[Subsection~4.4]{Vol00}.

\begin{lemma}[Volkov~{\cite[Proposition~4.4]{Vol00}}] \label{L: INFB Volkov}
Let~$M$ be any finite monoid that satisfies the identity $x^{2n} \approx x^n$ for some $n \geq 2$\up.
Then~$M$ is {\infb} if and only if it violates either~\eqref{id: excl B2} or \[ [\mathbf{e}z\mathbf{e},(\mathbf{e}y\mathbf{e})^{n-1}\mathbf{e}y^{n+1}\mathbf{e}]^n_\infty \approx \mathbf{e} \quad \text{with } \mathbf{e}=(xyzt)^n. \]
\end{lemma}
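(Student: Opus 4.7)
The plan is to reduce the statement to the known classification that a finite monoid $M$ satisfying $x^{2n}\approx x^n$ is {\infb} if and only if $\var\{M\}$ contains at least one of $\Atwo^1$ or $\Btwo^1$. Once this reduction is in place, the lemma becomes a matter of matching each of the two critical monoids with a single identity that separates it from the surrounding variety. The forward (``$\Leftarrow$'') direction is then immediate: violating~\eqref{id: excl B2} forces $\Btwo^1 \in \var\{M\}$ by the Sapir--Suhanov / Jackson argument already invoked in Lemma~\ref{L: INFB 4 conditions}, while violating the limit identity will force $\Atwo^1 \in \var\{M\}$; in either case $M$ is {\infb} because both $\Atwo^1$ and $\Btwo^1$ are {\infb}.

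For the backward (``$\Rightarrow$'') direction, I would first establish the classification above by invoking Theorem~\ref{T: INFB Sapir}. By part~(i), if $M$ is {\infb} then some submonoid $eMe$ is {\infb}; by part~(iii), the Zimin words $\bz_1,\dots,\bz_{|M|^3}$ are isoterms for $eMe$. Using the $D$-class / Rees-matrix structure of a finite monoid of bounded exponent, and the fact that any finite monoid avoiding both $\Atwo^1$ and $\Btwo^1$ as divisors lies in a finitely based and \emph{not} {\infb} variety (the content of Sapir's characterization for the $x^{2n}\approx x^n$ case), one concludes that $\{\Atwo^1,\Btwo^1\}\cap \var\{M\}\neq\varnothing$. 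This is the core structural step, but it is already the content of classical results one may cite.

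Given the reduction, the final step is to exhibit the two separating identities. For $\Btwo^1$ this has already been done: $\Btwo^1$ itself violates~\eqref{id: excl B2}, hence any variety containing $\Btwo^1$ violates it, and conversely violating~\eqref{id: excl B2} brings $\Btwo$ and then $\Btwo^1$ into the variety (Sapir--Suhanov plus Jackson, as in the proof of Lemma~\ref{L: INFB 4 conditions}). For $\Atwo^1$ the analogous role is played by the identity \[ [\mathbf{e}z\mathbf{e},(\mathbf{e}y\mathbf{e})^{n-1}\mathbf{e}y^{n+1}\mathbf{e}]^n_\infty \approx \mathbf{e}, \quad \mathbf{e}=(xyzt)^n. \] The reason this works is that conjugating by the idempotent $\mathbf{e}$ forces all four variables into the ``sandwich'' part of a $0$-simple sub-semigroup; the word $(\mathbf{e}y\mathbf{e})^{n-1}\mathbf{e}y^{n+1}\mathbf{e}$ selects a non-group element of maximal index, and the iterated commutator $[\cdot,\cdot]^n_k$ stabilises in any locally finite variety (giving a well-defined limit in the $\var\{M\}$-free semigroup). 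One verifies by direct substitution in $\Atwo^1$ that the identity fails there, and that it holds in any finite monoid of exponent dividing $n$ whose variety excludes $\Atwo^1$; the latter implication is where the specific form of the word is essential.

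The main obstacle is this last computational/structural claim: showing that the limit $[x,y]^n_\infty$, evaluated at the particular substitution $x\mapsto \mathbf{e}z\mathbf{e}$ and $y\mapsto (\mathbf{e}y\mathbf{e})^{n-1}\mathbf{e}y^{n+1}\mathbf{e}$, faithfully detects the presence of $\Atwo^1$ as a divisor. This requires analysing the ascending chain of words $[x,y]^n_{k!}$ inside the $\bV$-free semigroup and matching its eventual value against the multiplication table of $\Atwo^1$; I expect this to occupy the bulk of the technical work, whereas the reductions to Sapir's theorem and to Lemma~\ref{L: INFB 4 conditions} are essentially bookkeeping.
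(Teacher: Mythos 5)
First, a remark on context: the paper does not prove this lemma at all --- it is quoted verbatim from Volkov~\cite[Proposition~4.4]{Vol00} --- so the only available comparison is with the argument that result actually rests on, namely Sapir's criterion, Theorem~\ref{T: INFB Sapir}(ii).

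Your proposal has a fatal gap at its very first step: the claimed classification ``$M$ is {\infb} if and only if $\Atwo^1\in\var\{M\}$ or $\Btwo^1\in\var\{M\}$'' is not a theorem, and everything downstream depends on it. Note that $\var\{\Btwo^1\}\subseteq\var\{\Atwo^1\}$ (since $\Btwo\in\var\{\Atwo\}$, a standard fact, and Jackson's lemma --- already cited in the proof of Lemma~\ref{L: INFB 4 conditions} --- upgrades $\Btwo\in\var\{M\}$ to $\Btwo^1\in\var\{M\}$ for any monoid $M$). So your classification collapses to ``{\infb} iff $\Btwo^1\in\var\{M\}$'', i.e.\ iff $M$ violates~\eqref{id: excl B2}, which would make the second identity in the lemma redundant. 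But that equivalence is precisely what Lemma~\ref{L: INFB 4 conditions} establishes only under four \emph{additional} hypotheses ($|M|\le 55$, regularity, idempotents forming a submonoid, or all subgroups nilpotent); those hypotheses are there because the equivalence fails in general --- there are {\infb} monoids (necessarily of order at least $56$, with non-nilpotent maximal subgroups) that satisfy~\eqref{id: excl B2} and whose varieties contain neither $\Atwo^1$ nor $\Btwo^1$. These are exactly the monoids the second identity exists to detect, so your assertion that ``violating the limit identity forces $\Atwo^1\in\var\{M\}$'' is also false.

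A correct proof bypasses forbidden divisors entirely and translates Theorem~\ref{T: INFB Sapir}(ii) into equations. Identity~\eqref{id: excl B2} handles the ``trivial'' instances of Sapir's condition, where $eae$ or $ea^{d+1}e$ falls outside the maximal subgroup $M_e$ (equivalently, $\Btwo\in\var\{M\}$, by Sapir--Suhanov). Once that case is excluded, the word $(\mathbf{e}y\mathbf{e})^{n-1}\mathbf{e}y^{n+1}\mathbf{e}$ evaluates in $M_e$ to $(eae)^{-1}(ea^{d+1}e)$, and the stabilized iterated commutator $[\mathbf{e}z\mathbf{e},\,\cdot\,]^n_\infty\approx\mathbf{e}$ is the equational, Engel-type characterization of membership in the upper hypercenter $\Gamma(M_e)$ of the finite group $M_e$; the second identity therefore says exactly that $eae$ and $ea^{d+1}e$ always lie in the same coset of $\Gamma(M_e)$. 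The genuinely technical content is group-theoretic (the hypercenter of $M_e$), not the presence of $\Atwo^1$ as a divisor, which is where your outline points the effort.
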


The companion website checks if an input finite semigroup~$S$ is {\infb} in the following manner.
Suppose that $e_1, e_2, \ldots, e_r$ are all the idempotents of~$S$.
Then by Theorem~\ref{T: INFB Sapir}(i), it suffices to check if some submonoid ${ M_i = e_iSe_i }$ of~$S$ is {\infb}; this can be achieved by applying Theorem~\ref{T: INFB Sapir}(ii).
As this is the most general result, the website can handle semigroups of order higher than~$55$; if the semigroup is {\infb}, then the website provides the relevant information such as the hypercenter. 
The website also allows the user to check if a semigroup is {\infb} with Lemma~\ref{L: INFB 4 conditions}. 
Results on isoterms are computationally demanding and hence are not used.

Refer to the surveys by Volkov~\cite{Vol00,Vol01} for more information on {\infb} semigroups and the finite basis problem for finite semigroups in general.

Based on results in this subsection, a description of {\infb} semigroups of order up to~$9$ is possible. 
For this purpose, the semigroup~$\Atwo^1$ and~$\Btwo^1$, together with those given in Tables~\ref{Tab: infb order 7}--\ref{Tab: infb order 9}, are required.

\begin{table}[ht]  \centering \addtolength{\tabcolsep}{-2pt}
\begin{tabular}[t]{c|ccccccc}
$U_7$ & 1 & 2 & 3 & 4 & 5 & 6 & 7 \\ \hline
    1 & 1 & 1 & 1 & 1 & 1 & 1 & 1 \\
    2 & 1 & 1 & 1 & 1 & 2 & 2 & 3 \\
    3 & 1 & 2 & 3 & 1 & 1 & 3 & 1 \\
    4 & 4 & 4 & 4 & 4 & 4 & 4 & 4 \\
    5 & 4 & 4 & 4 & 4 & 5 & 5 & 7 \\
    6 & 1 & 2 & 3 & 4 & 5 & 6 & 7 \\
    7 & 4 & 5 & 7 & 4 & 4 & 7 & 4
\end{tabular}
\
\begin{tabular}[t]{c|ccccccc}
$V_7$ & 1 & 2 & 3 & 4 & 5 & 6 & 7 \\ \hline
    1 & 1 & 1 & 1 & 1 & 1 & 1 & 1 \\
    2 & 1 & 1 & 1 & 1 & 2 & 2 & 3 \\
    3 & 1 & 2 & 3 & 1 & 2 & 3 & 3 \\
    4 & 4 & 4 & 4 & 4 & 4 & 4 & 4 \\
    5 & 4 & 4 & 4 & 4 & 5 & 5 & 7 \\
    6 & 1 & 2 & 3 & 4 & 5 & 6 & 7 \\
    7 & 4 & 5 & 7 & 4 & 5 & 7 & 7
\end{tabular}
\
\begin{tabular}[t]{c|ccccccc}
$W_7$ & 1 & 2 & 3 & 4 & 5 & 6 & 7 \\ \hline
    1 & 1 & 1 & 1 & 1 & 5 & 5 & 5 \\
    2 & 1 & 2 & 1 & 2 & 5 & 5 & 7 \\
    3 & 1 & 1 & 3 & 3 & 5 & 6 & 5 \\
    4 & 1 & 2 & 3 & 4 & 5 & 6 & 7 \\
    5 & 5 & 5 & 5 & 5 & 1 & 1 & 1 \\
    6 & 5 & 6 & 5 & 6 & 1 & 1 & 3 \\
    7 & 5 & 5 & 7 & 7 & 1 & 2 & 1
\end{tabular}
\caption{The semigroups $U_7$, $V_7$, and $W_7$}
\label{Tab: infb order 7}
\end{table}

\begin{table}[ht]  \centering \addtolength{\tabcolsep}{-2pt}
\begin{tabular}[t]{c|cccccccc}
$U_8$ & 1 & 2 & 3 & 4 & 5 & 6 & 7 & 8 \\ \hline
    1 & 1 & 1 & 1 & 1 & 1 & 1 & 1 & 1 \\
    2 & 1 & 1 & 1 & 1 & 2 & 2 & 3 & 4 \\
    3 & 1 & 2 & 3 & 4 & 3 & 4 & 4 & 4 \\
    4 & 4 & 4 & 4 & 4 & 4 & 4 & 4 & 4 \\
    5 & 1 & 2 & 3 & 4 & 5 & 6 & 7 & 8 \\
    6 & 4 & 4 & 4 & 4 & 6 & 6 & 7 & 8 \\
    7 & 4 & 6 & 7 & 8 & 7 & 8 & 8 & 8 \\
    8 & 8 & 8 & 8 & 8 & 8 & 8 & 8 & 8
\end{tabular}
\quad
\begin{tabular}[t]{c|cccccccc}
$V_8$ & 1 & 2 & 3 & 4 & 5 & 6 & 7 & 8 \\ \hline
    1 & 1 & 1 & 1 & 1 & 5 & 5 & 7 & 7 \\
    2 & 1 & 2 & 1 & 2 & 5 & 5 & 7 & 8 \\
    3 & 1 & 1 & 3 & 3 & 5 & 6 & 7 & 7 \\
    4 & 1 & 2 & 3 & 4 & 5 & 6 & 7 & 8 \\
    5 & 5 & 5 & 5 & 5 & 7 & 7 & 1 & 1 \\
    6 & 5 & 6 & 5 & 6 & 7 & 7 & 1 & 3 \\
    7 & 7 & 7 & 7 & 7 & 1 & 1 & 5 & 5 \\
    8 & 7 & 7 & 8 & 8 & 1 & 2 & 5 & 5
\end{tabular} 
\caption{The semigroups $U_8$ and $V_8$}
\label{Tab: infb order 8}
\end{table}

\begin{table}[ht] \centering \addtolength{\tabcolsep}{-2pt}
\begin{tabular}[t]{c|ccccccccc}
$U_9$ & 1 & 2 & 3 & 4 & 5 & 6 & 7 & 8 & 9 \\ \hline
    1 & 1 & 1 & 1 & 1 & 1 & 1 & 1 & 1 & 1 \\
    2 & 1 & 1 & 1 & 1 & 1 & 2 & 2 & 3 & 4 \\
    3 & 1 & 2 & 3 & 4 & 4 & 3 & 4 & 4 & 4 \\
    4 & 4 & 4 & 4 & 4 & 4 & 4 & 4 & 4 & 4 \\
    5 & 5 & 5 & 5 & 5 & 5 & 5 & 5 & 5 & 5 \\
    6 & 1 & 2 & 3 & 4 & 5 & 6 & 7 & 8 & 9 \\
    7 & 5 & 5 & 5 & 5 & 5 & 7 & 7 & 8 & 9 \\
    8 & 5 & 7 & 8 & 9 & 9 & 8 & 9 & 9 & 9 \\
    9 & 9 & 9 & 9 & 9 & 9 & 9 & 9 & 9 & 9
\end{tabular}
\quad
\begin{tabular}[t]{c|ccccccccc}
$V_9$ & 1 & 2 & 3 & 4 & 5 & 6 & 7 & 8 & 9 \\ \hline
    1 & 1 & 1 & 1 & 1 & 1 & 6 & 6 & 6 & 6 \\
    2 & 1 & 1 & 1 & 2 & 2 & 6 & 6 & 6 & 7 \\
    3 & 3 & 3 & 3 & 3 & 3 & 8 & 8 & 8 & 8 \\
    4 & 1 & 2 & 3 & 4 & 5 & 6 & 7 & 8 & 9 \\
    5 & 3 & 3 & 3 & 5 & 5 & 8 & 8 & 8 & 9 \\
    6 & 1 & 1 & 1 & 6 & 1 & 6 & 6 & 6 & 6 \\
    7 & 1 & 2 & 1 & 7 & 1 & 6 & 7 & 6 & 6 \\
    8 & 3 & 3 & 3 & 8 & 3 & 8 & 8 & 8 & 8 \\
    9 & 3 & 5 & 3 & 9 & 3 & 8 & 9 & 8 & 8
\end{tabular}
\\[0.1in]
\begin{tabular}[t]{c|ccccccccc}
$W_9$ & 1 & 2 & 3 & 4 & 5 & 6 & 7 & 8 & 9 \\ \hline
    1 & 1 & 1 & 1 & 1 & 1 & 6 & 6 & 6 & 6 \\
    2 & 1 & 1 & 1 & 2 & 2 & 6 & 6 & 6 & 7 \\
    3 & 3 & 3 & 3 & 3 & 3 & 8 & 8 & 8 & 8 \\
    4 & 1 & 2 & 3 & 4 & 5 & 6 & 7 & 8 & 9 \\
    5 & 3 & 3 & 3 & 5 & 5 & 8 & 8 & 8 & 9 \\
    6 & 1 & 1 & 1 & 6 & 1 & 6 & 6 & 6 & 6 \\
    7 & 1 & 2 & 1 & 7 & 2 & 6 & 7 & 6 & 7 \\
    8 & 3 & 3 & 3 & 8 & 3 & 8 & 8 & 8 & 8 \\
    9 & 3 & 5 & 3 & 9 & 5 & 8 & 9 & 8 & 9
\end{tabular}
\caption{The semigroups $U_9$, $V_9$, and $W_9$}
\label{Tab: infb order 9}
\end{table}

Since the semigroups in Tables~\ref{Tab: infb order 7}--\ref{Tab: infb order 9} are monoids, it is routinely checked by Lemma~\ref{L: INFB 4 conditions} that they are all {\infb}.
With the exception of~$V_7$ and~$U_8$, each of these semigroups is isomorphic to its dual.

\begin{proposition} \label{P: infb up to 9}
Let $S$ be any {\infb} semigroup of order~$9$ or less\up.
\begin{enumerate}[\ \rm(i)]
\item If $|S| \leq 6$\up, then~$S$ is isomorphic to one of the semigroups~$\Atwo^1$ and~$\Btwo^1$\up. 

\item If $|S| = 7$\up, then either~$S$ contains~$\Atwo^1$ or $\Btwo^1$ as a subsemigroup or~$S$ is isomorphic to one of the semigroups~$U_7$\up, $V_7$\up, $\dual{V}_7$\up, and~$W_7$\up.

\item If $|S| = 8$\up, then either~$S$ contains a proper subsemigroup that is {\infb} or~$S$ is isomorphic to one of the semigroups~$U_8$\up, $\dual{U}_8$\up, and~$V_8$\up.

\item If $|S| = 9$ and~$S$ satisfies the identity $x^4 \approx x^2$\up, then either~$S$ contains a proper subsemigroup that is {\infb} or~$S$ is isomorphic to one of the semigroups~$U_9$\up, $\dual{U}_9$\up, $V_9$\up, and~$W_9$\up.
\end{enumerate}
\end{proposition}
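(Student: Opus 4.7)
The plan is to use Theorem 4.2(i) as the organizing principle: every inherently non-finitely based (INFB) finite semigroup $S$ contains an idempotent $e$ such that $eSe$ is an INFB monoid. Consequently, among INFB semigroups of any given order, the ``new'' examples are those where $S = eSe$ is itself a monoid; whenever the inclusion $eSe \subsetneq S$ is proper, the submonoid $eSe$ is automatically a strictly smaller INFB semigroup contained in $S$, which is exactly the alternative recorded in statements (ii)--(iv). This reduction means the problem essentially becomes one of classifying INFB \emph{monoids} at each order up to~$9$, and then linking each non-monoid INFB example to a smaller one by the $eSe$ construction.

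For part (i), since the only known INFB semigroups of order at most~$6$ are $\Atwo^1$ and $\Btwo^1$ (a classical result of Sapir~\cite{Sap87a,Sap87b}), one verifies by induction on $|S| \leq 6$ that if $S$ is INFB then $eSe$ is INFB for some idempotent $e$; if $eSe \subsetneq S$, the inductive hypothesis already forces $eSe \in \{\Atwo^1, \Btwo^1\}$, which has order~$6$, so the only new case is $|S| = 6$ with $S = eSe$ a monoid. A direct search over the finitely many monoids of order~$\leq 6$ (via Lemma~4.3, applicable since $|M| \leq 55$) confirms that $\Atwo^1$ and $\Btwo^1$ exhaust the possibilities. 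For parts (ii)--(iv), the same reduction applies: any INFB $S$ of the relevant order either has $eSe$ as a proper INFB subsemigroup (which then contains $\Atwo^1$ or $\Btwo^1$ by the inductive classification), or $S$ is itself an INFB monoid. The task then reduces to enumerating the INFB monoids of orders $7$, $8$, and $9$ (with the additional constraint $x^4 \approx x^2$ in part~(iv)) that do \emph{not} contain any of the smaller INFB monoids already found. For each of $U_7, V_7, \dual{V}_7, W_7, U_8, \dual{U}_8, V_8, U_9, \dual{U}_9, V_9, W_9$, one confirms inherent non-finite-basedness by Lemma~4.3, namely by checking that they violate the identity~\eqref{id: excl B2}; one also verifies directly from their Cayley tables that they contain neither $\Atwo^1$ nor $\Btwo^1$ as a subsemigroup.

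The chief obstacle is \textbf{completeness}: proving that the listed monoids exhaust all INFB monoids of the given order (modulo containment of smaller INFB examples). For order~$7$ this can be handled either by an exhaustive computer search over the $836{,}021$ semigroups of order~$7$ (filtering first by the monoid property, then by the identity~\eqref{id: excl B2}), or by a structural analysis using Theorem~4.2(ii) — one classifies the possible group-coset structures of $M_e$ and checks which ones give rise to $eae$ and $ea^{d+1}e$ in distinct cosets modulo $\Gamma(M_e)$, ruling out the majority of candidates. For orders~$8$ and~$9$ the naive enumeration becomes infeasible (billions to trillions of semigroups), so the argument must proceed by structural constraints: the INFB monoid $M$ must violate~\eqref{id: excl B2}, and one analyses the \emph{minimal} witnessing configurations $(x, y)$ for this failure, bounding the $\mathscr{J}$-class structure that can surround them. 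The hypothesis $x^4 \approx x^2$ in part~(iv) is what makes order~$9$ tractable — it forces a two-period structure that, combined with Lemma~4.3, restricts the possible skeleton of $M$ to a small list that can be finitely checked.

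The remaining routine verifications — computing $\var\{S\}$ membership, checking that $\Atwo^1$ and $\Btwo^1$ do not embed into the eleven exceptional monoids, and confirming that the duals $\dual{V}_7, \dual{U}_8, \dual{U}_9$ are indeed distinct from their primals — are mechanical and can be delegated to the companion website using the tools described in Section~\ref{smallsemigroups}.
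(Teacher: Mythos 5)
Your organizing reduction is sound and is in fact the same one the paper relies on: by Theorem~\ref{T: INFB Sapir}(i), an {\infb} semigroup~$S$ with no proper {\infb} subsemigroup must satisfy $S=eSe$ for some idempotent~$e$, hence is a monoid, so the classification reduces to {\infb} monoids at each order; and since every monoid of order at most~$9$ satisfies $x^{2n}\approx x^n$ for some $n\ge 2$ and has order at most~$55$, Lemma~\ref{L: INFB 4 conditions} converts ``{\infb}'' into ``violates~\eqref{id: excl B2}''. Your verification that the eleven exceptional monoids are {\infb} and contain neither $\Atwo^1$ nor $\Btwo^1$ is exactly the routine part.

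The genuine gap is the completeness claim, which you correctly identify as the chief obstacle but never discharge. For orders~$8$ and~$9$ you replace a proof by a programme: ``analyse the minimal witnessing configurations for the failure of~\eqref{id: excl B2} and bound the surrounding $\mathscr{J}$-class structure.'' No such analysis is carried out, no bound is derived, and nothing in your sketch shows the candidate list is exhaustive; as written, parts (ii)--(iv) remain unproven. The paper resolves completeness computationally and differently from what you propose: for orders up to~$8$ it runs the test of Theorem~\ref{T: INFB Sapir}(i)--(ii) over the complete \textsf{Smallsemi} catalogue (so your assertion that order-$8$ enumeration is infeasible contradicts the method actually used), and for order~$9$ it does \emph{not} enumerate semigroups at all: it uses Mace4 to generate directly all monoids of orders $6$--$9$ satisfying $x^4\approx x^2$ and violating~\eqref{id: excl B2} (457{,}745 models, reduced to 7{,}625 by Isofilter), then discards those containing a proper {\infb} subsemigroup. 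The point you miss is that the monoid reduction plus the first-order characterization turns completeness into a finite model-generation problem that a model builder answers exhaustively; this is precisely why the hypothesis $x^4\approx x^2$ appears in part~(iv), since without it the search space for order~$9$ is not first-order constrained enough to be generated. To repair your proof you would either have to actually carry out the structural analysis you gesture at, or fall back on the computational certificate the paper provides.
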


It is long and well known that the semigroups~$\Atwo^1$ and~$\Btwo^1$ of order~$6$ are the smallest {\infb} semigroups.
GAP's package SmallSemi contains all the semigroups of order up to~$8$ and hence we could routinely run the algorithm {\color{black} outlined after Lemma \ref{L: INFB Volkov}}.

To find {\infb} semigroups of order~$9$, we used the following algorithm (which in fact uses different results and computations to double check Proposition~\ref{P: infb up to 9} parts~(ii) and (iii)):
\begin{enumerate}
\item Use Mace4 \cite{prover} to generate all monoids of orders $6$--$9$ that satisfy the identity $x^4 \approx x^2$ but violate the identity \eqref{id: excl B2}, thus resorting to Lemma~\ref{L: INFB 4 conditions}; this led to 457,745 semigroups.

\item Use Isofilter to discard isomorphic copies; this led to 7,625 semigroups which are all {\infb}, but many of which contain proper subsemigroups that are {\infb}.

\item Use GAP's SmallSemi to discard the semigroups of order $n\in\{7,8,9\}$ that contain a proper subsemigroup that is {\infb}; this left us with the semigroups in Tables~\ref{Tab: infb order 7}--\ref{Tab: infb order 9}.
\end{enumerate}



\clearpage


\section{The companion webpage}\label{companion}

In this section we will give some brief details on the architecture of the website.

\subsection{Multiplication table}

A very flexible data entry tool was developed to allow the input of a multiplication table of a semigroup~$S$.
By default the elements of the semigroup are assumed to be $1,2,\ldots, \mathrm{N}$.
This is convenient to use the multiplication tables coming from {\GAP}.
Some other computational tools use the elements $0,1,\ldots, \mathrm{N}-1$, and this can also be used, along with sets on different (given) elements.

The entries of the Cayley table can be separated by commas or spaces, and optionally can include $[\cdot]$ to bound each line and/or the full multiplication table.
If the elements are all single-digit, all or part of the separators can be omitted.
For instance, all input strings below can be used as input for the same multiplication table:\\
~\\

\begin{longtable}[]{@{}ll@{}}
\toprule
\endhead
1 1 1 1 1 1 1 1 2 & space separated\tabularnewline
1,1,1,1,1,1,1,1,2 & comma separated\tabularnewline
1, 1, 1, 1, 1, 1, 1, 1, 2 & mixed commas and spaces\tabularnewline
{[}1, 1, 1, 1, 1, 1, 1, 1, 2{]} & "{[}" and "{]}"
enclosed\tabularnewline
{[} {[} 1, 1, 1 {]}, {[} 1, 1, 1 {]}, {[} 1, 1, 2 {]} {]} & {\GAP}
syntax\tabularnewline
111 111 112 & separators omitted (only for single digit
elements)\tabularnewline
111111112 & separators omitted (only for single digit
elements)\tabularnewline
\bottomrule
\end{longtable}

Using the {\GAP} syntax option, it is possible to copy a multiplication table from {\GAP} and paste it here.
For example, we can just copy and paste the output of {\GAP} coming from the following command:\\
~\\
gap\textgreater{} MultiplicationTable(SmallGroup(5,1));\\
{[} {[} 1, 2, 3, 4, 5 {]}, {[} 2, 3, 4, 5, 1 {]}, {[} 3, 4, 5, 1, 2 {]},
{[} 4, 5, 1, 2, 3 {]}, {[} 5, 1, 2, 3, 4 {]} {]}\\
~\\
The number of the multiplication table entries must be a perfect square, otherwise an error will be returned.
Only semigroups will be accepted, so the associativity property is checked by default.


Semigroups up to order~$100$ are accepted, but the representative in the isomorphism class of~$S$, whose vector $\vect(S)$ is lexicographically the least, will only be computed in case the order of~$S$ is~$10$ or less.

%
%
%

\subsection{Finding the least semigroup of its isomorphism class}

Finding the semigroup~$S$ in its isomorphism class whose vector $\vect(S)$ is the least lexicographically is not necessary to access the main tools available on the website;
however, it is much more convenient and an essential part of the way we name varieties.

An obvious algorithm would be to give to some model builder, such as Mace4, the Cayley table of the semigroup and ask for all the isomorphic models in the same underlying set.
This gives a list of vectors that we only need to order.

We decided to use our own algorithm that proved to deliver the result for semigroups of order up to~$10$ in less than a second, and that we now outline.

\subsubsection {The presentation to semigroup algorithm} \quad

\textbf{Input:} order, mtable: order and multiplication table of a semigroup.

\textbf{Output:} minlex: multiplication table of the least (lexicographically) semigroup isomorphic to the given semigroup.

\begin{tabbing}
\hspace*{0cm} \= \hspace{0.5cm} \= \hspace{1ex}
 \= \hspace{1ex} \= \hspace{1ex} \= \hspace{1ex} \= \hspace{1ex}
 \= \hspace{1ex} \= \hspace{1ex} \= \kill

routine Minlex (order mtable):\\

\>01:\>minlex = mtable\\
\>02:\>for i in 1 to order\\
\>03:\>\>newElem[i] = i\\
\>04:\>for x in order-permutations of order\\
\>05:\>\>for i = 1 to order\\
\>06:\>\>\>newElem[x[i]] = i\\
\>07:\>\>equal = True\\
\>08:\>\>stop = False\\
\>09:\>\>smaller = False\\
\>10:\>\>if newElem[mtable[x[1]][x[1]]] = 1\\
\>11:\>\>\>for l = 1 to order\\
\>12:\>\>\>\>for c = 1 to order\\
\>13:\>\>\>\>\>e = newElem[mtable[x[l]][x[c]]]\\
\>14:\>\>\>\>\>e0 = minlex[l][c]\\
\>15:\>\>\>\>\>if equal = True\\
\>16:\>\>\>\>\>\>if e $>$ e0\\
\>17:\>\>\>\>\>\>\>stop = True\\
\>18:\>\>\>\>\>\>\>exit for loop\\
\>19:\>\>\>\>\>\>else if e $<$ e0\\
\>20:\>\>\>\>\>\>\>equal = False\\
\>21:\>\>\>\>\>\>\>menor = True\\
\>22:\>\>\>\>\>a1[l][c] = e\\
\>23:\>\>\>\>if stop = True\\
\>24:\>\>\>\>\>exit for loop\\
\>25:\>\>if smaller == True\\
\>26:\>\>\>minlex = a1\\
\>27:\>return minlex\\
\end{tabbing}


\subsection{Generating a semigroup from a given presentation}

The presentation tool finds the multiplication table from a presentation. One of the distinctive features of this tool is that it allows to define infinitely many different presentations (semigroups, bands, etc.) defined as varieties or quasi-varieties. The  presentation (both theory and relations) must  be written in Prover9 syntax. A presentation has two ingredients: the theory and some  relations between the generators.
~\\
To specify the identities that define the theory and the relations, a
subset of Prover9 syntax is used:

\begin{itemize}
\item
  Variables (with names started by ``u'', ``v'', ``w'', ``x'', ``y'' and
  ``z"). No variables will be allowed at the {\em relations} window;
\item
  Constants (with names started with a $0-9$, $a-s$, or $A-Z$);
\item
  Binary operation character $*$;
\item
  Equal sign $=$;
\item
  Parentheses $($ and $)$;
\item
  Each identity must end with a final mark.
\end{itemize}

\textbf{Examples:}\\
~\\
Consider the following example presentations, and how to enter the 
corresponding theory:\\
~\\

\begin{longtable}[]{@{}lll@{}}
\toprule
Presentation & Theory & Relations\tabularnewline
\midrule
\endhead

$\langle a,e|ea^2=a^2,e^2=ae=e\rangle$  & $x*(y*z)=(x*y)*z.$ & $(e*a)*a=a*a.$       \\
$=\{a,e,a^2,ea\}$          &                    & $e*e=a*e.$           \\
                           &                    & $a*e=e.$             \\ \hline

$\langle a|a^5=1\rangle$                & $x*(y*z)=(x*y)*z.$ & $(((a*a)*a)*a)*a=1.$ \\
$=\{a,a^2,a^3,a^4,1\}$     & $x*1=x.$ $1*x=x.$  &                      \\ \hline

$ \langle a,e|ae=0,ea=a,e^2=e\rangle$    & $x*(y*z)=(x*y)*z.$ & $a*e=0.$             \\
$\cup$ $\{1\}=\{0,a,e,1\}$ & $x*0=0.$ $0*x=0.$  & $e*a=a.$             \\
                           & $x*1=x.$ $1*x=x.$  & $e*e=e.$             \\

\bottomrule
\end{longtable}

The tool will try to close the
multiplication table, but if more than 20 elements are reached, an error
will be returned.

Entering a semigroup as a presentation (or using given identities to find or filter varieties) demands the use of an automated theorem prover (in this site Prover/Mace4), something usually very expensive (in time). Therefore a  strategy to limit calls and also to speed-up the use of Prover9/Mace4 was implemented (see Table \ref{strategy}). 

\begin{table}[h]
\centering
\begin{tabular}{l|l|l}
\# & Step & Description \\
\hline\hline
&&                                  User enters a presentation in Prover9/\\
1 & Presentation &                  Mace4 format (both the theory and\\
&&                                  relations).\\
\hline
&&                                  User formulas are normalized to\\
2 & Normalization &                 a internal notation and ordering\\
&&                                  rules, to increase cache's  hit rate.\\
\hline
&&                                  If a similar presentation (in\\
3 & Presentation cache (SQL) &      normalized notation) is recorded\\
&&                                  in SQL, its result will be used.\\
\hline
&&                                  If the user had requested other similar\\
4 & Proofs cache (user session) &   proofs during the session, the results are\\
&&                                  used to reduce the number of proofs.\\
\hline
&&                                  If all users had requested other similar\\
5 & Proofs cache (SQL) &            proofs recorded in SQL, theirs result\\
&&                                  will be used to speed the process.\\
\hline
&&                                  Launched at the same time, but the \\
6 & Launch Prover9/Mace4 &    first to      find a proof or counterexample\\
&&                                  (respectively) stops the other.\\
\end{tabular}
\caption{Presentations algorithm}\label{strategy}
\end{table}

\subsection{Finding an {\ib} for a finitely generated variety} \label{subsec: finding basis}

Let~$\bV$ be any finitely generated variety.
Then the number of maximal subvarieties of~$\bV$ is some positive integer $k \geq 1$; see Lee {\etal}. \cite[Proposition~4.1]{LRS19}. 
Let $\bM_1,\bM_2,\ldots,\bM_k$ be these maximal subvarieties.
By maximality, each~$\bM_i$ can be defined within~$\bV$ by some identity~$\mu_i$.
If $k \geq 2$, then $\bV = \bM_i \vee \bM_j$ for all distinct~$i$ and~$j$; otherwise, $\bV$ has a unique maximal subvariety and is said to be \textit{prime}.
It follows that each finitely generated variety is either prime or a join of some of its prime subvarieties.

Now it is clear that for any finite semigroup~$S$, the equality $\var\{S\} = \bV$ holds if and only if $S \in \bV$ and $S \notin \bM_i$ for all~$i$.
However, if the variety~$\bV$ is finitely based and a finite {\ib}~$\Sigma$ is available, then the equality $\var\{S\} = \bV$ holds whenever $S \models \Sigma$ and $S \not\models \mu_i$ for all~$i$.
Therefore the identity system $(\Sigma;\mu_1,\mu_2,\ldots,\mu_k)$, called a \textit{Bas-Max system} for~$\bV$, provides an easily verifiable sufficient condition to check if a finite semigroup generates~$\bV$.
Presently, the website database contains Bas-Max systems for all of the following varieties:
\begin{enumerate}
\item varieties with a primitive generator of order up to~$4$;
\item proper subvarieties of Cross varieties in~(a);
\item varieties with a primitive generator of order~$5$.
\end{enumerate}

Now when a semigroup~$S$ entered into the website is shown to generate a variety~$\bV$ via its Bas-Max system $(\Sigma;\mu_1,\mu_2,\ldots,\mu_k)$, then besides the {\ib}~$\Sigma$ for $\var\{S\}$, other important information, such as the primitive generator for~$\bV$, any decomposition of~$\bV$ into a join of its prime subvarieties, and the number of subvarieties of~$\bV$, will also be displayed by the website.

Bas-Max systems for varieties in~(a) and~(b), together with the aforementioned properties, will be listed in Section~\ref{sec: varieties small semigroups}, while their proofs will be given in the appendix sections.
Justification of the Bas-Max systems for varieties in~(c) will be disseminated elsewhere.

The website will be regularly updated with newly established Bas-Max systems for varieties.

\subsection{Testing for equivalent {\ibs}}

Suppose we have a finite set~$\Sigma$ of identities and would like to know information about the variety $[\Sigma]$ of semigroups, such as the primitive generator for $[\Sigma]$ and the varieties covered by $[\Sigma]$.
If this variety happens to be in our database, then many of these information is available.
The question is how do we identify $[\Sigma]$ with a variety in the database.
A tool was developed that will, by specifying one or more identities in Prover9 format, retrieve the variety whose {\ib} is equivalent to~$\Sigma$.

%



\begin{figure}[H]
  \includegraphics[width=\textwidth]{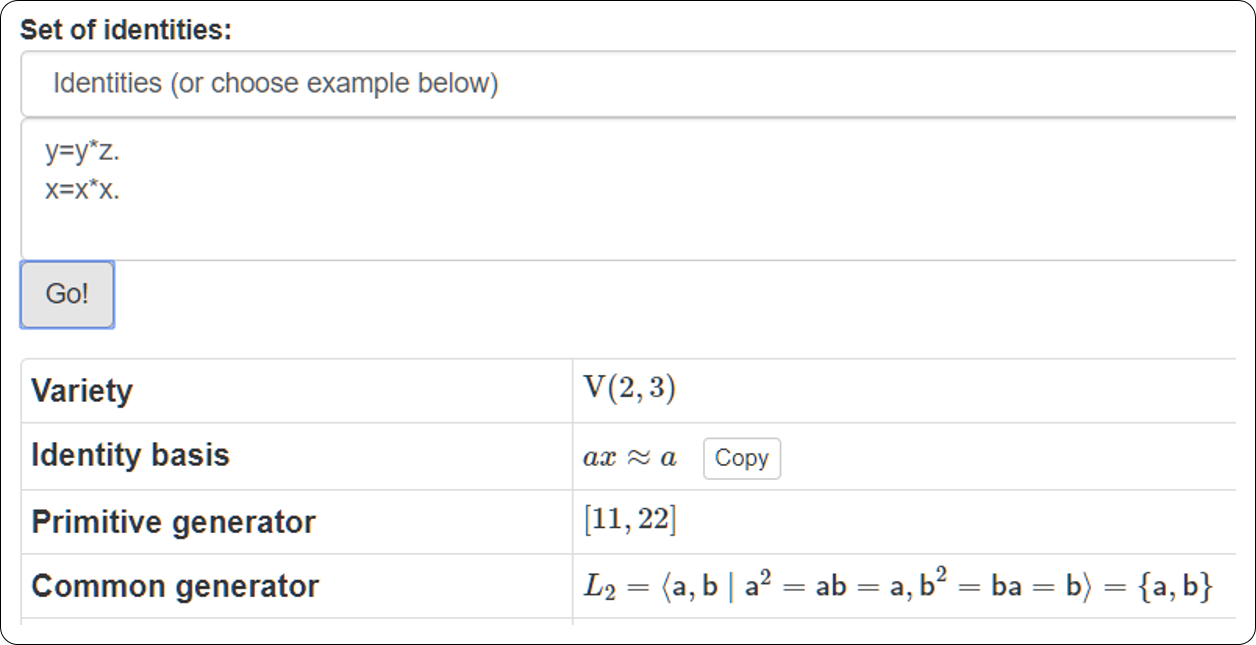}
  \caption{Companion website: example of testing for equivalent {\ib}}
  \label{F: equivbasis}
\end{figure}


{\color{black} \subsection{Filtering varieties using conditions}

Suppose we have some property and want to check which varieties in the database satisfy the property.
This can be done on the website.
To specify the identities, a subset of Prover9 syntax is used.
Only variables (with names started by $u-z$, the operation character $*$, the equal sign $=$, parentheses $($ and $)$, and final mark). 

It is not necessary to specify associativity.


The automatic theorem prover Prover9 and its accompanying program Mace4 that look for counterexamples will run simultaneously to check if the {\ib} for each variety in the database implies the identities provided. \\
~\\
There exist four options to invoke:\\
~\\

\begin{longtable}[]{@{}lll@{}}
\toprule
\begin{minipage}[b]{0.30\columnwidth}\raggedright
Option\strut
\end{minipage} & \begin{minipage}[b]{0.60\columnwidth}\raggedright
Prover9/Mace4 status\strut
\end{minipage}\tabularnewline
\midrule
\endhead
\begin{minipage}[t]{0.30\columnwidth}\raggedright
Proofs\strut
\end{minipage} & \begin{minipage}[t]{0.60\columnwidth}\raggedright
\begin{itemize}
\item
  All varieties for which a proof was found by Prover9 within 1 second.
\end{itemize}\strut
\end{minipage}\tabularnewline
\begin{minipage}[t]{0.30\columnwidth}\raggedright
No countermodels\strut
\end{minipage} & \begin{minipage}[t]{0.60\columnwidth}\raggedright
\begin{itemize}
\item
  The varieties for which a proof was found by Prover9 within 1 second
  plus:
\item
  The varieties where a proof was not found by Prover9 within 1 second
  but Mace4 also didn't found a countermodel within 1 second.
\end{itemize}\strut
\end{minipage}\tabularnewline
\begin{minipage}[t]{0.30\columnwidth}\raggedright
Countermodels\strut
\end{minipage} & \begin{minipage}[t]{0.60\columnwidth}\raggedright
\begin{itemize}
\item
  The varieties for which a countermodel was found by Mace4 within 1
  second;
\end{itemize}\strut
\end{minipage}\tabularnewline
\begin{minipage}[t]{0.30\columnwidth}\raggedright
No proofs\strut
\end{minipage} &  \begin{minipage}[t]{0.60\columnwidth}\raggedright
\begin{itemize}
\item
  The varieties for which a countermodel was found by Mace4 within 1
  second, plus:
\item
  The varieties for which a countermodel was not found by Mace4 within 1
  second, but also a proof was not found by Prover9 within 1 second.
\end{itemize}\strut
\end{minipage}\tabularnewline
\bottomrule
\end{longtable}
}

It is possible to apply successive filters to the sets of varieties obtained.

\subsection{Obtaining lattices of varieties}

A tool was developed to obtain a lattice of a set of varieties created with the filtering tool.\\

It is also possible to filter the list of varieties by leaving only the maximal varieties.\\

\begin{figure}[H]
  \includegraphics[width=\textwidth]{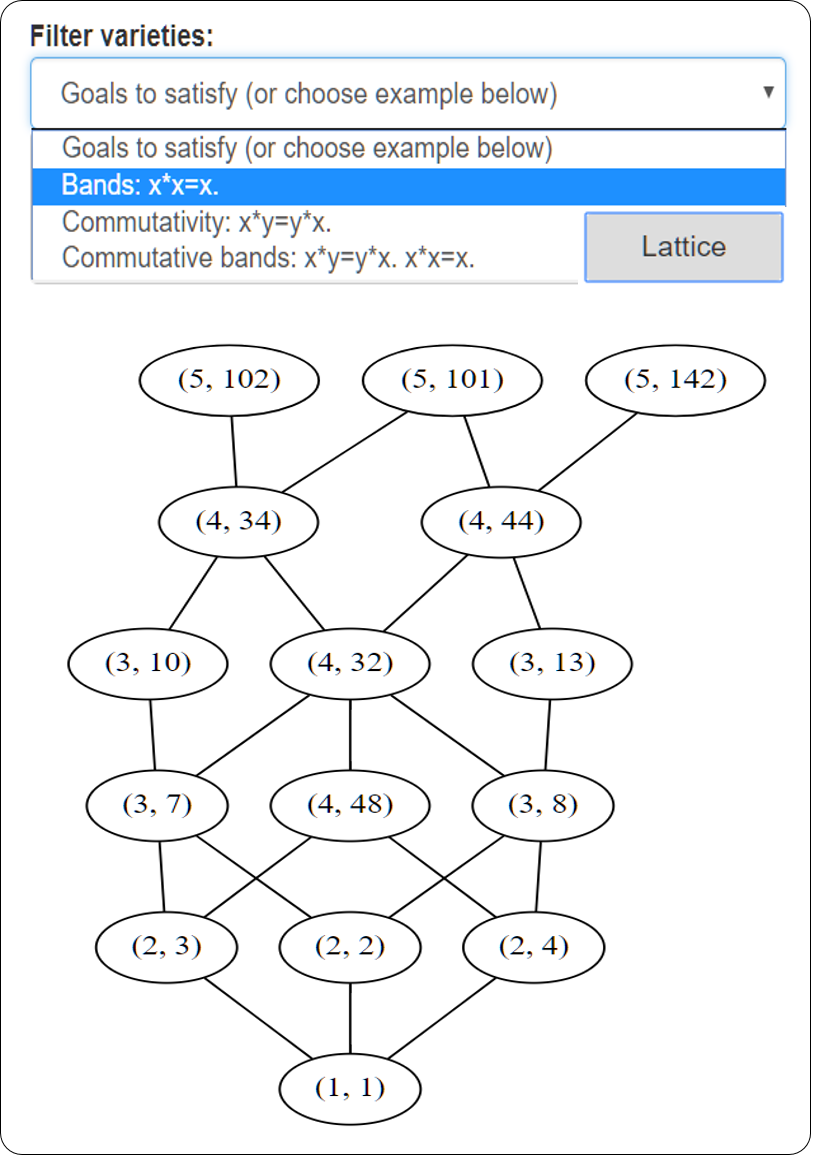}
  \caption{Companion website: obtaining the lattice of varieties generated
  by bands up to order 5}
  \label{F: latbands}
\end{figure}

\subsection{Extending the database: finding {\ibs} for new varieties}

Suppose we have {\ibs} for all varieties generated by a semigroup of order $n-1$ and we want to find an {\ib} for the variety generated some semigroup~$S$ of order~$n$.
If~$S$ does not belong to any variety generated by a semigroup of order less than~$n$, then $\var\{S\}$ is a new variety and we want to find an {\ib} for it.
The website has a tool to try to find candidates of identities that can form an {\ib} for $\var\{S\}$.
The first thing it does is to check, based on results from Subsection~\ref{subsec: INFB}, if~$S$ is inherently non-finitely based.
If the semigroup~$S$ is not inherently non-finitely based, then the website searches, in some \textit{ad hoc} intelligent ways, for candidates of identities of~$S$ to form an {\ib} for $\var\{S\}$.
Of course, if~$S$ happens to be non-finitely based, then the process will not terminate.
But if we are lucky, then the website will produce a natural conjecture for an {\ib}~$\Sigma$ for $\var\{S\}$.
The variety defined by~$\Sigma$ coincides with $\var\{S\}$ if the conjecture is correct, and properly contains $\var\{S\}$ otherwise.
We checked this procedure against all varieties generated by semigroups of order up to~$5$ and in every case, the procedure gave an {\ib} equivalent to the known one.

\section{Varieties generated by small semigroups} \label{sec: varieties small semigroups}


As mentioned in Subsection~\ref{subsec: finding basis}, the present section lists Bas-Max systems for all varieties generated by a semigroup of order up to~$4$ and for some that are their proper subvarieties.
Important information such as primitive generators, decompositions into joins of prime subvarieties, and number of subvarieties are also given.
To this end, the semigroups in Tables~\ref{Tab: generators 2}--\ref{Tab: generators 4} play a crucial role; these semigroups are primitive generators for the varieties they generate, which are in fact precisely all prime varieties generated by a semigroup of order up to~$4$.

\begin{table}[ht] \centering
\begin{tabular}[t]{c|cc}
$\Niltwo$ & 1 & 2 \\ \hline
        1 & 1 & 1 \\
        2 & 1 & 1
\end{tabular}
\ \
\begin{tabular}[t]{c|cc}
$\SL$ & 1 & 2 \\ \hline
    1 & 1 & 1 \\
    2 & 1 & 2
\end{tabular}
\ \
\begin{tabular}[t]{c|cc}
$\LZ$ & 1 & 2 \\ \hline
    1 & 1 & 1 \\
    2 & 2 & 2
\end{tabular}
\ \
\begin{tabular}[t]{c|cc}
$\RZ$ & 1 & 2 \\ \hline
    1 & 1 & 2 \\
    2 & 1 & 2
\end{tabular}
\ \
\begin{tabular}[t]{c|cc}
$\Ztwo$ & 1 & 2 \\ \hline
      1 & 1 & 2 \\
      2 & 2 & 1
\end{tabular}
\caption{Primitive generators of prime varieties generated by a semigroups of order~$2$}
\label{Tab: generators 2}
\end{table}

\begin{table}[ht] \centering
\begin{tabular}[t]{c|ccc}
$\Nilthree$ & 1 & 2 & 3 \\ \hline
          1 & 1 & 1 & 1 \\
          2 & 1 & 1 & 1 \\
          3 & 1 & 1 & 2
\end{tabular}
\quad
\begin{tabular}[t]{c|ccc}
$\JI$ & 1 & 2 & 3 \\ \hline
    1 & 1 & 1 & 1 \\
    2 & 1 & 1 & 1 \\
    3 & 1 & 2 & 3
\end{tabular}
\quad
\begin{tabular}[t]{c|ccc}
$\JIdual$ & 1 & 2 & 3 \\ \hline
        1 & 1 & 1 & 1 \\
        2 & 1 & 1 & 2 \\
        3 & 1 & 1 & 3
\end{tabular}
\quad
\begin{tabular}[t]{c|ccc}
$\Niltwoi$ & 1 & 2 & 3 \\ \hline
         1 & 1 & 1 & 1 \\
         2 & 1 & 1 & 2 \\
         3 & 1 & 2 & 3
\end{tabular}
\\[0.08in]
\begin{tabular}[t]{c|ccc}
$\LZi$ & 1 & 2 & 3 \\ \hline
     1 & 1 & 1 & 1 \\
     2 & 1 & 2 & 3 \\
     3 & 3 & 3 & 3
\end{tabular}
\quad
\begin{tabular}[t]{c|ccc}
$\RZi$ & 1 & 2 & 3 \\ \hline
     1 & 1 & 1 & 3 \\
     2 & 1 & 2 & 3 \\
     3 & 1 & 3 & 3
\end{tabular}
\quad
\begin{tabular}[t]{c|ccc}
$\Zthree$ & 1 & 2 & 3 \\ \hline
        1 & 1 & 2 & 3 \\
        2 & 2 & 3 & 1 \\
        3 & 3 & 1 & 2
\end{tabular}
\caption{Primitive generators of all prime varieties generated by a semigroups of order~$3$}
\label{Tab: generators 3}
\end{table}

\begin{table}[ht] \centering \addtolength{\tabcolsep}{-2pt}
\begin{tabular}[t]{c|cccc}
$\Ffour$ & 1 & 2 & 3 & 4 \\ \hline
       1 & 1 & 1 & 1 & 1 \\
       2 & 1 & 1 & 1 & 1 \\
       3 & 1 & 1 & 1 & 1 \\
       4 & 1 & 1 & 2 & 1
\end{tabular}
\quad
\begin{tabular}[t]{c|cccc}
$\Gfour$ & 1 & 2 & 3 & 4 \\ \hline
       1 & 1 & 1 & 1 & 1 \\
       2 & 1 & 1 & 1 & 1 \\
       3 & 1 & 1 & 1 & 2 \\
       4 & 1 & 1 & 2 & 1
\end{tabular}
\quad
\begin{tabular}[t]{c|cccc}
$\Nilfour$ & 1 & 2 & 3 & 4 \\ \hline
         1 & 1 & 1 & 1 & 1 \\
         2 & 1 & 1 & 1 & 1 \\
         3 & 1 & 1 & 1 & 2 \\
         4 & 1 & 1 & 2 & 3
\end{tabular}
\\[0.08in]
\begin{tabular}[t]{c|cccc}
$\Nilthreei$ & 1 & 2 & 3 & 4 \\ \hline
           1 & 1 & 1 & 1 & 1 \\
           2 & 1 & 1 & 1 & 2 \\
           3 & 1 & 1 & 2 & 3 \\
           4 & 1 & 2 & 3 & 4
\end{tabular}
\quad
\begin{tabular}[t]{c|cccc}
$\Bz$ & 1 & 2 & 3 & 4 \\ \hline
    1 & 1 & 1 & 1 & 1 \\
    2 & 1 & 1 & 1 & 2 \\
    3 & 1 & 2 & 3 & 1 \\
    4 & 1 & 1 & 1 & 4
\end{tabular}
\quad
\begin{tabular}[t]{c|cccc}
$\Az$ & 1 & 2 & 3 & 4 \\ \hline
    1 & 1 & 1 & 1 & 1 \\
    2 & 1 & 1 & 1 & 2 \\
    3 & 1 & 2 & 3 & 2 \\
    4 & 1 & 1 & 1 & 4
\end{tabular}
\\[0.08in]
\begin{tabular}[t]{c|cccc}
$\JIi$ & 1 & 2 & 3 & 4 \\ \hline
     1 & 1 & 1 & 1 & 1 \\
     2 & 1 & 1 & 1 & 2 \\
     3 & 1 & 2 & 3 & 3 \\
     4 & 1 & 2 & 3 & 4
\end{tabular}
\quad
\begin{tabular}[t]{c|cccc}
$\Ptwo$ & 1 & 2 & 3 & 4 \\ \hline
      1 & 1 & 1 & 1 & 1 \\
      2 & 1 & 1 & 1 & 3 \\
      3 & 3 & 3 & 3 & 3 \\
      4 & 4 & 4 & 4 & 4
\end{tabular}
\quad
\begin{tabular}[t]{c|cccc}
$\JIidual$ & 1 & 2 & 3 & 4 \\ \hline
         1 & 1 & 1 & 1 & 1 \\
         2 & 1 & 1 & 2 & 2 \\
         3 & 1 & 1 & 3 & 3 \\
         4 & 1 & 2 & 3 & 4
\end{tabular}
\\[0.08in]
\begin{tabular}[t]{c|cccc}
$\Otwo$ & 1 & 2 & 3 & 4 \\ \hline
      1 & 1 & 1 & 1 & 1 \\
      2 & 1 & 2 & 3 & 4 \\
      3 & 3 & 3 & 3 & 3 \\
      4 & 3 & 4 & 1 & 2
\end{tabular}
\quad
\begin{tabular}[t]{c|cccc}
$\Otwodual$ & 1 & 2 & 3 & 4 \\ \hline
          1 & 1 & 1 & 3 & 3 \\
          2 & 1 & 2 & 3 & 4 \\
          3 & 1 & 3 & 3 & 1 \\
          4 & 1 & 4 & 3 & 2
\end{tabular}
\quad
\begin{tabular}[t]{c|cccc}
$\Ptwodual$ & 1 & 2 & 3 & 4 \\ \hline
          1 & 1 & 1 & 3 & 4 \\
          2 & 1 & 1 & 3 & 4 \\
          3 & 1 & 1 & 3 & 4 \\
          4 & 1 & 3 & 3 & 4
\end{tabular}
\quad
\begin{tabular}[t]{c|cccc}
$\Zfour$ & 1 & 2 & 3 & 4 \\ \hline
       1 & 1 & 2 & 3 & 4 \\
       2 & 2 & 1 & 4 & 3 \\
       3 & 3 & 4 & 2 & 1 \\
       4 & 4 & 3 & 1 & 2
\end{tabular}
\caption{Primitive generators of all prime varieties generated by a semigroups of order~$4$}
\label{Tab: generators 4}
\end{table}

Some well-known semigroups in Tables~\ref{Tab: generators 2}--\ref{Tab: generators 4} are the semilattice~$\SL$ of order~$2$, the left zero band~$\LZ$ of order~$2$, the right zero band~$\RZ$ of order~$2$, the monogenic nilpotent semigroup \[ \Nil_n = \langle a \,|\, a^n=0\rangle = \{ a,a^2,\ldots,a^{n-1},0\} \] of order~$n$, and the cyclic group \[ \mathbb{Z}_n = \langle a \,|\, a^n=1\rangle = \{ a,a^2,\ldots,a^{n-1},1\} \] of order~$n$.
Recall that for any semigroup~$S$, the smallest monoid containing~$S$ is denoted by~$S^1$, and the dual of~$S$ is denoted by~$\overleftarrow{S}$.

In the remainder of the section, information on~88 varieties are grouped by the order of their primitive generators and given below in four subsections; these varieties are named Variety~\texttt{N}, or simply $\bV_\texttt{N}$, where $\texttt{N} \in \{ 1,2,\ldots,88\}$.
Proofs and references for all results are deferred to the appendix sections.

To illustrate how information on each variety can be read, consider Variety~\ref{V: LZ N2i} in Subsection~\ref{subsec: variety 4}, repeated here for reader convenience.

\begin{tcolorbox}[colback=white]
\begin{varexampleA*}[Subsection~\ref{subsec: LZ Nni}] \vquad 
\begin{enumerate}[\qquad(1)]
\item[(Gen)] $[1111,1112,3333,1214]$
\item[(Bas)] $x^3 \approx x^2$, $axy \approx ayx$
\item[(Max)] $x^2y^2 \approx y^2x^2$; $a^2x^2 \approx a^2x$
\item[(Dec)] $\var\{\LZ\} \vee \var\{\Niltwoi\}$
\item[(Sub)] Countably infinite
\end{enumerate}
\end{varexampleA*}
\end{tcolorbox}

\noindent The vector of the primitive generator of the variety~$\bV_{\ref{V: LZ N2i}}$ is given in (Gen).
The two identities in~(Bas) form an {\ib} for~$\bV_{\ref{V: LZ N2i}}$, while each identity in~(Max) defines within~$\bV_{\ref{V: LZ N2i}}$ a maximal subvariety; in other words, the identities in (Bas) and (Max) form a Bas-Max system for~$\bV_{\ref{V: LZ N2i}}$.
The join in~(Dec) is a decomposition of~$\bV_{\ref{V: LZ N2i}}$ into the join of the prime subvarieties $\var\{\LZ\}$ and $\var\{\Niltwoi\}$.
As indicated in~(Sub), the variety~$\bV_{\ref{V: LZ N2i}}$ has countably infinitely many subvarieties.
All these results regarding~$\bV_{\ref{V: LZ N2i}}$ are established in Subsection~\ref{subsec: LZ Nni}.

For another example, consider Variety~\ref{V: Cd} in Subsection~\ref{subsec: variety >4}.

\begin{tcolorbox}[colback=white]
\begin{varexampleB*}[Zhang and Luo~{\cite[Variety~$\mathbf{C}$ in Figure~4]{ZL09}}; Figure~\ref{F: JId LZi}] \vquad 
\begin{enumerate}[\qquad(1)]
\item[(Gen)] $[11111,11113,11133,11144,11155]$
\item[(Bas)] $ax^2 \approx ax$, $xyx \approx x^2y$, $a^2xy \approx a^2yx$
\item[(Max)] $axy \approx ayx$
\item[(Dec)] None
\item[(Sub)] 11
\end{enumerate}
\end{varexampleB*}
\end{tcolorbox}

\noindent The vector of the primitive generator of the variety~$\bV_{\ref{V: Cd}}$ is given in (Gen).
The three identities in~(Bas) form an {\ib} for~$\bV_{\ref{V: Cd}}$, while the identity in~(Max) define the unique maximal subvariety within~$\bV_{\ref{V: Cd}}$.
Since~$\bV_{\ref{V: Cd}}$ has only one maximal subvariety, it is prime and cannot be decomposed into a join of two or more prime subvarieties, as indicated by ``None" in~(Dec).
The number~11 in~(Sub) is the number of subvarieties of~$\bV_{\ref{V: Cd}}$.
Justification of the all these results regarding~$\bV_{\ref{V: Cd}}$ can be found in Zhang and Luo~\cite[Variety~$\mathbf{C}$ in Figure~4]{ZL09}.
For any variety with finitely many subvarieties, its lattice of subvarieties is given in Section~\ref{app: finite}.
Specifically, the lattice of subvarieties of~$\bV_{\ref{V: Cd}}$ can be found in Figure~\ref{F: JId LZi}.

\subsection{Varieties with primitive generator of order~$2$} \label{subsec: variety 2}

\begin{variety}[Evans~{\cite[Figure~3]{Eva71}}] \label{V: N2} \vquad
\begin{enumerate}[\qquad(1)]
\item[(Gen)] $[11,11] = \Niltwo$
\item[(Bas)] $x^2 \approx xy$, $xy \approx yx$
\item[(Max)] $x \approx y$
\item[(Dec)] None
\item[(Sub)] $2$
\end{enumerate}
\end{variety}

\begin{variety}[Evans~{\cite[Figure~3]{Eva71}}] \label{V: SL} \vquad
\begin{enumerate}[\qquad(1)]
\item[(Gen)] $[11,12] = \SL$
\item[(Bas)] $x^2 \approx x$, $xy \approx yx$
\item[(Max)] $x \approx y$
\item[(Dec)] None
\item[(Sub)] $2$
\end{enumerate}
\end{variety}

\begin{variety}[Evans~{\cite[Figure~3]{Eva71}}] \label{V: LZ} \vquad
\begin{enumerate}[\qquad(1)]
\item[(Gen)] $[11,22] = \LZ$
\item[(Bas)] $ax \approx a$
\item[(Max)] $x \approx y$
\item[(Dec)] None
\item[(Sub)] $2$
\end{enumerate}
\end{variety}

\begin{variety}[Evans~{\cite[Figure~3]{Eva71}}] \label{V: RZ} \vquad
\begin{enumerate}[\qquad(1)]
\item[(Gen)] $[12,12] = \RZ$
\item[(Bas)] $xa \approx a$
\item[(Max)] $x \approx y$
\item[(Dec)] None
\item[(Sub)] $2$
\end{enumerate}
\end{variety}

\begin{variety}[Lee {\etal}.~{\cite[Proposition~5.4]{LRS19}}] \label{V: Z2} \vquad
\begin{enumerate}[\qquad(1)]
\item[(Gen)] $[12,21] = \Ztwo$
\item[(Bas)] $x^2a \approx a$, $xy \approx yx$
\item[(Max)] $x \approx y$
\item[(Dec)] None
\item[(Sub)] $2$
\end{enumerate}
\end{variety}

\subsection{Varieties with primitive generator of order~$3$} \label{subsec: variety 3}

\begin{variety}[Tishchenko~{\cite[Variety~$\mathbf{CN}_3$ on page~439]{Tis17}}; Figures~\ref{F: N3 P2}, \ref{F: SL N3}, \ref{F: N3 Zn}, or~\ref{F: N4}] \label{V: N3} \vquad
\begin{enumerate}[\qquad(1)]
\item[(Gen)] $[111,111,112] = \Nilthree$
\item[(Bas)] $x^3 \approx xyz$, $xy \approx yx$
\item[(Max)] $x^3 \approx x^2$
\item[(Dec)] None
\item[(Sub)] $4$
\end{enumerate}
\end{variety}

\begin{variety}[Evans~{\cite[Figure~3]{Eva71}}; Figures~\ref{F: JI JId}, \ref{F: JI LZi}, \ref{F: JId LZi}, \ref{F: SL N3}, or~\ref{F: JI Zp}] \label{V: N2 SL} \vquad
\begin{enumerate}[\qquad(1)]
\item[(Gen)] $[111,111,113]$
\item[(Bas)] $x^2a \approx xa$, $xy \approx yx$
\item[(Max)] $x^2 \approx x$; $x^2 \approx xy$
\item[(Dec)] $\var\{\Niltwo\} \vee \var\{\SL\}$
\item[(Sub)] $4$
\end{enumerate}
\end{variety}

\begin{variety}[Zhang and Luo~{\cite[Variety~$\mathbf{D}$ in Figure~2]{ZL09}}; Figures~\ref{F: JI JId}, \ref{F: JI LZi}, \ref{F: JId LZi}, or~\ref{F: JI Zp}] \label{V: JI} \vquad
\begin{enumerate}[\qquad(1)]
\item[(Gen)] $[111,111,123] = \JI$
\item[(Bas)] $x^2a \approx xa$, $xy^2 \approx yx^2$
\item[(Max)] $xy \approx yx$
\item[(Dec)] None
\item[(Sub)] $5$
\end{enumerate}
\end{variety}

\begin{variety}[Evans~{\cite[Figure~3]{Eva71}}; Figures~\ref{F: JI LZi}, \ref{F: JId LZi}, or~\ref{F: N3 P2}] \label{V: N2 LZ} \vquad
\begin{enumerate}[\qquad(1)]
\item[(Gen)] $[111,111,333]$	
\item[(Bas)] $x^2 \approx xy$
\item[(Max)] $x^2 \approx x$; $xy \approx yx$
\item[(Dec)] $\var\{\Niltwo\} \vee \var\{ \LZ \}$	
\item[(Sub)] 4
\end{enumerate}
\end{variety}

\begin{variety}[Zhang and Luo~{\cite[Variety~$\mathbf{E}$ in Figure~2]{ZL09}}; Figures~\ref{F: JI JId}, \ref{F: JI LZi}, \ref{F: JId LZi}, or~\ref{F: JI Zp}] \label{V: JId} \vquad
\begin{enumerate}[\qquad(1)]
\item[(Gen)] $[111,112,113] = \JIdual$
\item[(Bas)] $ax^2 \approx ax$, $x^2y \approx y^2x$
\item[(Max)] $xy \approx yx$
\item[(Dec)] None
\item[(Sub)] 5
\end{enumerate}
\end{variety}

\begin{variety}[Subsection~\ref{subsec: Zp Nni}] \label{V: N2i} \vquad
\begin{enumerate}[\qquad(1)]
\item[(Gen)] $[111,112,123]=\Niltwoi$
\item[(Bas)] $x^3 \approx x^2$, $xy \approx yx$
\item[(Max)] $x^2y \approx xy^2$
\item[(Dec)] None
\item[(Sub)] Countably infinite
\end{enumerate}
\end{variety}

\begin{variety}[Gerhard and Petrich~{\cite[Variety $\mathtt{LNB}$ in Section~2]{GP85}}; Figures~\ref{F: LZi RZi}, \ref{F: JI LZi}, \ref{F: JId LZi}, or~\ref{F: O2}] \label{V: SL LZ} \vquad
\begin{enumerate}[\qquad(1)]
\item[(Gen)] $[111,121,333]$
\item[(Bas)] $x^2 \approx x$, $axy \approx ayx$
\item[(Max)] $xy \approx x$; $xy \approx yx$
\item[(Dec)] $\var\{\SL\} \vee \var\{\LZ\}$
\item[(Sub)] 4
\end{enumerate}
\end{variety}

\begin{variety}[Gerhard and Petrich~{\cite[Variety $\mathtt{RNB}$ in Section~2]{GP85}}; Figures~\ref{F: LZi RZi}, \ref{F: JI LZi}, \ref{F: JId LZi}, or~\ref{F: O2}] \label{V: SL RZ} \vquad
\begin{enumerate}[\qquad(1)]
\item[(Gen)] $[111,123,123]$
\item[(Bas)] $x^2 \approx x$, $xya \approx yxa$
\item[(Max)] $xy \approx y$; $xy \approx yx$
\item[(Dec)] $\var\{\SL\} \vee \var\{\RZ\}$
\item[(Sub)] 4
\end{enumerate}
\end{variety}

\begin{variety}[Subsection~\ref{subsec: JI Zp SL Zpp}] \label{V: SL Z2} \vquad
\begin{enumerate}[\qquad(1)]
\item[(Gen)] $[111,123,132]$
\item[(Bas)] $x^3 \approx x$, $xy \approx yx$
\item[(Max)] $x^2 \approx x$; $x^2y \approx y$
\item[(Dec)] $\var\{\SL\} \vee \var\{\Ztwo\}$
\item[(Sub)] 4
\end{enumerate}
\end{variety}

\begin{variety}[Gerhard and Petrich~{\cite[Variety $\mathtt{LRB}$ in Section~2]{GP85}}; Figures~\ref{F: LZi RZi}, \ref{F: JI LZi}, \ref{F: JId LZi}, or~\ref{F: O2}] \label{V: LZi} \vquad
\begin{enumerate}[\qquad(1)]
\item[(Gen)] $[111,123,333] = \LZi$
\item[(Bas)] $x^2 \approx x$, $xyx \approx xy$
\item[(Max)] $axy \approx ayx$
\item[(Dec)] None
\item[(Sub)] 5
\end{enumerate}
\end{variety}

\begin{variety}[Evans~{\cite[Figure~3]{Eva71}}; Figures~\ref{F: JI LZi}, \ref{F: JId LZi}, or~\ref{F: N3 P2}] \label{V: N2 RZ} \vquad
\begin{enumerate}[\qquad(1)]
\item[(Gen)] $[113,113,113]$
\item[(Bas)] $x^2 \approx yx$
\item[(Max)] $x^2 \approx x$; $xy \approx yx$
\item[(Dec)] $\var\{\Niltwo\} \vee \var\{\RZ\}$
\item[(Sub)] 4
\end{enumerate}
\end{variety}

\begin{variety}[Subsection~\ref{subsec: JI Zp SL Zpp}] \label{V: N2 Z2} \vquad
\begin{enumerate}[\qquad(1)]
\item[(Gen)] $[113,113,331]$
\item[(Bas)] $x^2ab \approx ab$, $xy \approx yx$
\item[(Max)] $x^3 \approx x$; $x^3 \approx x^2$
\item[(Dec)] $\var\{\Niltwo\} \vee \var\{\Ztwo\}$
\item[(Sub)] 4
\end{enumerate}
\end{variety}

\begin{variety}[Gerhard and Petrich~{\cite[Variety $\mathtt{RRB}$ in Section~2]{GP85}}; Figures~\ref{F: LZi RZi}, \ref{F: JI LZi}, \ref{F: JId LZi}, or~\ref{F: O2}] \label{V: RZi} \vquad
\begin{enumerate}[\qquad(1)]
\item[(Gen)] $[113,123,133] = \RZi$
\item[(Bas)] $x^2 \approx x$, $xyx \approx yx$
\item[(Max)] $xya \approx yxa$
\item[(Dec)] None
\item[(Sub)] 5
\end{enumerate}
\end{variety}

\begin{variety}[Lee {\etal}.~{\cite[Proposition~5.4]{LRS19}}] \label{V: Z3} \vquad
\begin{enumerate}[\qquad(1)]
\item[(Gen)] $[123,231,312] = \Zthree$
\item[(Bas)] $x^3a \approx a$, $xy \approx yx$
\item[(Max)] $x \approx y$
\item[(Dec)] None
\item[(Sub)] 2
\end{enumerate}
\end{variety}

\subsection{Varieties with primitive generator of order~$4$} \label{subsec: variety 4}

\begin{variety}[Tishchenko~{\cite[Variety~$\mathbf{N}_{3,2}$ on page~439]{Tis17}}; Figures~\ref{F: JI JId} or~\ref{F: N3 P2}] \label{V: F4} \vquad
\begin{enumerate}[\qquad(1)]
\item[(Gen)] $[1111,1111,1111,1121] = \Ffour$
\item[(Bas)] $x^2 \approx yzt$
\item[(Max)] $xy \approx yx$
\item[(Dec)] None
\item[(Sub)] 4
\end{enumerate}
\end{variety}

\begin{variety}[Tishchenko~{\cite[Variety~$\mathbf{N}_3$ on page~438]{Tis17}}; Figure~\ref{F: N3 P2}] \label{V: N3 F4} \vquad
\begin{enumerate}[\qquad(1)]
\item[(Gen)] $[1111,1111,1111,1122]$
\item[(Bas)] $x^3 \approx yzt$
\item[(Max)] $x^3 \approx x^2$; $xy \approx yx$
\item[(Dec)] $\var\{\Nilthree\} \vee \var\{\Ffour\}$
\item[(Sub)] 6
\end{enumerate}
\end{variety}

\begin{variety}[Tishchenko~{\cite[Variety~$\mathbf{CN}_{3,2}$ on page~439]{Tis17} \label{V: G4}}; Figures~\ref{F: JI JId}, \ref{F: N3 P2}, \ref{F: SL N3}, \ref{F: N3 Zn}, or~\ref{F: N4}] \vquad
\begin{enumerate}[\qquad(1)]
\item[(Gen)] $[1111,1111,1112,1121] = \Gfour$
\item[(Bas)] $x^2 \approx xyz$, $xy \approx yx$
\item[(Max)] $x^2 \approx xy$
\item[(Dec)] None
\item[(Sub)] 3
\end{enumerate}
\end{variety}

\begin{variety}[Lee {\etal}.~{\cite[Condition~A8]{LRS19}}; Figure~\ref{F: N4}] \label{V: N4} \vquad
\begin{enumerate}[\qquad(1)]
\item[(Gen)] $[1111,1111,1112,1123] = \Nilfour$
\item[(Bas)] $x^4 \approx xyzt$, $x^2y \approx xy^2$, $xy \approx yx$
\item[(Max)] $x^4 \approx x^3$
\item[(Dec)] None
\item[(Sub)] 8
\end{enumerate}
\end{variety}

\begin{variety}[Zhang and Luo~{\cite[Variety~$\mathbf{D \vee E}$ in Figure~2]{ZL09}}; Figure~\ref{F: JI JId}] \label{V: JI JId} \vquad
\begin{enumerate}[\qquad(1)]
\item[(Gen)] $[1111,1111,1113,1214]$
\item[(Bas)] $x^3 \approx x^2$, $xyx \approx x^2y^2$, $xyx \approx y^2x^2$, $ax^2b \approx axb$
\item[(Max)] $xyx \approx x^2y$; $xyx \approx yx^2$
\item[(Dec)] $\var\{\JI\} \vee \var\{\JIdual\}$
\item[(Sub)] 13
\end{enumerate}
\end{variety}

\begin{variety}[Subsection~\ref{subsec: JI Nni}] \label{V: JI N2i} \vquad
\begin{enumerate}[\qquad(1)]
\item[(Gen)] $[1111,1111,1113,1234]$
\item[(Bas)] $x^3 \approx x^2$, $x^2y^2 \approx y^2x^2$, $xya \approx yxa$
\item[(Max)] $x^2y \approx yx^2$; $xy^2 \approx yx^2$
\item[(Dec)] $\var\{\JI\} \vee \var\{\Niltwoi\}$
\item[(Sub)] Countably infinite
\end{enumerate}
\end{variety}

\begin{variety}[Lee {\etal}.~{\cite[Proposition~6.14]{LRS19}}; Figure~\ref{F: SL N3}] \label{V: SL N3} \vquad
\begin{enumerate}[\qquad(1)]
\item[(Gen)] $[1111,1111,1121,1114]$
\item[(Bas)] $x^2ab \approx xab$, $xy \approx yx$
\item[(Max)] $x^3 \approx x^2$; $x^3 \approx y^3$
\item[(Dec)] $\var\{\SL\} \vee \var\{\Nilthree\}$
\item[(Sub)] 8
\end{enumerate}
\end{variety}

\begin{variety}[Tishchenko~{\cite[Variety~$\mathbf{L}_{1,3}$ on page~438]{Tis17}}; Figure~\ref{F: N3 P2}] \label{V: LZ N3} \vquad
\begin{enumerate}[\qquad(1)]
\item[(Gen)] $[1111,1111,1121,4444]$
\item[(Bas)] $x^3 \approx xyz$
\item[(Max)] $x^3 \approx x^2$; $x^3 \approx y^3$
\item[(Dec)] $\var\{\LZ\} \vee \var\{\Nilthree\}$
\item[(Sub)] 10
\end{enumerate}
\end{variety}

\begin{variety}[Evans~{\cite[Figure~3]{Eva71}}; Figures~\ref{F: JI LZi} or~\ref{F: JId LZi}] \label{V: N2 SL LZ} \vquad
\begin{enumerate}[\qquad(1)]
\item[(Gen)] $[1111,1111,1131,4444]$
\item[(Bas)] $x^2a \approx xa$, $ax^2 \approx ax$, $axy \approx ayx$
\item[(Max)] $x^2 \approx x$; $x^2 \approx xy$; $xy \approx yx$
\item[(Dec)] $\var\{\Niltwo\} \vee \var\{\SL\} \vee \var\{\LZ\}$
\item[(Sub)] 8
\end{enumerate}
\end{variety}

\begin{variety}[Evans~{\cite[Figure~3]{Eva71}}; Figures~\ref{F: JI LZi} or~\ref{F: JId LZi}] \label{V: N2 SL RZ} \vquad
\begin{enumerate}[\qquad(1)]
\item[(Gen)] $[1111,1111,1134,1134]$
\item[(Bas)] $x^2a \approx xa$, $ax^2 \approx ax$, $xya \approx yxa$
\item[(Max)] $x^2 \approx x$; $x^2 \approx yx$; $xy \approx yx$
\item[(Dec)] $\var\{\Niltwo\} \vee \var\{\SL\} \vee \var\{\RZ\}$
\item[(Sub)] 8
\end{enumerate}
\end{variety}

\begin{variety}[Subsection~\ref{subsec: JI Zp SL Zpp}] \label{V: N2 SL Z2} \vquad
\begin{enumerate}[\qquad(1)]
\item[(Gen)] $[1111,1111,1134,1143]$
\item[(Bas)] $x^3a \approx xa$, $xy \approx yx$
\item[(Max)] $x^3 \approx x$; $x^3 \approx x^2$; $x^2 \approx y^2$
\item[(Dec)] $\var\{\Niltwo\} \vee \var\{\SL\} \vee \var\{\Ztwo\}$
\item[(Sub)] 8
\end{enumerate}
\end{variety}

\begin{variety}[Zhang and Luo~{\cite[Variety ${ \mathbf{L^1 \vee N} }$ in Figure~5]{ZL09}}; Figures~\ref{F: JI LZi} or~\ref{F: JId LZi}] \label{V: N2 LZi} \vquad
\begin{enumerate}[\qquad(1)]
\item[(Gen)] $[1111,1111,1134,4444]$
\item[(Bas)] $x^2a \approx xa$, $ax^2 \approx ax$, $xyx \approx xy$
\item[(Max)] $x^2 \approx x$; $axy \approx ayx$
\item[(Dec)] $\var\{\Niltwo\} \vee \var\{\LZi\}$
\item[(Sub)] 10
\end{enumerate}
\end{variety}

\begin{variety}[Zhang and Luo~{\cite[Variety ${ \mathbf{D \vee L} }$ in Figure~4]{ZL09}}; Figure~\ref{F: JI LZi}] \label{V: LZ JI} \vquad
\begin{enumerate}[\qquad(1)]
\item[(Gen)] $[1111,1111,1231,4444]$
\item[(Bas)] $x^2a \approx xa$, $axy^2 \approx ayx^2$
\item[(Max)] $ax^2 \approx ax$; $xy^2 \approx yx^2$
\item[(Dec)] $\var\{\LZ\} \vee \var\{\JI\}$
\item[(Sub)] 10
\end{enumerate}
\end{variety}

\begin{variety}[Dual of Variety~\ref{V: LZ JId}; Figure~\ref{F: JId LZi}] \label{V: RZ JI} \vquad
\begin{enumerate}[\qquad(1)]
\item[(Gen)] $[1111,1111,1234,1234]$
\item[(Bas)] $x^2a \approx xa$, $xya \approx yxa$
\item[(Max)] $ax^2 \approx ax$; $xy^2 \approx yx^2$
\item[(Dec)] $\var\{\RZ\} \vee \var\{\JI\}$
\item[(Sub)] 10
\end{enumerate}
\end{variety}

\begin{variety}[Subsection~\ref{subsec: JI Zp SL Zpp}] \label{V: Z2 JI} \vquad
\begin{enumerate}[\qquad(1)]
\item[(Gen)] $[1111,1111,1234,1243]$
\item[(Bas)] $x^3a \approx xa$, $x^2y^2 \approx y^2x^2$, $xya \approx yxa$
\item[(Max)] $x^3 \approx x^2$; $xy \approx yx$
\item[(Dec)] $\var\{\Ztwo\} \vee \var\{\JI\}$
\item[(Sub)] 10
\end{enumerate}
\end{variety}

\begin{variety}[Edmunds~{\cite[Semigroup $\mathtt{S(4,11)}$ on page~70]{Edm80}}; Figure~\ref{F: JI LZi}] \label{V: JI LZi} \vquad
\begin{enumerate}[\qquad(1)]
\item[(Gen)] $[1111,1111,1234,4444]$
\item[(Bas)] $x^2a \approx xa$, $xyx \approx xy^2$
\item[(Max)] $ax^2 \approx ax$; $axy^2 \approx ayx^2$
\item[(Dec)] $\var\{\JI\} \vee \var\{\LZi\}$
\item[(Sub)] 13
\end{enumerate}
\end{variety}

\begin{variety}[Subsection~\ref{subsec: JI Nni}] \label{V: JId N2i} \vquad
\begin{enumerate}[\qquad(1)]
\item[(Gen)] $[1111,1112,1113,1134]$
\item[(Bas)] $x^3 \approx x^2$, $x^2y^2 \approx y^2x^2$, $axy \approx ayx$
\item[(Max)] $x^2y \approx yx^2$; $x^2y \approx y^2x$
\item[(Dec)] $\var\{\JIdual\} \vee \var\{\Niltwoi\}$
\item[(Sub)] Countably infinite
\end{enumerate}
\end{variety}

\begin{variety}[Subsection~\ref{subsec: Zp Nni}] \label{V: N3i} \vquad
\begin{enumerate}[\qquad(1)]
\item[(Gen)] $[1111,1112,1123,1234] = \Nilthreei$
\item[(Bas)] $x^4 \approx x^3$, $xy \approx yx$
\item[(Max)] $x^3y^2 \approx x^2y^3$
\item[(Dec)] None
\item[(Sub)] Countably infinite
\end{enumerate}
\end{variety}

\begin{variety}[Subsection~\ref{subsec: B0 A0}] \label{V: B0} \vquad
\begin{enumerate}[\qquad(1)]
\item[(Gen)] $[1111,1112,1231,1114] = \Bz$
\item[(Bas)] $x^3 \approx x^2$, $x^2yx^2 \approx yxy$, $x^2y^2 \approx y^2x^2$
\item[(Max)] $a^2x^2b^2 \approx a^2xb^2$
\item[(Dec)] None
\item[(Sub)] Countably infinite
\end{enumerate}
\end{variety}

\begin{variety}[Subsection~\ref{subsec: B0 A0}] \label{V: A0} \vquad
\begin{enumerate}[\qquad(1)]
\item[(Gen)] $[1111,1112,1232,1114] = \Az$
\item[(Bas)] $x^3 \approx x^2$, $x^2yx^2 \approx yxy$
\item[(Max)] $x^2y^2 \approx y^2x^2$
\item[(Dec)] None
\item[(Sub)] Countably infinite
\end{enumerate}
\end{variety}

\begin{variety}[Subsection~\ref{subsec: JIi}] \label{V: JIi} \vquad
\begin{enumerate}[\qquad(1)]
\item[(Gen)] $[1111,1112,1233,1234] = \JIi$
\item[(Bas)] $x^3 \approx x^2$, $x^2y^2 \approx y^2x^2$, $xyx \approx yx^2$
\item[(Max)] $x^2ya^2 \approx yx^2a^2$
\item[(Dec)] None
\item[(Sub)] Countably infinite
\end{enumerate}
\end{variety}

\begin{variety}[Zhang and Luo~{\cite[Variety~${ \mathbf{E \vee L} }$ in Figure~4]{ZL09}}; Figure~\ref{F: JId LZi}] \label{V: LZ JId} \vquad
\begin{enumerate}[\qquad(1)]
\item[(Gen)] $[1111,1112,3333,1114]$
\item[(Bas)] $ax^2 \approx ax$, $axy \approx ayx$
\item[(Max)] $x^2a \approx xa$; $x^2y \approx y^2x$
\item[(Dec)] $\var\{\LZ\} \vee \var\{\JIdual\}$
\item[(Sub)] 10
\end{enumerate}
\end{variety}

\begin{variety}[Edmunds~{\cite[Semigroup $\mathtt{S(4,25)}$ on page~70]{Edm80}}; Figure~\ref{F: JId LZi}] \label{V: JId LZi} \vquad
\begin{enumerate}[\qquad(1)]
\item[(Gen)] $[1111,1112,3333,1134]$
\item[(Bas)] $ax^2 \approx ax$, $xyx \approx x^2y$
\item[(Max)] $x^2a \approx xa$; $a^2xy \approx a^2yx$
\item[(Dec)] $\var\{\JIdual\} \vee \var\{\LZi\}$
\item[(Sub)] 14
\end{enumerate}
\end{variety}

\begin{variety}[Subsection~\ref{subsec: LZ Nni}] \label{V: LZ N2i} \vquad
\begin{enumerate}[\qquad(1)]
\item[(Gen)] $[1111,1112,3333,1214]$
\item[(Bas)] $x^3 \approx x^2$, $axy \approx ayx$
\item[(Max)] $x^2y^2 \approx y^2x^2$; $a^2x^2 \approx a^2x$
\item[(Dec)] $\var\{\LZ\} \vee \var\{\Niltwoi\}$
\item[(Sub)] Countably infinite
\end{enumerate}
\end{variety}

\begin{variety}[Subsection~\ref{subsec: LZi Nni}] \label{V: N2i LZi} \vquad
\begin{enumerate}[\qquad(1)]
\item[(Gen)] $[1111,1112,3333,1234]$
\item[(Bas)] $x^3 \approx x^2$, $xyx \approx x^2y$
\item[(Max)] $a^2x^2 \approx a^2x$; $a^2x^2y^2 \approx a^2y^2x^2$
\item[(Dec)] $\var\{\Niltwoi\} \vee \var\{\LZi\}$
\item[(Sub)] Countably infinite
\end{enumerate}
\end{variety}

\begin{variety}[Tishchenko~{\cite[Variety~$\mathbf{L}_{2,2}$ on page~438]{Tis17}}; Figure~\ref{F: N3 P2}] \label{V: P2} \vquad
\begin{enumerate}[\qquad(1)]
\item[(Gen)] $[1111,1113,3333,4444] = \Ptwo$
\item[(Bas)] $abx \approx ab$
\item[(Max)] $x^2 \approx xy$
\item[(Dec)] None
\item[(Sub)] 5
\end{enumerate}
\end{variety}

\begin{variety}[Dual of Variety~\ref{V: JIi}] \label{V: JIid} \vquad
\begin{enumerate}[\qquad(1)]
\item[(Gen)] $[1111,1122,1133,1234] = \JIidual$
\item[(Bas)] $x^3 \approx x^2$, $x^2y^2 \approx y^2x^2$, $xyx \approx x^2y$
\item[(Max)] $a^2x^2y \approx a^2yx^2$
\item[(Dec)] None
\item[(Sub)] Countably infinite
\end{enumerate}
\end{variety}

\begin{variety}[Dual of Variety~\ref{V: LZ JI}; Figure~\ref{F: JI LZi}] \label{V: RZ JId} \vquad
\begin{enumerate}[\qquad(1)]
\item[(Gen)] $[1111,1122,1134,1134]$
\item[(Bas)] $ax^2 \approx ax$, $x^2ya \approx y^2xa$
\item[(Max)] $x^2a \approx xa$; $x^2y \approx y^2x$
\item[(Dec)] $\var\{\RZ\} \vee \var\{\JIdual\}$
\item[(Sub)] 10
\end{enumerate}
\end{variety}

\begin{variety}[Dual of Variety~\ref{V: Z2 JI}] \label{V: Z2 JId} \vquad
\begin{enumerate}[\qquad(1)]
\item[(Gen)] $[1111,1122,1134,1143]$
\item[(Bas)] $ax^3 \approx ax$, $x^2y^2 \approx y^2x^2$, $axy \approx ayx$
\item[(Max)] $x^3 \approx x^2$; $xy \approx yx$
\item[(Dec)] $\var\{\Ztwo\} \vee \var\{\JIdual\}$
\item[(Sub)] 10
\end{enumerate}
\end{variety}

\begin{variety}[Subsection~\ref{subsec: LZ Nni}] \label{V: RZ N2i} \vquad
\begin{enumerate}[\qquad(1)]
\item[(Gen)] $[1111,1122,1234,1234]$
\item[(Bas)] $x^3 \approx x^2$, $xya \approx yxa$
\item[(Max)] $x^2y^2 \approx y^2x^2$; $x^2a^2 \approx xa^2$
\item[(Dec)] $\var\{\RZ\} \vee \var\{\Niltwoi\}$
\item[(Sub)] Countably infinite
\end{enumerate}
\end{variety}

\begin{variety}[Subsection~\ref{subsec: Zp Nni}] \label{V: Z2 N2i} \vquad
\begin{enumerate}[\qquad(1)]
\item[(Gen)] $[1111,1122,1234,1243]$
\item[(Bas)] $x^4 \approx x^2$, $xy \approx yx$
\item[(Max)] $x^3 \approx x^2$; $x^3y \approx xy^3$
\item[(Dec)] $\var\{\Ztwo\} \vee \var\{\Niltwoi\}$
\item[(Sub)] Countably infinite
\end{enumerate}
\end{variety}

\begin{variety}[Gerhard and Petrich~{\cite[Variety $\mathtt{NB}$ in Section~2]{GP85}}; Figure~\ref{F: LZi RZi}] \label{V: SL LZ RZ} \vquad
\begin{enumerate}[\qquad(1)]
\item[(Gen)] $[1111,1214,3333,1214]$
\item[(Bas)] $x^2 \approx x$, $axya \approx ayxa$
\item[(Max)] $xyx \approx x$; $xyx \approx xy$; $xyx \approx yx$
\item[(Dec)] $\var\{\SL\} \vee \var\{\LZ\} \vee \var\{ \RZ\}$
\item[(Sub)] 8
\end{enumerate}
\end{variety}

\begin{variety}[Petrich~{\cite[Lemma~7.3(vii)]{Pet74}}; Figure~\ref{F: O2}] \label{V: SL LZ Z2} \vquad
\begin{enumerate}[\qquad(1)]
\item[(Gen)] $[1111,1214,3333,1412]$
\item[(Bas)] $x^3 \approx x$, $axy \approx ayx$
\item[(Max)] $x^2 \approx x$; $xy \approx yx$; $ax^2 \approx a$
\item[(Dec)] $\var\{\SL\} \vee \var\{\LZ\} \vee \var\{\Ztwo\}$
\item[(Sub)] 8
\end{enumerate}
\end{variety}

\begin{variety}[Gerhard and Petrich~{\cite[Variety $\mathtt{LQNB}$ in Section~2]{GP85}}; Figure~\ref{F: LZi RZi}] \label{V: RZ LZi} \vquad
\begin{enumerate}[\qquad(1)]
\item[(Gen)] $[1111,1234,1234,4444]$
\item[(Bas)] $x^2 \approx x$, $xyxa \approx xya$
\item[(Max)] $xyx \approx xy$; $axya \approx ayxa$
\item[(Dec)] $\var\{\RZ\} \vee \var\{\LZi\}$
\item[(Sub)] 10
\end{enumerate}
\end{variety}

\begin{variety}[Tishchenko~{\cite[Variety~$\mathbf{V}_2$ on page~111]{Tis07}}; Figure~\ref{F: O2}] \label{V: Z2 LZi} \vquad
\begin{enumerate}[\qquad(1)]
\item[(Gen)] $[1111,1234,1324,4444]$
\item[(Bas)] $x^3 \approx x$, $xyx \approx x^2y$
\item[(Max)] $x^2 \approx x$; $axy \approx ayx$
\item[(Dec)] $\var\{\Ztwo\} \vee \var\{\LZi\}$
\item[(Sub)] 10
\end{enumerate}
\end{variety}

\begin{variety}[Subsection~\ref{subsec: JI Zp SL Zpp}] \label{V: SL Z3} \vquad
\begin{enumerate}[\qquad(1)]
\item[(Gen)] $[1111,1234,1342,1423]$
\item[(Bas)] $x^4 \approx x$, $xy \approx yx$
\item[(Max)] $x^2 \approx x$; $x^3a \approx a$
\item[(Dec)] $\var\{\SL\} \vee \var\{\Zthree\}$
\item[(Sub)] 4
\end{enumerate}
\end{variety}

\begin{variety}[Tishchenko~{\cite[Proposition~2.25]{Tis07}}; Figure~\ref{F: O2}] \label{V: O2} \vquad
\begin{enumerate}[\qquad(1)]
\item[(Gen)] $[1111,1234,3333,3412] = \Otwo$
\item[(Bas)] $x^3 \approx x$, $xyxy \approx xy^2x$
\item[(Max)] $xyx \approx x^2y$
\item[(Dec)] None
\item[(Sub)] 11
\end{enumerate}
\end{variety}

\begin{variety}[Dual of Variety~\ref{V: LZ N3}; Figure~\ref{F: N3 P2}] \label{V: RZ N3} \vquad
\begin{enumerate}[\qquad(1)]
\item[(Gen)] $[1114,1114,1124,1114]$
\item[(Bas)] $x^3 \approx yzx$
\item[(Max)] $x^3 \approx x^2$; $x^3 \approx y^3$
\item[(Dec)] $\var\{\RZ\} \vee \var\{\Nilthree\}$
\item[(Sub)] 10
\end{enumerate}
\end{variety}

\begin{variety}[Subsection~\ref{subsec: N3 Zn}] \label{V: Z2 N3} \vquad
\begin{enumerate}[\qquad(1)]
\item[(Gen)] $[1114,1114,1124,4441]$
\item[(Bas)] $x^2abc \approx abc$, $xy \approx yx$
\item[(Max)] $x^4 \approx x^2$; $x^4 \approx x^3$
\item[(Dec)] $\var\{\Ztwo\} \vee \var\{\Nilthree\}$
\item[(Sub)] 8
\end{enumerate}
\end{variety}

\begin{variety}[Dual of Variety~\ref{V: N2 LZi}; Figures~\ref{F: JI LZi} or~\ref{F: JId LZi}] \label{V: N2 RZi} \vquad
\begin{enumerate}[\qquad(1)]
\item[(Gen)] $[1114,1114,1134,1144]$
\item[(Bas)] $x^2a \approx xa$, $ax^2 \approx ax$, $xyx \approx yx$
\item[(Max)] $x^2 \approx x$; $xya \approx yxa$
\item[(Dec)] $\var\{\Niltwo\} \vee \var\{\RZi\}$
\item[(Sub)] 10
\end{enumerate}
\end{variety}

\begin{variety}[Dual of Variety~\ref{V: JId LZi}; Figure~\ref{F: JId LZi}] \label{V: JI RZi} \vquad
\begin{enumerate}[\qquad(1)]
\item[(Gen)] $[1114,1114,1234,1144]$
\item[(Bas)] $x^2a \approx xa$, $xyx \approx yx^2$
\item[(Max)] $ax^2 \approx ax$; $xya^2 \approx yxa^2$
\item[(Dec)] $\var\{\JI\} \vee \var\{\RZi\}$
\item[(Sub)] 14
\end{enumerate}
\end{variety}

\begin{variety}[Dual of Variety~\ref{V: JI LZi}; Figure~\ref{F: JI LZi}] \label{V: JId RZi} \vquad
\begin{enumerate}[\qquad(1)]
\item[(Gen)] $[1114,1124,1134,1144]$
\item[(Bas)] $ax^2 \approx ax$, $xyx \approx y^2x$
\item[(Max)] $x^2a \approx xa$; $x^2ya \approx y^2xa$
\item[(Dec)] $\var\{\JIdual\} \vee \var\{\RZi\}$
\item[(Sub)] 13
\end{enumerate}
\end{variety}

\begin{variety}[Subsection~\ref{subsec: LZi Nni}] \label{V: N2i RZi} \vquad
\begin{enumerate}[\qquad(1)]
\item[(Gen)] $[1114,1124,1234,1144]$
\item[(Bas)] $x^3 \approx x^2$, $xyx \approx yx^2$
\item[(Max)] $x^2a^2 \approx xa^2$; $x^2y^2a^2 \approx y^2x^2a^2$
\item[(Dec)] $\var\{\Niltwoi\} \vee \var\{\RZi\}$
\item[(Sub)] Countably infinite
\end{enumerate}
\end{variety}

\begin{variety}[Gerhard and Petrich~{\cite[Variety $\mathtt{RQNB}$ in Section~2]{GP85}}; Figure~\ref{F: LZi RZi}] \label{V: LZ RZi} \vquad
\begin{enumerate}[\qquad(1)]
\item[(Gen)] $[1114,1224,1334,1444]$
\item[(Bas)] $x^2 \approx x$, $axyx \approx ayx$
\item[(Max)] $xyx \approx yx$; $axya \approx ayxa$
\item[(Dec)] $\var\{\LZ\} \vee \var\{\RZi\}$
\item[(Sub)] 10
\end{enumerate}
\end{variety}

\begin{variety}[Dual to Variety~\ref{V: SL LZ Z2}; Figure~\ref{F: O2}] \label{V: SL RZ Z2} \vquad
\begin{enumerate}[\qquad(1)]
\item[(Gen)] $[1114,1234,1234,4441]$
\item[(Bas)] $x^3 \approx x$, $xya \approx yxa$
\item[(Max)] $x^2 \approx x$; $x^2a \approx a$; $xy \approx yx$
\item[(Dec)] $\var\{\SL\} \vee \var\{\RZ\} \vee \var\{\Ztwo\}$
\item[(Sub)] 8
\end{enumerate}
\end{variety}

\begin{variety}[Dual of Variety~\ref{V: Z2 LZi}; Figure~\ref{F: O2}] \label{V: Z2 RZi} \vquad
\begin{enumerate}[\qquad(1)]
\item[(Gen)] $[1114,1234,1324,1444]$
\item[(Bas)] $x^3 \approx x$, $xyx \approx yx^2$
\item[(Max)] $x^2 \approx x$; $xya \approx yxa$
\item[(Dec)] $\var\{\Ztwo\} \vee \var\{\RZi\}$
\item[(Sub)] 10
\end{enumerate}
\end{variety}

\begin{variety}[Dual of Variety~\ref{V: O2}; Figure~\ref{F: O2}] \label{V: O2d} \vquad
\begin{enumerate}[\qquad(1)]
\item[(Gen)] $[1133,1234,1331,1432] = \Otwodual$
\item[(Bas)] $x^3 \approx x$, $xyxy \approx yx^2y$
\item[(Max)] $xyx \approx yx^2$
\item[(Dec)] None
\item[(Sub)] 11
\end{enumerate}
\end{variety}

\begin{variety}[Gerhard and Petrich~{\cite[Variety $\mathtt{Rec\,B}$ in Section~2]{GP85}}; Figure~\ref{F: LZi RZi}] \label{V: LZ RZ} \vquad
\begin{enumerate}[\qquad(1)]
\item[(Gen)] $[1133,2244,1133,2244]$
\item[(Bas)] $xyx \approx x$
\item[(Max)] $xy \approx x$; $xy \approx y$
\item[(Dec)] $\var\{\LZ\} \vee \var\{\RZ\}$
\item[(Sub)] 4
\end{enumerate}
\end{variety}

\begin{variety}[Tishchenko~{\cite[Variety ${ \mathbf{A}_2 \vee \mathbf{L}_1 }$ on page~108]{Tis07}}; Figure~\ref{F: O2}] \label{V: LZ Z2} \vquad
\begin{enumerate}[\qquad(1)]
\item[(Gen)] $[1133,2244,3311,4422]$
\item[(Bas)] $ax^2 \approx a$, $axy \approx ayx$
\item[(Max)] $x^2 \approx x$; $x^2 \approx y^2$
\item[(Dec)] $\var\{\LZ\} \vee \var\{\Ztwo\}$
\item[(Sub)] 4
\end{enumerate}
\end{variety}

\begin{variety}[Dual of Variety~\ref{V: P2}; Figure~\ref{F: N3 P2}] \label{V: P2d} \vquad
\begin{enumerate}[\qquad(1)]
\item[(Gen)] $[1134,1134,1134,1334] = \Ptwodual$
\item[(Bas)] $xab \approx ab$
\item[(Max)] $x^2 \approx yx$
\item[(Dec)] None
\item[(Sub)] 5
\end{enumerate}
\end{variety}

\begin{variety}[Subsection~\ref{subsec: JI Zp SL Zpp}] \label{V: N2 Z3} \vquad
\begin{enumerate}[\qquad(1)]
\item[(Gen)] $[1134,1134,3341,4413]$
\item[(Bas)] $x^3ab \approx ab$, $xy \approx yx$
\item[(Max)] $x^4 \approx x$; $x^3 \approx x^2$
\item[(Dec)] $\var\{\Niltwo\} \vee \var\{\Zthree\}$
\item[(Sub)] 4
\end{enumerate}
\end{variety}

\begin{variety}[Dual of Variety~\ref{V: LZ Z2}; Figure~\ref{F: O2}] \label{V: RZ Z2} \vquad
\begin{enumerate}[\qquad(1)]
\item[(Gen)] $[1234,1234,3412,3412]$
\item[(Bas)] $x^2a \approx a$, $xya \approx yxa$
\item[(Max)] $x^2 \approx x$; $x^2 \approx y^2$
\item[(Dec)] $\var\{\RZ\} \vee \var\{\Ztwo\}$
\item[(Sub)] 4
\end{enumerate}
\end{variety}

\begin{variety}[Lee {\etal}.~{\cite[Proposition~5.4]{LRS19}}; Figure~\ref{F: SL Zpp}] \label{V: Z4} \vquad
\begin{enumerate}[\qquad(1)]
\item[(Gen)] $[1234,2143,3421,4312] = \Zfour$
\item[(Bas)] $x^4a \approx a$, $xy \approx yx$
\item[(Max)] $x^3 \approx x$
\item[(Dec)] None
\item[(Sub)] 3
\end{enumerate}
\end{variety}

\subsection{Some varieties with primitive generator of order greater than~$4$} \label{subsec: variety >4}

\begin{variety}[Zhang and Luo~{\cite[Variety~$\mathbf{F \vee S}$ in Figure~2]{ZL09}}; Figure~\ref{F: JI JId}] \label{V: SL F4} \vquad
\begin{enumerate}[\qquad(1)]
\item[(Gen)] $[11111,11111,11111,11141,11211]$
\item[(Bas)] $x^3 \approx x^2$, $x^2ab \approx xab$, $xya \approx yxa$, $axy \approx ayx$
\item[(Max)] $x^2y \approx x^2$; $xy \approx yx$
\item[(Dec)] $\var\{\SL\} \vee \var\{\Ffour\}$
\item[(Sub)] 8
\end{enumerate}
\end{variety}

\begin{variety}[Tishchenko~{\cite[Variety~$\mathbf{V}_{1,3}$ on page~439]{Tis17}}; Figure~\ref{F: N3 P2}] \label{V: LZ G4} \vquad
\begin{enumerate}[\qquad(1)]
\item[(Gen)] $[11111,11111,11111,11211,55555]$
\item[(Bas)] $x^2 \approx xyz$
\item[(Max)] $x^2 \approx xy$; $x^2 \approx y^2$
\item[(Dec)] $\var\{\LZ\} \vee \var\{\Gfour\}$
\item[(Sub)] 7
\end{enumerate}
\end{variety}

\begin{variety}[Dual of Variety~\ref{V: Cd}; Figure~\ref{F: JId LZi}] \label{V: C} \vquad
\begin{enumerate}[\qquad(1)]
\item[(Gen)] $[11111,11111,11111,11345,13345]$
\item[(Bas)] $x^2a \approx xa$, $xyx \approx yx^2$, $xya^2 \approx yxa^2$
\item[(Max)] $xya \approx yxa$
\item[(Dec)] None
\item[(Sub)] 11
\end{enumerate}
\end{variety}

\begin{variety}[Zhang and Luo~{\cite[Variety~$\mathbf{G \vee S}$ in Figure~2]{ZL09}}; Figures~\ref{F: JI JId} or~\ref{F: SL N3}] \label{V: SL G4} \vquad
\begin{enumerate}[\qquad(1)]
\item[(Gen)] $[11111,11111,11112,11141,11211]$
\item[(Bas)] $x^3 \approx x^2$, $x^2ab \approx xab$, $xy \approx yx$
\item[(Max)] $x^2a \approx xa$; $x^2 \approx y^2$
\item[(Dec)] $\var\{\SL\} \vee \var\{\Gfour\}$
\item[(Sub)] 6
\end{enumerate}
\end{variety}

\begin{variety}[Tishchenko~{\cite[Variety~$\mathbf{L}_{2,3}$ in Proposition~3.1]{Tis17}}; Figure~\ref{F: N3 P2}] \label{V: N3 P2} \vquad
\begin{enumerate}[\qquad(1)]
\item[(Gen)] $[11111,11111,11214,44444,55555]$
\item[(Bas)] $xyx \approx xyz$
\item[(Max)] $x^3 \approx x^2$; $x^3 \approx xyx$
\item[(Dec)] $\var\{\Nilthree\} \vee \var\{\Ptwo\}$
\item[(Sub)] 13
\end{enumerate}
\end{variety}

\begin{variety}[Zhang and Luo~{\cite[Variety~$\mathbf{C}$ in Figure~4]{ZL09}}; Figure~\ref{F: JId LZi}] \label{V: Cd} \vquad
\begin{enumerate}[\qquad(1)]
\item[(Gen)] $[11111,11113,11133,11144,11155]$
\item[(Bas)] $ax^2 \approx ax$, $xyx \approx x^2y$, $a^2xy \approx a^2yx$
\item[(Max)] $axy \approx ayx$
\item[(Dec)] None
\item[(Sub)] 11
\end{enumerate}
\end{variety}

\begin{variety}[Gerhard and Petrich~{\cite[Variety $\mathtt{RB}$ in Section~2]{GP85}}; Figure~\ref{F: LZi RZi}] \label{V: LZi RZi} \vquad
\begin{enumerate}[\qquad(1)]
\item[(Gen)] $[11111,12125,33333,12345,12155]$
\item[(Bas)] $x^2 \approx x$, $xyxzx \approx xyzx$
\item[(Max)] $axyx \approx ayx$; $xyxa \approx xya$
\item[(Dec)] $\var\{\LZi\} \vee \var\{\RZi\}$
\item[(Sub)] 13
\end{enumerate}
\end{variety}

\begin{variety}[Dual of Variety~\ref{V: LZ G4}; Figure~\ref{F: N3 P2}] \label{V: RZ G4} \vquad
\begin{enumerate}[\qquad(1)]
\item[(Gen)] $[11115,11115,11115,11215,11115]$
\item[(Bas)] $x^2 \approx yzx$
\item[(Max)] $x^2 \approx yx$; $x^2 \approx y^2$
\item[(Dec)] $\var\{\RZ\} \vee \var\{\Gfour\}$
\item[(Sub)] 7
\end{enumerate}
\end{variety}

\begin{variety}[Dual of Variety~\ref{V: N3 P2}; Figure~\ref{F: N3 P2}] \label{V: N3 P2d} \vquad
\begin{enumerate}[\qquad(1)]
\item[(Gen)] $[11145,11145,11245,11145,11445]$
\item[(Bas)] $xyx \approx zyx$
\item[(Max)] $x^3 \approx x^2$; $x^3 \approx xyx$
\item[(Dec)] $\var\{\Nilthree\} \vee \var\{\Ptwodual\}$
\item[(Sub)] 13
\end{enumerate}
\end{variety}

\begin{variety}[Zhang and Luo~{\cite[Variety~$\mathbf{D \vee F}$ in Figure~2]{ZL09}}; Figure~\ref{F: JI JId}] \label{V: JI G4} \vquad
\begin{enumerate}[\qquad(1)]
\item[(Gen)] $[111111,111111,111111,111111,111211,113116]$
\item[(Bas)] $x^3 \approx x^2$, $xy^2 \approx yx^2$, $ax^2b \approx axb$
\item[(Max)] $x^2a \approx xa$; $x^2y \approx xy^2$
\item[(Dec)] $\var\{\JI\} \vee \var\{\Gfour\}$
\item[(Sub)] 10
\end{enumerate}
\end{variety}

\begin{variety}[Zhang and Luo~{\cite[Variety~$\mathbf{E \vee F}$ in Figure~2]{ZL09}}; Figure~\ref{F: JI JId}] \label{V: JId G4} \vquad
\begin{enumerate}[\qquad(1)]
\item[(Gen)] $[111111,111111,111111,111114,112111,111116]$
\item[(Bas)] $x^3 \approx x^2$, $x^2y \approx y^2x$, $ax^2b \approx axb$
\item[(Max)] $ax^2 \approx ax$; $x^2y \approx xy^2$
\item[(Dec)] $\var\{\JIdual\} \vee \var\{\Gfour\}$
\item[(Sub)] 10
\end{enumerate}
\end{variety}

\begin{variety}[Tishchenko~{\cite[Variety~$\mathbf{V}_{2,3}$ on page~439]{Tis17}}; Figure~\ref{F: N3 P2}] \label{V: G4 P2} \vquad
\begin{enumerate}[\qquad(1)]
\item[(Gen)] $[111111,111111,111111,112115,555555,666666]$
\item[(Bas)] $x^3 \approx x^2$, $xyx \approx xyz$
\item[(Max)] $xyx \approx x^2$; $xyx \approx xy$
\item[(Dec)] $\var\{\Gfour\} \vee \var\{\Ptwo\}$
\item[(Sub)] 9
\end{enumerate}
\end{variety}

\begin{variety}[Dual of Variety~\ref{V: G4 P2}; Figure~\ref{F: N3 P2}] \label{V: G4 P2d} \vquad
\begin{enumerate}[\qquad(1)]
\item[(Gen)] $[111156,111156,111156,112156,111156,111556]$
\item[(Bas)] $x^3 \approx x^2$, $xyx \approx zyx$
\item[(Max)] $xyx \approx x^2$; $xyx \approx yx$
\item[(Dec)] $\var\{\Gfour\} \vee \var\{\Ptwodual\}$
\item[(Sub)] 9
\end{enumerate}
\end{variety}

\begin{variety}[Mel'nik~{\cite[Variety $B_{24}$ in Figure~3]{Mel72}}; Figure~\ref{F: N4}] \label{V: CN 3 4} \vquad
\begin{enumerate}[\qquad(1)]
\item[(Gen)] $[1111111,1111111,1111112,1111121,1111122,1112235,1121254]$
\item[(Bas)] $x^3 \approx xyzt$, $x^2y \approx xy^2$, $xy \approx yx$
\item[(Max)] $x^2y \approx x^3$
\item[(Dec)] None
\item[(Sub)] 7
\end{enumerate}
\end{variety}

\begin{variety}[Mel'nik~{\cite[Variety $B_{26}$ in Figure~3]{Mel72}}; Figure~\ref{F: N4}] \label{V: CN 2 4} \vquad
\begin{enumerate}[\qquad(1)]
\item[(Gen)] $[1111\,1111, 1111\,1111, 1111\,1112, 1111\,1121, 1111\,1211,1111\,2134,$ \newline $\phantom{[}1112\,1315, 1121\,1451]$
\item[(Bas)] $x^2 \approx xyzt$, $xy \approx yx$
\item[(Max)] $x^2 \approx xyz$
\item[(Dec)] None
\item[(Sub)] 4
\end{enumerate}
\end{variety}

\begin{variety}[Mel'nik~{\cite[Variety $B_{25}$ in Figure~3]{Mel72}}; Figure~\ref{F: N4}] \label{V: CN 21 4} \vquad
\begin{enumerate}[\qquad(1)]
\item[(Gen)] $[1111\,1111,1111\,1111,1111\,1112,1111\,1121,1111\,1211,1111\,2134,$ \newline $\phantom{[}1112\,1315,1121\,1452]$
\item[(Bas)] $x^2y \approx xyzt$, $xy \approx yx$
\item[(Max)] $x^3 \approx xyz$; $x^3 \approx x^2$
\item[(Dec)] $\var\{\Nilthree\} \vee \bV_{\ref{V: CN 2 4}}$
\item[(Sub)] 6
\end{enumerate}
\end{variety}

\section{Problems}

In this section we propose a number of problems that are naturally prompted by the results in this paper.

\begin{prob}
Identify all varieties generated by a semigroup of order~$6$.
\end{prob}

Regarding groups we propose the following problems.

\begin{prob}
Given a finite group~$G$, find good bounds for the following:
\begin{enumerate}
 \item the number of critical groups in $\var\{G\}$;
 \item the order of the largest critical group in $\var\{G\}$;
 \item the number of subvarieties of $\var\{G\}$;
 \item the number of varieties covered by $\var\{G\}$.
\end{enumerate}
Solve the same problems for the class $\mathbf{C}(e,m,c)$ introduced in Subsection~\ref{subsec: C(e,m,c)}.
\end{prob}


\clearpage

\appendix

\section{Basic results on identities of some semigroups} \label{app: prelim}

The present section establishes some background equational results that are required in Sections~\ref{app: finite} and~\ref{app: infinite}.
For more information on universal algebra, refer to the monograph of Burris and Sankappanavar~\cite{BS81}.

Words are formed over some countably infinite set~$\sX$ of variables.
An \textit{identity} is an expression $\bu \approx \bv$ where $\bu, \bv \in \sX^+$.
An identity $\bu \approx \bv$ is \textit{nontrivial} if $\bu \neq \bv$.
A semigroup~$S$ \textit{satisfies} an identity $\bu \approx \bv$ if for any substitution $\varphi : \sX \to S$, the elements $\varphi(\bu)$ and $\varphi(\bv)$ of~$S$ are equal; otherwise, $S$ \textit{violates} $\bu \approx \bv$.
An identity $\bu \approx \bv$ is \textit{deducible} from some identity $\bu' \approx \bv'$ if there exist some substitution $\varphi: \sX \to \sX^+$ and some words $\mathbf{p},\mathbf{q} \in \sX^*$ such that $\bu = \mathbf{p}\big(\varphi(\bu')\big)\mathbf{q}$ and $\bv = \mathbf{p}\big(\varphi(\bv')\big)\mathbf{q}$.
An identity $\bu \approx \bv$ is \textit{deducible} from some set~$\Sigma$ of identities if there exists some sequence \[ \bu = \bw_0, \bw_1, \ldots, \bw_m = \bv \] of words where each identity $\bw_i \approx \bw_{i+1}$ is deducible from some identity in~$\Sigma$.

For any word~$\bw$,
\begin{itemize}
\item the \textit{head} of~$\bw$, denoted by $\head(\bw)$, is the first variable occurring in~$\bw$;
\item the \textit{tail} of~$\bw$, denoted by $\tail(\bw)$, is the last variable occurring in~$\bw$;
\item the \textit{initial part} of~$\bw$, denoted by $\ini(\bw)$, is the word obtained by retaining the first occurrence of each variable in~$\bw$;
\item the \textit{content} of~$\bw$, denoted by $\con(\bw)$, is the set of variables occurring in~$\bw$;
\item the number of occurrences of a variable~$x$ in~$\bw$ is denoted by $\occ(x,\bw)$.
\end{itemize}

\begin{lemma} \label{L: LZ LZi N3 Nni Zn word}
Let $\bu \approx \bv$ be any identity\up.
Then
\begin{enumerate}[\rm(i)]
\item $\LZ$ satisfies $\bu \approx \bv$ if and only if $\head(\bu) = \head(\bv)$\up;
\item $\LZi$ satisfies $\bu \approx \bv$ if and only if $\ini(\bu) = \ini(\bv)$\up;
\item $\Nilthree$ satisfies $\bu \approx \bv$ if and only if either \[ |\bu|, |\bv| \geq 3 \quad\text{or} \quad \occ(x,\bu) = \occ(x,\bv) \text{ for all $x \in \sX$}; \]
\item $\Nil_n^1$ satisfies $\bu \approx \bv$ if and only if for all $x \in \sX$\up, either \[ \occ(x,\bu) = \occ(x,\bv) < n \quad \text{or} \quad \occ(x,\bu), \occ(x,\bv) \geq n; \]
\item $\mathbb{Z}_n$ satisfies $\bu \approx \bv$ if and only if $\occ(x,\bu) \equiv \occ(x,\bv) \pmod n$ for all $x \in \sX$\up.
\end{enumerate}
\end{lemma}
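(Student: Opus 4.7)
The plan is to prove each of the five parts by direct substitution analysis. In every case, sufficiency of the combinatorial condition on $\bu$ and $\bv$ follows from the structural identities of the semigroup in question, while necessity is obtained by exhibiting a specific substitution that distinguishes $\bu$ from $\bv$ whenever the condition fails.

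Parts~(i), (ii), and~(v) are quickest. For~(i), the identity $xy \approx x$ forces every nonempty word to evaluate to its first letter, so $\varphi(\bu) = \varphi(\head(\bu))$ for every $\varphi$, and the equivalence with $\head(\bu)=\head(\bv)$ is immediate. For~(ii), the presence of an identity element together with the left-zero rule $ab = a$ on $\LZ$ implies that under any $\varphi\colon \sX \to \LZi$ the word $\bw$ evaluates to the $\varphi$-image of the leftmost variable whose image is not~$1$, or to~$1$ if all variables go to~$1$; varying over substitutions that send any prescribed subset of variables to~$1$ and the remaining variables to distinct non-identity elements of $\LZ$ shows that $\bu$ and $\bv$ are equivalent precisely when $\ini(\bu)=\ini(\bv)$. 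For~(v), every $\varphi\colon \sX \to \mathbb{Z}_n$ has the form $\varphi(x) = a^{k_x}$, so by commutativity $\varphi(\bw) = a^{\sum_x k_x \occ(x,\bw)}$; taking $k_x = 1$ and $k_y = 0$ for $y \ne x$ extracts the congruence $\occ(x,\bu) \equiv \occ(x,\bv) \pmod n$, and the converse is immediate.

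Part~(iv) uses that $\Nil_n^1$ is a commutative monoid containing $\Nil_n = \{a, a^2, \ldots, a^{n-1}, 0\}$. The monoid structure allows isolating a single variable: substituting $\varphi(x) \in \Nil_n$ and $\varphi(y) = 1$ for all $y \ne x$ yields $\varphi(\bu) = \varphi(x)^{\occ(x,\bu)}$, and requiring $\varphi(x)^{\occ(x,\bu)} = \varphi(x)^{\occ(x,\bv)}$ for every $\varphi(x)$ forces exactly the stated alternative. Sufficiency then follows from commutativity together with the absorbing behaviour of~$0$ and the fact that any partial sum of exponents reaching~$n$ collapses the whole product to~$0$.

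The main obstacle is part~(iii), where $\Nilthree$ lacks an identity element and variables cannot be isolated cleanly. The saving observation is that every product of three or more elements of $\Nilthree$ equals~$0$, so any substitution kills a word of length at least~$3$; this immediately yields the first sufficient clause, and commutativity handles the second. To establish necessity when either $|\bu| < 3$ or $|\bv| < 3$, I would enumerate the possible shapes of a short word---namely $x$, $x^2$, and $xy$ with $x \ne y$---and for each such shape argue, using substitutions into $\{a, a^2\}$ on the variables of the longer word, that $\bv$ must share the same multiset of variables with matching multiplicities. Combining this case analysis with the commutativity of $\Nilthree$ completes the argument.
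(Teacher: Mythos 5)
Your proposal is correct, but it takes a genuinely different route from the paper: the paper does not prove Lemma~\ref{L: LZ LZi N3 Nni Zn word} at all, dismissing it as ``well-known and easily verified'' and citing Petrich and Reilly for parts~(i) and~(ii) and Almeida for parts~(iv) and~(v), whereas you supply a self-contained verification by direct substitution analysis. Your arguments for (i), (iv), and (v) are exactly the standard ones, and your treatment of (iii) correctly isolates the only delicate point, namely that $\Nilthree$ has no identity element, so necessity must be settled by enumerating the short words $x$, $x^2$, and $xy$ and testing substitutions into $\{a,a^2\}$; this case analysis does close (e.g.\ for $\bu=xy$ the substitutions $x\mapsto a,y\mapsto a^2$ and $x\mapsto a^2,y\mapsto a$ rule out $\bv=x^2$ and $\bv=y^2$ respectively, leaving only $\bv\in\{xy,yx\}$). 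One small imprecision in (ii): you propose sending ``the remaining variables to distinct non-identity elements of $\LZ$,'' but $\LZ$ has only two non-identity elements; fortunately only two are ever needed, since at the first position where $\ini(\bu)$ and $\ini(\bv)$ disagree you need only send the common prefix to~$1$ and the two disagreeing variables to the two distinct left zeros. With that wording tightened, your proof is a complete and more elementary substitute for the paper's citations, at the cost of length; the paper's approach buys brevity and defers to sources where these word problems are solved for the free objects in the corresponding varieties.
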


\begin{proof}
These results are well-known and easily verified.
For instance, see Petrich and Reilly~\cite[Theorem~V.1.9]{PR99} for parts~(i) and~(ii) and Almeida~\cite[Lemma~6.1.4]{Alm94} for parts~(iv) and~(v).
\end{proof}

\begin{lemma} \label{L: excl Nni}
Let~$\bW$ be any variety that satisfies the identity
\begin{equation}
x^{n+k} \approx x^n \label{id: excl Nni xn+k=xn}
\end{equation}
for some $n \geq 2$ and $k \geq 1$\up.
Suppose that $\Nil_n^1 \notin \bW$\up.
Then~$\bW$ satisfies the identity
\begin{equation}
(x^ny)^{n-1+k} x^n \approx (x^ny)^{n-1}x^n. \label{id: excl Nni}
\end{equation}
\end{lemma}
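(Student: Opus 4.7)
My plan is to combine a carefully chosen substitution with the hypothesis $x^{n+k}\approx x^n$ to collapse an identity separating $\bW$ from $\Nil_n^1$ down to the target~\eqref{id: excl Nni}. Since $\Nil_n^1\notin\bW$, some identity $\bu\approx\bv$ satisfied by $\bW$ must fail in $\Nil_n^1$, and Lemma~\ref{L: LZ LZi N3 Nni Zn word}(iv) then supplies a variable $z$ such that $p:=\occ(z,\bu)$ and $q:=\occ(z,\bv)$ are unequal with $\min\{p,q\}<n$. Interchanging $\bu$ and $\bv$ if necessary, I may assume $p<q$ and $p<n$, and write $\bu = A_0 z A_1 z\cdots z A_p$ and $\bv = B_0 z B_1 z\cdots z B_q$ with $z\notin\con(A_i)\cup\con(B_j)$. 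Since $\bW$ also satisfies the derived identity $\bu\,t^n\approx\bv\,t^n$ for a fresh variable $t$, I will have $n$ extra letters of padding at the right end of each side, which turns out to be essential.

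I would then apply the substitution $\sigma$ sending $z\mapsto x^ny$, $t\mapsto x$, and every other variable to $x^k$, with $x,y$ chosen fresh. The reason for setting $\sigma(\xi)=x^k$ is that the iterated consequence $x^{n+jk}\approx x^n$ ($j\ge 0$) of the hypothesis then forces every maximal $x$-block arising in $\sigma(\bu\,t^n)$ and $\sigma(\bv\,t^n)$ to collapse to $x^n$: each such block has exponent of the form $n+k|A_i|$, i.e.\ $n$ plus a non-negative multiple of~$k$. Crucially, the final $x$-block, which without padding would have only exponent $k|A_p|$ (possibly below~$n$), is lifted to the form $n+k|A_p|$ by the trailing $t^n\mapsto x^n$. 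After this absorption, $\bW$ satisfies
\begin{equation}\label{plan:reduced}
(x^ny)^p x^n \approx (x^ny)^q x^n.
\end{equation}

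The final step is to boost \eqref{plan:reduced} to \eqref{id: excl Nni}. Left-multiplying \eqref{plan:reduced} by $(x^ny)^{n-1-p}$ (legitimate since $p\le n-1$) and then iterating $j$ times gives
\[
(x^ny)^{n-1} x^n \approx (x^ny)^{n-1+jm} x^n \qquad\text{for every } j\ge 0,
\]
with $m:=q-p\ge 1$. Applying $x^{n+k}\approx x^n$ to the element $x^ny$ yields $(x^ny)^{n+k}\approx (x^ny)^n$, so powers of $x^ny$ of exponent at least $n$ are periodic with period dividing~$k$; hence $(x^ny)^{n-1+k}\approx (x^ny)^{n-1+jm}$ in $\bW$ whenever $jm$ is a positive multiple of~$k$. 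Taking $j = k/\gcd(m,k)$ achieves this and produces~\eqref{id: excl Nni}. The part I expect to be the main obstacle is the clean collapse of the $x$-blocks in the substitution step, and in particular the observation that the trailing block needs to be pre-padded with $t^n$; without that padding, the residual $x^{k|A_p|}$ need not reduce to $x^n$ (its exponent may fall short of~$n$) and the whole scheme breaks down.
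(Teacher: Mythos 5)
Your proof is correct and follows essentially the same route as the paper's: the same substitution $z\mapsto x^ny$ (all other variables to $x^k$) with an appended $x^n$ to collapse the violated identity to $(x^ny)^p x^n \approx (x^ny)^q x^n$, followed by left-multiplication up to exponent $n-1$ and the periodicity $(x^ny)^{n+k}\approx(x^ny)^n$ to reach the stated identity. The only cosmetic differences are that you introduce the trailing padding via a fresh variable $t^n$ before substituting rather than appending $x^n$ afterwards, and you iterate $k/\gcd(q-p,k)$ times where the paper simply iterates $k$ times.
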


\begin{proof}
By assumption, the variety~$\bW$ satisfies some identity $\alpha: \bu \approx \bv$ that is violated by the semigroup~$\Nil_n^1$.
In view of Lemma~\ref{L: LZ LZi N3 Nni Zn word}(iv), generality is not lost by assuming the existence of some variable $y \in \sX$ such that $\occ(y,\bu) = r < n$ and $\occ(y,\bv) = s > r$.
Then \[ \bu = \bu_0 y \bu_1 y \bu_2 \cdots y \bu_r \quad \text{and} \quad \bv = \bv_0 y \bv_1 y \bv_2 \cdots y \bv_s \] for some $\bu_i, \bv_j \in \sX^*$ such that $y \notin \con(\bu_i\bv_j)$.
Let~$\varphi$ denote the substitution that maps~$y$ to~$x^ny$ and every other variable to~$x^k$.
Then since \[ (x^ny)^r x^n \stackrel{\eqref{id: excl Nni xn+k=xn}}{\approx} \big(\varphi(\bu)\big) x^n \stackrel{\alpha}{\approx} \big(\varphi(\bv)\big) x^n \stackrel{\eqref{id: excl Nni xn+k=xn}}{\approx} (x^ny)^s x^n, \] the variety~$\bW$ satisfies the identity $(x^ny)^r x^n \approx (x^ny)^s x^n$.
It follows that~$\bW$ satisfies the identity $\beta: (x^ny)^{n-1} x^n \approx (x^ny)^{n-1+t} x^n$ for some $t \geq 1$.
Since
\begin{align*}
(x^ny)^{n-1} x^n & \stackrel{\beta}{\approx} (x^ny)^{n-1+t} x^n \stackrel{\beta}{\approx} (x^ny)^{n-1+2t} x^n \stackrel{\beta}{\approx} \cdots \\
& \stackrel{\beta}{\approx} (x^ny)^{n-1+kt} x^n \stackrel{\eqref{id: excl Nni xn+k=xn}}{\approx} (x^ny)^{n-1+k} x^n,
\end{align*}
the variety~$\bW$ also satisfies the identity~\eqref{id: excl Nni}.
\end{proof}

\begin{lemma}[{\cite[Lemma~7]{GS82}}] \label{L: JI word}
The semigroup~$\JI$ satisfies an identity $\bu \approx \bv$ if and only if $\con(\bu) = \con(\bv)$ and either of the following conditions holds\up:
\begin{enumerate}[\rm(i)]
\item $\occ\big(\tail(\bu),\bu\big) = \occ\big(\tail(\bv),\bv\big) = 1$ with $\tail(\bu) = \tail(\bv)$\up;
\item $\occ\big(\tail(\bu),\bu\big), \occ\big(\tail(\bv),\bv\big) \geq 2$\up.
\end{enumerate}
\end{lemma}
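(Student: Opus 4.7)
The plan is to evaluate words in $\JI$ explicitly, then read off the conditions mechanically. Relabelling the elements $1,2,3$ of Table~\ref{Tab: generators 3} as $0,a,e$, one checks that $0$ is a two-sided zero, $e$ is a left identity, while $a\cdot a = a\cdot e = 0$, so that $a$ annihilates everything on its right and $e\cdot a = a$. A short induction on length then yields the key evaluation rule: for any word $\bw=x_1x_2\cdots x_k$ with $k\geq 2$ and any substitution $\varphi\colon\sX\to\JI$,
\[
\varphi(\bw) = \begin{cases} \varphi(x_k) & \text{if } \varphi(x_1) = \varphi(x_2) = \cdots = \varphi(x_{k-1}) = e, \\ 0 & \text{otherwise;} \end{cases}
\]
when $k=1$, $\varphi(\bw) = \varphi(x_1)$ trivially. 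In particular, only the content of the prefix $x_1\cdots x_{k-1}$ and the image of the tail contribute to the value.

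For the forward direction, suppose $\JI\models\bu\approx\bv$. To obtain $\con(\bu)=\con(\bv)$, take a variable $y$ lying in exactly one of the two contents and apply the substitution sending $y\mapsto 0$ and every other variable to $e$; one side evaluates to $0$ and the other to $e$, a contradiction. For each $y\in\con(\bu)=\con(\bv)$, apply the substitution $\psi_y$ with $\psi_y(y)=a$ and $\psi_y(x)=e$ for $x\neq y$. The evaluation rule gives $\psi_y(\bu)=a$ precisely when $y=\tail(\bu)$ and $\occ(y,\bu)=1$, and $\psi_y(\bu)=0$ otherwise, with the same dichotomy for $\bv$. Equating $\psi_y(\bu)=\psi_y(\bv)$ for all $y\in\con(\bu)$ yields the biconditional
\[
\bigl[y=\tail(\bu)\text{ and }\occ(y,\bu)=1\bigr] \iff \bigl[y=\tail(\bv)\text{ and }\occ(y,\bv)=1\bigr].
\]
Instantiating at $y=\tail(\bu)$: if $\occ(\tail(\bu),\bu)=1$ we obtain (i); otherwise $\occ(\tail(\bu),\bu)\geq 2$, and applying the biconditional at $y=\tail(\bv)$ rules out $\occ(\tail(\bv),\bv)=1$ (else we would get $\tail(\bu)=\tail(\bv)$ and $\occ(\tail(\bv),\bu)=1$, contradicting $\occ(\tail(\bu),\bu)\geq 2$), which yields (ii).

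For the backward direction, assume $\con(\bu)=\con(\bv)$ together with (i) or (ii), and fix an arbitrary substitution $\varphi$. If some variable in the common content is sent to $0$, both sides vanish. Otherwise all variables of $\con(\bu)=\con(\bv)$ go to $\{a,e\}$. Under (ii), the tail of each word appears in its own prefix (since it occurs at least twice), so any variable sent to $a$---whether or not it is the tail---lies in the prefix of \emph{both} $\bu$ and $\bv$, making both products $0$; if no variable maps to $a$, every variable maps to $e$ and both products equal $e$. Under (i), write $t=\tail(\bu)=\tail(\bv)$; since $t$ occurs exactly once in each word, the prefixes of $\bu$ and $\bv$ share the content $\con(\bu)\setminus\{t\}$, so either both prefixes consist entirely of $e$'s and the two sides both equal $\varphi(t)$, or some variable in $\con(\bu)\setminus\{t\}$ maps to $a$ and both sides equal $0$; the degenerate possibility $|\bu|=|\bv|=1$ reduces to $\bu=\bv=t$.

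There is no genuine obstacle in this argument; the only delicate point is confirming, in case (ii) of the sufficiency step, that a variable mapped to $a$ always lies in the prefix of each word---which is precisely where condition (ii), rather than the weaker statement that the tail appears at least once, is actually used.
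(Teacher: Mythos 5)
Your proof is correct. Note that the paper itself does not prove this lemma --- it simply cites Golubov and Sapir \cite[Lemma~7]{GS82} --- so there is no internal argument to compare against; what you have supplied is a self-contained elementary verification that the cited description of the identities of $\JI$ is right. Your route is the natural one: the evaluation rule (a word evaluates to the image of its tail when the whole prefix goes to the left identity $e$, and to $0$ otherwise) is exactly the right normal form for products in this three-element semigroup, the substitutions $y\mapsto 0$ and $y\mapsto a$ with everything else sent to $e$ are the correct separating tests for necessity, and your observation in the sufficiency step --- that under condition (ii) the tail of each word reappears in its own prefix, so that a variable sent to $a$ kills both sides regardless of where it sits --- is precisely the point at which the hypothesis $\occ(\tail(\bu),\bu),\occ(\tail(\bv),\bv)\geq 2$ is needed. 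The degenerate length-one cases and the asymmetric instantiation of the biconditional at $\tail(\bu)$ versus $\tail(\bv)$ are both handled correctly. I see no gap.
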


\begin{lemma} \label{L: excl JI}
Let~$\bW$ be any variety that satisfies the identity
\begin{equation}
x^{2n} \approx x^n \label{id: excl JI x2n=xn}
\end{equation}
for some $n \geq 2$\up.
Suppose that $\JI \notin \bW$\up.
Then~$\bW$ satisfies one of the identities
\begin{align}
(x^ny)^{n+1} & \approx x^ny, \label{id: excl JI xnyn+1=xny} \\
x^nyx^n & \approx x^ny. \label{id: excl JI xnyxn=xny}
\end{align}
\end{lemma}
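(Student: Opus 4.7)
The plan is to follow the template of the proof of Lemma~\ref{L: excl Nni}: since $\JI\notin\bW$, extract an identity $\bu\approx\bv$ satisfied by $\bW$ but violated by $\JI$, use Lemma~\ref{L: JI word} to pinpoint the reason for the violation, and then derive one of the two target identities via a carefully chosen substitution combined with $x^{2n}\approx x^n$. By Lemma~\ref{L: JI word}, exactly one of the following situations obtains: (a)~$\con(\bu)\neq\con(\bv)$; (b)~contents agree, $\tail(\bu)\neq\tail(\bv)$, and at least one of the two tails has occurrence~$1$; or (c)~contents agree and the common tail variable $a$ has $\occ(a,\cdot)=1$ on one side and $\geq 2$ on the other.

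Cases~(a) and~(c) will both yield \eqref{id: excl JI xnyn+1=xny}, and I would dispatch them together. After possibly swapping $\bu$ and $\bv$, pick a variable $z$ as follows: in case~(c), take $z=a$, with the side where $\occ(a,\cdot)\geq 2$ placed on the left; in case~(a), take any $z\in\con(\bu)\setminus\con(\bv)$. Apply the substitution sending $z$ to $x^ny$ and every other variable to $x^n$. The $x^n$-factor in $\varphi(z)=x^ny$ ensures that every run of $x^n$'s between successive occurrences of $y$ in $\varphi(\bu)$ is nonempty, so $x^{2n}\approx x^n$ collapses $\varphi(\bu)$ either to $(x^ny)^k$ or to $(x^ny)^k x^n$, where $k=\occ(z,\bu)\geq 1$; meanwhile $\varphi(\bv)\approx x^ny$ in case~(c) and $\varphi(\bv)\approx x^n$ in case~(a). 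Right-multiplying by $y$ or by $x^ny$ as needed to absorb the trailing $x^n$ (or its absence) yields $(x^ny)^{k+1}\approx x^ny$ in $\bW$. Setting $A=x^ny$, this identity makes $A^k$ a two-sided identity for $A$, so $\langle A\rangle$ is a cyclic group of order dividing $k$; substituting $x\mapsto A$ in $x^{2n}\approx x^n$ gives $A^{2n}\approx A^n$, so $A^n$ is idempotent and therefore equal to the identity element of this group. Hence the order of $A$ divides $n$, and $A^{n+1}=A\cdot A^n=A$, which is \eqref{id: excl JI xnyn+1=xny}.

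For case~(b), I aim at \eqref{id: excl JI xnyxn=xny}. After possibly swapping $\bu$ and $\bv$, assume $\occ(b,\bv)=1$ where $b=\tail(\bv)$, and write $\bu=\bu_0 a$ and $\bv=\bv_0 b$ with $a=\tail(\bu)\neq b$. Substitute $b\mapsto y$, $a\mapsto x^n$, and every other variable $\mapsto x^n$. Since $b\notin\con(\bv_0)$ and $a\in\con(\bv)\setminus\{b\}$ forces $|\bv_0|\geq 1$, the right-hand side collapses to $\varphi(\bv)\approx x^ny$. Since $b\in\con(\bu)$ and $\tail(\bu)=a\neq b$, the variable $b$ must occur in $\bu_0$, so $\varphi(\bu)=Wx^n$ where $W=\varphi(\bu_0)$ contains $y$. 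Right-multiplying the identity $Wx^n\approx x^ny$ by $x^n$ gives $Wx^{2n}\approx x^nyx^n$; collapsing $Wx^{2n}\approx Wx^n$ via $x^{2n}\approx x^n$ and applying transitivity with $Wx^n\approx x^ny$ produces $x^ny\approx x^nyx^n$, which is \eqref{id: excl JI xnyxn=xny}.

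The main obstacle will be the exponent bookkeeping in cases~(a) and~(c): the normal form of $\varphi(\bu)$ under $x^{2n}\approx x^n$ depends on whether $z$ occurs at the very end of $\bu$, and one must carefully treat the optional trailing $x^n$ before reaching the clean identity $(x^ny)^{k+1}\approx x^ny$. Once that is in hand, the cyclic-group consequence of $A^{k+1}\approx A$ together with $A^{2n}\approx A^n$ reduces to a short divisibility argument: the order of $A$ divides $\gcd(k,n)$, which in turn divides $n$.
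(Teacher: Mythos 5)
Your proof is correct and follows essentially the same route as the paper's: the same case analysis via Lemma~\ref{L: JI word}, the same substitution sending the distinguished variable to $x^ny$ (or $y$) and every other variable to $x^n$, and the same use of \eqref{id: excl JI x2n=xn} to collapse the resulting blocks of powers. The differences are cosmetic only --- you fold the $\con(\bu)\neq\con(\bv)$ case (which the paper dismisses as well known) into the common-tail case, you close the distinct-tails case by right-multiplying $Wx^n\approx x^ny$ by $x^n$ instead of re-substituting $y\mapsto yx^n$, and you phrase the final reduction from $(x^ny)^{k+1}\approx x^ny$ to $(x^ny)^{n+1}\approx x^ny$ as a cyclic-group observation where the paper writes out the explicit chain of deductions.
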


\begin{proof}
By assumption, the variety~$\bW$ satisfies an identity $\alpha: \bu \approx \bv$ that is violated by the semigroup~$\JI$.
It is well known and easily shown that if $\con(\bu) \neq \con(\bv)$, then the identity $(x^ny)^nx^n \approx x^n$ is deducible from the identities $\{ \eqref{id: excl JI x2n=xn}, \bu \approx \bv \}$ and so is satisfied by the variety~$\bW$, whence~$\bW$ also satisfies the identity~\eqref{id: excl JI xnyn+1=xny}.
Therefore assume that $\con(\bu) = \con(\bv)$.
By Lemma~\ref{L: JI word}, there are two cases.

\noindent\textsc{Case~1}: $\tail(\bu) = \tail(\bv) = y$ with $\occ(y,\bu) = 1$ and $\occ(y,\bv) = m \geq 2$.
Then \[ \bu = \bw_0 y \quad \text{and} \quad \bv = \bw_1 y \bw_2 y \cdots \bw_m y \] for some $\bw_i \in \sX^*$ such that $y \notin \con(\bw_i)$.
Let~$\varphi$ denote the substitution that maps~$y$ to $x^ny$ and every other variable to~$x^n$.
Then \[ x^n y \stackrel{\eqref{id: excl JI x2n=xn}}{\approx} x^n\big( \varphi(\bu) \big) \stackrel{\alpha}{\approx} x^n \big( \varphi(\bv) \big) \stackrel{\eqref{id: excl JI x2n=xn}}{\approx} (x^ny)^m, \] so that~$\bW$ satisfies the identity $\beta: x^n y \approx (x^ny)^{\ell+1}$ with $\ell = m-1$.
Since \[ x^n y \stackrel{\beta}{\approx} (x^n y)^{\ell+1} \stackrel{\beta}{\approx} (x^n y)^{2\ell+1} \stackrel{\beta}{\approx} \cdots \stackrel{\beta}{\approx} (x^n y)^{n\ell+1} \stackrel{\eqref{id: excl JI x2n=xn}}{\approx} (x^n y)^{n+1}, \] the variety~$\bW$ also satisfies the identity~\eqref{id: excl JI xnyn+1=xny}.

\noindent\textsc{Case~2}: $\tail(\bu) = y \neq z = \tail(\bv)$ with $\occ(y,\bu) = 1$ and $\occ(z,\bv) \geq 1$.
The assumption $\con(\bu) = \con(\bv)$ implies that $\occ(y,\bv) = m \geq 1$.
Then \[ \bu = \bw_0 y \quad \text{and} \quad \bv = \bw_1 y \bw_2 y \cdots \bw_m y \bw_{m+1} z \] for some $\bw_i \in \sX$ such that $y \notin \con(\bw_i)$.
Let~$\varphi$ denote the substitution in Case~1.
Then \[ x^n y \stackrel{\eqref{id: excl JI x2n=xn}}{\approx} x^n\big( \varphi(\bu) \big) \stackrel{\alpha}{\approx} x^n \big( \varphi(\bv) \big) \stackrel{\eqref{id: excl JI x2n=xn}}{\approx} (x^ny)^m x^n, \] so that~$\bW$ satisfies the identity $\gamma: x^n y \approx (x^ny)^mx^n$.
Since \[ x^n (yx^n) \stackrel{\gamma}{\approx} \big(x^n(yx^n)\big)^mx^n \stackrel{\eqref{id: excl JI x2n=xn}}{\approx} (x^ny)^mx^n \stackrel{\gamma}{\approx} x^n y, \] the variety~$\bW$ also satisfies the identity~\eqref{id: excl JI xnyxn=xny}.
\end{proof}

\begin{lemma} \label{L: HFB implies countable}
A variety that contains only finitely based subvarieties\up, contains at most countably many subvarieties\up.
\end{lemma}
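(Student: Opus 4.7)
The plan is to show that if every subvariety of a variety $\mathbf{V}$ is finitely based, then there are only countably many such subvarieties, by exhibiting an injection from the set of subvarieties into a countable set.

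First I would observe that the set $\sX^+$ of semigroup words over the countable alphabet $\sX$ is countably infinite, so the set $\sX^+ \times \sX^+$ of all identities is countably infinite as well. Since the set of finite subsets of a countably infinite set is countable, the collection $\mathcal{F}$ of all finite sets of identities is countable.

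Next, for each subvariety $\mathbf{W}$ of $\mathbf{V}$, the hypothesis guarantees that $\mathbf{W}$ possesses at least one finite identity basis; using the axiom of choice (or any fixed enumeration of finite sets of identities) I would select one such basis $\Sigma_{\mathbf{W}} \in \mathcal{F}$ for each subvariety $\mathbf{W}$. This defines a map $\Phi : \mathbf{W} \mapsto \Sigma_{\mathbf{W}}$ from the set of subvarieties of $\mathbf{V}$ into $\mathcal{F}$. The key point is that $\Phi$ is injective: if $\mathbf{W}_1 \neq \mathbf{W}_2$, then these varieties are defined by inequivalent sets of identities, so $\Sigma_{\mathbf{W}_1}$ and $\Sigma_{\mathbf{W}_2}$ cannot coincide (otherwise both subvarieties would equal the variety defined by that common finite set).

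Since $\Phi$ embeds the set of subvarieties of $\mathbf{V}$ into the countable set $\mathcal{F}$, there are at most countably many subvarieties of $\mathbf{V}$, which completes the argument. There is no real obstacle here; the statement is essentially a bookkeeping observation about cardinalities, with the only subtle step being the verification that different subvarieties must be assigned different finite identity bases.
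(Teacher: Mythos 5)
Your argument is correct and is essentially the paper's own proof, which simply notes that there are only countably many finite sets of identities over a countable alphabet; you have merely spelled out the injection from subvarieties to their chosen finite bases, whose injectivity follows because a finite basis determines its variety. No issues.
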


\begin{proof}
Up to renaming of variables, there can only be countably many finite sets of identities.
\end{proof}

\section{Some finite lattices of varieties} \label{app: finite}

\subsection{Subvarieties of ${ \bV_{\ref{V: JI JId}} = \var\{\JI,\protect\JIdual\} }$}

\begin{proposition}[Zhang and Luo~{\cite[Figure~2]{ZL09}}] \quad \label{P: JI JId}
\begin{enumerate}[\rm(i)]
\item The proper nontrivial subvarieties of $\bV_{\ref{V: JI JId}} = \var\{\JI,\JIdual\}$ are
\begin{align*}
\bV_{\ref{V: N2}} & = \var\{\Niltwo\}, & \bV_{\ref{V: SL}} & = \var\{\SL\}, & \bV_{\ref{V: N2 SL}} & = \var\{\Niltwo,\SL\}, \\
\bV_{\ref{V: JI}} & = \var\{\JI\}, & \bV_{\ref{V: JId}} & = \var\{\JIdual\}, & \bV_{\ref{V: F4}} & = \var\{\Ffour\}, \\
\bV_{\ref{V: G4}} & = \var\{\Gfour\}, & \bV_{\ref{V: SL F4}} & = \var\{\SL,\Ffour\}, & \bV_{\ref{V: SL G4}} & = \var\{\SL,\Gfour\}, \\
\bV_{\ref{V: JI G4}} & = \var\{\JI,\Gfour\}, & \bV_{\ref{V: JId G4}} & = \var\{\JIdual,\Gfour\}. &&
\end{align*}

\item The lattice $\sL(\bV_{\ref{V: JI JId}})$ is given in Figure~\ref{F: JI JId}.
\end{enumerate}
\end{proposition}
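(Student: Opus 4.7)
The plan is to establish Proposition~\ref{P: JI JId} in three stages: first verify that the eleven listed varieties are genuine subvarieties of $\bV_{\ref{V: JI JId}}$, second show that they (together with the trivial variety and $\bV_{\ref{V: JI JId}}$ itself) are pairwise distinct, and third prove that this list is exhaustive. The lattice structure in Figure~\ref{F: JI JId} should then follow by inclusion checks using the identity bases given in Subsections~\ref{subsec: variety 3}, \ref{subsec: variety 4}, and~\ref{subsec: variety >4}.

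Stage one amounts to verifying that each listed generator satisfies the identity basis $\{x^3 \approx x^2,\ xyx \approx x^2y^2,\ xyx \approx y^2x^2,\ ax^2b \approx axb\}$ of $\bV_{\ref{V: JI JId}}$. For the \q{small} generators ($\Niltwo$, $\SL$, $\JI$, $\JIdual$, $\Ffour$, $\Gfour$) this is routine checking. For the joins such as $\var\{\JI,\Gfour\}$, it follows automatically once the factors have been checked. Stage two (distinctness) proceeds by exhibiting, for each pair of candidate subvarieties, an identity separating them: for example $\JI$ violates $ax^2 \approx ax$, $\JIdual$ violates $x^2a \approx xa$, $\Ffour$ violates $x^2 \approx xy$, and $\Gfour$ violates $xy \approx yx$; combining these with duality one verifies that all thirteen varieties are distinct.

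The substantive stage is the third: showing that every subvariety $\bV$ of $\bV_{\ref{V: JI JId}}$ appears in the list. My plan is to stratify by the behaviour of idempotents and nilpotent elements. First, any idempotent subvariety of $\bV_{\ref{V: JI JId}}$ must be a semilattice variety: adding $x^2 \approx x$ to the basis yields $xyx \approx xy^2 \approx y^2 \approx y$ via $xyx \approx y^2x^2$ and $ax^2b \approx axb$, forcing commutativity, so only $\bV_\texttt{1}$ (trivial) and $\bV_{\ref{V: SL}}$ arise here. For non-idempotent $\bV$, I would classify based on whether $\bV$ satisfies $x^2 \approx xyz$ (forcing containment in $\var\{\Ffour\}$-type varieties), $x^2 \approx xy$ (forcing $\bV$ into the \q{left-handed} part), or its dual $x^2 \approx yx$, or neither. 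In the \q{neither} case, $\Niltwo$ and $\SL$ are both present, and one analyses whether $\JI$ and/or $\JIdual$ lie in $\bV$ by applying a dualised form of Lemma~\ref{L: excl JI} together with the identity $x^3 \approx x^2$: if $\JI \notin \bV$ then $\bV$ satisfies $xyx \approx x^2y$, if $\JIdual \notin \bV$ then dually $\bV$ satisfies $xyx \approx yx^2$, and these identities collapse the join structure to the correct sublattice.

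The hardest part will be the final combinatorial bookkeeping in stage three, ensuring no subvariety is missed in the middle rows of the lattice where $\var\{\Gfour\}$ and its joins with $\var\{\JI\}$, $\var\{\JIdual\}$, $\var\{\SL\}$ interact. I expect to handle this by computing the $\bV_{\ref{V: JI JId}}$-free semigroup on two (or three) generators explicitly, using the basis to show it is finite, and then reading off all fully invariant congruences. Since $x^3 \approx x^2$ and $ax^2b \approx axb$ together collapse powers drastically, this free semigroup should be manageable, and the resulting enumeration of fully invariant congruences will directly yield the claimed lattice $\sL(\bV_{\ref{V: JI JId}})$ of Figure~\ref{F: JI JId}.
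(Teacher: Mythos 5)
The first thing to say is that the paper does not prove this proposition at all: it is imported wholesale from Zhang and Luo \cite[Figure~2]{ZL09}, so any self-contained argument is automatically a different route from the paper's. Your three-stage plan (membership, distinctness, exhaustiveness) has the right shape, and stages one and two really are routine checks of the kind you describe. The problems are in the details of stage one's idempotent reduction and, more importantly, in how you propose to finish stage three.

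Concretely: your chain $xyx \approx xy^2 \approx y^2 \approx y$ in the idempotent case is wrong --- it would give $xyx \approx y$, which the two-element semilattice $\SL$ violates, so taken literally it would expel $\SL$ from $\bV_{\ref{V: JI JId}}$. What the basis actually yields under $x^2 \approx x$ is $xy \approx xyx \approx yx$ (from $xyx \approx x^2y^2$ and $xyx \approx y^2x^2$), so your conclusion (only $\bT$ and $\var\{\SL\}$ arise) survives but your derivation does not. The more serious gap is the fallback for exhaustiveness. The subvariety lattice of $\bV_{\ref{V: JI JId}}$ is anti-isomorphic to the lattice of fully invariant congruences on the relatively free semigroup of \emph{countably infinite} rank; computing the two- or three-generated relatively free semigroup and its fully invariant congruences only classifies the identities in that many variables, and a priori two distinct subvarieties can satisfy exactly the same identities in three variables. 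To make your computation conclusive you must first prove that every subvariety of $\bV_{\ref{V: JI JId}}$ is defined relative to it by identities in a fixed small number of variables --- a fact that is true here but is precisely the content you would be trying to establish. The way the paper's own machinery closes analogous gaps (see Appendix~\ref{app: prelim} and Subsections~\ref{subsec: JI Nni}--\ref{subsec: JIi}) is via exclusion lemmas: for each prime generator $P \in \{\Niltwo, \SL, \Ffour, \Gfour, \JI, \JIdual\}$ one shows that $P \notin \bV$ forces $\bV$ to satisfy a specific two- or three-variable identity (Lemma~\ref{L: excl JI} and its dual do this for $\JI$ and $\JIdual$), and then a finite case analysis over which primes lie in $\bV$ shows that $\bV$ equals the join of the primes it contains. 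Reorganising your stage three around such exclusion identities, rather than around the free object on few generators, is what would actually complete the proof.
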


\begin{figure}[ht]
\begin{center}
\begin{picture}(100,170)(00,20) \setlength{\unitlength}{0.7mm}
\put(25,85){\circle*{\circlesize}}
\put(10,70){\circle*{\circlesize}} \put(40,70){\circle*{\circlesize}}
\put(10,50){\circle*{\circlesize}} \put(25,55){\circle*{\circlesize}} \put(40,50){\circle*{\circlesize}}
\put(25,45){\circle*{\circlesize}}
\put(10,40){\circle*{\circlesize}} \put(25,35){\circle*{\circlesize}}
\put(10,30){\circle*{\circlesize}} \put(25,25){\circle*{\circlesize}}
\put(10,20){\circle*{\circlesize}}
\put(10,10){\circle*{\circlesize}}
\put(10,70){\line(1,1){15}} \put(10,40){\line(1,1){30}} \put(10,30){\line(1,1){15}} \put(10,20){\line(1,1){30}} \put(10,10){\line(1,1){15}}
\put(40,70){\line(-1,1){15}} \put(25,55){\line(-1,1){15}} \put(25,35){\line(-1,1){15}}
\put(10,10){\line(0,1){30}} \put(10,50){\line(0,1){20}} \put(25,25){\line(0,1){30}} \put(40,50){\line(0,1){20}}
\put(26,87){\makebox(0,0)[b]{$\bV_{\ref{V: JI JId}}$}}
\put(8,70){\makebox(0,0)[r]{$\bV_{\ref{V: JI G4}}$}} \put(42,70){\makebox(0,0)[l]{$\bV_{\ref{V: JId G4}}$}}
\put(8,50){\makebox(0,0)[r]{$\bV_{\ref{V: JI}}$}} \put(24,60){\makebox(0,0)[b]{$\bV_{\ref{V: SL F4}}$}} \put(42,50){\makebox(0,0)[l]{$\bV_{\ref{V: JId}}$}}
\put(26,48){\makebox(0,0)[l]{$\bV_{\ref{V: SL G4}}$}}
\put(8,40){\makebox(0,0)[r]{$\bV_{\ref{V: F4}}$}} \put(27,35){\makebox(0,0)[tl]{$\bV_{\ref{V: N2 SL}}$}}
\put(8,30){\makebox(0,0)[r]{$\bV_{\ref{V: G4}}$}} \put(27,25){\makebox(0,0)[tl]{$\bV_{\ref{V: SL}}$}}
\put(8,20){\makebox(0,0)[r]{$\bV_{\ref{V: N2}}$}}
\put(8,10){\makebox(0,0)[r]{$\bT$}}
\end{picture}
\end{center}
\caption{The lattice $\sL(\bV_{\ref{V: JI JId}})$}
\label{F: JI JId}
\end{figure}

\subsection{Subvarieties of ${ \bV_{\ref{V: LZi RZi}} = \var\{ \LZi, \RZi \} }$}

\begin{proposition}[Gerhard and Petrich~{\cite[Section~2]{GP85}}] \quad
\begin{enumerate}[\rm(i)]
\item The proper nontrivial subvarieties of $\bV_{\ref{V: LZi RZi}} = \var\{ \LZi, \RZi \}$ are
\begin{align*}
\bV_{\ref{V: SL}} & = \var\{\SL\}, & \bV_{\ref{V: LZ}} & =\var\{\LZ\}, & \bV_{\ref{V: RZ}} & = \var\{\RZ\}, \\
\bV_{\ref{V: SL LZ}} & = \var\{\SL,\LZ\}, & \bV_{\ref{V: SL RZ}} & = \var\{\SL,\RZ\}, & \bV_{\ref{V: LZi}} & = \var\{\LZi\}, \\
\bV_{\ref{V: RZi}} & = \var\{\RZi\}, & \bV_{\ref{V: SL LZ RZ}} & = \var\{\SL,\LZ,\RZ\}, & \bV_{\ref{V: RZ LZi}} & = \var\{\RZ,\LZi\}, \\
\bV_{\ref{V: LZ RZi}} & = \var\{\LZ,\RZi\}, & \bV_{\ref{V: LZ RZ}} & = \var\{\LZ,\RZ\}. & &
\end{align*}
\item The lattice $\sL(\bV_{\ref{V: LZi RZi}})$ is given in Figure~\ref{F: LZi RZi}.
\end{enumerate}
\end{proposition}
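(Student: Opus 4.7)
The plan is to identify $\bV_{\ref{V: LZi RZi}}$ as a variety of bands and then read off its subvariety lattice from the band lattice $\sL(\bB)$ described in Figure~\ref{F: bands}. Since both $\LZi$ and $\RZi$ are patently bands, $\sL(\bV_{\ref{V: LZi RZi}})$ is a principal ideal of $\sL(\bB)$, so the proof reduces to locating $\bV_{\ref{V: LZi RZi}}$ in $\sL(\bB)$ and enumerating what lies below.

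First I would verify that $\bV_{\ref{V: LZi RZi}}$ coincides with the variety of regular bands, defined within $\bB$ by $xyxzx \approx xyzx$. One direction is a routine computation on the two generators. The converse is the classical fact, going back to Kimura and developed in Gerhard and Petrich~\cite{GP85}, that every regular band is a subdirect product of a left regular band and a right regular band, together with the fact that every left (respectively right) regular band belongs to $\var\{\LZi\}$ (respectively $\var\{\RZi\}$). This places $\bV_{\ref{V: LZi RZi}}$ precisely at the top of the sublattice of regular bands in $\sL(\bB)$.

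Next I would enumerate the subvarieties of $\bV_{\ref{V: LZi RZi}}$. The principal ideal of $\sL(\bB)$ below the regular band variety has 13 elements and is exactly the sublattice obtained by combining the eight normal band varieties $\{\bT,\bSL,\bLZ,\bRZ,\bLN,\bRN,\bRB,\bN\}$ with the two ``one-sided regular'' varieties $\mathtt{LRB}=\var\{\LZi\}$ and $\mathtt{RRB}=\var\{\RZi\}$ and their further joins $\bRZ\vee\mathtt{LRB}$, $\bLZ\vee\mathtt{RRB}$, and $\mathtt{LRB}\vee\mathtt{RRB}=\bV_{\ref{V: LZi RZi}}$. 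Matching defining identities to the (Bas) lines of the listed varieties and checking that the (Max) rows reproduce the covering relations in $\sL(\bB)$ then confirms the list of eleven proper nontrivial subvarieties and the Hasse diagram promised in Figure~\ref{F: LZi RZi}.

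The main subtlety will be carefully distinguishing the left normal band variety $\bV_{\ref{V: SL LZ}}=\bLN=[x^2\approx x,\,xyz\approx xzy]$ from the strictly larger left regular band variety $\bV_{\ref{V: LZi}}=\mathtt{LRB}=[x^2\approx x,\,xyx\approx xy]$, and symmetrically on the right. The proper inclusion $\bLN\subsetneq\mathtt{LRB}$ can be witnessed by a quick count on free objects: the three-generated free left normal band has $12$ elements while the three-generated free left regular band has $15$. Once this distinction is firmly in place, the remaining identifications are straightforward, and the covering relations can be verified one by one by comparing the (Bas) and (Max) identities against the identities appearing in Figure~\ref{F: bands}.
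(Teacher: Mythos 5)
Your proposal is correct and follows essentially the same route as the paper, which offers no independent argument but simply cites Gerhard and Petrich's classification: you identify $\var\{\LZi,\RZi\}$ with the variety of regular bands and read off the principal ideal below it in the lattice $\sL(\bB)$ of band varieties, which is exactly what the cited source supplies. Your supporting details (Kimura's subdirect decomposition into left and right regular bands, the fact that $\LZi$ and $\RZi$ generate the full one-sided regular band varieties, and the free-object counts $12$ versus $15$ separating left normal from left regular bands) are all accurate.
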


\begin{figure}[ht]
\begin{center}
\begin{picture}(120,130)(00,10) \setlength{\unitlength}{0.7mm}
\put(30,60){\circle*{\circlesize}}
\put(10,50){\circle*{\circlesize}} \put(50,50){\circle*{\circlesize}}
\put(10,40){\circle*{\circlesize}} \put(30,40){\circle*{\circlesize}} \put(50,40){\circle*{\circlesize}}
\put(10,30){\circle*{\circlesize}} \put(30,30){\circle*{\circlesize}} \put(50,30){\circle*{\circlesize}}
\put(10,20){\circle*{\circlesize}} \put(30,20){\circle*{\circlesize}} \put(50,20){\circle*{\circlesize}}
\put(30,10){\circle*{\circlesize}}
\put(10,50){\line(2,1){20}} \put(10,30){\line(2,1){40}} \put(30,20){\line(2,1){20}} \put(10,20){\line(2,1){20}} \put(30,10){\line(2,1){20}}
\put(30,10){\line(-2,1){20}} \put(30,20){\line(-2,1){20}} \put(50,20){\line(-2,1){20}} \put(50,30){\line(-2,1){40}} \put(50,50){\line(-2,1){20}}
\put(10,20){\line(0,1){30}} \put(30,10){\line(0,1){10}} \put(30,30){\line(0,1){10}} \put(50,20){\line(0,1){30}}
\put(31,62){\makebox(0,0)[b]{$\bV_{\ref{V: LZi RZi}}$}}
\put(08,50){\makebox(0,0)[br]{$\bV_{\ref{V: RZ LZi}}$}} \put(52,50){\makebox(0,0)[bl]{$\bV_{\ref{V: LZ RZi}}$}}
\put(08,40){\makebox(0,0)[r]{$\bV_{\ref{V: LZi}}$}} \put(30,43){\makebox(0,0)[b]{$\bV_{\ref{V: SL LZ RZ}}$}} \put(52,40){\makebox(0,0)[l]{$\bV_{\ref{V: RZi}}$}}
\put(08,30){\makebox(0,0)[r]{$\bV_{\ref{V: SL LZ}}$}} \put(32,31){\makebox(0,0)[l]{$\bV_{\ref{V: LZ RZ}}$}} \put(52,30){\makebox(0,0)[l]{$\bV_{\ref{V: SL RZ}}$}}
\put(08,20){\makebox(0,0)[tr]{$\bV_{\ref{V: LZ}}$}} \put(32,20){\makebox(0,0)[tl]{$\bV_{\ref{V: SL}}$}} \put(52,20){\makebox(0,0)[tl]{$\bV_{\ref{V: RZ}}$}}
\put(30,07.5){\makebox(0,0)[t]{$\bT$}}
\end{picture}
\end{center}
\caption{The lattice $\sL(\bV_{\ref{V: LZi RZi}})$}
\label{F: LZi RZi}
\end{figure}

\subsection{Subvarieties of ${ \bV_{\ref{V: JI LZi}} = \var\{\JI,\LZi\} }$ and ${ \bV_{\ref{V: JId RZi}} = \var\{\protect\JIdual,\RZi\} }$}

\begin{proposition}[Zhang and Luo~{\cite[Subvarieties of~$\mathbf{A}$ in Figure~5]{ZL09}}] \quad \label{P: Ji LZi}
\begin{enumerate}[\rm(i)]
\item The proper nontrivial subvarieties of $\bV_{\ref{V: JI LZi}} = \var\{\JI,\LZi\}$ are
\begin{align*}
\bV_{\ref{V: N2}} & = \var\{\Niltwo\}, & \bV_{\ref{V: SL}} & = \var\{\SL\}, & \bV_{\ref{V: LZ}} & = \var\{\LZ\}, \\
\bV_{\ref{V: N2 SL}} & = \var\{\Niltwo,\SL\}, & \bV_{\ref{V: JI}} & = \var\{\JI\}, & \bV_{\ref{V: N2 LZ}} & = \var\{\Niltwo,\LZ\}, \\
\bV_{\ref{V: SL LZ}} & = \var\{\SL,\LZ\}, & \bV_{\ref{V: LZi}} & = \var\{\LZi\}, & \bV_{\ref{V: N2 SL LZ}} & = \var\{\Niltwo,\SL,\LZ\}, \\
\bV_{\ref{V: N2 LZi}} & = \var\{\Niltwo,\LZi\}, & \bV_{\ref{V: LZ JI}} & = \var\{\LZ,\JI\}. &&
\end{align*}
\item The proper nontrivial subvarieties of $\bV_{\ref{V: JId RZi}} = \var\{\JIdual,\RZi\}$ are
\begin{align*}
\bV_{\ref{V: N2}} & = \var\{\Niltwo\}, & \bV_{\ref{V: SL}} & = \var\{\SL\}, & \bV_{\ref{V: RZ}} & = \var\{\RZ\}, \\
\bV_{\ref{V: N2 SL}} & = \var\{\Niltwo,\SL\}, & \bV_{\ref{V: JId}} & = \var\{\JIdual\}, & \bV_{\ref{V: SL RZ}} & = \var\{\SL,\RZ\}, \\
\bV_{\ref{V: N2 RZ}} & = \var\{\Niltwo,\RZ\}, & \bV_{\ref{V: RZi}} & = \var\{\RZi\}, & \bV_{\ref{V: N2 SL RZ}} & = \var\{\Niltwo,\SL,\RZ\}, \\
\bV_{\ref{V: RZ JId}} & = \var\{\RZ,\JIdual\}, & \bV_{\ref{V: N2 RZi}} & = \var\{\Niltwo,\RZi\}. &&
\end{align*}
\item The lattices $\sL(\bV_{\ref{V: JI LZi}})$ and $\sL(\bV_{\ref{V: JId RZi}})$ are given in Figure~\ref{F: JI LZi}.
\end{enumerate}
\end{proposition}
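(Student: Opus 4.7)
The plan is to establish part~(i) for $\bV_{\ref{V: JI LZi}} = \var\{\JI,\LZi\}$; part~(ii) will then follow by duality, since each proper nontrivial subvariety listed for $\bV_{\ref{V: JId RZi}}$ is the dual of the corresponding entry for $\bV_{\ref{V: JI LZi}}$ (with $\bV_{\ref{V: N2}}$ and $\bV_{\ref{V: SL}}$ self-dual), and anti-isomorphism preserves the lattice of subvarieties. Part~(iii) is then immediate once the inclusions among the listed subvarieties have been pinned down.

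First I would check that each of the eleven listed varieties is contained in $\bV_{\ref{V: JI LZi}}$. By Variety~\ref{V: JI LZi}, the identity basis $\{x^2a \approx xa,\, xyx \approx xy^2\}$ defines $\bV_{\ref{V: JI LZi}}$, so it suffices to verify these two identities on each primitive generator recorded in Section~\ref{sec: varieties small semigroups}; this is a routine multiplication-table check. Next I would confirm that the thirteen candidates---the eleven listed varieties together with $\bT$ and $\bV_{\ref{V: JI LZi}}$---are pairwise distinct by producing, for each pair, an identity drawn from the (Bas) lines in Section~\ref{sec: varieties small semigroups}, aided by Lemmas~\ref{L: LZ LZi N3 Nni Zn word} and~\ref{L: JI word} when checking which identities hold in the relevant generators. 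The strict inclusions read off from these checks then assemble the Hasse diagram of Figure~\ref{F: JI LZi}.

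The main step is to show that no subvariety of $\bV_{\ref{V: JI LZi}}$ lies outside the thirteen candidates. For this I would exploit the prime decomposition $\bV_{\ref{V: JI LZi}} = \var\{\JI\} \vee \var\{\LZi\}$ given in Variety~\ref{V: JI LZi}, together with the known structures of the subvariety lattices of $\var\{\JI\}$ and $\var\{\LZi\}$ (five subvarieties each, by Variety~\ref{V: JI} and Variety~\ref{V: LZi}). Given any subvariety $\bU \subseteq \bV_{\ref{V: JI LZi}}$, set $\bU_1 = \bU \cap \var\{\JI\}$ and $\bU_2 = \bU \cap \var\{\LZi\}$; each of $\bU_1$ and $\bU_2$ is one of only five possibilities. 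Enumerating the $5 \times 5$ joins $\bU_1 \vee \bU_2$ and collapsing duplicates (caused by the shared subvarieties $\bT$ and $\bV_{\ref{V: SL}}$) yields exactly the thirteen candidates. The easy inclusion $\bU_1 \vee \bU_2 \subseteq \bU$ is clear; the substance of the argument is the reverse inclusion.

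The main obstacle will be establishing $\bU \subseteq \bU_1 \vee \bU_2$. I would approach it by developing a canonical form for words modulo the identities of $\bV_{\ref{V: JI LZi}}$: the identity $x^2a \approx xa$ collapses any non-final square, while $xyx \approx xy^2$ rewrites a reappearance of a variable as a square at the position of the reappearance, so every word $\bw$ reduces to $\ini(\bw)\cdot \tail(\bw)^{\epsilon}$ for some $\epsilon \in \{0,1\}$. Combining this normal form with Lemmas~\ref{L: LZ LZi N3 Nni Zn word}(ii) and~\ref{L: JI word}, I would classify the nontrivial identities satisfied in $\bV_{\ref{V: JI LZi}}$ and argue that any identity violated in $\bU$ must already be violated in $\bU_1$ or in $\bU_2$, which gives the desired inclusion. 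This forces $\bU = \bU_1 \vee \bU_2$, confining $\bU$ to one of the thirteen listed varieties and finishing the proof.
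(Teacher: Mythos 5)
The paper offers no proof of this proposition at all---it is quoted from Zhang and Luo~\cite{ZL09}---so there is no in-house argument to measure you against; I can only assess your plan on its own terms. The routine parts are fine: the containment and distinctness checks, the reduction of part~(ii) to part~(i) by duality, the reading-off of the Hasse diagram, and the observation that the $5\times5$ join enumeration collapses to exactly the thirteen candidates. The genuine gap is at the step you yourself flag as the main obstacle: the claim that every subvariety $\bU$ of $\var\{\JI,\LZi\}$ equals $(\bU\cap\var\{\JI\})\vee(\bU\cap\var\{\LZi\})$. This is not a formal consequence of $\var\{\JI,\LZi\}$ being the join of the two factors, and a normal form for $\var\{\JI,\LZi\}$ cannot deliver it by itself, because the assertion concerns the equational theories of \emph{arbitrary} subvarieties $\bU$, not just the ambient one. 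The analogous assertion is actually false for a join treated in this same paper: by Proposition~\ref{P: JI JId}, $\var\{\Ffour\}$ and $\var\{\Gfour\}$ are subvarieties of $\var\{\JI,\JIdual\}$, yet $\var\{\Ffour\}\cap\var\{\JI\}=\var\{\Ffour\}\cap\var\{\JIdual\}=\var\{\Niltwo\}$ (since $\SL\notin\var\{\Ffour\}$ and $\var\{\JI\}$ has only five subvarieties), so $\var\{\Ffour\}$ strictly exceeds the join of its traces on the two factors. Your sentence ``any identity violated in $\bU$ must already be violated in $\bU_1$ or in $\bU_2$'' is therefore exactly the content of the proposition restated, not something the normal form hands you; as written the plan is circular at its decisive point.

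To close the gap you need the exclusion-lemma machinery the appendix uses for the infinite lattices in Section~\ref{app: infinite}: for each prime generator $\Niltwo$, $\SL$, $\LZ$, $\JI$, $\LZi$ that fails to lie in $\bU$, an explicit identity that $\bU$ is thereby forced to satisfy (Lemma~\ref{L: excl JI} already supplies one for $\JI$, and Lemma~\ref{L: LZ LZi N3 Nni Zn word} makes the ones for $\Niltwo$, $\SL$, $\LZ$, $\LZi$ easy to extract), followed, for each of the thirteen admissible combinations, by a derivation showing that the {\ib} of $\var\{\JI,\LZi\}$ together with the relevant exclusion identities deduces an {\ib} of the corresponding join. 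That is presumably what Zhang and Luo's general theorem on joins of varieties encapsulates; without it, your final step does not go through.
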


\begin{figure}[ht]
\begin{center}
\begin{picture}(280,130)(00,10) \setlength{\unitlength}{0.7mm}
\put(30,60){\circle*{\circlesize}}
\put(10,50){\circle*{\circlesize}} \put(50,50){\circle*{\circlesize}}
\put(10,40){\circle*{\circlesize}} \put(30,40){\circle*{\circlesize}} \put(50,40){\circle*{\circlesize}}
\put(10,30){\circle*{\circlesize}} \put(30,30){\circle*{\circlesize}} \put(50,30){\circle*{\circlesize}}
\put(10,20){\circle*{\circlesize}} \put(30,20){\circle*{\circlesize}} \put(50,20){\circle*{\circlesize}}
\put(30,10){\circle*{\circlesize}}
\put(10,50){\line(2,1){20}} \put(10,30){\line(2,1){40}} \put(30,20){\line(2,1){20}} \put(10,20){\line(2,1){20}} \put(30,10){\line(2,1){20}}
\put(30,10){\line(-2,1){20}} \put(30,20){\line(-2,1){20}} \put(50,20){\line(-2,1){20}} \put(50,30){\line(-2,1){40}} \put(50,50){\line(-2,1){20}}
\put(10,20){\line(0,1){30}} \put(30,10){\line(0,1){10}} \put(30,30){\line(0,1){10}} \put(50,20){\line(0,1){30}}
\put(31,62){\makebox(0,0)[b]{$\bV_{\ref{V: JI LZi}}$}}
\put(08,50){\makebox(0,0)[br]{$\bV_{\ref{V: N2 LZi}}$}} \put(52,50){\makebox(0,0)[bl]{$\bV_{\ref{V: LZ JI}}$}}
\put(08,40){\makebox(0,0)[r]{$\bV_{\ref{V: LZi}}$}} \put(30,43){\makebox(0,0)[b]{$\bV_{\ref{V: N2 SL LZ}}$}} \put(52,40){\makebox(0,0)[l]{$\bV_{\ref{V: JI}}$}}
\put(08,30){\makebox(0,0)[r]{$\bV_{\ref{V: SL LZ}}$}} \put(32,31){\makebox(0,0)[l]{$\bV_{\ref{V: N2 LZ}}$}} \put(52,30){\makebox(0,0)[l]{$\bV_{\ref{V: N2 SL}}$}}
\put(08,20){\makebox(0,0)[tr]{$\bV_{\ref{V: LZ}}$}} \put(32,20){\makebox(0,0)[tl]{$\bV_{\ref{V: SL}}$}} \put(52,20){\makebox(0,0)[tl]{$\bV_{\ref{V: N2}}$}}
\put(30,07.5){\makebox(0,0)[t]{$\bT$}}
\put(110,60){\circle*{\circlesize}}
\put(90,50){\circle*{\circlesize}} \put(130,50){\circle*{\circlesize}}
\put(90,40){\circle*{\circlesize}} \put(110,40){\circle*{\circlesize}} \put(130,40){\circle*{\circlesize}}
\put(90,30){\circle*{\circlesize}} \put(110,30){\circle*{\circlesize}} \put(130,30){\circle*{\circlesize}}
\put(90,20){\circle*{\circlesize}} \put(110,20){\circle*{\circlesize}} \put(130,20){\circle*{\circlesize}}
\put(110,10){\circle*{\circlesize}}
\put(90,50){\line(2,1){20}} \put(90,30){\line(2,1){40}} \put(110,20){\line(2,1){20}} \put(90,20){\line(2,1){20}} \put(110,10){\line(2,1){20}}
\put(110,10){\line(-2,1){20}} \put(110,20){\line(-2,1){20}} \put(130,20){\line(-2,1){20}} \put(130,30){\line(-2,1){40}} \put(130,50){\line(-2,1){20}}
\put(90,20){\line(0,1){30}} \put(110,10){\line(0,1){10}} \put(110,30){\line(0,1){10}} \put(130,20){\line(0,1){30}}
\put(111,62){\makebox(0,0)[b]{$\bV_{\ref{V: JId RZi}}$}}
\put(88,50){\makebox(0,0)[br]{$\bV_{\ref{V: RZ JId}}$}} \put(132,50){\makebox(0,0)[bl]{$\bV_{\ref{V: N2 RZi}}$}}
\put(88,40){\makebox(0,0)[r]{$\bV_{\ref{V: JId}}$}} \put(110,43){\makebox(0,0)[b]{$\bV_{\ref{V: N2 SL RZ}}$}} \put(132,40){\makebox(0,0)[l]{$\bV_{\ref{V: RZi}}$}}
\put(88,30){\makebox(0,0)[tr]{$\bV_{\ref{V: N2 SL}}$}} \put(112,31){\makebox(0,0)[l]{$\bV_{\ref{V: N2 RZ}}$}} \put(132,30){\makebox(0,0)[tl]{$\bV_{\ref{V: SL RZ}}$}}
\put(88,20){\makebox(0,0)[tr]{$\bV_{\ref{V: N2}}$}} \put(112,20){\makebox(0,0)[tl]{$\bV_{\ref{V: SL}}$}} \put(132,20){\makebox(0,0)[tl]{$\bV_{\ref{V: RZ}}$}}
\put(110,07.5){\makebox(0,0)[t]{$\bT$}}
\end{picture}
\end{center}
\caption{The lattices $\sL(\bV_{\ref{V: JI LZi}})$ and $\sL(\bV_{\ref{V: JId RZi}})$}
\label{F: JI LZi}
\end{figure}

\subsection{Subvarieties of ${ \bV_{\ref{V: JId LZi}} = \var\{\protect\JIdual,\LZi\} }$ and ${ \bV_{\ref{V: JI RZi}} = \var\{\JI,\RZi\} }$}

\begin{proposition}[Zhang and Luo~{\cite[Subvarieties of~$\mathbf{B}$ in Figure~5]{ZL09}}] \quad \label{P: JId LZi}
\begin{enumerate}[\rm(i)]
\item The proper nontrivial subvarieties of $\bV_{\ref{V: JId LZi}} = \var\{\JIdual,\LZi\}$ are
\begin{align*}
\bV_{\ref{V: N2}} & = \var\{\Niltwo\}, & \bV_{\ref{V: SL}} & = \var\{\SL\}, & \bV_{\ref{V: LZ}} & = \var\{\LZ\}, \\
\bV_{\ref{V: N2 SL}} & = \var\{\Niltwo,\SL\}, & \bV_{\ref{V: N2 LZ}} & = \var\{\Niltwo,\LZ\}, & \bV_{\ref{V: JId}} & = \var\{\JIdual\}, \\
\bV_{\ref{V: SL LZ}} & = \var\{\SL,\LZ\}, & \bV_{\ref{V: LZi}} & = \var\{\LZi\}, & \bV_{\ref{V: N2 SL LZ}} & = \var\{\Niltwo,\SL,\LZ\}, \\
\bV_{\ref{V: N2 LZi}} & = \var\{\Niltwo,\LZi\}, & \bV_{\ref{V: LZ JId}} & = \var\{\LZ,\JIdual\}, && \\
\bV_{\ref{V: Cd}} & = \makebox[1cm][l]{$\var\{[11111,11113,11133,11144,11155]\}$.} &&&&
\end{align*}

\item The proper nontrivial subvarieties of $\bV_{\ref{V: JI RZi}} = \var\{\JI,\RZi\}$ are
\begin{align*}
\bV_{\ref{V: N2}} & = \var\{\Niltwo\}, & \bV_{\ref{V: SL}} & = \var\{\SL\}, & \bV_{\ref{V: RZ}} & = \var\{\RZ\}, \\
\bV_{\ref{V: N2 SL}} & = \var\{\Niltwo,\SL\}, & \bV_{\ref{V: JI}} & = \var\{\JI\}, & \bV_{\ref{V: SL RZ}} & = \var\{\SL,\RZ\}, \\
\bV_{\ref{V: N2 RZ}} & = \var\{\Niltwo,\RZ\}, & \bV_{\ref{V: RZi}} & = \var\{\RZi\}, & \bV_{\ref{V: N2 SL RZ}} & = \var\{\Niltwo,\SL,\RZ\}, \\
\bV_{\ref{V: RZ JI}} & = \var\{\RZ,\JI\}, & \bV_{\ref{V: N2 RZi}} & = \var\{\Niltwo,\RZi\}, && \\
\bV_{\ref{V: C}} & = \makebox[1cm][l]{$\var\{[11111,11111,11111,11345,13345]\}$.} &&&&
\end{align*}

\item The lattices $\sL(\bV_{\ref{V: JId LZi}})$ and $\sL(\bV_{\ref{V: JI RZi}})$ are given in Figure~\ref{F: JId LZi}.
\end{enumerate}
\end{proposition}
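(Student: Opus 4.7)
Since the variety $\bV_{\ref{V: JI RZi}}$ is the dual of $\bV_{\ref{V: JId LZi}}$, and duality induces an anti-isomorphism of subvariety lattices under which each listed subvariety of $\bV_{\ref{V: JI RZi}}$ is the dual of the correspondingly-labeled subvariety of $\bV_{\ref{V: JId LZi}}$, part~(ii) and the right-hand lattice in Figure~\ref{F: JId LZi} follow immediately from~(i) and the left-hand lattice. Therefore I will concentrate on $\bV_{\ref{V: JId LZi}}$. The plan is to combine the {\ib} $\{ax^2 \approx ax,\, xyx \approx x^2y\}$ for $\bV_{\ref{V: JId LZi}}$ from Variety~\ref{V: JId LZi} with the description of $\sL(\bV_{\ref{V: JId}})$ and $\sL(\bV_{\ref{V: LZi}})$ extractable from Propositions~\ref{P: JI JId} and the Gerhard--Petrich classification summarized in Figure~\ref{F: LZi RZi}.

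First I would list the obvious candidates: every join $\bU \vee \bW$ with $\bU \subseteq \bV_{\ref{V: JId}}$ and $\bW \subseteq \bV_{\ref{V: LZi}}$. The subvarieties of $\bV_{\ref{V: JId}}$ are $\bT, \bV_{\ref{V: N2}}, \bV_{\ref{V: SL}}, \bV_{\ref{V: N2 SL}}, \bV_{\ref{V: JId}}$ and those of $\bV_{\ref{V: LZi}}$ are $\bT, \bV_{\ref{V: SL}}, \bV_{\ref{V: LZ}}, \bV_{\ref{V: SL LZ}}, \bV_{\ref{V: LZi}}$; forming all possible joins and removing duplicates produces the eleven varieties $\bT$, $\bV_{\ref{V: N2}}$, $\bV_{\ref{V: SL}}$, $\bV_{\ref{V: LZ}}$, $\bV_{\ref{V: N2 SL}}$, $\bV_{\ref{V: N2 LZ}}$, $\bV_{\ref{V: JId}}$, $\bV_{\ref{V: SL LZ}}$, $\bV_{\ref{V: LZi}}$, $\bV_{\ref{V: N2 SL LZ}}$, $\bV_{\ref{V: N2 LZi}}$, $\bV_{\ref{V: LZ JId}}$ listed in~(i). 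To these I must adjoin $\bV_{\ref{V: Cd}}$ and $\bV_{\ref{V: JId LZi}}$ itself, and verify that the containments displayed in Figure~\ref{F: JId LZi} hold. Most containments follow by directly checking that the relevant primitive generator satisfies the {\ib} of the larger variety; for $\bV_{\ref{V: Cd}}$, one checks that its basis $\{ax^2 \approx ax,\, xyx \approx x^2y,\, a^2xy \approx a^2yx\}$ follows from the basis of $\bV_{\ref{V: JId LZi}}$ together with the defining identity $a^2xy \approx a^2yx$ of one of its maximal subvarieties, and that $\bV_{\ref{V: JId}}$ and $\bV_{\ref{V: LZi}}$ both embed in $\bV_{\ref{V: Cd}}$.

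The principal obstacle, and the bulk of the argument, is completeness: showing that no other subvariety exists. Here I would proceed by assuming $\bW$ is a subvariety of $\bV_{\ref{V: JId LZi}}$ and performing a case analysis on the identities satisfied by~$\bW$ but not by $\bV_{\ref{V: JId LZi}}$. Using the basis identities $ax^2 \approx ax$ and $xyx \approx x^2y$, one shows that modulo these, every word $\bw$ with $\con(\bw) = \{x_1,\ldots,x_n\}$ and each $x_i$ occurring in~$\bw$ is equivalent to a canonical form $x_{i_1} x_{i_2} \cdots x_{i_k} \cdot x_{j_1}^{\ep_1} x_{j_2}^{\ep_2} \cdots x_{j_m}^{\ep_m}$ where the prefix records the initial part of~$\bw$ (a left-normal-band-type invariant for $\LZi$) and the suffix records a commuting product of pure powers (a $\JIdual$-type invariant). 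Every nontrivial identity of $\bV_{\ref{V: JId LZi}}$ not in the basis therefore reduces to one of a small collection of critical identities (such as $xy \approx yx$, $x^2 \approx x$, $xya \approx yxa$, $a^2xy \approx a^2yx$, $x^2a \approx xa$, and similar), and each such critical identity forces $\bW$ into the lattice shown. The hardest substep is identifying $\bV_{\ref{V: Cd}}$: it arises as the unique subvariety strictly between $\var\{\JId,\LZi\}\cap[a^2xy \approx a^2yx]$ and the remaining joins, and is not of the form $\bU \vee \bW$ with $\bU \subseteq \bV_{\ref{V: JId}}$ and $\bW \subseteq \bV_{\ref{V: LZi}}$, so one must argue separately that it is closed under join with any smaller listed subvariety.

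Finally, the covering relations in Figure~\ref{F: JId LZi} are verified by checking, for each edge $\bU \subset \bU'$, that no listed subvariety lies strictly between; this follows from the case analysis above together with the observation that distinct subvarieties are separated by identities drawn from a small explicit list. The appeal to Zhang and Luo~\cite[Figure~5]{ZL09} provides an independent confirmation of the lattice structure.
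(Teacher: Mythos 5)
The paper offers no proof of this proposition---it is quoted directly from Zhang and Luo~\cite[Figure~5]{ZL09}---so your sketch is being measured against the literature rather than against an in-paper argument. Your overall architecture is the right one: deduce~(ii) from~(i) by duality, enumerate the joins $\bU\vee\bW$ with $\bU\subseteq\var\{\JIdual\}$ and $\bW\subseteq\var\{\LZi\}$, adjoin the single extra variety $\bV_{\ref{V: Cd}}$, and prove completeness by a normal-form analysis over the basis $\{ax^2\approx ax,\ xyx\approx x^2y\}$. The join enumeration is correct and yields exactly the eleven listed varieties other than $\bV_{\ref{V: Cd}}$. (One terminological slip: duality gives an order-preserving lattice \emph{isomorphism} $\sL(\bV_{\ref{V: JId LZi}})\to\sL(\bV_{\ref{V: JI RZi}})$, not an anti-isomorphism.)

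Two steps as written would fail, however. First, your claim that $\var\{\LZi\}$ embeds in $\bV_{\ref{V: Cd}}$ is false: if it did, then $\bV_{\ref{V: Cd}}\supseteq\var\{\JIdual\}\vee\var\{\LZi\}=\bV_{\ref{V: JId LZi}}$, contradicting properness. Concretely, $\LZi$ violates $a^2xy\approx a^2yx$ (substitute its identity element for $a$ and the two left-zero elements for $x,y$). What is true, and what Figure~\ref{F: JId LZi} records, is that $\bV_{\ref{V: Cd}}$ contains $\var\{\LZ\}$ and $\var\{\JIdual\}$ and covers their join $\bV_{\ref{V: LZ JId}}$. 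Second, your canonical form is not the right one for this basis: $xyx\approx x^2y$ lets every non-first occurrence of a letter be pulled back to the position of its first occurrence, and $ax^2\approx ax$ then collapses exponents, so modulo the basis every word equals $x_{i_1}^{e}x_{i_2}\cdots x_{i_k}$ where $x_{i_1}\cdots x_{i_k}$ is its initial part and $e\in\{1,2\}$; there is no trailing block of commuting powers. The invariants are $\ini(\bw)$ together with whether $\head(\bw)$ is simple or repeated, and the completeness case analysis---the actual content of the proposition, namely that every identity strictly stronger than the basis forces one of the thirteen listed varieties---must be run against these invariants. As it stands that analysis is only asserted, so the proposal is a plausible outline containing two concrete errors rather than a proof.
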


\begin{figure}[ht]
\begin{center}
\begin{picture}(280,130)(00,10) \setlength{\unitlength}{0.7mm}
\put(30,60){\circle*{\circlesize}}
\put(40,55){\circle*{\circlesize}}
\put(10,50){\circle*{\circlesize}} \put(50,50){\circle*{\circlesize}}
\put(10,40){\circle*{\circlesize}} \put(30,40){\circle*{\circlesize}} \put(50,40){\circle*{\circlesize}}
\put(10,30){\circle*{\circlesize}} \put(30,30){\circle*{\circlesize}} \put(50,30){\circle*{\circlesize}}
\put(10,20){\circle*{\circlesize}} \put(30,20){\circle*{\circlesize}} \put(50,20){\circle*{\circlesize}}
\put(30,10){\circle*{\circlesize}}
\put(10,50){\line(2,1){20}} \put(10,30){\line(2,1){40}} \put(30,20){\line(2,1){20}} \put(10,20){\line(2,1){20}} \put(30,10){\line(2,1){20}}
\put(30,10){\line(-2,1){20}} \put(30,20){\line(-2,1){20}} \put(50,20){\line(-2,1){20}} \put(50,30){\line(-2,1){40}} \put(50,50){\line(-2,1){20}}
\put(10,20){\line(0,1){30}} \put(30,10){\line(0,1){10}} \put(30,30){\line(0,1){10}} \put(50,20){\line(0,1){30}}
\put(31,62){\makebox(0,0)[b]{$\bV_{\ref{V: JId LZi}}$}}
\put(41,56){\makebox(0,0)[bl]{$\bV_{\ref{V: Cd}}$}}
\put(08,50){\makebox(0,0)[br]{$\bV_{\ref{V: N2 LZi}}$}} \put(52,50){\makebox(0,0)[l]{$\bV_{\ref{V: LZ JId}}$}}
\put(08,40){\makebox(0,0)[r]{$\bV_{\ref{V: LZi}}$}} \put(30,43){\makebox(0,0)[b]{$\bV_{\ref{V: N2 SL LZ}}$}} \put(52,40){\makebox(0,0)[l]{$\bV_{\ref{V: JId}}$}}
\put(08,30){\makebox(0,0)[r]{$\bV_{\ref{V: SL LZ}}$}} \put(32,31){\makebox(0,0)[l]{$\bV_{\ref{V: N2 LZ}}$}} \put(52,30){\makebox(0,0)[l]{$\bV_{\ref{V: N2 SL}}$}}
\put(08,20){\makebox(0,0)[tr]{$\bV_{\ref{V: LZ}}$}} \put(32,20){\makebox(0,0)[tl]{$\bV_{\ref{V: SL}}$}} \put(52,20){\makebox(0,0)[tl]{$\bV_{\ref{V: N2}}$}}
\put(30,07.5){\makebox(0,0)[t]{$\bT$}}
\put(110,60){\circle*{\circlesize}}
\put(100,55){\circle*{\circlesize}}
\put(90,50){\circle*{\circlesize}} \put(130,50){\circle*{\circlesize}}
\put(90,40){\circle*{\circlesize}} \put(110,40){\circle*{\circlesize}} \put(130,40){\circle*{\circlesize}}
\put(90,30){\circle*{\circlesize}} \put(110,30){\circle*{\circlesize}} \put(130,30){\circle*{\circlesize}}
\put(90,20){\circle*{\circlesize}} \put(110,20){\circle*{\circlesize}} \put(130,20){\circle*{\circlesize}}
\put(110,10){\circle*{\circlesize}}
\put(90,50){\line(2,1){20}} \put(90,30){\line(2,1){40}} \put(110,20){\line(2,1){20}} \put(90,20){\line(2,1){20}} \put(110,10){\line(2,1){20}}
\put(110,10){\line(-2,1){20}} \put(110,20){\line(-2,1){20}} \put(130,20){\line(-2,1){20}} \put(130,30){\line(-2,1){40}} \put(130,50){\line(-2,1){20}}
\put(90,20){\line(0,1){30}} \put(110,10){\line(0,1){10}} \put(110,30){\line(0,1){10}} \put(130,20){\line(0,1){30}}
\put(111,62){\makebox(0,0)[b]{$\bV_{\ref{V: JI RZi}}$}}
\put(99,56){\makebox(0,0)[br]{$\bV_{\ref{V: C}}$}}
\put(88,50){\makebox(0,0)[r]{$\bV_{\ref{V: RZ JI}}$}} \put(132,50){\makebox(0,0)[bl]{$\bV_{\ref{V: N2 RZi}}$}}
\put(88,40){\makebox(0,0)[r]{$\bV_{\ref{V: JI}}$}} \put(110,43){\makebox(0,0)[b]{$\bV_{\ref{V: N2 SL RZ}}$}} \put(132,40){\makebox(0,0)[l]{$\bV_{\ref{V: RZi}}$}}
\put(88,30){\makebox(0,0)[r]{$\bV_{\ref{V: N2 SL}}$}} \put(112,31){\makebox(0,0)[l]{$\bV_{\ref{V: N2 RZ}}$}} \put(132,30){\makebox(0,0)[l]{$\bV_{\ref{V: SL RZ}}$}}
\put(88,20){\makebox(0,0)[tr]{$\bV_{\ref{V: N2}}$}} \put(112,20){\makebox(0,0)[tl]{$\bV_{\ref{V: SL}}$}} \put(132,20){\makebox(0,0)[tl]{$\bV_{\ref{V: RZ}}$}}
\put(110,07.5){\makebox(0,0)[t]{$\bT$}}
\end{picture}
\end{center}
\caption{The lattices $\sL(\bV_{\ref{V: JId LZi}})$ and $\sL(\bV_{\ref{V: JI RZi}})$}
\label{F: JId LZi}
\end{figure}

\subsection{Subvarieties of ${ \bV_{\ref{V: O2}} = \var\{\Otwo\} }$ and ${ \bV_{\ref{V: O2d}} = \var\{\protect\Otwodual\} }$}

\begin{proposition}[{\cite[Figure~7]{Tis07}}] \quad \label{P: O2}
\begin{enumerate}[\rm(i)]
\item The proper nontrivial subvarieties of $\bV_{\ref{V: O2}} = \var\{\Otwo\}$ are
\begin{align*}
\bV_{\ref{V: SL}} & = \var\{\SL\}, & \bV_{\ref{V: LZ}} & = \var\{\LZ\}, & \bV_{\ref{V: Z2}} & = \var\{\Ztwo\}, \\
\bV_{\ref{V: SL LZ}} & = \var\{\SL,\LZ\}, & \bV_{\ref{V: SL Z2}} & = \var\{\SL,\Ztwo\}, & \bV_{\ref{V: LZi}} & = \var\{\LZi\}, \\
\bV_{\ref{V: SL LZ Z2}} & = \var\{\SL,\LZ,\Ztwo\}, & \bV_{\ref{V: Z2 LZi}} & = \var\{\Ztwo,\LZi\}, & \bV_{\ref{V: LZ Z2}} & = \var\{ \LZ,\Ztwo \}.
\end{align*}

\item The proper nontrivial subvarieties of $\bV_{\ref{V: O2d}} = \var\{\Otwodual\}$ are
\begin{align*}
\bV_{\ref{V: SL}} & = \var\{\SL\}, & \bV_{\ref{V: RZ}} & = \var\{\RZ\}, & \bV_{\ref{V: Z2}} & = \var\{\Ztwo\}, \\
\bV_{\ref{V: SL RZ}} & = \var\{\SL,\RZ\}, & \bV_{\ref{V: SL Z2}} & = \var\{\SL,\Ztwo\}, & \bV_{\ref{V: RZi}} & = \var\{\RZi\}, \\
\bV_{\ref{V: SL RZ Z2}} & = \var\{\SL,\RZ,\Ztwo\}, & \bV_{\ref{V: Z2 RZi}} & = \var\{\Ztwo,\RZi\}, & \bV_{\ref{V: RZ Z2}} & = \var\{ \RZ,\Ztwo \}.
\end{align*}

\item The lattices $\sL(\bV_{\ref{V: O2}})$ and $\sL(\bV_{\ref{V: O2d}})$ are given in Figure~\ref{F: O2}.
\end{enumerate}
\end{proposition}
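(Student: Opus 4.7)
The plan is to deduce part~(ii) from part~(i) by duality. Since $\Otwodual$ is the anti-isomorphic opposite of $\Otwo$, the assignment $\bV \mapsto \dual{\bV}$ is a lattice isomorphism $\sL(\var\{\Otwo\}) \to \sL(\var\{\Otwodual\})$, and the nine listed subvarieties in~(ii) are precisely the duals of those in~(i): $\var\{\SL\}$ and $\var\{\Ztwo\}$ are self-dual, while $\var\{\LZ\}$ and $\var\{\LZi\}$ dualize to $\var\{\RZ\}$ and $\var\{\RZi\}$ respectively.

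For part~(i), the {\ib} $\{x^3 \approx x,\ xyxy \approx xy^2x\}$ forces every semigroup in $\var\{\Otwo\}$ to be completely regular, with group $\mathscr{H}$-classes of exponent dividing~$2$. Since $xyxy \approx xy^2x$ collapses to the left-regular-band identity $xy \approx xyx$ once $x^2 \approx x$ is added, the band subvariety of $\var\{\Otwo\}$ is $\var\{\LZi\}$, and its group subvariety is $\var\{\Ztwo\}$.

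The attack proceeds in four stages. First I would verify containment: each of the nine listed primitive generators satisfies the base identities, hence generates a subvariety of $\var\{\Otwo\}$. Next I would establish pairwise distinctness using transparent separating identities ($x^2 \approx x$ separates the band varieties from those containing $\var\{\Ztwo\}$, $xy \approx yx$ separates the commutative ones from $\var\{\LZ\}$ and $\var\{\LZi\}$, the semilattice identity distinguishes $\var\{\SL\}$ from $\var\{\LZ\}$, and so on). Third I would prove completeness: every proper subvariety of $\var\{\Otwo\}$ is the join of a subvariety of $\var\{\LZi\}$ (one of the five varieties $\bT, \var\{\SL\}, \var\{\LZ\}, \var\{\SL,\LZ\}, \var\{\LZi\}$) and a subvariety of $\var\{\Ztwo\}$ (one of $\bT, \var\{\Ztwo\}$), giving $5 \times 2 - 1 = 9$ proper nontrivial joins. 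Finally I would read Figure~\ref{F: O2} off from the resulting covering relations.

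The hard part will be the completeness stage, and specifically showing that the unique maximal proper subvariety of $\var\{\Otwo\}$ is $\var\{\LZi, \Ztwo\} = \bV_{\ref{V: Z2 LZi}}$, cut out within $\var\{\Otwo\}$ by the extra identity $xyx \approx x^2y$. That $\Otwo$ violates this identity is immediate from its Cayley table (taking $x = 4, y = 1$ gives $xyx = 3 \neq 1 = x^2y$), and that $\var\{\LZi, \Ztwo\}$ satisfies it is routine ($\LZi$ gives $xyx = xy = x^2y$ via idempotency, and $\Ztwo$ is commutative). The substantive work is to show that $\{x^3 \approx x,\ xyxy \approx xy^2x,\ xyx \approx x^2y\}$ is an {\ib} for $\var\{\LZi, \Ztwo\}$. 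My approach would be to reduce every word $\bw$ to a normal form $\ini(\bw) \cdot g(\bw)$, with $\ini(\bw)$ the initial-segment witness for $\var\{\LZi\}$ (Lemma~\ref{L: LZ LZi N3 Nni Zn word}(ii)) and $g(\bw)$ a power-product encoding the $\Ztwo$-component via exponent parities modulo~$2$ (Lemma~\ref{L: LZ LZi N3 Nni Zn word}(v)), and then verify that two such normal forms are provably equal from the base iff they agree in both parts.
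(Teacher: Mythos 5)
You should first note that the paper does not actually prove this proposition: it is quoted from Tishchenko \cite[Figure~7]{Tis07}, so your proposal has to be measured against the way analogous results are established elsewhere in this paper (e.g.\ Lemmas~\ref{L: excl Nni} and~\ref{L: JIi max}) rather than against an in-text argument. Your overall architecture --- containment, separation, completeness, and deducing part~(ii) from part~(i) by dualization --- is the right one; the computations you do exhibit are correct (in particular $xyx=3\neq 1=x^2y$ for $x=4$, $y=1$ in $\Otwo$), and the identification of the band and group parts of $\var\{\Otwo\}$ as $\var\{\LZi\}$ and $\var\{\Ztwo\}$ is sound.

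The gap is in the completeness stage, and it is not where you locate it. Proving that $\{x^3\approx x,\ xyxy\approx xy^2x,\ xyx\approx x^2y\}$ is an {\ib} for $\var\{\LZi,\Ztwo\}$ only identifies $\var\{\LZi,\Ztwo\}$ with the subvariety of $\var\{\Otwo\}$ cut out by the single additional identity $xyx\approx x^2y$; it does not show that this is the \emph{unique maximal} subvariety. For that you must show that \emph{every} proper subvariety of $\var\{\Otwo\}$ satisfies $xyx\approx x^2y$, i.e.\ that from the basis of $\var\{\Otwo\}$ together with an arbitrary identity $\bu\approx\bv$ violated by $\Otwo$ one can always deduce $xyx\approx x^2y$. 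That syntactic case analysis (in the style of Lemma~\ref{L: excl Nni} or Lemma~\ref{L: JIi max}) is the genuinely hard step and is entirely absent from your outline; without it, your list of nine joins is only a lower bound for the set of proper nontrivial subvarieties, and Figure~\ref{F: O2} cannot be ``read off.'' A second, smaller omission: the assertion that every subvariety of $\var\{\LZi,\Ztwo\}$ splits as the join of a subvariety of $\var\{\LZi\}$ with a subvariety of $\var\{\Ztwo\}$, i.e.\ that $\sL(\var\{\LZi,\Ztwo\})\cong\sL(\var\{\LZi\})\times\sL(\var\{\Ztwo\})$, also requires proof --- compare the appeal to Vernikov's decomposition result in Lemma~\ref{L: N3 Zn} --- and your normal-form analysis for the {\ib} would have to be pushed further to deliver it.
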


\begin{figure}[ht]
\begin{center}
\begin{picture}(280,130)(00,10) \setlength{\unitlength}{0.7mm}
\put(30,60){\circle*{\circlesize}}
\put(30,50){\circle*{\circlesize}}
\put(10,40){\circle*{\circlesize}} \put(30,40){\circle*{\circlesize}}
\put(10,30){\circle*{\circlesize}} \put(30,30){\circle*{\circlesize}} \put(50,30){\circle*{\circlesize}}
\put(10,20){\circle*{\circlesize}} \put(30,20){\circle*{\circlesize}} \put(50,20){\circle*{\circlesize}}
\put(30,10){\circle*{\circlesize}}
\put(10,40){\line(2,1){20}} \put(10,30){\line(2,1){20}} \put(30,20){\line(2,1){20}} \put(10,20){\line(2,1){20}} \put(30,10){\line(2,1){20}}
\put(30,10){\line(-2,1){20}} \put(30,20){\line(-2,1){20}} \put(50,20){\line(-2,1){20}} \put(50,30){\line(-2,1){20}}
\put(10,20){\line(0,1){20}} \put(30,10){\line(0,1){10}} \put(30,30){\line(0,1){30}} \put(50,20){\line(0,1){10}}
\put(31,62){\makebox(0,0)[b]{$\bV_{\ref{V: O2}}$}}
\put(32,51){\makebox(0,0)[l]{$\bV_{\ref{V: Z2 LZi}}$}}
\put(08,40){\makebox(0,0)[r]{$\bV_{\ref{V: LZi}}$}} \put(32,41){\makebox(0,0)[l]{$\bV_{\ref{V: SL LZ Z2}}$}}
\put(08,30){\makebox(0,0)[r]{$\bV_{\ref{V: SL LZ}}$}} \put(32,31){\makebox(0,0)[l]{$\bV_{\ref{V: LZ Z2}}$}} \put(52,30){\makebox(0,0)[l]{$\bV_{\ref{V: SL Z2}}$}}
\put(08,20){\makebox(0,0)[r]{$\bV_{\ref{V: LZ}}$}} \put(32,18){\makebox(0,0)[l]{$\bV_{\ref{V: SL}}$}} \put(52,20){\makebox(0,0)[l]{$\bV_{\ref{V: Z2}}$}}
\put(30,07.5){\makebox(0,0)[t]{$\bT$}}
\put(110,60){\circle*{\circlesize}}
\put(110,50){\circle*{\circlesize}}
\put(110,40){\circle*{\circlesize}} \put(130,40){\circle*{\circlesize}}
\put(90,30){\circle*{\circlesize}} \put(110,30){\circle*{\circlesize}} \put(130,30){\circle*{\circlesize}}
\put(90,20){\circle*{\circlesize}} \put(110,20){\circle*{\circlesize}} \put(130,20){\circle*{\circlesize}}
\put(110,10){\circle*{\circlesize}}
\put(90,30){\line(2,1){20}} \put(110,20){\line(2,1){20}} \put(90,20){\line(2,1){20}} \put(110,10){\line(2,1){20}}
\put(110,10){\line(-2,1){20}} \put(110,20){\line(-2,1){20}} \put(130,20){\line(-2,1){20}} \put(130,30){\line(-2,1){20}} \put(130,40){\line(-2,1){20}}
\put(90,20){\line(0,1){10}} \put(110,10){\line(0,1){10}} \put(110,30){\line(0,1){30}} \put(130,20){\line(0,1){20}}
\put(111,62){\makebox(0,0)[b]{$\bV_{\ref{V: O2d}}$}}
\put(108,51){\makebox(0,0)[r]{$\bV_{\ref{V: Z2 RZi}}$}}
\put(108,42){\makebox(0,0)[r]{$\bV_{\ref{V: SL RZ Z2}}$}} \put(132,40){\makebox(0,0)[l]{$\bV_{\ref{V: RZi}}$}}
\put(88,30){\makebox(0,0)[r]{$\bV_{\ref{V: SL Z2}}$}} \put(112,31){\makebox(0,0)[l]{$\bV_{\ref{V: RZ Z2}}$}} \put(132,30){\makebox(0,0)[l]{$\bV_{\ref{V: SL RZ}}$}}
\put(88,20){\makebox(0,0)[r]{$\bV_{\ref{V: Z2}}$}} \put(112,18){\makebox(0,0)[l]{$\bV_{\ref{V: SL}}$}} \put(132,20){\makebox(0,0)[l]{$\bV_{\ref{V: RZ}}$}}
\put(110,07.5){\makebox(0,0)[t]{$\bT$}}
\end{picture}
\end{center}
\caption{The lattices $\sL(\bV_{\ref{V: O2}})$ and $\sL(\bV_{\ref{V: O2d}})$}
\label{F: O2}
\end{figure}

\subsection{Subvarieties of ${ \bV_{\ref{V: N3 P2}} = \var\{\Nilthree,\Ptwo\} }$ and ${ \bV_{\ref{V: N3 P2d}} = \var\{\Nilthree,\protect\Ptwodual\} }$}

\begin{proposition}[Tishchenko~{\cite[Figure~1]{Tis17}}]
\quad \label{P: N3 P2}
\begin{enumerate}[\rm(i)]
\item The proper nontrivial subvarieties of $\bV_{\ref{V: N3 P2}} = \var\{\Nilthree,\Ptwo\}$ are
\begin{align*}
\bV_{\ref{V: N2}} & = \var\{\Niltwo\}, & \bV_{\ref{V: LZ}} & = \var\{\LZ\}, & \bV_{\ref{V: N3}} & = \var\{\Nilthree\}, \\
\bV_{\ref{V: N2 LZ}} & = \var\{\Niltwo,\LZ\}, & \bV_{\ref{V: F4}} & = \var\{\Ffour\}, & \bV_{\ref{V: N3 F4}} & = \var\{\Nilthree,\Ffour\}, \\
\bV_{\ref{V: G4}} & = \var\{\Gfour\}, & \bV_{\ref{V: LZ N3}} & = \var\{\LZ,\Nilthree\}, & \bV_{\ref{V: P2}} & = \var\{\Ptwo\}, \\
\bV_{\ref{V: LZ G4}} & = \var\{\LZ,\Gfour\}, & \bV_{\ref{V: G4 P2}} & = \var\{\Gfour,\Ptwo\}. & &
\end{align*}

\item The proper nontrivial subvarieties of $\bV_{\ref{V: N3 P2d}} = \var\{\Nilthree,\Ptwodual\}$ are
\begin{align*}
\bV_{\ref{V: N2}} & = \var\{\Niltwo\}, & \bV_{\ref{V: RZ}} & = \var\{\RZ\}, & \bV_{\ref{V: N3}} & = \var\{\Nilthree\}, \\
\bV_{\ref{V: N2 RZ}} & = \var\{\Niltwo,\RZ\}, & \bV_{\ref{V: F4}} & = \var\{\Ffour\}, & \bV_{\ref{V: N3 F4}} & = \var\{\Nilthree,\Ffour\}, \\
\bV_{\ref{V: G4}} & = \var\{\Gfour\}, & \bV_{\ref{V: RZ N3}} & = \var\{\RZ,\Nilthree\}, & \bV_{\ref{V: P2d}} & = \var\{\Ptwodual\}, \\
\bV_{\ref{V: RZ G4}} & = \var\{\RZ,\Gfour\}, & \bV_{\ref{V: G4 P2d}} & = \var\{\Gfour,\Ptwodual\}. & &
\end{align*}

\item The lattices $\sL(\bV_{\ref{V: N3 P2}})$ and $\sL(\bV_{\ref{V: N3 P2d}})$ are given in Figure~\ref{F: N3 P2}.
\end{enumerate}
\end{proposition}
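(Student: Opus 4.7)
The plan is to prove both parts (i) and (ii) for $\bV_{\ref{V: N3 P2}}$; the corresponding statements for $\bV_{\ref{V: N3 P2d}}$ will follow by duality, since $\Nilthree$ is self-dual (an immediate check on its Cayley table) while $\Ptwodual$ is by definition the dual of $\Ptwo$. The starting point is to verify the basis claimed in Variety~\ref{V: N3 P2}, namely that $\bV_{\ref{V: N3 P2}}$ is defined by $xyx \approx xyz$. That both $\Nilthree$ and $\Ptwo$ satisfy this identity is a direct check: in $\Ptwo$ the identity $abx \approx ab$ from Variety~\ref{V: P2} forces both sides to equal $xy$, while in $\Nilthree$ both sides are length-three words and so coincide by Lemma~\ref{L: LZ LZi N3 Nni Zn word}(iii). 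For the converse inclusion, I will extract a normal form for words from $xyx \approx xyz$, using the consequences $x^3 \approx x^2z$ (substitute $y \mapsto x$) and $xyx \approx xy^2$ (substitute $z \mapsto y$) to show that in the free semigroup of $[xyx \approx xyz]$, every word of length at least three is determined by its content together with its first two variables. This realizes $F_X([xyx \approx xyz])$ as a subdirect product of $F_X(\Nilthree)$ and $F_X(\Ptwo)$, yielding the equality $[xyx \approx xyz] = \var\{\Nilthree,\Ptwo\}$.

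With the basis in place, I next confirm the two maximal subvarieties listed in the (Max) row of Variety~\ref{V: N3 P2}. The identity $x^3 \approx x^2$ is satisfied by $\Ptwo$ (routine elementwise check) but violated by $\Nilthree$ via Lemma~\ref{L: LZ LZi N3 Nni Zn word}(iii), so adding it cuts $\bV_{\ref{V: N3 P2}}$ down to $\var\{\Ptwo\} = \bV_{\ref{V: P2}}$; dually, $x^3 \approx xyx$ is satisfied by both $\Nilthree$ (both sides have length three) and $\LZ$ (both sides collapse to the head) but violated by $\Ptwo$ (take $x=2$, $y=4$), so adding it yields $\var\{\LZ,\Nilthree\} = \bV_{\ref{V: LZ N3}}$. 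Each of the remaining nine candidate subvarieties appears in Subsections~\ref{subsec: variety 2}--\ref{subsec: variety >4} with a Bas-Max system already established, so I can verify all other covering relations by comparing bases and applying the standard test $S \models \Sigma$, $S \not\models \mu_i$ to the primitive generators; this produces every edge of the Hasse diagram in Figure~\ref{F: N3 P2}.

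The main obstacle is completeness: proving that the list of eleven proper nontrivial subvarieties is exhaustive. For this I exploit the subdirect embedding $F_X(\bV_{\ref{V: N3 P2}}) \hookrightarrow F_X(\Nilthree) \times F_X(\Ptwo)$ obtained in the first step. Every subvariety corresponds to a fully invariant congruence on $F_X(\bV_{\ref{V: N3 P2}})$, and the normal form reduces such a congruence to a pair consisting of a fully invariant congruence on $F_X(\Nilthree)$ and one on $F_X(\Ptwo)$, each of whose lattices is finite and already known (the subvarieties of $\var\{\Nilthree\}$ and of $\var\{\Ptwo\}$ respectively, recorded as $\bV_{\ref{V: N3}}$ and $\bV_{\ref{V: P2}}$ in Subsections~\ref{subsec: variety 3} and~\ref{subsec: variety 4}). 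A finite case analysis, enumerating all such pairs and determining which of them glue to a genuine fully invariant congruence on the subdirect product, exhibits exactly the eleven subvarieties listed. The hardest bookkeeping is in identifying which "mixed" congruences correspond to honest joins (for instance, distinguishing $\bV_{\ref{V: LZ G4}}$ from $\bV_{\ref{V: G4 P2}}$); this is where I expect to invoke the explicit Tishchenko argument from~\cite{Tis17} rather than recomputing from scratch. The parallel proof for $\bV_{\ref{V: N3 P2d}}$ then follows by transposing every Cayley table, completing the proposition.
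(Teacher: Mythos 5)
First, a point of reference: the paper does not prove this proposition at all --- the result is imported wholesale from Tishchenko~\cite{Tis17}, and the attribution in the proposition header is the entirety of the paper's justification. So any self-contained argument you give is already going beyond what the paper does. Your opening steps are sound and are of the same kind as the verifications the paper performs elsewhere for its Bas-Max systems: the identity $xyx \approx xyz$ forces every word of length at least three to equal $aba$, where $a,b$ are its first two letters (in fact the content is irrelevant, which is stronger than the normal form you state), both $\Nilthree$ and $\Ptwo$ satisfy the identity, and the resulting subdirect embedding of the relatively free semigroup into $F_X(\var\{\Nilthree\}) \times F_X(\var\{\Ptwo\})$ gives $[xyx \approx xyz] = \var\{\Nilthree,\Ptwo\}$. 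The identification of the two maximal subvarieties and the reduction of part (ii) to part (i) by duality are also fine.

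The gap is in your completeness argument. A fully invariant congruence on the subdirect product is \emph{not} determined by a pair of fully invariant congruences on the factors; equivalently, a subvariety $\bW$ of $\var\{\Nilthree\} \vee \var\{\Ptwo\}$ is not determined by the pair $\bigl(\bW \cap \var\{\Nilthree\},\, \bW \cap \var\{\Ptwo\}\bigr)$. A concrete failure occurs inside the very lattice you are computing: $\var\{\Ffour\}$ and $\var\{\Gfour\}$ are distinct subvarieties (the former noncommutative, the latter commutative), yet both meet $\var\{\Nilthree\}$ in $\var\{\Gfour\}$ and both meet $\var\{\Ptwo\}$ in $\var\{\Niltwo\}$, since neither contains $\Nilthree$ or $\LZ$. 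Your ``gluing'' caveat only excludes pairs that fail to arise; it does not account for one pair arising from two different subvarieties, so an enumeration indexed by pairs would conflate $\var\{\Ffour\}$ with $\var\{\Gfour\}$ and lose one of the eleven varieties in the statement. (Numerically, $\sL(\var\{\Nilthree\})$ has $4$ elements and $\sL(\var\{\Ptwo\})$ has $5$, while the target lattice has $13$, so no pair-by-pair bookkeeping can come out exactly right.) To close this self-containedly you would have to classify the fully invariant congruences on the normal-form semigroup directly --- a feasible but nontrivial computation, and precisely the one you announce you will defer to Tishchenko. As written, then, your proof and the paper's both ultimately rest on \cite{Tis17} for exhaustiveness, with the additional problem that your proposed reduction to pairs is incorrect as stated.
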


\begin{figure}[ht]
\begin{center}
\begin{picture}(280,110)(00,10) \setlength{\unitlength}{0.7mm}
\put(05,51.25){\circle*{\circlesize}} \put(20,47.5){\circle*{\circlesize}} \put(35,43.75){\circle*{\circlesize}} \put(50,40){\circle*{\circlesize}}
\put(05,41.25){\circle*{\circlesize}} \put(20,37.5){\circle*{\circlesize}} \put(35,33.75){\circle*{\circlesize}} \put(50,30){\circle*{\circlesize}}
\put(05,31.25){\circle*{\circlesize}} \put(20,27.5){\circle*{\circlesize}} \put(50,20){\circle*{\circlesize}}
\put(20,17.5){\circle*{\circlesize}} \put(50,10){\circle*{\circlesize}}
\put(50,40){\line(-4,1){45}} \put(50,30){\line(-4,1){45}} \put(50,20){\line(-4,1){45}} \put(50,10){\line(-4,1){30}}
\put(05,31.25){\line(0,1){20}} \put(20,17.5){\line(0,1){30}} \put(35,33.75){\line(0,1){10}} \put(50,10){\line(0,1){30}}
\put(05,53.5){\makebox(0,0)[b]{$\bV_{\ref{V: N3 P2}}$}} \put(20,49.75){\makebox(0,0)[b]{$\bV_{\ref{V: LZ N3}}$}} \put(35,46){\makebox(0,0)[b]{$\bV_{\ref{V: N3 F4}}$}} \put(50,42.25){\makebox(0,0)[b]{$\bV_{\ref{V: N3}}$}}
\put(03,41.25){\makebox(0,0)[r]{$\bV_{\ref{V: G4 P2}}$}} \put(19.5,35.5){\makebox(0,0)[r]{$\bV_{\ref{V: LZ G4}}$}} \put(35,31.5){\makebox(0,0)[t]{$\bV_{\ref{V: F4}}$}} \put(52,30){\makebox(0,0)[l]{$\bV_{\ref{V: G4}}$}}
\put(03,31.25){\makebox(0,0)[r]{$\bV_{\ref{V: P2}}$}} \put(19,25.25){\makebox(0,0)[r]{$\bV_{\ref{V: N2 LZ}}$}} \put(52,20){\makebox(0,0)[l]{$\bV_{\ref{V: N2}}$}}
\put(19,16){\makebox(0,0)[r]{$\bV_{\ref{V: LZ}}$}} \put(52.5,09){\makebox(0,0)[l]{$\bT$}}
\put(90,40){\circle*{\circlesize}} \put(105,43.75){\circle*{\circlesize}} \put(120,47.5){\circle*{\circlesize}} \put(135,51.25){\circle*{\circlesize}}
\put(90,30){\circle*{\circlesize}} \put(105,33.75){\circle*{\circlesize}} \put(120,37.5){\circle*{\circlesize}} \put(135,41.25){\circle*{\circlesize}}
\put(90,20){\circle*{\circlesize}} \put(120,27.5){\circle*{\circlesize}} \put(135,31.25){\circle*{\circlesize}}
\put(90,10){\circle*{\circlesize}} \put(120,17.5){\circle*{\circlesize}}
\put(90,40){\line(4,1){45}} \put(90,30){\line(4,1){45}} \put(90,20){\line(4,1){45}} \put(90,10){\line(4,1){30}} 
\put(90,10){\line(0,1){30}} \put(105,33.75){\line(0,1){10}} \put(120,17.5){\line(0,1){30}} \put(135,31.25){\line(0,1){20}}
\put(90,42.25){\makebox(0,0)[b]{$\bV_{\ref{V: N3}}$}} \put(105,46){\makebox(0,0)[b]{$\bV_{\ref{V: N3 F4}}$}} \put(120,49.75){\makebox(0,0)[b]{$\bV_{\ref{V: RZ N3}}$}} \put(135,53.5){\makebox(0,0)[b]{$\bV_{\ref{V: N3 P2d}}$}}
\put(88,30){\makebox(0,0)[r]{$\bV_{\ref{V: G4}}$}} \put(105,31.5){\makebox(0,0)[t]{$\bV_{\ref{V: F4}}$}} \put(121.5,34){\makebox(0,0)[l]{$\bV_{\ref{V: RZ G4}}$}} \put(137,41.25){\makebox(0,0)[l]{$\bV_{\ref{V: G4 P2d}}$}}
\put(88,20){\makebox(0,0)[r]{$\bV_{\ref{V: N2}}$}} \put(122,24.25){\makebox(0,0)[l]{$\bV_{\ref{V: N2 RZ}}$}} \put(137,31.25){\makebox(0,0)[l]{$\bV_{\ref{V: P2d}}$}}
\put(88,09){\makebox(0,0)[r]{$\bT$}} \put(122,16){\makebox(0,0)[l]{$\bV_{\ref{V: RZ}}$}}
\end{picture}
\end{center}
\caption{The lattices $\sL(\bV_{\ref{V: N3 P2}})$ and $\sL(\bV_{\ref{V: N3 P2d}})$}
\label{F: N3 P2}
\end{figure}

\subsection{Subvarieties of ${ \bV_{\ref{V: SL N3}} = \var\{\SL,\Nilthree\} }$}

\begin{lemma}[{\cite[Lemma 1.3]{Ver07}}] \label{L: SL V}
Let~$\bV$ be any variety such that $\SL \notin \bV$\up.
\begin{enumerate}[\rm(i)]
\item The lattice $\sL(\bV)$ is isomorphic to the interval \[ \mathscr{I} = [\var\{\SL\}, \var\{\SL\} \vee \bV]. \]
\item The lattice $\sL(\var\{\SL\} \vee \bV)$ is isomorphic to the direct product \[ \sL(\var\{\SL\}) \times \sL(\bV). \]
Consequently\up, $\sL(\var\{\SL\} \vee \bV)$ is the disjoint union of~$\sL(\bV)$ and~$\mathscr{I}$\up.
\end{enumerate}
\end{lemma}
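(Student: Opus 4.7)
The plan is to prove part (i) by exhibiting an explicit order isomorphism between $\sL(\bV)$ and the interval $\mathscr{I}$, and then deduce part (ii) by partitioning $\sL(\var\{\SL\} \vee \bV)$ into the subvarieties that contain $\SL$ and those that do not. For (i), I would define maps $\varphi: \sL(\bV) \to \mathscr{I}$ by $\varphi(\bW) = \var\{\SL\} \vee \bW$ and $\psi: \mathscr{I} \to \sL(\bV)$ by $\psi(\bU) = \bU \cap \bV$. Both are evidently order-preserving and well-defined (note that $\varphi(\bW) \subseteq \var\{\SL\} \vee \bV$ and $\psi(\bU) \subseteq \bV$). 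It remains to show that $\psi \circ \varphi = \mathrm{id}_{\sL(\bV)}$ and $\varphi \circ \psi = \mathrm{id}_{\mathscr{I}}$.

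The only containments that require work are $(\var\{\SL\} \vee \bW) \cap \bV \subseteq \bW$ and $\bU \subseteq \var\{\SL\} \vee (\bU \cap \bV)$, the reverse containments being immediate from $\bW \subseteq \bV$ and from $\var\{\SL\} \subseteq \bU$ respectively. Both reduce to the following key claim: if $\bW \subseteq \bV$ and $S \in (\var\{\SL\} \vee \bW) \cap \bV$, then every identity of $\bW$ holds in $S$. Since the identities of $\var\{\SL\} \vee \bW$ are precisely those common to $\SL$ and $\bW$, i.e.\ the identities $\bu \approx \bv$ of $\bW$ with $\con(\bu) = \con(\bv)$, any balanced identity of $\bW$ automatically holds in~$S$. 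The task therefore reduces to showing that every non-balanced identity $\bu \approx \bv$ of $\bW$ is also satisfied by~$S$.

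This is where the hypothesis $\SL \notin \bV$ enters decisively. I would fix some identity $\bs \approx \bt$ which holds in $\bV$ (hence in $S$) but is violated by~$\SL$; necessarily $\con(\bs) \neq \con(\bt)$. The plan is then to show that $\bu \approx \bv$ is derivable from the balanced identities of $\bW$ together with the single identity $\bs \approx \bt$. The idea is that, by appropriate substitutions into $\bs \approx \bt$, one can ``move'' or ``eliminate'' content variables at will, so that any non-balanced identity of $\bW$ can be factored as a chain in which consecutive words are related either by an instance of $\bs \approx \bt$ (which $S$ satisfies because $S \in \bV$) or by a balanced identity of $\bW$ (which $S$ satisfies because $S \in \var\{\SL\} \vee \bW$). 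Making this reduction precise is the main obstacle, and it is essentially the content of Vernikov's argument; the rest of the proof is formal lattice theory.

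Finally, for (ii), every $\bU \in \sL(\var\{\SL\} \vee \bV)$ satisfies exactly one of the alternatives $\SL \in \bU$ or $\SL \notin \bU$. In the first case $\bU \in \mathscr{I}$ and part~(i) already identifies~$\bU$ with the subvariety $\bU \cap \bV$ of~$\bV$. In the second case the same syntactic argument as above---now applied to the non-balanced identity that $\bU$ must satisfy by virtue of $\SL \notin \bU$---shows that every identity of~$\bV$ is forced on~$\bU$, giving $\bU \subseteq \bV$ and hence $\bU \in \sL(\bV)$. This decomposes $\sL(\var\{\SL\} \vee \bV)$ as the disjoint union of two copies of $\sL(\bV)$, order-isomorphic to $\sL(\var\{\SL\}) \times \sL(\bV)$ since $\sL(\var\{\SL\})$ is the two-element chain, with joins and meets respecting the partition by containment of~$\SL$.
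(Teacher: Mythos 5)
Your architecture is the right one and is essentially forced by the statement: the mutually inverse maps $\bW\mapsto\var\{\SL\}\vee\bW$ and $\bU\mapsto\bU\cap\bV$ for part (i), and the partition of $\sL(\var\{\SL\}\vee\bV)$ according to whether a subvariety contains $\SL$ for part (ii). You also correctly identify the one nontrivial containment, $(\var\{\SL\}\vee\bW)\cap\bV\subseteq\bW$, and correctly observe that since $\mathrm{Id}(\var\{\SL\}\vee\bW)$ consists of the identities of $\bW$ with equal contents, everything reduces to deducing the remaining identities of $\bW$ from these together with the identities of $\bV$. But the proposal then stops: the sentence ``Making this reduction precise is the main obstacle, and it is essentially the content of Vernikov's argument'' concedes exactly the point at issue. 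That containment \emph{is} the lemma. The surrounding material is formal lattice bookkeeping valid for any element of any lattice, whereas the containment is a distributivity (neutrality) property of the specific atom $\var\{\SL\}$ that fails for general lattices --- in the pentagon $N_5$, with $w<v$ and $a$ incomparable to both, one has $a\wedge v=0$ yet $(a\vee w)\wedge v=v>w$. So the hypothesis $\SL\notin\bV$ must do real syntactic work (e.g.\ one first shows $\bV$ satisfies an identity $\bs\approx\bt$ with $\con(\bs)\neq\con(\bt)$, hence is periodic, and then manufactures from it the missing content-unequal identities of $\bW$ inside $S$), and none of that work is carried out. The same unproved principle is invoked a second time, in mirrored form, when you argue in part (ii) that $\SL\notin\bU$ forces $\bU\subseteq\bV$, and a third time, implicitly, when you assert that joins respect your partition, which needs $(\bU_1\vee\bU_2)\cap\bV=(\bU_1\cap\bV)\vee(\bU_2\cap\bV)$.

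To be fair, the paper does not prove this step either; it cites Vernikov [Ver07, Lemma~1.3], which is precisely the neutrality statement you are missing, so your reduction is faithful to what the lemma actually asserts. But as a self-contained proof the proposal has a genuine gap at its only mathematically substantive point. A minor terminological remark: the condition relevant to $\SL$ is equality of contents, $\con(\bu)=\con(\bv)$, not the stronger ``balanced'' condition that each variable occur equally often on both sides; you define your usage correctly in-line, but the standard word for your notion would be ``content-equal'' or ``uniform.''
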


\begin{proposition} \label{P: SL N3}
The lattice $\bV_{\ref{V: SL N3}} = \var\{\SL,\Nilthree\}$ is given in Figure~\ref{F: SL N3}\up.
\end{proposition}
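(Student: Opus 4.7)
The plan is to invoke Lemma~\ref{L: SL V}(ii) with $\bV = \var\{\Nilthree\}$. The required hypothesis $\SL \notin \var\{\Nilthree\}$ is verified as follows: the semigroup $\Nilthree$ satisfies $x^3 \approx xyz$ (its basis identity from Variety~\ref{V: N3}), but $\SL$ violates this identity because a semilattice satisfies $\bu \approx \bv$ only when $\con(\bu) = \con(\bv)$, and $\con(x^3) = \{x\} \neq \{x,y,z\} = \con(xyz)$. Lemma~\ref{L: SL V}(ii) then yields the lattice isomorphism
\[
\sL(\bV_{\ref{V: SL N3}}) \;\cong\; \sL(\var\{\SL\}) \times \sL(\var\{\Nilthree\}),
\]
presented as the disjoint union of $\sL(\var\{\Nilthree\})$ with the interval $[\var\{\SL\},\bV_{\ref{V: SL N3}}]$.

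The next step is to identify $\sL(\var\{\Nilthree\})$ explicitly. Variety~\ref{V: N3} records $|\sL(\var\{\Nilthree\})| = 4$ (citing Tishchenko~\cite{Tis17}), so it suffices to exhibit four nested subvarieties: the trivial $\bT$, the atom $\bV_{\ref{V: N2}} = \var\{\Niltwo\}$, the subvariety $\bW$ of $\var\{\Nilthree\}$ defined by the additional identity $x^3 \approx x^2$ (flagged as the unique maximal subvariety in~(Max) of Variety~\ref{V: N3}), and $\var\{\Nilthree\}$ itself. Both containments $\var\{\Niltwo\} \subseteq \bW \subseteq \var\{\Nilthree\}$ are clear; strictness of $\bW \subsetneq \var\{\Nilthree\}$ follows because $\Nilthree$ violates $x^3 \approx x^2$ (in $\Nilthree$ one has $a^3 = 0 \neq a^2$); strictness of $\var\{\Niltwo\} \subsetneq \bW$ is witnessed by the free semigroup of $\bW$ on two generators $\{a,b\}$, which consists of the four elements $\{a, b, ab, 0\}$ with $a^2 = b^2 = 0$, $ab = ba$, and all length-three products equal to $0$, so that $x^2 \approx xy$ fails. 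Together with the count of~$4$, this forces $\sL(\var\{\Nilthree\})$ to be the chain
\[
\bT \subset \var\{\Niltwo\} \subset \bW \subset \var\{\Nilthree\}.
\]

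Taking the product with the two-element chain $\sL(\var\{\SL\})$ (cf.~Variety~\ref{V: SL}) produces the $2 \times 4$ grid of eight subvarieties displayed in Figure~\ref{F: SL N3}. The bottom row consists of the four elements above; the top row consists of their joins with $\var\{\SL\}$, namely $\var\{\SL\}$, $\bV_{\ref{V: N2 SL}} = \var\{\Niltwo,\SL\}$, $\var\{\SL\} \vee \bW$, and $\bV_{\ref{V: SL N3}}$ itself. The main obstacle is not the invocation of Lemma~\ref{L: SL V}(ii), which is a black box, but the enumeration of $\sL(\var\{\Nilthree\})$; this is handled by combining the cardinality bound of~$4$ from~\cite{Tis17} with the explicit chain of four nested varieties produced above, after which the rest of the argument is routine.
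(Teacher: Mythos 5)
Your proposal is correct and follows essentially the same route as the paper: both verify $\SL\notin\var\{\Nilthree\}$, identify $\sL(\var\{\Nilthree\})$ as the four-element chain $\bT\subset\var\{\Niltwo\}\subset\var\{\Gfour\}\subset\var\{\Nilthree\}$ (your $\bW$ is exactly $\var\{\Gfour\}$, since adding $x^3\approx x^2$ to $x^3\approx xyz$ yields $x^2\approx xyz$), and then apply Lemma~\ref{L: SL V}. The only cosmetic difference is that the paper reads this chain off Proposition~\ref{P: N3 P2}, whereas you reconstruct it from the recorded subvariety count of~$4$ together with explicit nested witnesses; both ultimately rest on the same citation to Tishchenko.
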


\begin{proof}
By Proposition~\ref{P: N3 P2}, the subvarieties of $\bV_{\ref{V: N3}} = \var\{\Nilthree\}$ constitute the chain $\bT \subset \bV_{\ref{V: N2}} \subset \bV_{\ref{V: G4}} \subset \bV_{\ref{V: N3}}$.
Since $\bV_{\ref{V: SL N3}} = \var\{\SL\} \vee \var\{\Nilthree\}$, the result follows from Lemma~\ref{L: SL V}.
\end{proof}

\begin{figure}[ht]
\begin{center}
\begin{picture}(100,90)(00,20) \setlength{\unitlength}{0.7mm}
\put(40,50){\circle*{\circlesize}}
\put(10,40){\circle*{\circlesize}} \put(40,40){\circle*{\circlesize}}
\put(10,30){\circle*{\circlesize}} \put(40,30){\circle*{\circlesize}}
\put(10,20){\circle*{\circlesize}} \put(40,20){\circle*{\circlesize}}
\put(10,10){\circle*{\circlesize}}
\put(10,40){\line(3,1){30}} \put(10,30){\line(3,1){30}} \put(10,20){\line(3,1){30}} \put(10,10){\line(3,1){30}}
\put(10,10){\line(0,1){30}} \put(40,20){\line(0,1){30}}
\put(42.5,50){\makebox(0,0)[l]{$\bV_{\ref{V: SL N3}} = \var\{\SL,\Nilthree\}$}}
\put(8,40){\makebox(0,0)[r]{$\bV_{\ref{V: N3}} = \var\{\Nilthree\}$}} \put(42.5,40){\makebox(0,0)[l]{$\bV_{\ref{V: SL G4}} = \var\{\SL,\Gfour\}$}}
\put(8,30){\makebox(0,0)[r]{$\bV_{\ref{V: G4}} = \var\{\Gfour\}$}} \put(42.5,30){\makebox(0,0)[l]{$\bV_{\ref{V: N2 SL}} = \var\{\Niltwo,\SL\}$}}
\put(8,20){\makebox(0,0)[r]{$\bV_{\ref{V: N2}} = \var\{\Niltwo\}$}} \put(42.5,20){\makebox(0,0)[l]{$\bV_{\ref{V: SL}} = \var\{\SL\}$}}
\put(8,10){\makebox(0,0)[r]{$\bT$}}
\end{picture}
\end{center}
\caption{The lattice $\sL(\bV_{\ref{V: SL N3}})$}
\label{F: SL N3}
\end{figure}

\subsection{Subvarieties of ${ \var\{\Nilthree,\mathbb{Z}_n\} }$} \label{subsec: N3 Zn}

\begin{lemma} \label{L: N3 Zn}
Let $n \geq 1$ be any integer\up.
\begin{enumerate}[\rm(i)]
\item The lattice $\sL(\var\{\Nilthree,\mathbb{Z}_n\})$ is isomorphic to the direct product \[ \sL(\var\{\Nilthree\}) \times \sL(\var\{\mathbb{Z}_n\}). \]
Consequently\up, $\sL(\var\{\Nilthree,\mathbb{Z}_n\})$ is the disjoint union of the intervals \[ \mathscr{I}_d = [\var\{\mathbb{Z}_d\},\var\{\Nilthree,\mathbb{Z}_d\}], \] where~$d$ ranges over all divisors of~$n$\up.
\item The interval $\mathscr{I}_d$  coincides with the chain \[ \var\{\mathbb{Z}_d\} \subset \var\{\Niltwo,\mathbb{Z}_d\} \subset \var\{\Gfour,\mathbb{Z}_d\} \subset \var\{\Nilthree,\mathbb{Z}_d\}. \]
\end{enumerate}
\end{lemma}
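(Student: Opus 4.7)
The plan is to establish part~(i) and deduce part~(ii) as an easy consequence. For~(i), I aim to show that the lattice isomorphism arises from the mutually inverse maps $\bW \mapsto (\bW \cap \var\{\Nilthree\}, \bW \cap \var\{\mathbb{Z}_n\})$ and $(\bW_1, \bW_2) \mapsto \bW_1 \vee \bW_2$. Two ingredients suffice: (a)~$\var\{\Nilthree\} \cap \var\{\mathbb{Z}_n\} = \bT$, which is immediate since any semigroup in both is simultaneously a group and nilpotent, hence trivial; and (b)~every subvariety $\bW \subseteq \var\{\Nilthree,\mathbb{Z}_n\}$ decomposes as $\bW = (\bW \cap \var\{\Nilthree\}) \vee (\bW \cap \var\{\mathbb{Z}_n\})$. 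Given (a) and (b), checking that the two maps above are mutually inverse lattice homomorphisms is routine.

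The main work is (b), which I propose to handle by classifying the subdirectly irreducible members of $\var\{\Nilthree,\mathbb{Z}_n\}$: each such semigroup lies in $\var\{\Nilthree\}$ or in $\var\{\mathbb{Z}_n\}$. Since every subvariety is generated by its subdirectly irreducible members, (b) follows at once. The key structural observation is that every $S \in \var\{\Nilthree,\mathbb{Z}_n\}$ is commutative and satisfies $x^{n+3} \approx x^3$ (both identities hold in the two generators by inspection), so $a^{3n}$ is idempotent for each $a \in S$ and $S$ admits a decomposition as a semilattice of its Archimedean components, each of which is either a group or a nil-semigroup. A subdirectly irreducible $S$ must collapse this semilattice to a single component: if $S$ had two distinct idempotents, one could use them to construct a nontrivial pair of congruences with trivial intersection, contradicting subdirect irreducibility. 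Hence $S$ is either a nil-semigroup, in which case $x^{n+3} \approx x^3$ combined with Lemma~\ref{L: LZ LZi N3 Nni Zn word}(iii) forces $S \in \var\{\Nilthree\}$, or a group of exponent dividing~$n$, placing $S$ in $\var\{\mathbb{Z}_n\}$.

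Part~(ii) then follows directly. By Proposition~\ref{P: N3 P2}, the subvarieties of $\var\{\Nilthree\}$ form the chain $\bT \subset \var\{\Niltwo\} \subset \var\{\Gfour\} \subset \var\{\Nilthree\}$, while the subvarieties of $\var\{\mathbb{Z}_n\}$ are exactly the $\var\{\mathbb{Z}_d\}$ for $d \mid n$, a classical fact about varieties of abelian groups recalled in Subsection~\ref{subsec:L(G)}. For each fixed~$d \mid n$, joining $\var\{\mathbb{Z}_d\}$ with each of the four subvarieties of $\var\{\Nilthree\}$ in turn produces the four elements of the chain comprising~$\mathscr{I}_d$. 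The principal obstacle of the plan is a careful execution of the subdirect-irreducibility classification: translating the semilattice-of-Archimedean-components structure into subdirect decompositions, and then verifying that the single resulting component of an SI semigroup genuinely lies in $\var\{\Nilthree\}$ or $\var\{\mathbb{Z}_n\}$, rather than in some larger variety of commutative semigroups satisfying $x^{n+3} \approx x^3$.
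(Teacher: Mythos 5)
Your route---proving the direct-product decomposition from scratch by classifying the subdirectly irreducible members of $\var\{\Nilthree,\mathbb{Z}_n\}$---is genuinely different from the paper's, which simply invokes Vernikov's Proposition~2 in~\cite{Ver88} for part~(i) and then reads part~(ii) off the four-element chain $\sL(\var\{\Nilthree\})$, exactly as you do. Your part~(ii) and the observation $\var\{\Nilthree\}\cap\var\{\mathbb{Z}_n\}=\bT$ are fine. But the central step of your part~(i) has a genuine gap: the dichotomy you assert for Archimedean components is false. A periodic commutative semigroup is a semilattice of Archimedean components each of which is a nil-extension of a group, not necessarily a group or a nil-semigroup; for instance $\Nilthree\times\mathbb{Z}_n$ has a unique idempotent, hence is a single Archimedean component, yet is neither a group nor nil. (In fact every member of $\var\{\Nilthree,\mathbb{Z}_n\}$ has at most one idempotent: if $e,f$ are idempotents then $fe=f^ne^3=e^3=e$ by the identity $x^nabc\approx abc$, symmetrically $ef=f$, and commutativity gives $e=f$. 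So your semilattice collapse is automatic and does no separating work.) Consequently your conclusion ``hence $S$ is either a nil-semigroup \dots{} or a group'' does not follow from anything you have established; the case of a proper nil-extension of a nontrivial group, which is the generic case, is unhandled.

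The missing idea is the subdirect decomposition of such an $S$: with $e$ its unique idempotent, $eS$ is an abelian group of exponent dividing $n$, the identity $x^nabc\approx abc$ with $x\mapsto a^3$ gives $abc=a^{3n}abc=e(abc)$, so $S^3\subseteq eS$ and the Rees quotient $S/eS$ is commutative and $3$-nilpotent; then $s\mapsto(es,[s])$ embeds $S$ subdirectly into $eS\times S/eS$, and subdirect irreducibility forces $S\cong eS\in\var\{\mathbb{Z}_n\}$ or $S\cong S/eS\in\var\{\Nilthree\}$. You flag this as ``the principal obstacle'' but never carry it out, and the intermediate claims you do make would not survive it. A second, smaller gap: trivial intersection plus your property~(b) do not by themselves make your two maps mutually inverse---you still need the separation property $(\bW_1\vee\bW_2)\cap\var\{\Nilthree\}=\bW_1$ and its companion for $\var\{\mathbb{Z}_n\}$, which here requires exhibiting identities such as $x^{n+2}\approx x^2$ and $x^dab\approx ab$ that cut the chain $\bT\subset\var\{\Niltwo\}\subset\var\{\Gfour\}\subset\var\{\Nilthree\}$ and the divisor lattice at each level. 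This is checkable with the bases recorded elsewhere in the section, but it is not ``routine'' from (a) and (b) alone.
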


\begin{proof}
(i) This follows from Vernikov~\cite[Proposition~2]{Ver88}.

(ii) This follows from part~(i) since by Figure~\ref{F: SL N3}, the lattice $\sL(\var\{\Nilthree\})$ coincides with the chain $\bT \subset \var\{\Niltwo\} \subset \var\{\Gfour\} \subset \var\{\Nilthree\}$.
\end{proof}

\begin{proposition} \label{P: N3 Zp}
For any prime $p \geq 2$\up, the lattice $\sL(\var\{\Nilthree,\mathbb{Z}_p\})$ is given in Figure~\ref{F: N3 Zn}\up.
\end{proposition}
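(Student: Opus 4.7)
The plan is to deduce this proposition as a direct specialization of Lemma~\ref{L: N3 Zn}. Since~$p$ is prime, the only divisors of~$p$ are~$1$ and~$p$, so the decomposition in Lemma~\ref{L: N3 Zn}(i) collapses to a disjoint union of just two intervals, $\mathscr{I}_1$ and $\mathscr{I}_p$, where $\var\{\mathbb{Z}_1\} = \bT$.

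First I would apply Lemma~\ref{L: N3 Zn}(i) with $n = p$ to obtain the isomorphism
\[
\sL(\var\{\Nilthree,\mathbb{Z}_p\}) \cong \sL(\var\{\Nilthree\}) \times \sL(\var\{\mathbb{Z}_p\}).
\]
Next I would identify each factor. The lattice $\sL(\var\{\mathbb{Z}_p\})$ is the two-element chain $\bT \subset \var\{\mathbb{Z}_p\}$, since by Subsection~\ref{subsec:L(G)} the subvarieties of $\var\{\mathbb{Z}_p\}$ correspond to divisors of~$p$. The lattice $\sL(\var\{\Nilthree\})$ is the four-element chain $\bT \subset \var\{\Niltwo\} \subset \var\{\Gfour\} \subset \var\{\Nilthree\}$, which was already recorded in Figure~\ref{F: SL N3} (and is immediate from Proposition~\ref{P: N3 P2} restricted to subvarieties of~$\var\{\Nilthree\}$).

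Then I would invoke Lemma~\ref{L: N3 Zn}(ii) to explicitly identify the two intervals: $\mathscr{I}_1$ is the four-element chain of subvarieties of~$\var\{\Nilthree\}$ listed above, while $\mathscr{I}_p$ is the chain
\[
\var\{\mathbb{Z}_p\} \subset \var\{\Niltwo,\mathbb{Z}_p\} \subset \var\{\Gfour,\mathbb{Z}_p\} \subset \var\{\Nilthree,\mathbb{Z}_p\}.
\]
Finally, using the product structure, I would match each of the eight varieties to its position in Figure~\ref{F: N3 Zn}: the covering relations in the product are precisely those inherited from the two chains, yielding a lattice isomorphic to the $4 \times 2$ grid depicted there.

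There is essentially no genuine obstacle here, as all the heavy lifting is packaged into Lemma~\ref{L: N3 Zn}; the only verification that needs to be made is that Figure~\ref{F: N3 Zn} indeed exhibits this $4 \times 2$ grid, with the eight varieties paired as the direct product described above, which is a routine visual check.
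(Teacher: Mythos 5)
Your proposal is correct and takes essentially the same route as the paper, whose entire proof is the single line ``This follows from Lemma~\ref{L: N3 Zn}''; you have simply unpacked that reference by specializing to $n=p$ prime and reading off the two four-element chains $\mathscr{I}_1$ and $\mathscr{I}_p$. The details you supply (the $4\times 2$ grid structure and the matching with Figure~\ref{F: N3 Zn}) are exactly what the paper leaves implicit.
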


\begin{proof}
This follows from Lemma~\ref{L: N3 Zn}.
\end{proof}

\begin{figure}[ht]
\begin{center}
\begin{picture}(90,90)(00,20) \setlength{\unitlength}{0.7mm}
\put(40,50){\circle*{\circlesize}}
\put(10,40){\circle*{\circlesize}} \put(40,40){\circle*{\circlesize}}
\put(10,30){\circle*{\circlesize}} \put(40,30){\circle*{\circlesize}}
\put(10,20){\circle*{\circlesize}} \put(40,20){\circle*{\circlesize}}
\put(10,10){\circle*{\circlesize}}
\put(10,40){\line(3,1){30}} \put(10,30){\line(3,1){30}} \put(10,20){\line(3,1){30}} \put(10,10){\line(3,1){30}}
\put(10,10){\line(0,1){30}} \put(40,20){\line(0,1){30}}
\put(42.5,50){\makebox(0,0)[l]{$\var\{\Nilthree,\mathbb{Z}_p\}$}}
\put(8,40){\makebox(0,0)[r]{$\bV_{\ref{V: N3}} = \var\{\Nilthree\}$}} \put(42.5,40){\makebox(0,0)[l]{$\var\{\Gfour,\mathbb{Z}_p\}$}}
\put(8,30){\makebox(0,0)[r]{$\bV_{\ref{V: G4}} = \var\{\Gfour\}$}} \put(42.5,30){\makebox(0,0)[l]{$\var\{\Niltwo,\mathbb{Z}_p\}$}}
\put(8,20){\makebox(0,0)[r]{$\bV_{\ref{V: N2}} = \var\{\Niltwo\}$}} \put(42.5,20){\makebox(0,0)[l]{$\var\{\mathbb{Z}_p\}$}}
\put(8,10){\makebox(0,0)[r]{$\bT$}}
\end{picture}
\end{center}
\caption{The lattice $\sL(\var\{\Nilthree,\mathbb{Z}_p\})$ with prime ${ p \geq 2 }$}
\label{F: N3 Zn}
\end{figure}

\begin{proposition} \label{P: bases N3 Zn} Let $n \geq 2$ be any integer\up.
Then the identities
\begin{subequations} \label{id: N3 Zn}
\begin{align}
x^n abc & \approx abc, \label{id: N3 Zn xnabc=abc} \\
xy & \approx yx \label{id: N3 Zn xy=yx}
\end{align}
\end{subequations}
constitute an {\ib} for the variety $\var\{\Nilthree,\mathbb{Z}_n\}$\up.
\end{proposition}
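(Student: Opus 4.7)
The plan is to prove the two inclusions $\var\{\Nilthree,\mathbb{Z}_n\} \subseteq [\eqref{id: N3 Zn}]$ and $[\eqref{id: N3 Zn}] \subseteq \var\{\Nilthree,\mathbb{Z}_n\}$ separately. For the forward inclusion, I would verify that both identities in \eqref{id: N3 Zn} hold in each generator: both $\Nilthree$ and $\mathbb{Z}_n$ are commutative, the identity \eqref{id: N3 Zn xnabc=abc} holds in $\mathbb{Z}_n$ because $x^n=1$ there, and it holds in $\Nilthree$ because both sides have length at least three and so evaluate to the zero element of $\Nilthree$.

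For the converse inclusion, I would first use Lemma~\ref{L: LZ LZi N3 Nni Zn word}(iii) and~(v) to describe the identities of $\var\{\Nilthree,\mathbb{Z}_n\}$: an identity $\bu \approx \bv$ holds in the variety if and only if $\occ(x,\bu) \equiv \occ(x,\bv) \pmod{n}$ for every $x \in \sX$ and either the exponent vectors of $\bu$ and $\bv$ agree exactly or both $|\bu|, |\bv| \geq 3$. When the exponent vectors agree exactly, $\bu$ and $\bv$ are permutations of each other and $\bu \approx \bv$ follows from \eqref{id: N3 Zn xy=yx} alone, so the essential case is $|\bu|, |\bv| \geq 3$ with exponent vectors agreeing only modulo~$n$.

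The key technical step I would then prove is that \eqref{id: N3 Zn} deduces $\bw \approx y^{mn}\bw$ for every word $\bw$ with $|\bw| \geq 3$, every variable $y \in \sX$, and every $m \geq 0$. Writing $\bw = z_1 z_2 z_3 \bw_0$, where $z_1, z_2, z_3 \in \sX$ are the first three letters of $\bw$ and $\bw_0 \in \sX^*$ is the rest, the substitution $a \mapsto z_1, b \mapsto z_2, c \mapsto z_3$ in \eqref{id: N3 Zn xnabc=abc} followed by right-multiplication by $\bw_0$ yields $y^n \bw \approx \bw$. Induction on $m$ then gives the full claim, since $y^n \bw$ itself has length at least three.

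To conclude, let $\{x_1, \ldots, x_k\} = \con(\bu) \cup \con(\bv)$ and set $a_i = \occ(x_i, \bu)$, $b_i = \occ(x_i, \bv)$; by assumption $a_i \equiv b_i \pmod{n}$. Choose $d_i = \max\{0, (b_i - a_i)/n\}$ and $e_i = \max\{0, (a_i - b_i)/n\}$ so that $a_i + n d_i = b_i + n e_i$. Applying the key step to each $x_i$ in turn, together with commutativity, would deduce $\bu \approx \bu \cdot \prod_i x_i^{n d_i}$ and $\bv \approx \bv \cdot \prod_i x_i^{n e_i}$; the padded words have identical exponent vectors and are therefore equated by \eqref{id: N3 Zn xy=yx}, giving $\bu \approx \bv$. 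The main concern throughout---preserving the length-$\geq 3$ condition while padding---is automatic because padding only lengthens words, so no genuine obstacle arises.
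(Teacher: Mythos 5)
Your proposal is correct and follows essentially the same route as the paper's proof: both sides verify the identities in the two generators, invoke Lemma~\ref{L: LZ LZi N3 Nni Zn word}(iii) and~(v) to reduce to the case $|\bu|,|\bv|\geq 3$ with exponents agreeing modulo~$n$, and then use~\eqref{id: N3 Zn xnabc=abc} together with commutativity to absorb $n$-th powers of variables into words of length at least three. The only difference is organizational: the paper pads $\bv$ alone and strips a single large $n$-th power in one application of~\eqref{id: N3 Zn xnabc=abc}, whereas you pad both sides symmetrically and absorb one factor $y^{n}$ at a time via your key step $y^{n}\bw\approx\bw$.
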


\begin{proof}
It is routinely checked that the identities~\eqref{id: N3 Zn} are satisfied by the variety $\var\{\Nilthree,\mathbb{Z}_n\}$.
Therefore it remains to show that any nontrivial identity $\bu \approx \bv$ satisfied by $\var\{\Nilthree,\mathbb{Z}_n\}$ is deducible from~\eqref{id: N3 Zn}.
By Lemma~\ref{L: LZ LZi N3 Nni Zn word} parts~(iii) and~(v), the following properties hold:
\begin{enumerate}[\ \ (a)]
\item either $|\bu|,|\bv| \geq 3$ or $\occ(x,\bu) = \occ(x,\bv)$ for all $x \in \sX$;
\item $\occ(x,\bu) \equiv \occ(x,\bv) \pmod n$ for all variables~$x$.
\end{enumerate}
If $\occ(x,\bu) = \occ(x,\bv)$ for all $x \in \sX$, then it is clear that the identity $\bu \approx \bv$ is deducible from~\eqref{id: N3 Zn xy=yx}.
Therefore suppose that $|\bu|,|\bv| \geq 3$.
Generality is not lost by assuming that $\con(\bu) = \{ x_1,x_2, \ldots,x_k \}$ and $\con(\bv) \backslash \con(\bu) = \{ y_1,y_2,\ldots,y_m \}$ for some $k \geq 1$ and $m \geq 0$.
Let $e_i = \occ(x_i,\bu)$, so that $\sum_{i=1}^k e_i = |\bu| \geq 3$.
By~(b), there exist $r_i, s_j \geq 1$ such that $\occ(x_i,\bv) = e_i +r_in \geq 0$ and $\occ(y_j,\bv) = s_jn \geq 0$.
Let $r_i' \geq 1$ be any integer such that $r_i+r_i' \geq 1$.
Then
\begin{align*}
\bv & \stackrel{\eqref{id: N3 Zn xnabc=abc}}{\approx} \makebox[1.75in][l]{$\displaystyle\bigg(\prod_{i=1}^k x_i^{r_i'n} \bigg) \bv$} \text{since $|\bv| \geq 3$} \\
& \stackrel{\eqref{id: N3 Zn xy=yx}}{\approx} \bigg(\prod_{i=1}^k x_i^{r_i+r_i'}\prod_{i=1}^m y_i^{s_i} \bigg)^{\!n} \, \prod_{i=1}^k x_i^{e_i} \\
& \stackrel{\eqref{id: N3 Zn xnabc=abc}}{\approx} \makebox[1.75in][l]{$\displaystyle\prod_{i=1}^k x_i^{e_i}$} \text{since $\sum_{i=1}^k e_i \geq 3$} \\
& \stackrel{\eqref{id: N3 Zn xy=yx}}{\approx} \bu. \qedhere
\end{align*}
\end{proof}

\begin{proposition}
Let $n \geq 2$ be any integer\up.
Then the identities
\begin{equation}
x^n abc \approx abc, \quad xy \approx yx, \quad x^{n+2} \approx x^2 \label{id: G4 Zn}
\end{equation}
constitute an {\ib} for the variety $\var\{\Gfour,\mathbb{Z}_n\}$\up.
\end{proposition}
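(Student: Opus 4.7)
The plan is to mirror the two-step structure of Proposition~\ref{P: bases N3 Zn}: first confirm that each of the three identities holds in both $\Gfour$ and $\mathbb{Z}_n$, then show that any identity $\bu \approx \bv$ of $\var\{\Gfour, \mathbb{Z}_n\}$ is deducible from them. Verifying the identities is routine: commutativity and $x^{n+2} \approx x^2$ are immediate in $\mathbb{Z}_n$, and in $\Gfour$ (which is readily identified with the commutative four-element monoid $\{0, a, b, ab\}$ satisfying $a^2 = b^2 = 0$) every product of length at least three, as well as every square, collapses to the zero element, so all three identities hold.

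For the deduction, I would start from an arbitrary identity $\bu \approx \bv$ of $\var\{\Gfour, \mathbb{Z}_n\}$. Applying Lemma~\ref{L: LZ LZi N3 Nni Zn word}(v) to $\mathbb{Z}_n$ yields the congruence $\occ(x, \bu) \equiv \occ(x, \bv) \pmod n$ for every variable $x$. A short direct analysis of the evaluations of words in $\Gfour$ shows that $\Gfour \models \bu \approx \bv$ forces exactly one of the following alternatives: both $\bu$ and $\bv$ are the same single variable; both lie in $\{xy, yx\}$ for some distinct pair $x, y$; or each of $\bu$ and $\bv$ is either a square $x^2$ or a word of length at least three. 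The first two options are handled by triviality and a single application of $xy \approx yx$ respectively.

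The crux is the third alternative, where I would first use $x^{n+2} \approx x^2$ in reverse to replace every length-$2$ square occurring on either side by $x^{n+2}$; since $n \ge 2$, this produces words of length at least four. The replacement preserves the $\mathbb{Z}_n$-congruence because it only shifts one exponent by $n$. With both sides now of length at least three, the hypothesis of Lemma~\ref{L: LZ LZi N3 Nni Zn word}(iii) is trivially satisfied, so the transformed identity is in fact an identity of the larger variety $\var\{\Nilthree, \mathbb{Z}_n\}$, and Proposition~\ref{P: bases N3 Zn} supplies a deduction of it from $\{x^n abc \approx abc, xy \approx yx\}$. Prepending the preliminary applications of $x^{n+2} \approx x^2$ then delivers a deduction of the original identity $\bu \approx \bv$ from the full set~\eqref{id: G4 Zn}.

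The main obstacle I anticipate is giving a clean proof of the three-way classification of identities satisfied by $\Gfour$; once that dichotomy is in hand, the rest reduces by a simple length-increasing trick to the already-proven Proposition~\ref{P: bases N3 Zn}, so no further equational manipulation is required.
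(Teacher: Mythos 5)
Your proposal is correct, but it takes a genuinely different route from the paper. The paper's proof is a three-line sandwich argument: letting $\bW$ be the variety defined by \eqref{id: G4 Zn}, it checks $\var\{\Gfour,\mathbb{Z}_n\} \subseteq \bW \subseteq \var\{\Nilthree,\mathbb{Z}_n\}$, observes that $\Nilthree$ violates $x^{n+2}\approx x^2$ so $\bW \neq \var\{\Nilthree,\mathbb{Z}_n\}$, and then invokes Lemma~\ref{L: N3 Zn}(ii), which says the interval $[\var\{\mathbb{Z}_n\},\var\{\Nilthree,\mathbb{Z}_n\}]$ is a four-element chain with $\var\{\Gfour,\mathbb{Z}_n\}$ immediately below the top; this forces $\bW=\var\{\Gfour,\mathbb{Z}_n\}$ with no equational work at all. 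You instead solve the word problem for $\Gfour$ directly (your trichotomy is correct and easy to verify: a word evaluates to a nonzero element of $\Gfour$ only if it is a single variable or a product of two distinct variables, which pins down the three cases), and then reduce the only nontrivial case to Proposition~\ref{P: bases N3 Zn} via the length-inflating substitution $x^2 \to x^{n+2}$, which preserves the $\mathbb{Z}_n$-congruence and pushes both sides past length $3$ so that Lemma~\ref{L: LZ LZi N3 Nni Zn word}(iii) applies. The trade-off: the paper's argument is shorter but leans on the prior lattice description (itself resting on Vernikov's direct-product decomposition), whereas yours is self-contained at the syntactic level and yields an explicit derivation scheme at the cost of proving the $\Gfour$ classification. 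One small correction: $\Gfour$ is not a monoid --- it is the commutative nilpotent semigroup $\{0,a,b,ab\}$ with $a^2=b^2=0$ and all products of length at least three equal to $0$; this does not affect your argument.
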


\begin{proof}
Let~$\bW$ denote the variety defined by the identities~\eqref{id: G4 Zn}.
Then it is routinely checked that the inclusions $\var\{\Gfour,\mathbb{Z}_n\} \subseteq \bW \subseteq \var\{\Nilthree,\mathbb{Z}_n\}$ hold.
But the semigroup~$\Nilthree$ violates the last identity in~\eqref{id: G4 Zn}, so that $\bW \neq \var\{\Nilthree,\mathbb{Z}_n\}$.
Therefore $\bW = \var\{\Gfour,\mathbb{Z}_n\}$ by Lemma~\ref{L: N3 Zn}(ii).
\end{proof}

\subsection{Subvarieties of ${ \var\{\JI,\mathbb{Z}_p\} }$ and ${ \var\{\SL,\mathbb{Z}_{p^2}\} }$} \label{subsec: JI Zp SL Zpp}

\begin{lemma}[{\cite[Part~(b) of the main theorem]{Sap91}}] \label{L: Sapir}
Let~$\mathbf{G}$ be any periodic variety generated by a group\up.
Then each subvariety of $\var\{\JI \} \vee \mathbf{G}$ is the join of some subvariety of~$\mathbf{G}$ with some of the following varieties\up: \[ \bT, \quad \bV_{\ref{V: N2}} = \var\{\Niltwo\}, \quad \bV_{\ref{V: SL}} = \var\{\SL\}, \quad \bV_{\ref{V: JI}} = \var\{\JI\}. \]
\end{lemma}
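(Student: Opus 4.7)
The plan is to reduce the statement to the following cleaner structural identity, valid for every subvariety $\mathbf{V}$ of $\var\{\JI\}\vee\mathbf{G}$:
\[ \mathbf{V} \,=\, (\mathbf{V}\cap\mathbf{G}) \,\vee\, (\mathbf{V}\cap\var\{\JI\}). \]
Once this is known, the lemma follows from a lattice-theoretic observation: Variety~\ref{V: JI} together with Proposition~\ref{P: Ji LZi} records that $\var\{\JI\}$ has exactly five subvarieties, and each one is a join of a subset of $\{\bT,\,\var\{\Niltwo\},\,\var\{\SL\},\,\var\{\JI\}\}$ (the only nontrivial case being $\bV_{\ref{V: N2 SL}}=\var\{\Niltwo\}\vee\var\{\SL\}$). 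Hence, writing $\mathbf{H}=\mathbf{V}\cap\mathbf{G}$ and $\mathbf{K}=\mathbf{V}\cap\var\{\JI\}$, we will obtain $\mathbf{V}=\mathbf{H}\vee\bV_{i_1}\vee\cdots\vee\bV_{i_k}$ for suitable choices from the displayed four-element list.

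First I would dispose of the easy inclusion $\mathbf{H}\vee\mathbf{K}\subseteq\mathbf{V}$, which is immediate from the definitions of $\mathbf{H}$ and $\mathbf{K}$. The reverse inclusion is the heart of the proof, and I would attack it via subdirectly irreducible (SI) members: since $\mathbf{V}$ is generated by its SIs, it suffices to establish the following dichotomy: \emph{every SI semigroup $S\in\var\{\JI\}\vee\mathbf{G}$ lies entirely in $\var\{\JI\}$ or entirely in $\mathbf{G}$}. Granted this, each SI of $\mathbf{V}$ belongs to $\mathbf{V}\cap\var\{\JI\}=\mathbf{K}$ or to $\mathbf{V}\cap\mathbf{G}=\mathbf{H}$, yielding $\mathbf{V}\subseteq\mathbf{H}\vee\mathbf{K}$.

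To prove the SI dichotomy, I would fix an exponent $n$ for the periodic group variety $\mathbf{G}$ so that $\mathbf{G}$ satisfies $x^{n+1}\approx x$. Combined with the $\JI$-identity $x^2 a\approx xa$, this forces the identity $x^{n+1}a\approx xa$ on $\var\{\JI\}\vee\mathbf{G}$, already exhibiting the two-phase nature of the variety. For any $S$ in this variety, let $U=\{s\in S:s^{n+1}=s\}$: this subset is closed under multiplication, consists of completely regular elements, and is a union of subgroups, each lying in $\mathbf{G}$; its complement $N=S\setminus U$ consists of elements whose powers exhibit $\JI$-like nilpotent behaviour modulo the above identity. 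The key construction is then, whenever both $U$ and $N$ are ``nondegenerate'' in $S$, to produce two nontrivial congruences on $S$ whose intersection is trivial---most naturally, a Rees-type congruence modulo the ideal generated by $N$, together with a congruence collapsing the group $\mathscr{H}$-classes inside $U$ to single points---thereby contradicting subdirect irreducibility.

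The hard part will be this last construction, because the two congruences must be chosen to separate the group-like and $\JI$-like behaviour cleanly, and the verification that each is genuinely nontrivial must exploit the precise identities $x^2a\approx xa$ and $xy^2\approx yx^2$ of $\JI$ alongside $x^{n+1}\approx x$; the remainder of the argument is then essentially lattice-theoretic bookkeeping, combined with the fact that Proposition~\ref{P: Ji LZi} already enumerates the subvarieties of $\var\{\JI\}$ that can arise as $\mathbf{K}$.
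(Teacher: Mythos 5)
The paper does not prove this lemma at all---it is quoted from part~(b) of the main theorem of Sapir~\cite{Sap91}---so your attempt has to be judged on its own terms. Your reformulation of the statement as the identity $\mathbf{V}=(\mathbf{V}\cap\mathbf{G})\vee(\mathbf{V}\cap\var\{\JI\})$ is a correct and equivalent rephrasing, and the lattice-theoretic bookkeeping with the five subvarieties of $\var\{\JI\}$ is fine. The fatal problem is the step everything rests on: the claimed dichotomy that every subdirectly irreducible member of $\var\{\JI\}\vee\mathbf{G}$ lies wholly in $\var\{\JI\}$ or wholly in $\mathbf{G}$ is false.

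Concretely, take $\mathbf{G}=\var\{\Ztwo\}$ and let $S=\Ztwo^{0}=\{1,g,0\}$ be the two-element group with a zero adjoined. Then $S$ is commutative and satisfies $x^{3}\approx x$, hence satisfies the basis $\{x^{3}a\approx xa,\ x^{2}y^{2}\approx y^{2}x^{2},\ xya\approx yxa\}$ of $\var\{\JI,\Ztwo\}$ from Proposition~\ref{P: bases JI Zn}(ii); alternatively, $S$ is a homomorphic image of the subsemigroup $\SL\times\Ztwo$ of $\JI\times\Ztwo$. Checking the three two-block partitions of $\{1,g,0\}$ shows the only proper nontrivial congruence of $S$ is the one with classes $\{1,g\}$ and $\{0\}$, so $S$ is subdirectly irreducible. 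Yet $S$ violates $x^{2}y\approx xy$ (substitute $x=y=g$: $g^{2}g=g$ but $gg=1$), so $S\notin\var\{\JI\}$, and $S$ violates $x^{2}a\approx a$ (substitute $x=0$), so $S\notin\var\{\Ztwo\}$. Your congruence construction cannot detect this example: here every element satisfies $s^{n+1}=s$, so your set $N$ is empty and no separating pair of congruences is produced. The genuinely hard content of Sapir's theorem is exactly the control of such ``mixed'' subdirectly irreducibles---semilattices of groups and nil extensions thereof, which generate varieties like $\var\{\SL\}\vee\mathbf{G}$ or $\var\{\Niltwo\}\vee\mathbf{G}$ without lying in either factor of the join---so the argument cannot be reduced to sorting each subdirectly irreducible into one of the two factors.
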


\begin{proposition} \label{P: JI Zp SL Zpp}
Let $p \geq 2$ be any prime\up.
\begin{enumerate}[\rm(i)]
\item The lattice $\sL(\var\{\JI,\mathbb{Z}_p\})$ is given in Figure~\ref{F: JI Zp}\up.
\item The lattice $\sL(\var\{\SL,\mathbb{Z}_{p^2}\})$ is given in Figure~\ref{F: SL Zpp}\up.
\end{enumerate}
\end{proposition}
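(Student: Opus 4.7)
The plan is to prove the two parts separately, each reducing to a well-known structural result from the preliminaries, and then to verify distinctness and cover relations. For part~(i), I would apply Lemma~\ref{L: Sapir} with $\mathbf{G} = \var\{\mathbb{Z}_p\}$. Since $p$ is prime, the variety $\var\{\mathbb{Z}_p\}$ has only two subvarieties: $\bT$ and $\var\{\mathbb{Z}_p\}$ (because abelian groups of prime exponent form a two-element chain). Sapir's lemma then yields that every subvariety of $\var\{\JI,\mathbb{Z}_p\}$ has the form $\bW_1 \vee \bW_2$ with $\bW_1 \in \{\bT,\var\{\mathbb{Z}_p\}\}$ and $\bW_2 \in \{\bT,\var\{\Niltwo\},\var\{\SL\},\var\{\JI\}\}$, yielding at most eight candidate subvarieties. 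All eight joins are realized as distinct subvarieties, and this yields the list to be checked against the figure.

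The next step would be to verify that these eight candidates are pairwise distinct and to establish the cover relations. Distinctness is checked by exhibiting, for each pair, a separating identity, using Lemma~\ref{L: JI word} for identities of $\JI$ and Lemma~\ref{L: LZ LZi N3 Nni Zn word} parts~(iii)--(v) for identities of $\Niltwo$, $\SL$, and $\mathbb{Z}_p$. For example, $\var\{\mathbb{Z}_p\}$ satisfies $x^{p+1} \approx x$ while $\var\{\Niltwo\}$ does not, $\var\{\SL\}$ satisfies $x^2 \approx x$ but $\var\{\JI\}$ does not, and $\var\{\JI\}$ satisfies $x^2a \approx xa$ while $\var\{\SL\}$ forces $xy \approx yx$ (violated in $\JI$). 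Because $\var\{\mathbb{Z}_p\}$ is periodic while the other building blocks are aperiodic, no coincidences occur between the two columns, and the resulting lattice is the ordinal sum/product predicted by the join structure, matching Figure~\ref{F: JI Zp}. The covers are then read off componentwise from the three-element chain $\bT \subset \var\{\Niltwo\} \subset \var\{\SL\} \vee \var\{\Niltwo\} \subset \var\{\JI\}$ (extracted from Proposition~\ref{P: Ji LZi}) crossed with the two-element chain in the group component.

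Part~(ii) is much more immediate. Since a group variety contains no nontrivial semilattice, $\SL \notin \var\{\mathbb{Z}_{p^2}\}$, so Lemma~\ref{L: SL V}(ii) applies to give the isomorphism $\sL(\var\{\SL,\mathbb{Z}_{p^2}\}) \cong \sL(\var\{\SL\}) \times \sL(\var\{\mathbb{Z}_{p^2}\})$. The first factor is the two-element chain $\{\bT,\var\{\SL\}\}$. For the second factor, as discussed in Subsection~\ref{subsec:L(G)}, the subvarieties of $\var\{\mathbb{Z}_{p^2}\}$ correspond to the divisors of $p^2$, giving the three-element chain $\bT \subset \var\{\mathbb{Z}_p\} \subset \var\{\mathbb{Z}_{p^2}\}$. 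Hence the full lattice is a $2 \times 3$ grid with six subvarieties, as shown in Figure~\ref{F: SL Zpp}. The main obstacle across the whole proof is really only the case analysis of pairwise distinctness in part~(i), since part~(ii) is essentially a direct product computation; but Sapir's lemma reduces that case analysis to a finite verification using identities already in hand, so no deep new machinery is needed.
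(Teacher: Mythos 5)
Your part~(ii) is correct and in fact somewhat cleaner than the paper's treatment: the paper derives both parts from Lemma~\ref{L: Sapir} alone, whereas you invoke Lemma~\ref{L: SL V}(ii) to obtain the direct product decomposition $\sL(\var\{\SL\})\times\sL(\var\{\mathbb{Z}_{p^2}\})$ immediately and then read off the $2\times 3$ grid. That is a legitimate alternative route using a lemma the paper itself uses elsewhere.

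Part~(i), however, contains a genuine error stemming from a misreading of Lemma~\ref{L: Sapir}. The lemma says that each subvariety of $\var\{\JI\}\vee\mathbf{G}$ is the join of a subvariety of $\mathbf{G}$ with \emph{some of} the varieties $\bT$, $\var\{\Niltwo\}$, $\var\{\SL\}$, $\var\{\JI\}$ --- that is, with the join of an arbitrary subset of them, not with just one of them. The subset $\{\var\{\Niltwo\},\var\{\SL\}\}$ yields $\var\{\Niltwo\}\vee\var\{\SL\}=\var\{\Niltwo,\SL\}$ (Variety~\ref{V: N2 SL}), which is distinct from all four listed varieties: it is commutative, hence a proper subvariety of $\var\{\JI\}$, and it properly contains both $\var\{\Niltwo\}$ and $\var\{\SL\}$. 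So the semigroup component contributes \emph{five} varieties, not four, and $\sL(\var\{\JI,\mathbb{Z}_p\})$ has ten elements, as Figure~\ref{F: JI Zp} shows, rather than the eight you claim. Relatedly, your description of the cover structure as the chain $\bT\subset\var\{\Niltwo\}\subset\var\{\Niltwo,\SL\}\subset\var\{\JI\}$ crossed with a two-element chain is wrong: $\sL(\var\{\JI\})$ is not a chain, since $\var\{\Niltwo\}$ and $\var\{\SL\}$ are incomparable atoms lying below $\var\{\Niltwo,\SL\}$. Once ``some of'' is read correctly, your remaining distinctness argument (separating the two columns by membership of $\mathbb{Z}_p$, and the semigroup parts by identities) does go through and recovers the paper's one-line deduction from Lemma~\ref{L: Sapir}.
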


\begin{proof}
This follows from Lemma~\ref{L: Sapir}.
\end{proof}

\begin{figure}[ht]
\begin{center}
\begin{picture}(270,180)(00,10) \setlength{\unitlength}{0.7mm}
\put(40,70){\circle*{\circlesize}}
\put(40,50){\circle*{\circlesize}}
\put(20,30){\circle*{\circlesize}} \put(60,30){\circle*{\circlesize}}
\put(40,10){\circle*{\circlesize}}
\put(100,90){\circle*{\circlesize}}
\put(100,70){\circle*{\circlesize}}
\put(80,50){\circle*{\circlesize}} \put(120,50){\circle*{\circlesize}}
\put(100,30){\circle*{\circlesize}}
\put(40,70){\line(3,1){60}} \put(40,50){\line(3,1){60}} \put(20,30){\line(3,1){60}} \put(60,30){\line(3,1){60}} \put(40,10){\line(3,1){60}}
\put(40,50){\line(0,1){20}} \put(100,70){\line(0,1){20}}
\put(20,30){\line(1,1){20}} \put(40,10){\line(1,1){20}} \put(80,50){\line(1,1){20}} \put(100,30){\line(1,1){20}}
\put(20,30){\line(1,-1){20}} \put(40,50){\line(1,-1){20}} \put(80,50){\line(1,-1){20}} \put(100,70){\line(1,-1){20}}
\put(38,70){\makebox(0,0)[r]{$\bV_{\ref{V: JI}} = \var\{\JI\}$}}
\put(38,50){\makebox(0,0)[r]{$\bV_{\ref{V: N2 SL}} = \var\{\Niltwo,\SL\}$}}
\put(18,30){\makebox(0,0)[r]{$\bV_{\ref{V: N2}} = \var\{\Niltwo\}$}} \put(60,25){\makebox(0,0)[l]{$\bV_{\ref{V: SL}} = \var\{\SL\}$}}
\put(40,07.5){\makebox(0,0)[t]{$\bT$}}
\put(103,90){\makebox(0,0)[l]{$\var\{\JI,\mathbb{Z}_p\}$}}
\put(103,70){\makebox(0,0)[l]{$\var\{\Niltwo,\SL,\mathbb{Z}_p\}$}}
\put(83,50){\makebox(0,0)[l]{$\var\{\Niltwo,\mathbb{Z}_p\}$}} \put(122,50){\makebox(0,0)[l]{$\var\{\SL,\mathbb{Z}_p\}$}}
\put(103,30){\makebox(0,0)[l]{$\var\{\mathbb{Z}_p\}$}}
\end{picture}
\end{center}
\caption{The lattice $\sL(\var\{\JI,\mathbb{Z}_p\})$ with prime ${ p \geq 2 }$}
\label{F: JI Zp}
\end{figure}

\begin{figure}[ht]
\begin{center}
\begin{picture}(100,80)(00,20) \setlength{\unitlength}{0.7mm}
\put(40,40){\circle*{\circlesize}}
\put(10,30){\circle*{\circlesize}} \put(40,30){\circle*{\circlesize}}
\put(10,20){\circle*{\circlesize}} \put(40,20){\circle*{\circlesize}}
\put(10,10){\circle*{\circlesize}}
\put(10,30){\line(3,1){30}} \put(10,20){\line(3,1){30}} \put(10,10){\line(3,1){30}}
\put(10,10){\line(0,1){20}} \put(40,20){\line(0,1){20}}
\put(42.5,40){\makebox(0,0)[l]{$\var\{\SL,\mathbb{Z}_{p^2}\}$}}
\put(8,30){\makebox(0,0)[r]{$\var\{\mathbb{Z}_{p^2}\}$}} \put(42.5,30){\makebox(0,0)[l]{$\var\{\SL,\mathbb{Z}_p\}$}}
\put(8,20){\makebox(0,0)[r]{$\var\{\mathbb{Z}_p\}$}} \put(42.5,20){\makebox(0,0)[l]{$\var\{\SL\}$}}
\put(8,10){\makebox(0,0)[r]{$\bT$}}
\end{picture}
\end{center}
\caption{The lattice $\sL(\var\{\SL,\mathbb{Z}_{p^2}\})$ with prime ${ p \geq 2 }$}
\label{F: SL Zpp}
\end{figure}
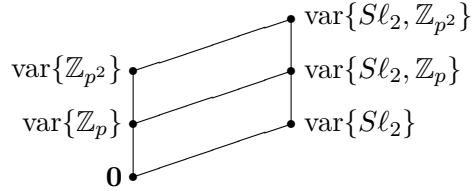

\begin{proposition} \label{P: bases JI Zn} Let $n \geq 2$ be any integer\up.
\begin{enumerate}[\rm(i)]
\item The identities
\begin{subequations} \label{id: JI Zn}
\begin{align}
x^{n+1}a & \approx xa, \label{id: JI Zn xn+1a=xa} \\
x^{m_1}y^{m_2} & \approx y^{m_2}x^{m_1}, \quad m_1,m_2 \geq 2, \label{id: JI Zn xxyy=yyxx} \\
xya & \approx yxa. \label{id: JI Zn xya=yxa}
\end{align}
\end{subequations}
constitute an {\ib} for the variety $\var\{\JI,\mathbb{Z}_n\}$\up.
\item The identities
\[
x^{n+1}a \approx xa, \quad x^2y^2 \approx y^2x^2, \quad xya \approx yxa
\]
also constitute an {\ib} for the variety $\var\{\JI,\mathbb{Z}_n\}$\up.
\end{enumerate}
\end{proposition}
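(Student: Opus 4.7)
The plan is to prove (i) directly and derive (ii) from it. Let $\Sigma_1$ and $\Sigma_2$ denote the identity systems in (i) and (ii) respectively, and write $\bu \approx_\Sigma \bv$ to mean that $\bu \approx \bv$ is deducible from $\Sigma$. The inclusion $\var\{\JI,\mathbb{Z}_n\} \subseteq [\Sigma_1]$ is routine: by Lemma~\ref{L: LZ LZi N3 Nni Zn word}(v), every identity in $\Sigma_1$ holds in $\mathbb{Z}_n$ (using $x^n = 1$ and commutativity); by Lemma~\ref{L: JI word}, the identity $xya \approx yxa$ (common tail of multiplicity~$1$) and the family $x^{m_1}y^{m_2} \approx y^{m_2}x^{m_1}$ with $m_1, m_2 \geq 2$ (both tails of multiplicity $\geq 2$) hold in $\JI$, while $x^{n+1}a \approx xa$ is verified directly from the Cayley table of $\JI$.

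For the reverse inclusion, fix an identity $\bu \approx \bv$ satisfied by $\var\{\JI,\mathbb{Z}_n\}$. Combining Lemmas~\ref{L: JI word} and~\ref{L: LZ LZi N3 Nni Zn word}(v) yields: (a) $\con(\bu) = \con(\bv)$; (b) $\occ(x,\bu) \equiv \occ(x,\bv) \pmod n$ for every variable~$x$; and (c) either Case~A (common tail, multiplicity~$1$) or Case~B (each tail of multiplicity~$\geq 2$, the two tail variables possibly distinct). I will reduce any word $\bw$ modulo $\Sigma_1$ to an \emph{extended normal form} of shape $z_1^{a_1}z_2^{a_2}\cdots z_m^{a_m}y^j$, where $y = \tail(\bw)$, $z_1 < z_2 < \cdots < z_m$ enumerate $\con(\bw) \setminus \{y\}$ in a fixed canonical order, $j = 1$ in Case~A, and all of $a_1, \ldots, a_m, j$ lie in $\{2, \ldots, n+1\}$ in Case~B. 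The key moves are: $xya \approx yxa$, which allows arbitrary rearrangement of all letters of a word of length $\geq 3$ except the last; $x^{n+1}a \approx xa$, which adjusts any non-terminal block exponent modulo~$n$; and its derived consequence $y^{n+j} \approx y^j$ for $j \geq 2$, obtained via the substitution $x \mapsto y$, $a \mapsto y^{j-1}$, needed to reduce the final block in Case~B. Applying $x^{n+1}a \approx xa$ in reverse then raises any exponent from $\{1, \ldots, n\}$ into $\{2, \ldots, n+1\}$; as this range has size exactly~$n$, residue classes modulo~$n$ admit unique representatives there.

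In Case~A both normal forms have tail $y^1$, the same ordered body, and, by (b), coinciding body exponents, so they are identical. Case~B with common tail is analogous. The \emph{main obstacle} is Case~B with $\tail(\bu) = y \neq z = \tail(\bv)$: both $xya \approx yxa$ and $x^{n+1}a \approx xa$ preserve the last letter of the word, so changing the tail variable requires the block-commutativity identity $x^{m_1}y^{m_2} \approx y^{m_2}x^{m_1}$, which is applicable precisely when both exponents are $\geq 2$. This is exactly why the extended normal form is designed with every exponent in $\{2, \ldots, n+1\}$: iterated block-commutativity then permits arbitrary reordering of the variable blocks, so $\bu$'s extended normal form can be rearranged to place $z$ last. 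The resulting word matches $\bv$'s extended normal form exponent by exponent, by (a), (b), and uniqueness of representatives in $\{2, \ldots, n+1\}$ modulo~$n$; therefore $\bu \approx_{\Sigma_1} \bv$, proving (i).

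For part (ii), since $x^2y^2 \approx y^2x^2$ is the $m_1 = m_2 = 2$ instance of the family in $\Sigma_1$, one direction of the equivalence $[\Sigma_1] = [\Sigma_2]$ is immediate; it remains to show $\Sigma_2 \vdash x^{m_1}y^{m_2} \approx y^{m_2}x^{m_1}$ for all $m_1, m_2 \geq 2$. I proceed by induction on $m_1 + m_2$: the base case $m_1 = m_2 = 2$ is axiomatic; for $m_1 + m_2 \geq 5$ with WLOG $m_1 \geq 3$, write $x^{m_1}y^{m_2} = x \cdot x^{m_1-1}y^{m_2} \approx_{\Sigma_2} x \cdot y^{m_2}x^{m_1-1}$ by the inductive hypothesis (applicable since $m_1 - 1 \geq 2$), and then apply $xya \approx yxa$ iteratively to shift the leading $x$ past the block $y^{m_2}$, each swap being legitimate because the total word length is at least~$5$, ensuring the required right-context. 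This yields $y^{m_2}x^{m_1}$, completing the induction and hence the proof of~(ii).
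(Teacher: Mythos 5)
Your proof is correct and follows essentially the same route as the paper's: both directions verify the identities via Lemmas~\ref{L: JI word} and~\ref{L: LZ LZi N3 Nni Zn word}(v), split on the two cases of Lemma~\ref{L: JI word}, and inflate block exponents so that the block-commutativity identity~\eqref{id: JI Zn xxyy=yyxx} can transpose the tail block (you normalize to canonical representatives in $\{2,\ldots,n+1\}$, the paper raises all exponents to a common value $g_i > \max\{e_i,f_i\}$ in the same residue class). The only real divergence is in part~(ii), where you derive $x^{m_1}y^{m_2} \approx y^{m_2}x^{m_1}$ by induction on $m_1+m_2$, whereas the paper gives a direct three-step derivation via the parity decomposition $m_i = 2p_i + r_i$; both are valid.
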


\begin{proof}
(i) It is routinely checked that the identities~\eqref{id: JI Zn} are satisfied by the variety $\var\{\JI,\mathbb{Z}_n\}$.
Therefore it remains to show that any identity $\bu \approx \bv$ satisfied by $\var\{\JI,\mathbb{Z}_n\}$ is deducible from~\eqref{id: JI Zn}.
By Lemma~\ref{L: JI word}, generality is not lost by assuming that $\con(\bu) = \con(\bv) = \{x_1,x_2,\ldots,x_m \}$, so that $e_i = \occ(x_i,\bu) \geq 1$ and $f_i = \occ(x_i,\bv) \geq 1$.
Then $e_i \equiv f_i \pmod n$ by Lemma~\ref{L: LZ LZi N3 Nni Zn word}(v).
By Lemma~\ref{L: JI word}, there are two cases.

\noindent\textsc{Case~1}: $\tail(\bu) = \tail(\bv) = x_k$ with either $e_k = f_k = 1$ or $e_k,f_k \geq 2$.
Then
\begin{align*}
\bu & \stackrel{\eqref{id: JI Zn xya=yxa}}{\approx} \bigg(\prod_{i \neq k} x_i^{e_i}\bigg) x_k^{e_k} \\
& \stackrel{\eqref{id: JI Zn xn+1a=xa}}{\approx} \bigg(\prod_{i \neq k} x_i^{f_i}\bigg) x_k^{f_k} \quad \text{since ${ e_i \equiv f_i \ (\bmod n) }$} \\
& \stackrel{\eqref{id: JI Zn xya=yxa}}{\approx} \bv.
\end{align*}

\noindent\textsc{Case~2}: $\tail(\bu) = x_k$ and $\tail(\bv) = x_\ell$ with $k < \ell$ and $e_k, f_\ell \geq 2$.
Choose any integer $g_i > \max\{e_i,f_i\}$ such that $g_i \equiv e_i \equiv f_i \pmod n$.
Then
\begin{align*}
\bu & \stackrel{\eqref{id: JI Zn xya=yxa}}{\approx} \bigg(\prod_{i \neq k,\ell} x_i^{e_i}\bigg) x_\ell^{e_\ell} x_k^{e_k} \\
& \stackrel{\eqref{id: JI Zn xn+1a=xa}}{\approx} \bigg(\prod_{i \neq k,\ell} x_i^{g_i}\bigg) x_\ell^{g_\ell} x_k^{g_k} \quad \text{since ${ g_i \equiv e_i \;(\bmod\; n) }$ and ${ e_k \geq 2 }$} \\
& \stackrel{\eqref{id: JI Zn xxyy=yyxx}}{\approx} \bigg(\prod_{i \neq k,\ell} x_i^{g_i}\bigg) x_k^{g_k} x_\ell^{g_\ell} \\
& \stackrel{\eqref{id: JI Zn xn+1a=xa}}{\approx} \bigg(\prod_{i \neq k,\ell} x_i^{f_i}\bigg) x_k^{f_k} x_\ell^{f_\ell} \quad \text{since ${ g_i \equiv f_i \;(\bmod\; n) }$ and ${ f_\ell \geq 2 }$} \\
& \stackrel{\eqref{id: JI Zn xya=yxa}}{\approx} \bv.
\end{align*}

(ii) It suffices to show that the identities~\eqref{id: JI Zn xxyy=yyxx} are deducible from the identities $\alpha: x^2y^2 \approx y^2x^2$ and $\beta: xya \approx yxa$.
Write $m_i = 2p_i+r_i$ where $p_i \geq 1$ and $r_i \in \{ 0,1 \}$.
Then
\[
x^{m_1}y^{m_2} \stackrel{\beta}{\approx} y^{r_2} x^{r_1} x^{2p_1}  y^{2p_2} \stackrel{\alpha}{\approx} y^{r_2} x^{r_1} y^{2p_2} x^{2p_1} \stackrel{\beta}{\approx} y^{m_2}x^{m_1}. \qedhere
\]
\end{proof}

\begin{proposition}[{\cite[Lemma~7.3 and Diagram~8]{Pet74}}] \label{P: bases N2 SL Zn N2 Zn SL Zn}
Let $n \geq 2$ be any integer\up.
\begin{enumerate}[\rm(i)]
\item The identities \[ x^{n+1} a \approx xa, \quad xy \approx yx \] constitute an {\ib} for the variety $\var\{\Niltwo,\SL,\mathbb{Z}_n\}$\up.
\item The identities \[ x^n ab \approx ab, \quad xy \approx yx \] constitute an {\ib} for the variety $\var\{\Niltwo,\mathbb{Z}_n\}$\up.
\item The identities \[ x^{n+1} \approx x, \quad xy \approx yx \] constitute an {\ib} for the variety $\var\{\SL,\mathbb{Z}_n\}$\up.
\end{enumerate}
\end{proposition}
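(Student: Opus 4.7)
The plan is to handle all three parts in parallel by verifying the easy direction (each listed identity holds in the corresponding generators among $\Niltwo$, $\SL$, and $\mathbb{Z}_n$) and then, for each part, showing that any identity $\bu \approx \bv$ satisfied by the respective variety is equationally deducible from the proposed basis. The easy direction is immediate: in $\Niltwo$ every product of length at least two evaluates to zero, in $\SL$ one has $x^k = x$ for $k \geq 1$, and in $\mathbb{Z}_n$ one has $x^n = 1$; commutativity is built in.

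For the hard direction, first apply Lemma~\ref{L: LZ LZi N3 Nni Zn word} to translate the semantic assumption into combinatorial conditions on the words. Namely, $\SL$ forces $\con(\bu) = \con(\bv)$, $\mathbb{Z}_n$ forces $\occ(x,\bu) \equiv \occ(x,\bv) \pmod n$ for every variable~$x$, and $\Niltwo$ forces either $\bu = \bv$ as words or $|\bu|, |\bv| \geq 2$ (as can be verified directly from the two-element zero semigroup). Using $xy \approx yx$, write $\bu \approx \prod x_i^{e_i}$ and $\bv \approx \prod x_i^{f_i}$ with $e_i \equiv f_i \pmod n$ for every relevant variable.

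For part~(iii), the identity $x^{n+1} \approx x$ is context-free and can be iterated to reduce each exponent $e_i \geq 1$ to its unique congruence-class representative $r_i \in \{1,\ldots,n\}$; both $\bu$ and $\bv$ then reduce to the common word $\prod x_i^{r_i}$. Part~(i) is analogous but uses $x^{n+1}a \approx xa$, which demands a nonempty context~$a$; the nontrivial case $|\bu|, |\bv| \geq 2$ is then handled by splitting on whether $k = |\con(\bu)| \geq 2$ (another variable always supplies the context) or $k = 1$ with $e_1, f_1 \geq 2$ (applications of the identity with $a$ a residual power of the lone variable, valid as long as the exponent is at least $n+2$, drive both $\bu$ and $\bv$ to a common canonical value in $\{2,\ldots,n+1\}$).

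Part~(ii) is the most delicate because the absence of $\SL$ allows $\con(\bu) \neq \con(\bv)$. The identity $x^n ab \approx ab$, combined with commutativity, permits inserting or deleting any factor $x^n$ provided the surrounding word has length at least~$2$. The plan is to transform $\bu$ into $\bv$ in two phases: first, for each variable~$y$ with $\occ(y,\bv) > \occ(y,\bu)$, insert $(\occ(y,\bv) - \occ(y,\bu))/n$ copies of $y^n$ into $\bu$; since insertions are length-monotone, the length constraint is automatic. Second, for each~$y$ with $\occ(y,\bv) < \occ(y,\bu)$, delete the appropriate number of $y^n$ factors; since each deletion reduces the total length by exactly~$n$ and the final length is $|\bv| \geq 2$, every intermediate length remains at least~$2$, so each deletion is legal. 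The main obstacle throughout the proof is this bookkeeping of length constraints across elementary transformations, but once it is checked, full deducibility of $\bu \approx \bv$ follows in all three parts.
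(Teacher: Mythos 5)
Your proposal is correct, but it is worth noting that the paper gives no proof of this proposition at all: it simply cites Petrich~\cite[Lemma~7.3 and Diagram~8]{Pet74}. What you have written is therefore a self-contained replacement for that citation, and it is carried out in exactly the style the paper uses for the sibling results it does prove (Propositions~\ref{P: bases N3 Zn}, \ref{P: bases JI Zn}, and~\ref{P: Zp Nni}): translate membership of $\bu \approx \bv$ in the equational theory into combinatorial conditions on $\con$, $\occ$, and word length via the generators, pass to a commutative normal form $\prod x_i^{e_i}$, and then reduce exponents to canonical representatives using the exponent-lowering identity. All the delicate points are handled correctly: the characterization of the identities of $\Niltwo$ (trivial, or both sides of length at least~$2$); the single-variable case of part~(i), where the context $a$ must be supplied by a residual power of the lone variable so that reduction stops at a representative in $\{2,\ldots,n+1\}$ rather than $\{1,\ldots,n\}$; and, in part~(ii), the insert-then-delete ordering that keeps every intermediate word long enough for $x^nab\approx ab$ to apply (each pre-deletion word has length at least $|\bv|+n\ge n+2$, so the remainder splits into nonempty $a$ and $b$). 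The only thing your write-up leaves implicit is that in part~(ii) a variable of $\bv$ absent from $\bu$ must occur a multiple of $n$ times in $\bv$ by the $\mathbb{Z}_n$ condition, which is what licenses the insertion count $(\occ(y,\bv)-\occ(y,\bu))/n$; this is immediate from Lemma~\ref{L: LZ LZi N3 Nni Zn word}(v) and does not affect correctness.
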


\subsection{Subvarieties of ${ \bV_{\ref{V: N4}} = \var\{\Nilfour\} }$} \label{subsec: N4}

\begin{proposition}[Mel'nik~{\cite[Subvarieties of $B_{23}$ in Figure~3]{Mel72}}] \label{P: N4} \quad
\begin{enumerate}[\rm(i)]
\item The proper nontrivial subvarieties of $\bV_{\ref{V: N4}} = \var\{\Nilfour\}$ are
\begin{align*}
\bV_{\ref{V: N2}} & = \var\{\Niltwo\}, \qquad \bV_{\ref{V: N3}} = \var\{\Nilthree\}, \qquad \bV_{\ref{V: G4}} = \var\{\Gfour\}, \\
\bV_{\ref{V: CN 3 4}} & = \var\!\left\{ \! \begin{array}[c]{l} [1111111,1111111,1111112,1111121,1111122,1112235, \\ \phantom{[} 1121254] \end{array} \! \right\}\!, \\
\bV_{\ref{V: CN 2 4}} & = \var\!\left\{ \! \begin{array}[c]{l} [1111\,1111,1111\,1111,1111\,1112,1111\,1121,1111\,1211, \\ \phantom{[} 1111\,2134,1112\,1315,1121\,1451] \end{array} \! \right\}\!, \\
\bV_{\ref{V: CN 21 4}} & = \var\!\left\{ \! \begin{array}[c]{l} [1111\,1111,1111\,1111,1111\,1112,1111\,1121,1111\,1211, \\ \phantom{[} 1111\,2134,1112\,1315,1121\,1452] \end{array} \! \right\}\!.
\end{align*}

\item The lattice $\sL(\bV_{\ref{V: N4}})$ is given in Figure~\ref{F: N4}\up.
\end{enumerate}
\end{proposition}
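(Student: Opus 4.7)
\medskip

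\noindent\textit{Proof proposal.} The plan is to characterize the subvarieties of $\bV_{\ref{V: N4}} = \var\{\Nilfour\}$ by analysing the defining identities $x^4 \approx xyzt$, $x^2y \approx xy^2$, $xy \approx yx$. Since all identities in $\var\{\Nilfour\}$ are commutative and collapse every product of length at least $4$, the $\bV_{\ref{V: N4}}$-free semigroup on $n$ generators is the finite commutative nilpotent semigroup whose elements are the commutative monomials of total degree~$1$, $2$, or~$3$ together with a single ``zero'' element $\mathbf{0}$ representing all words of length $\geq 4$. Moreover, the identity $x^2y \approx xy^2$ collapses the two-variable degree-$3$ monomials: $x^2y$ and $xy^2$ become equal for any pair of distinct variables $x,y$. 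Thus the free semigroup on $n$ generators, up to isomorphism, is classified by degree and by the multiset of variables occurring.

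The first step is to show that every identity $\bu \approx \bv$ holding in $\bV_{\ref{V: N4}}$ is, in the terminology above, deducible from the defining identities using the following rule: either $|\bu|, |\bv| \geq 4$, or $\con(\bu) = \con(\bv)$ with $\occ(x,\bu) = \occ(x,\bv)$ for every variable~$x$, except possibly that two-variable words of degree~$3$ differing only in which variable carries the square are identified. This is routine from the defining identities and the analogue of Lemma~\ref{L: LZ LZi N3 Nni Zn word}(iv) adapted to the monogenic nilpotent case.

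The second step is the classification itself. A proper subvariety $\bV$ of $\bV_{\ref{V: N4}}$ is determined by the additional identities it satisfies, and these identities must act on the finite layer of degree-$2$ and degree-$3$ monomials either by collapsing some family of them to~$\mathbf{0}$, or by identifying monomials with the same total degree across different ``shapes'' (one-, two-, or three-variable). I would enumerate these collapse patterns systematically: if all degree-$3$ monomials collapse to $\mathbf{0}$, we descend to $\var\{\Nilthree\}$; if moreover degree-$2$ monomials collapse too, we reach $\var\{\Niltwo\}$; if only the three-variable degree-$3$ monomials collapse we obtain $\bV_{\ref{V: CN 2 4}}$, and adding further identifications yields $\bV_{\ref{V: CN 3 4}}$ and $\bV_{\ref{V: CN 21 4}}$. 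Each candidate subvariety then receives an explicit identity basis from the defining identities of $\bV_{\ref{V: N4}}$ together with the description of its collapse pattern, and one shows that the corresponding primitive generators listed in the statement indeed lie in, and generate, the predicted variety.

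The main obstacle will be exhaustiveness: verifying that no additional ``collapse patterns'' are possible beyond the six listed here, and that distinct patterns yield distinct varieties. I would address this by taking an arbitrary nontrivial identity $\bu \approx \bv$ satisfied by a hypothetical extra subvariety, decomposing it according to the free-semigroup structure described above, and showing that it is deducible from one of the identified patterns. Once classification is complete, the covering relations of the lattice in Figure~\ref{F: N4} follow from computing, for each pair of candidate subvarieties, joins by unions of identity bases and meets by intersections; this final step is mechanical and agrees with Mel'nik~\cite{Mel72}.
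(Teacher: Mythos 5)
You should first be aware that the paper itself contains no proof of Proposition~\ref{P: N4}: it is imported wholesale from Mel'nik~\cite{Mel72} (the subvarieties of $B_{23}$ in his Figure~3). So any self-contained argument is a genuinely different route, and your overall strategy is the right one. The relatively free semigroups of $\var\{\Nilfour\}$ are exactly as you describe (commutative monomials of degree at most~$3$ modulo $x^2y\approx xy^2$, plus an absorbing $\mathbf{0}$), and the whole classification comes down to one observation you do not quite articulate: \emph{every} nontrivial extra identity reduces to collapsing one of the five normal-form types $xy$, $x^2$, $x^3$, $x^2y$, $xyz$ to $\mathbf{0}$. Indeed, if $\con(\bu)\neq\con(\bv)$, substituting a fourth power for a variable on only one side sends the other side to $\mathbf{0}$; and two distinct normal forms with equal content have different degrees or exponent patterns, so a further substitution again forces a collapse. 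The forcing order among the five collapses (e.g.\ $xyz\Rightarrow x^2y\Rightarrow x^3$ via $z\mapsto x$, and $x^2\Rightarrow x^2y, x^3$, but $x^3$ forces nothing else) has exactly six nonempty closed subsets, which together with meets yield precisely the six listed varieties and nothing more. Making this lemma explicit is what turns your ``exhaustiveness obstacle'' into a finite check.

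The genuine problem is that your assignment of collapse patterns to the named varieties is wrong, and the enumeration as written would not reproduce the lattice. The pattern ``only the three-variable degree-$3$ monomials collapse'' is not realizable: collapsing $xyz$ forces, via $z\mapsto x$, the collapse of $x^2y$ and hence of $x^3$, so that pattern is $\var\{\Nilthree\}$, not $\bV_{\ref{V: CN 2 4}}$. The variety $\bV_{\ref{V: CN 2 4}}$ is obtained instead by collapsing the \emph{squares} ($x^2\approx\mathbf{0}$, a degree-$2$ phenomenon), which kills $x^3$ and $x^2y$ but leaves $xy$ and $xyz$ alive; that is why it is incomparable with $\var\{\Nilthree\}$ and why their join is $\bV_{\ref{V: CN 21 4}}$ (collapse of $x^2y$ alone). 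Likewise ``adding further identifications yields $\bV_{\ref{V: CN 3 4}}$'' is upside down: $\bV_{\ref{V: CN 3 4}}$ is the unique \emph{maximal} proper subvariety, given by the single weakest collapse $x^3\approx\mathbf{0}$, so it satisfies the fewest extra identities, not more. Until the reduction-to-collapse lemma is proved and these assignments are corrected, the argument establishes neither the list in~(i) nor the lattice in~(ii), although the method itself, executed carefully, does succeed.
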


\begin{figure}[htbp]
\begin{center}
\begin{picture}(90,140)(00,10) \setlength{\unitlength}{0.7mm}
\put(20,70){\circle*{\circlesize}}
\put(20,60){\circle*{\circlesize}}
\put(20,50){\circle*{\circlesize}}
\put(10,40){\circle*{\circlesize}} \put(30,40){\circle*{\circlesize}}
\put(20,30){\circle*{\circlesize}}
\put(20,20){\circle*{\circlesize}}
\put(20,10){\circle*{\circlesize}}
\put(10,40){\line(1,1){10}} \put(20,30){\line(1,1){10}} 
\put(10,40){\line(1,-1){10}} \put(20,50){\line(1,-1){10}}
\put(20,10){\line(0,1){20}} \put(20,50){\line(0,1){20}}
\put(22.5,70){\makebox(0,0)[l]{$\bV_{\ref{V: N4}}$}}
\put(22.5,60){\makebox(0,0)[l]{$\bV_{\ref{V: CN 3 4}}$}}
\put(22.5,50){\makebox(0,0)[l]{$\bV_{\ref{V: CN 21 4}}$}}
\put(8,40){\makebox(0,0)[r]{$\bV_{\ref{V: N3}}$}} \put(32.5,40){\makebox(0,0)[l]{$\bV_{\ref{V: CN 2 4}}$}}
\put(22.5,28){\makebox(0,0)[l]{$\bV_{\ref{V: G4}}$}}
\put(22.5,19){\makebox(0,0)[l]{$\bV_{\ref{V: N2}}$}}
\put(22.5,10){\makebox(0,0)[l]{$\bT$}}
\end{picture}
\end{center}
\caption{The lattice $\sL(\bV_{\ref{V: N4}})$}
\label{F: N4}
\end{figure}

\section{Some varieties with infinitely many subvarieties} \label{app: infinite}

\subsection{The variety ${ \var\{\mathbb{Z}_p,\Nil_n^1\} }$} \label{subsec: Zp Nni}

\begin{proposition} \label{P: Zp Nni}
Let $p \geq 1$ and $n \geq 2$ be any integers\up.
\begin{enumerate}[\rm(i)]
\item The identities
\begin{subequations} \label{id: Zp Nni}
\begin{align}
x^{n+p} & \approx x^n, \label{id: Zp Nni xn+p=xn} \\
xy & \approx yx \label{id: Zp Nni xy=yx}
\end{align}
\end{subequations}
constitute an {\ib} for the variety $\var\{\mathbb{Z}_p,\Nil_n^1\}$\up.
\item The variety $\var\{\mathbb{Z}_p,\Nil_n^1\}$ contains countably infinitely many subvarieties\up.
\end{enumerate}
\end{proposition}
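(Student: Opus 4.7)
For part~(i), the plan is the standard two-step argument: verify that both generators satisfy~\eqref{id: Zp Nni}, then prove deducibility. Verification is immediate since each generator is commutative, $\mathbb{Z}_p$ satisfies $x^p = 1$, and every non-identity element of $\Nil_n^1$ has $n$-th power zero. For deducibility, given any identity $\bu \approx \bv$ satisfied by the variety, I will use~\eqref{id: Zp Nni xy=yx} to rewrite both sides as $\prod_{i=1}^k x_i^{e_i}$ and $\prod_{i=1}^k x_i^{f_i}$ in canonical order, where $\{x_1,\ldots,x_k\} = \con(\bu) \cup \con(\bv)$. Combining Lemma~\ref{L: LZ LZi N3 Nni Zn word}(iv) applied to $\Nil_n^1$ with Lemma~\ref{L: LZ LZi N3 Nni Zn word}(v) applied to $\mathbb{Z}_p$, one obtains for each $i$ either $e_i = f_i < n$ (trivial) or both $e_i, f_i \geq n$ with $e_i \equiv f_i \pmod p$. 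From~\eqref{id: Zp Nni xn+p=xn} one readily derives $x^m \approx x^{m+p}$ for all $m \geq n$ by multiplying on either side by $x^{m-n}$, which suffices to match $x_i^{e_i}$ with $x_i^{f_i}$ in the nontrivial case. The overall structure parallels the proof of Proposition~\ref{P: bases N3 Zn}.

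For part~(ii), the plan is a squeeze: bound the subvariety lattice above and below by countable sets. The lower bound comes essentially for free: the submonoid $\{1, a^{n-1}, 0\}$ of $\Nil_n^1$ is isomorphic to $\Nil_2^1$, since $(a^{n-1})^2 = a^{2n-2} = 0$ whenever $n \geq 2$, so $\var\{\Nil_2^1\} \subseteq \var\{\mathbb{Z}_p, \Nil_n^1\}$; the former variety already has countably infinitely many subvarieties by Figure~\ref{F: N2i}. For the upper bound, every subvariety of $\var\{\mathbb{Z}_p, \Nil_n^1\}$ is commutative and satisfies~\eqref{id: Zp Nni xn+p=xn}, hence is locally finite with relatively free $k$-generator algebra of size at most $(n+p)^k$. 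I will then invoke Lemma~\ref{L: HFB implies countable} once I have shown that every such subvariety is finitely based.

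The main obstacle is this last finite-basis claim, uniformly across all subvarieties. The cleanest route is to parametrize identities directly: modulo~\eqref{id: Zp Nni}, every identity admits a canonical form $\prod x_i^{e_i} \approx \prod x_i^{f_i}$ with exponents bounded by $n+p-1$, so the pool of candidate identities is countable. A well-quasi-ordering argument on such canonical forms (in the spirit of Dickson's lemma applied coordinatewise to the finitely many reduced exponents over a finite variable set) then shows that any fully invariant congruence on the free algebra is generated by finitely many pairs modulo the substitution closure, yielding a finite basis for each subvariety. This completes the upper bound and, combined with the lower bound, establishes that the subvariety lattice is exactly countably infinite.
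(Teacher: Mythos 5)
Your part~(i) is essentially the paper's proof: verify \eqref{id: Zp Nni}, put both sides of $\bu\approx\bv$ into the canonical commutative form, apply Lemma~\ref{L: LZ LZi N3 Nni Zn word}(iv) and~(v) to get, for each variable, either equal exponents below~$n$ or exponents that are both $\geq n$ and congruent modulo~$p$, and close the gap with $x^{m+p}\approx x^m$ for $m\geq n$. No issues there, and the lower bound in part~(ii) (embedding $\Niltwo^1$ as $\{1,a^{n-1},0\}$ and citing its infinitely many subvarieties) is also exactly what the paper does.

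The gap is in the upper bound of part~(ii). The paper disposes of it in one line by citing Perkins' theorem that \emph{every} variety of commutative semigroups is finitely based \cite{Per69}, and then applying Lemma~\ref{L: HFB implies countable}. You instead try to re-derive the finite basis property of each subvariety via a Dickson's-lemma argument on canonical exponent vectors, and that sketch does not go through as stated. Reducing exponents into $\{0,1,\dots,n+p-1\}$ shows only that for each \emph{fixed} number $k$ of variables there are finitely many canonical identities; but a subvariety is cut out by identities in unboundedly many variables, and Dickson's lemma applies coordinatewise only for fixed~$k$. The entire difficulty is showing that the substitution closure of finitely many identities eventually absorbs the identities in larger variable sets --- that is precisely the content of Perkins' theorem, and your sentence ``a well-quasi-ordering argument \dots\ shows that any fully invariant congruence \dots\ is generated by finitely many pairs modulo the substitution closure'' asserts this conclusion rather than proving it. (Your local-finiteness observation, that the $k$-generated relatively free algebras have order at most $(n+p)^k$, does no work here: locally finite varieties can have uncountably many subvarieties, as the order-$6$ examples in the paper show.) The repair is immediate: replace the sketch by a citation of \cite{Per69}, after which Lemma~\ref{L: HFB implies countable} gives countability exactly as you intend.
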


\begin{proof}
(i) It is routinely checked that the identities~\eqref{id: Zp Nni} are satisfied by the variety $\var\{\mathbb{Z}_p,\Nil_n^1\}$.
Hence it remains to show that any identity $\bu \approx \bv$ satisfied by $\var\{\mathbb{Z}_p,\Nil_n^1\}$ is deducible from~\eqref{id: Zp Nni}.
Generality is not lost by assuming that $\bu, \bv \in \{ x_1,x_2,\ldots,x_m \}^*$ with $e_i = \occ(x_i,\bu)$ and $f_i = \occ(x_i,\bv)$.
Then it follows from Lemma~\ref{L: LZ LZi N3 Nni Zn word} parts~(iv) and~(v) that for each~$i$,
\begin{enumerate}[\ \ (a)]
\item either $e_i = f_i < n$ or $e_i,f_i \geq n$;
\item $e_i \equiv f_i \pmod p$.
\end{enumerate}
If $e_i \neq f_i$ for some~$i$, then $e_i,f_i \in \{ n+rp \,|\, r \geq 0 \}$ by~(a) and~(b), whence the identity $x^{e_i} \approx x^{f_i}$ is deducible from~\eqref{id: Zp Nni xn+p=xn}.
It follows that
\[
\bu \stackrel{\eqref{id: Zp Nni xy=yx}}{\approx} \prod_{i=1}^m x_i^{e_i} \stackrel{\eqref{id: Zp Nni xn+p=xn}}{\approx} \prod_{i=1}^m x_i^{f_i} \stackrel{\eqref{id: Zp Nni xy=yx}}{\approx} \bv
\]

(ii) Any variety of commutative semigroups is finitely based~\cite{Per69}.
Hence by Lemma~\ref{L: HFB implies countable}, the variety $\var\{\mathbb{Z}_p,\Nil_n^1\}$ contains countably many subvarieties.
The result then holds since the subvariety $\var\{\Niltwo^1\}$ of $\var\{\mathbb{Z}_p,\Nil_n^1\}$ contains infinitely many subvarieties \cite[Figure~5(b)]{Eva71}.
\end{proof}

\begin{corollary} \label{C: Nni}
Let $n \geq 2$ be any integer\up.
\begin{enumerate}[\rm(i)]
\item The identities
\begin{subequations} \label{id: Nni}
\begin{align}
x^{n+1} & \approx x^n, \label{id: Nni xn+1=xn} \\
xy & \approx yx \label{id: Nni xy=yx}
\end{align}
\end{subequations}
constitute an {\ib} for the variety $\var\{\Nil_n^1\}$\up.
\item The variety $\var\{\Nil_n^1\}$ contains countably infinitely many subvarieties\up.
\end{enumerate}
\end{corollary}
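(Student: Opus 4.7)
The plan is to obtain the corollary as an immediate specialization of Proposition~\ref{P: Zp Nni} with $p=1$. Since $\mathbb{Z}_1$ is the trivial group, the generator $\mathbb{Z}_1$ can be discarded from $\var\{\mathbb{Z}_1,\Nil_n^1\}$, giving the equality $\var\{\mathbb{Z}_1,\Nil_n^1\} = \var\{\Nil_n^1\}$. Substituting $p=1$ into the identity~\eqref{id: Zp Nni xn+p=xn} yields $x^{n+1} \approx x^n$, while~\eqref{id: Zp Nni xy=yx} remains $xy \approx yx$, producing exactly the identities in~\eqref{id: Nni}. Thus part~(i) follows directly from Proposition~\ref{P: Zp Nni}(i).

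For part~(ii), I would again invoke Proposition~\ref{P: Zp Nni}(ii) with $p=1$, since the argument given there only requires that commutative varieties are finitely based (via Perkins' theorem) combined with Lemma~\ref{L: HFB implies countable} to conclude countability, and that $\var\{\Niltwo^1\}$ already has infinitely many subvarieties (from~\cite[Figure~5(b)]{Eva71} or Figure~\ref{F: N2i}) to give the lower bound. Since $\Niltwo^1$ embeds as a subsemigroup of $\Nil_n^1$ for every $n \geq 2$, the containment $\var\{\Niltwo^1\} \subseteq \var\{\Nil_n^1\}$ holds, transporting the infinitely many subvarieties of $\var\{\Niltwo^1\}$ into $\var\{\Nil_n^1\}$.

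There is no genuine obstacle here, since both parts are literal specializations of the preceding proposition. The only subtlety worth mentioning is to record explicitly why $\var\{\mathbb{Z}_1,\Nil_n^1\} = \var\{\Nil_n^1\}$ (because $\mathbb{Z}_1$ is a one-element, hence trivial, semigroup which contributes nothing to the generated variety), so that the reader can see the substitution $p=1$ is legitimate in Proposition~\ref{P: Zp Nni}.
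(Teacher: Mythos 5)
Your proposal is correct and is exactly the derivation the paper intends: the corollary is the $p=1$ case of Proposition~\ref{P: Zp Nni}, which is stated for all $p\geq 1$, and $\mathbb{Z}_1$ being trivial gives $\var\{\mathbb{Z}_1,\Nil_n^1\}=\var\{\Nil_n^1\}$ with the identities specializing to~\eqref{id: Nni}. Your extra remarks (the embedding of $\Niltwo^1$ into $\Nil_n^1$ and the explicit justification for discarding $\mathbb{Z}_1$) are sound but not needed beyond the specialization itself.
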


\begin{lemma}[Lee {\etal}.~{\cite[Proposition~5.10]{LRS19}}] \label{L: Nni max}
Each proper subvariety of $\var\{\Nil_n^1\}$ satisfies the identity
\begin{equation}
x^ny^{n-1} \approx x^{n-1} y^n. \label{id: Nni max}
\end{equation}
\end{lemma}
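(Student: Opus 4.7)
Let $\bV$ be a proper subvariety of $\var\{\Nil_n^1\}$. By Corollary~\ref{C: Nni}, $\bV$ is commutative and satisfies $x^{n+1}\approx x^n$, so every word can be brought into canonical form $\prod_i x_i^{e_i}$ with $0\le e_i\le n$. Properness of $\bV$ supplies an identity $\bu\approx\bv$ holding in $\bV$ but violated by $\Nil_n^1$; by Lemma~\ref{L: LZ LZi N3 Nni Zn word}(iv), after normalisation this identity must involve some variable whose two exponents differ with at least one of them strictly less than $n$. Without loss of generality this variable is $x_1$, with $a=\occ(x_1,\bu)<\occ(x_1,\bv)=c$ and $a<n$.

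My first step is to reduce to two variables. Applying the substitution $x_1\mapsto x$ and $x_i\mapsto y$ for every other variable in $\con(\bu)\cup\con(\bv)$, and renormalising with $xy\approx yx$ and $x^{n+1}\approx x^n$, yields an identity
\[
x^a y^b \approx x^c y^d
\]
satisfied by $\bV$, where $0\le a<c\le n$, $a<n$, and $b,d\in\{0,1,\ldots,n\}$.

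The main step is to derive $x^n y^{n-1}\approx x^{n-1}y^n$ from this two-variable identity by multiplying both sides by suitable powers of $x$ and $y$ and normalising, split according to how $b$ compares with $d$. If $b<d$, multiplying by $x^{n-1-a}y^{n-1-b}$ yields $x^{n-1}y^{n-1}\approx x^n y^n$ (since $c+(n-1-a)\ge n$ and $d+(n-1-b)\ge n$), after which a further multiplication by $x$ and by $y$ separately gives $x^n y^{n-1}\approx x^n y^n\approx x^{n-1}y^n$. If $b>d$ (which forces $d\le n-1$), multiplying by $x^{n-1-a}y^{n-1-d}$ produces the target identity directly, because the left-hand $y$-exponent saturates to $n$ while the right-hand $y$-exponent is exactly $n-1$. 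If $b=d\ge 1$, I first multiply by $x^{n-1-a}$ to obtain $x^{n-1}y^b\approx x^n y^b$, then apply $x\leftrightarrow y$ to this derived identity to produce the twin $x^by^{n-1}\approx x^by^n$; multiplying each twin by a further suitable power and combining then sandwiches $x^{n-1}y^{n-1}$ between $x^n y^{n-1}$ and $x^{n-1}y^n$.

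I expect the main obstacle to lie in the degenerate case $b=d=0$, where the two-variable identity collapses to $x^a\approx x^c$; here multiplying by $x^{n-1-a}$ first yields $x^{n-1}\approx x^n$, and substituting $x\mapsto y$ then gives $y^{n-1}\approx y^n$, after which the target follows via $x^n y^{n-1}\approx x^{n-1}y^{n-1}\approx x^{n-1}y^n$. A secondary source of care is ensuring that every intermediate word is nonempty and every exponent occurring in a substitution is nonnegative, which is where the hypothesis $a<n$ is crucially used; the edge cases with $b$ or $d$ equal to $n$ in the third case are resolved by the same symmetry trick.
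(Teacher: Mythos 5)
Your proposal is correct: every case of your two-variable analysis closes (including the degenerate cases $b=d=0$ and $b=d=n$, which you handle by the renaming trick), and the initial reduction via Lemma~\ref{L: LZ LZi N3 Nni Zn word}(iv) and properness is sound. However, the route is genuinely different from the one the paper is set up to use. The paper does not prove this lemma itself (it cites \cite[Proposition~5.10]{LRS19}), but it does prove the general exclusion Lemma~\ref{L: excl Nni}, and the intended one-line derivation is the same pattern used in Lemmas~\ref{L: Zp Nni max}, \ref{L: JI Nni max} and~\ref{L: LZ Nni max}: since a proper subvariety $\bW$ omits $\Nil_n^1$, Lemma~\ref{L: excl Nni} with $k=1$ gives $(x^ny)^nx^n \approx (x^ny)^{n-1}x^n$, which under $xy\approx yx$ and $x^{n+1}\approx x^n$ collapses to $x^ny^n \approx x^ny^{n-1}$; swapping $x$ and $y$ gives $x^ny^n \approx x^{n-1}y^n$, and the two together yield \eqref{id: Nni max}. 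What your approach buys is self-containedness and elementarity: you have in effect inlined the proof of Lemma~\ref{L: excl Nni} and specialised it to the commutative, $k=1$ setting, at the cost of a longer case analysis; what the paper's route buys is brevity and reuse, since the same exclusion lemma serves all the analogous maximal-subvariety computations in Section~\ref{app: infinite}. Either way the statement is established, so there is no gap to repair.
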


\begin{proposition}
Let $n \geq 2$ be any integer\up.
\begin{enumerate}[\rm(i)]
\item The variety $\var\{ \eqref{id: Nni},\eqref{id: Nni max} \}$ is the only maximal subvariety of $\var\{\Nil_n^1\}$\up.
\item The variety $\var\{ \eqref{id: Nni},\eqref{id: Nni max} \}$ is not finitely generated\up.
\end{enumerate}
\end{proposition}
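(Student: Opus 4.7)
For part~(i), the plan is a direct application of Lemma~\ref{L: Nni max}: every proper subvariety of $\var\{\Nil_n^1\}$ satisfies \eqref{id: Nni max} by the lemma and satisfies \eqref{id: Nni} trivially, so is contained in $\bV := \var\{\eqref{id: Nni},\eqref{id: Nni max}\}$. To confirm $\bV \subsetneq \var\{\Nil_n^1\}$, substitute $x \mapsto a$ and $y \mapsto 1$ in \eqref{id: Nni max} within $\Nil_n^1$: the left side evaluates to $a^n = 0$ while the right side evaluates to $a^{n-1} \neq 0$. Thus $\bV$ is the unique maximal proper subvariety.

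For part~(ii), the plan is to show that no finite semigroup in $\bV$ generates $\bV$. For each $K \geq 2$, introduce the identity
\[
\eta_K \colon x_0^n x_1 x_2 \cdots x_{K-1} \approx x_0^{n-1} x_1 x_2 \cdots x_{K-1}.
\]
A profile comparison in the free algebra $F_K(\bV)$ on $K$ generators shows that $\bV$ does not satisfy $\eta_K$, so $\bV \cap [\eta_K]$ is a proper subvariety of $\bV$. The key claim will be that $F_m(\bV)$ satisfies $\eta_K$ whenever $K \geq (n-1)(m-1) + 2$. Granting this, if $\bV = \var\{S\}$ for a finite semigroup $S$ of order $m$, then $S$ is $m$-generated and hence a homomorphic image of $F_m(\bV)$, yielding $\var\{S\} \subseteq \var\{F_m(\bV)\} \subseteq \bV \cap [\eta_K] \subsetneq \bV$ for $K = (n-1)(m-1) + 2$, a contradiction.

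To establish the key claim, consider any substitution $\sigma \colon \{x_0, \ldots, x_{K-1}\} \to F_m(\bV)$ and let $C$ denote the set of free generators of $F_m(\bV)$ appearing in $\sigma(x_0)$. A direct exponent comparison shows that the exponents of a generator $y_i$ on the two sides of $\eta_K$ agree except possibly when $y_i \in C$ occurs in $\sigma(x_0)$ with multiplicity exactly $1$ and does not reappear in any $\sigma(x_j)$ for $j \geq 1$. If additionally no generator outside $C$ accumulates total exponent at least $n$ in $\sigma(x_1) \cdots \sigma(x_{K-1})$, then that product is a concatenation of $K-1$ nonempty words in the $m - |C|$ generators outside $C$, each used at most $n-1$ times in total, forcing $K - 1 \leq (n-1)(m - |C|) \leq (n-1)(m-1)$ because $|C| \geq 1$; this fails for $K \geq (n-1)(m-1) + 2$. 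In the complementary case, some generator $y_j$ achieves exponent $n$ in $\sigma(x_1)\cdots\sigma(x_{K-1})$ or appears in $\sigma(x_0)$ with higher multiplicity, and the derived identity $x^n y^n \approx x^{n-1} y^n$---obtained by substituting $y \mapsto y^2$ in \eqref{id: Nni max} together with \eqref{id: Nni xn+1=xn}---then reduces the $n$-th power of $\sigma(x_0)$ to the $(n-1)$-st, giving the required equality. The main obstacle is the case analysis, which becomes subtler for $n \geq 3$ than for $n = 2$ where the pigeonhole bound collapses.
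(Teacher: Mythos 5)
Part~(i) is correct and is the paper's own argument: Lemma~\ref{L: Nni max} places every proper subvariety of $\var\{\Nil_n^1\}$ inside $\var\{\eqref{id: Nni},\eqref{id: Nni max}\}$, and your substitution $x\mapsto a$, $y\mapsto 1$ confirms that $\Nil_n^1$ violates \eqref{id: Nni max}, so this subvariety is proper and therefore the unique maximal one.

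For part~(ii) you use the same separating identities as the paper (your $\eta_K$ is, modulo the commutativity law \eqref{id: Nni xy=yx}, the paper's identity $x_1x_2\cdots x_my^n\approx x_1x_2\cdots x_my^{n-1}$), but your verification that finite members satisfy them has a gap. In your first case you assert that the tail product $\sigma(x_1)\cdots\sigma(x_{K-1})$ is a word in the $m-|C|$ generators outside $C$; this does not follow from your hypotheses, which only guarantee that \emph{one} generator of $C$ (a problematic one, with multiplicity $1$ in $\sigma(x_0)$ and total exponent $0$ in the tail) is absent from the tail. A generator of $C$ with multiplicity $1$ in $\sigma(x_0)$ that reappears in the tail with total exponent between $1$ and $n-1$ lies in neither of your cases---it has neither ``exponent at least $n$ in the tail'' nor ``higher multiplicity in $\sigma(x_0)$''---and it invalidates the length count $K-1\le(n-1)(m-|C|)$. (The analysis can be repaired: count only the genuinely problematic generators, which contribute nothing to the tail, and observe that any generator whose capped exponent is $n$ on both sides can serve as the sink for \eqref{id: Nni max}; but as written the cases do not exhaust all substitutions.) More importantly, the detour through $F_m(\bV)$ is unnecessary: the paper applies a pigeonhole argument directly to an arbitrary finite semigroup $S$ in the variety---in a product of $m\ge n|S|$ elements some element $a$ occurs at least $n$ times, so by commutativity the product equals $sa^n$, and then $sa^nb^n=sa^{n+1}b^{n-1}=sa^nb^{n-1}$ by \eqref{id: Nni max} and \eqref{id: Nni xn+1=xn}. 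That two-line computation replaces your entire free-algebra case analysis.
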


\begin{proof}
(i) This follows from Corollary~\ref{C: Nni}(i) and Lemma~\ref{L: Nni max}.

(ii) It is easily seen that the variety $\var\{ \eqref{id: Nni},\eqref{id: Nni max} \}$ violates the identity
\begin{equation}
x_1 x_2 \cdots x_m y^n \approx x_1 x_2 \cdots x_m y^{n-1} \label{id: Nni finite}
\end{equation}
for any $m \geq 1$.
Hence it suffices to show that each finite semigroup~$S$ in the variety $\var\{ \eqref{id: Nni},\eqref{id: Nni max} \}$ satisfies the identity~\eqref{id: Nni finite} for all $m \geq n|S|$.
Choose any elements $a_1,a_2,\ldots,a_m,b \in S$.
Then the list $a_1,a_2,\ldots,a_m$ contains some element $a \in S$ at least~$n$ times, due to the magnitude of~$m$.
Therefore $a_1 a_2 \cdots a_m \stackrel{\eqref{id: Nni xy=yx}}{=} sa^n$ for some $s \in S$, whence
\begin{align*}
a_1a_2 \cdots a_m b^n & \stackrel{\eqref{id: Nni xy=yx}}{=} sa^nb^n \stackrel{\eqref{id: Nni max}}{=} sa^{n+1}b^{n-1} \\
& \stackrel{\eqref{id: Nni xn+1=xn}}{=} sa^nb^{n-1} \stackrel{\eqref{id: Nni xy=yx}}{=} a_1a_2 \cdots a_m b^{n-1}. \qedhere
\end{align*}
\end{proof}

\begin{lemma} \label{L: Zp Nni max}
Let $p \geq 2$ be any prime and $n \geq 2$ be any integer\up.
Then each proper subvariety of $\var\{\mathbb{Z}_p,\Nil_n^1\}$ satisfies one of the following identities\up:
\begin{align}
x^{n-1+p} y^{n-1} & \approx x^{n-1} y^{n-1+p}, \label{id: Zk Nni max excl Nni} \\
x^{n+1} & \approx x^n. \label{id: Zk Nni max excl Zp}
\end{align}
\end{lemma}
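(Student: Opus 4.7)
The plan is to split into two cases according to which of the generators $\mathbb{Z}_p$ and $\Nil_n^1$ fails to lie in $\bW$. Since $\bW$ is a proper subvariety of $\var\{\mathbb{Z}_p,\Nil_n^1\}=\var\{\mathbb{Z}_p\}\vee\var\{\Nil_n^1\}$, any subvariety containing both generators would coincide with the whole; hence either $\mathbb{Z}_p\notin\bW$ or $\Nil_n^1\notin\bW$. In both situations, Proposition~\ref{P: Zp Nni} ensures that $\bW$ inherits commutativity and the identity $x^{n+p}\approx x^n$, which is the engine of all the reductions below.

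For the case $\mathbb{Z}_p\notin\bW$, I would use Lemma~\ref{L: LZ LZi N3 Nni Zn word}(v) to choose an identity $\bu\approx\bv$ of $\bW$ with some variable $x$ satisfying $\occ(x,\bu)\not\equiv\occ(x,\bv)\pmod p$. Substituting every other variable by $x^p$ (which preserves residues modulo $p$) and then multiplying by $x^n$ produces a single-variable identity $x^a\approx x^b$ with $a,b\geq n$ and $a\not\equiv b\pmod p$. Reducing each side modulo $p$ via $x^{n+p}\approx x^n$ down to the canonical representative in $[n,n+p-1]$, followed by a further multiplication to cancel the common part, yields $x^{n+d}\approx x^n$ in $\bW$ for some $d$ with $0<d<p$. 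Primality of $p$ gives $\gcd(d,p)=1$, so integers $s,t\geq 0$ with $sd=1+tp$ exist; iterating the two identities gives $x^{n+sd}\approx x^n$ and $x^{n+sd}=x^{n+1+tp}\approx x^{n+1}$, whence $x^{n+1}\approx x^n$ in $\bW$.

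For the case $\Nil_n^1\notin\bW$, Lemma~\ref{L: LZ LZi N3 Nni Zn word}(iv) supplies an identity $\bu\approx\bv$ in $\bW$ together with a variable whose occurrence counts $e<f$ violate the $\Nil_n^1$ condition; since $\bu\approx\bv$ also holds in $\mathbb{Z}_p$, $e\equiv f\pmod p$, so $f=e+rp$ with $r\geq 1$, and necessarily $e\leq n-1$. The key step is a pair of dual substitutions. Replacing the violating variable by a fresh $y$ and every other variable by $x^p$, and then multiplying both sides by $y^{n-1-e}x^n$, I would reduce the $y$-exponent via $y^{n+p}\approx y^n$ (the reduction $y^{n-1+rp}\approx y^{n-1+p}$ goes through uniformly for every $r\geq 1$) and collapse the $x$-exponent to $n$ in the same way, arriving at $y^{n-1}x^n\approx y^{n-1+p}x^n$ in $\bW$. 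The mirror substitution, which sends the violating variable to $x$ and the rest to $y^p$, yields $x^{n-1}y^n\approx x^{n-1+p}y^n$. Multiplying the first derived identity by $x^{p-1}$ and the second by $y^{p-1}$ and invoking commutativity produces $x^{n-1+p}y^{n-1}\approx x^{n-1+p}y^{n-1+p}$ and $x^{n-1}y^{n-1+p}\approx x^{n-1+p}y^{n-1+p}$, which combine by transitivity to the required identity $x^{n-1+p}y^{n-1}\approx x^{n-1}y^{n-1+p}$.

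The main obstacle will be the exponent bookkeeping in the substitution-and-reduction steps: one must check that, every time the axiom $x^{n+p}\approx x^n$ is invoked, the relevant exponent is at least $n$, so that the rewrite is legitimate. In particular, justifying that $y^{n-1+rp}\approx y^{n-1+p}$ holds uniformly for all $r\geq 1$---and hence that both subcases of the $\Nil_n^1$-violation ($f<n$ and $f\geq n$) are handled by the same substitution---is the technical crux that keeps Case~2 clean, while the Bezout maneuver at the end of Case~1 is what crucially uses the primality of $p$.
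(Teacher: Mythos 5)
Your argument is correct, and while your treatment of the case $\mathbb{Z}_p\notin\bW$ coincides with the paper's (substitute $x^p$ for the extraneous variables, pad with $x^n$, cancel to get $x^{n+d}\approx x^n$ with $0<d<p$, then use primality to walk down to $x^{n+1}\approx x^n$), your treatment of the case $\Nil_n^1\notin\bW$ takes a genuinely different route. The paper invokes the general-purpose Lemma~\ref{L: excl Nni} to obtain $(x^ny)^{n-1+p}x^n\approx(x^ny)^{n-1}x^n$ and then performs a somewhat delicate computation (choosing $r$ with $n^2+r\equiv n\pmod p$) to extract $x^{n-1+p}y^{n-1+p}\approx x^{n-1}y^{n-1+p}$ before symmetrizing. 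You instead go back to the word criterion of Lemma~\ref{L: LZ LZi N3 Nni Zn word}(iv), extract a variable with occurrence counts $e<f$, $e\le n-1$, $f=e+rp$, and use two direct substitutions plus padding to land on $y^{n-1}x^n\approx y^{n-1+p}x^n$ and its mirror, which combine by transitivity. Your route is more elementary and self-contained (it essentially re-proves the relevant special case of Lemma~\ref{L: excl Nni} in a form already adapted to the commutative setting), at the cost of redoing a substitution argument the paper has packaged once and reuses in Subsections~\ref{subsec: JI Nni}--\ref{subsec: LZi Nni}; the exponent bookkeeping you flag ($y^{n-1+rp}\approx y^{n-1+p}$ for all $r\ge1$, legitimate because $n-1+rp\ge n+p$ whenever $r\ge2$) does go through.

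One point you should make explicit: your second case uses ``$\bu\approx\bv$ also holds in $\mathbb{Z}_p$'' to get $e\equiv f\pmod p$, which requires $\mathbb{Z}_p\in\bW$. That is not automatic from $\Nil_n^1\notin\bW$ alone, but it is harmless provided you phrase the second case as the \emph{remaining} one (if $\mathbb{Z}_p\notin\bW$ you are already done by the first case, so you may assume $\mathbb{Z}_p\in\bW$ and hence that every identity of $\bW$ holds in $\mathbb{Z}_p$). The paper's version of this case avoids the issue entirely because Lemma~\ref{L: excl Nni} needs only the identity $x^{n+p}\approx x^n$, not membership of $\mathbb{Z}_p$.
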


\begin{proof}
Let~$\bW$ be any proper subvariety of $\var\{\mathbb{Z}_p,\Nil_n^1\}$.
Then either $\mathbb{Z}_p \notin \bW$ or $\Nil_n^1 \notin \bW$.
First suppose that $\Nil_n^1 \notin \bW$.
Then it follows from Lemma~\ref{L: excl Nni} that the variety~$\bW$ satisfies the identity $\alpha: (x^ny)^{n-1+p} x^n \approx (x^ny)^{n-1} x^n$.
Let $r \geq 1$ be such that $n^2 + r \equiv n \pmod p$.
Then since
\begin{align*}
x^{n-1+p} y^n & \stackrel{\eqref{id: Zp Nni xn+p=xn}}{\approx} x^{n-1+p} y^{n^2+r} = x^{n-1+p} (y^{n})^n y^r \\
& \stackrel{\eqref{id: Zp Nni xn+p=xn}}{\approx} x^{n-1+p} (y^{n})^{n+p} y^r \stackrel{\eqref{id: Zp Nni xy=yx}}{\approx} (y^nx)^{n-1+p} y^ny^r \\
& \makebox[0.38in]{$\stackrel{\alpha}{\approx}$} (y^nx)^{n-1} y^ny^r \stackrel{\eqref{id: Zp Nni xy=yx}}{\approx} x^{n-1} y^{n^2+r} \stackrel{\eqref{id: Zp Nni xn+p=xn}}{\approx} x^{n-1} y^n,
\end{align*}
it follows that~$\bW$ satisfies the identity $\beta: x^{n-1+p} y^{n-1+p} \approx x^{n-1} y^{n-1+p}$.
But since
\begin{align*}
x^{n-1} y^{n-1+p} & \stackrel{\beta}{\approx} x^{n-1+p} y^{n-1+p} \stackrel{\eqref{id: Zp Nni xy=yx}}{\approx} y^{n-1+p} x^{n-1+p} \\
& \stackrel{\beta}{\approx} y^{n-1} x^{n-1+p} \stackrel{\eqref{id: Zp Nni xy=yx}}{\approx} x^{n-1+p} y^{n-1},
\end{align*}
the variety~$\bW$ also satisfies the identity~\eqref{id: Zk Nni max excl Nni}.

It remains to assume that $\mathbb{Z}_p \notin \bW$.
Then by Lemma~\ref{L: LZ LZi N3 Nni Zn word}(v), the variety~$\bW$ satisfies an identity $\gamma: \bu \approx \bv$ with $\occ(x,\bu) \not\equiv \occ(x,\bv) \pmod p$ for some variable $x \in \sX$.
Generality is not lost with the assumption that $e \equiv \occ(x,\bu) \pmod p$ and $f \equiv \occ(x,\bv) \pmod p$ with $0 \leq e < f \leq p-1$.
Let~$\varphi$ denote the substitution that fixes~$x$ and maps every other variable to~$x^p$.
Then
\[
x^{n+e} \stackrel{\eqref{id: Zp Nni xy=yx}}{\approx} \big(\varphi(\bu)\big)x^n \stackrel{\gamma}{\approx} \big(\varphi(\bv)\big)x^n \stackrel{\eqref{id: Zp Nni xy=yx}}{\approx} x^{n+f},
\]
so that the variety~$\bW$ satisfies the identity $\delta: x^{n+e} \approx x^{n+f}$.
Since
\[
x^n \stackrel{\eqref{id: Zp Nni xn+p=xn}}{\approx} x^{n+e} x^{p-e} \stackrel{\delta}{\approx} x^{n+f} x^{p-e} \stackrel{\eqref{id: Zp Nni xn+p=xn}}{\approx} x^{n+f-e},
\]
the variety~$\bW$ satisfies the identity $\varepsilon: x^n \approx x^{n+\ell}$ for some $\ell \geq 1$.
Since~$p$ is prime, there exists some $m \geq 1$ such that $m \ell \equiv 1 \pmod p$.
Therefore
\[
x^n \stackrel{\varepsilon}{\approx} x^{n+\ell} \stackrel{\varepsilon}{\approx} x^{n+2\ell} \stackrel{\varepsilon}{\approx} \cdots \stackrel{\varepsilon}{\approx} x^{n+m\ell} \stackrel{\eqref{id: Zp Nni xn+p=xn}}{\approx} x^{n+1},
\]
so that the variety~$\bW$ satisfies the identity~\eqref{id: Zk Nni max excl Zp}.
\end{proof}

\begin{proposition}
For any prime $p \geq 2$ and integer $n \geq 2$\up, let \[ \bU = \var\{\eqref{id: Zp Nni},\eqref{id: Zk Nni max excl Nni}\} \quad \text{and} \quad \bV = \var\{\eqref{id: Zp Nni},\eqref{id: Zk Nni max excl Zp}\}. \]
Then
\begin{enumerate}[\rm(i)]
\item $\bU$ and~$\bV$ are precisely all maximal subvarieties of  $\var\{\mathbb{Z}_p,\Nil_n^1\}$\up;
\item $\bU$ is not finitely generated\up;
\item $\bV = \var\{\Nil_n^1\}$\up.
\end{enumerate}
\end{proposition}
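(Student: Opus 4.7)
The plan is to dispose of parts~(iii) and~(i) quickly using the preceding lemmas, and then to tackle the genuine difficulty of part~(ii), the non-finite generation of~$\bU$. For~(iii), the identity \eqref{id: Zk Nni max excl Zp}, namely $x^{n+1}\approx x^n$, iteratively implies \eqref{id: Zp Nni xn+p=xn}, so $\bV$ is defined by $\{x^{n+1}\approx x^n,\ xy\approx yx\}$, which is an {\ib} for $\var\{\Nil_n^1\}$ by Corollary~\ref{C: Nni}(i). For~(i), I would first record that $\mathbb{Z}_p\in\bU\setminus\bV$ (the identity $x^{n-1+p}\approx x^{n-1}$ holds in $\mathbb{Z}_p$ since $x^p=1$, whereas $x^{n+1}\approx x^n$ fails because distinct consecutive powers of a generator remain distinct when $p\geq 2$) and $\Nil_n^1\in\bV\setminus\bU$ (substituting $x=a$ and $y=1$ in \eqref{id: Zk Nni max excl Nni} yields $0\neq a^{n-1}$ in $\Nil_n^1$, while $a^{n+1}=0=a^n$). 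These witnesses show that $\bU$ and $\bV$ are both proper and mutually incomparable; combined with Lemma~\ref{L: Zp Nni max}, the standard argument goes through: any proper subvariety of $\var\{\mathbb{Z}_p,\Nil_n^1\}$ lies in $\bU$ or $\bV$, and any variety strictly containing $\bU$, if still proper, would (by Lemma~\ref{L: Zp Nni max} applied to it) lie in~$\bV$, forcing $\bU\subseteq\bV$ and contradicting incomparability; the symmetric argument handles $\bV$.

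For part~(ii), my approach is to exhibit, for each integer $m\geq 1$, an identity
\[
\alpha_m:\quad y^{n-1+p}\,x_1 x_2\cdots x_m \approx y^{n-1}\,x_1 x_2\cdots x_m
\]
that every finite semigroup in~$\bU$ satisfies once $m$ is large enough, while $\bU$ itself violates $\alpha_m$ for every $m\geq 1$. The easier half is showing that any finite $T\in\bU$ with $|T|=k$ satisfies $\alpha_m$ for every $m\geq (n-1)k+1$: by commutativity and pigeonhole, any product $a_1\cdots a_m$ in $T$ can be rewritten as $s\cdot a^n$ for some $a,s\in T$, after which the chain
\[
b^{n-1+p}\cdot s\,a^n = s\,b^{n-1+p}a^{n-1}\cdot a = s\,b^{n-1}a^{n-1+p}\cdot a = s\,b^{n-1}a^{n+p}= s\,b^{n-1}a^n,
\]
using \eqref{id: Zk Nni max excl Nni} at the second step and \eqref{id: Zp Nni xn+p=xn} at the fourth, establishes $\alpha_m$ in~$T$.

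The main obstacle is the second half: showing that $\bU$ itself violates $\alpha_m$ for every $m\geq 1$. I would work in the $\bU$-free commutative semigroup $F$ on $\{y,x_1,\ldots,x_m\}$, representing elements as multisets of exponents. The congruence on $F$ is generated by two families of elementary moves: single-variable shifts $M\mapsto M\pm p\,[v]$, valid when $M(v)\geq n$ (for~$+$) or $M(v)\geq n+p$ (for~$-$); and exchange moves $M\mapsto M-p\,w_1+p\,w_2$ coming from substitution instances $\sigma(x)=w_1,\,\sigma(y)=w_2$ of \eqref{id: Zk Nni max excl Nni} with non-empty multisets $w_1,w_2$ satisfying $M\geq (n-1+p)w_1+(n-1)w_2$, together with the symmetric move $M\mapsto M+p\,w_1-p\,w_2$ available when $M\geq (n-1)w_1+(n-1+p)w_2$. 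The crux is to verify that the multiset $M^*=(n-1,1,\ldots,1)$ corresponding to $y^{n-1}x_1\cdots x_m$ admits no such move: single-variable shifts fail because every coordinate of $M^*$ is at most $n-1<n$; for the forward exchange, the constraint $M^*(v)\geq (n-1+p)w_1(v)+(n-1)w_2(v)$ forces $w_1(y)=0$ (else $n-1+p\leq n-1$, impossible for $p\geq 2$) and $w_1(x_i)=0$ for each~$i$ (else $n-1+p\leq 1$, impossible for $n,p\geq 2$), contradicting the non-emptiness of $w_1$; and the backward exchange is ruled out symmetrically by forcing $w_2=0$. Hence the equivalence class of $M^*$ in $F$ is $\{M^*\}$, so $y^{n-1+p}x_1\cdots x_m$, which is a genuinely different multiset, is not equal to $M^*$, proving $\alpha_m$ fails in $F\in\bU$. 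Combined with the first half, no finite $T$ can generate~$\bU$.
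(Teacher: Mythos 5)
Your proof follows the paper's argument in all essentials: parts (i) and (iii) are handled exactly as in the paper via Lemma~\ref{L: Zp Nni max} and Corollary~\ref{C: Nni}(i), and part (ii) uses the same family of identities $x^{n-1+p}y_1\cdots y_m\approx x^{n-1}y_1\cdots y_m$ together with essentially the same commutativity-plus-pigeonhole computation showing every finite member of $\bU$ satisfies them for large $m$. The only divergence is that you supply a full normal-form argument in the $\bU$-free commutative semigroup for the claim that $\bU$ itself violates these identities, which the paper dismisses as ``easily seen''; your verification is correct, with the minor remark that substitution instances of $x^{n+p}\approx x^n$ may send $x$ to an arbitrary nonempty multiset $w$ rather than a single variable, but the same inequality $M^*(v)\le n-1<n\le nw(v)$ on any $v$ in the support of $w$ rules these out as well.
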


\begin{proof}
(i) Since~$\mathbb{Z}_p$ satisfies $\{ \eqref{id: Zp Nni},\eqref{id: Zk Nni max excl Nni} \}$ and violates~\eqref{id: Zk Nni max excl Zp}, while $\Nil_n^1$ satisfies $\{\eqref{id: Zp Nni},\eqref{id: Zk Nni max excl Zp}\}$ and violates~\eqref{id: Zk Nni max excl Nni}, the varieties~$\bU$ and~$\bV$ are incomparable.
The result then follows from Lemma~\ref{L: Zp Nni max}.

(ii) It is easily seen that the variety~$\bU$ violates the identity
\begin{equation}
x^{n-1+p}y_1 y_2 \cdots y_m \approx x^{n-1}y_1 y_2 \cdots y_m \label{id: Zp Nni finite}
\end{equation}
for any $m \geq 1$.
Hence it suffices to show that each finite semigroup~$S$ in~$\bU$ satisfies the identity~\eqref{id: Zp Nni finite} for all $m \geq (n+p)|S|$.
Choose any elements $a,b_1,b_2,\ldots,b_m \in S$.
Then the list $b_1,b_2,\ldots,b_m$ contains some element $b \in S$ at least $n+p$ times, due to the magnitude of~$m$.
Therefore $b_1 b_2 \cdots b_m \stackrel{\eqref{id: Zp Nni xy=yx}}{=} b^{n+p}s$ for some $s \in S$, whence
\begin{align*}
a^{n-1}b_1b_2 \cdots b_m & \stackrel{\eqref{id: Zp Nni xy=yx}}{=} a^{n-1} b^{n+p}s \stackrel{\eqref{id: Zk Nni max excl Nni}}{=} a^{n-1+p} b^ns \\
& \stackrel{\eqref{id: Zp Nni xn+p=xn}}{=} a^{n-1+p} b^{n+p} s \stackrel{\eqref{id: Zp Nni xy=yx}}{=} a^{n-1+p} b_1b_2 \cdots b_m.
\end{align*}

(iii) This follows from Corollary~\ref{C: Nni}(i).
\end{proof}

\subsection{The varieties ${ \var\{\JI,\Nil_n^1\} }$ and ${ \var\{\protect\JIdual,\Nil_n^1\} }$} \label{subsec: JI Nni}

\begin{proposition} \label{P: bases JI Nni} Let $n \geq 2$ be any integer\up.
\begin{enumerate}[\rm(i)]
\item The identities
\begin{subequations} \label{id: JI Nni}
\begin{align}
x^{n+1} & \approx x^n, \label{id: JI Nni xn+1=xn} \\
x^{m_1}y^{m_2} & \approx y^{m_2}x^{m_2}, \quad m_1,m_2 \in \{ 2,3,4,\ldots\}, \label{id: JI Nni xxyy=yyxx} \\
xya & \approx yxa \label{id: JI Nni xya=yxa}
\end{align}
\end{subequations}
constitute an {\ib} for the variety $\var\{\JI,\Nil_n^1\}$\up.

\item The identities
\[
x^{n+1} \approx x^n, \quad x^2y^2 \approx y^2x^2, \quad xya \approx yxa
\]
also constitute an {\ib} for the variety $\var\{\JI,\Nil_n^1\}$\up.
\item The variety $\var\{\JI,\Nil_n^1\}$ contains countably infinitely many subvarieties\up.
\end{enumerate}
\end{proposition}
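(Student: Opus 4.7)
For part~(i), the easy direction is to verify that the identities \eqref{id: JI Nni} hold in both $\JI$ and $\Nil_n^1$; this is immediate from Lemma~\ref{L: JI word} (noting that the listed identities preserve tails) together with Lemma~\ref{L: LZ LZi N3 Nni Zn word}(iv) and commutativity of $\Nil_n^1$. For the converse, suppose $\bu \approx \bv$ is satisfied by $\var\{\JI, \Nil_n^1\}$. Lemma~\ref{L: JI word} gives $\con(\bu) = \con(\bv) = \{x_1, \ldots, x_m\}$ together with a tail condition; writing $e_i = \occ(x_i, \bu)$ and $f_i = \occ(x_i, \bv)$, Lemma~\ref{L: LZ LZi N3 Nni Zn word}(iv) forces, for each $i$, either $e_i = f_i < n$ or $e_i, f_i \geq n$.

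In Case~1 ($\tail(\bu) = \tail(\bv) = x_k$), repeated application of $xya \approx yxa$ permutes the first $|\bu|-1$ letters of each word freely, producing the canonical forms $\bu \approx \bigl(\prod_{i \neq k} x_i^{e_i}\bigr) x_k^{e_k}$ and $\bv \approx \bigl(\prod_{i \neq k} x_i^{f_i}\bigr) x_k^{f_k}$; whenever $e_j \neq f_j$ the hypothesis forces $e_j, f_j \geq n$, and $x^{n+1} \approx x^n$ collapses $x_j^{e_j}$ and $x_j^{f_j}$ to the common word $x_j^n$. In Case~2 ($\tail(\bu) = x_k \neq x_\ell = \tail(\bv)$ with $e_k, f_\ell \geq 2$), the $\Nil_n^1$ condition applied to $x_\ell$ and $x_k$ forces $e_\ell, f_k \geq 2$ as well. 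Rewriting $\bu \approx \bigl(\prod_{i \neq k,\ell} x_i^{e_i}\bigr) x_\ell^{e_\ell} x_k^{e_k}$ via $xya \approx yxa$ and then swapping the last two blocks via $x^{m_1} y^{m_2} \approx y^{m_2} x^{m_1}$ (with $m_1 = e_\ell$, $m_2 = e_k$, both $\geq 2$) yields a word with tail $x_\ell$ in the same canonical form as $\bv$; exponent mismatches are resolved by $x^{n+1} \approx x^n$ as before.

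Part~(ii) mirrors Proposition~\ref{P: bases JI Zn}(ii): for $m_1, m_2 \geq 2$, write $m_i = 2p_i + r_i$ with $p_i \geq 1$ and $r_i \in \{0, 1\}$, then derive $x^{m_1} y^{m_2} \approx y^{r_2} x^{r_1} x^{2p_1} y^{2p_2} \approx y^{r_2} x^{r_1} y^{2p_2} x^{2p_1} \approx y^{m_2} x^{m_1}$ using $xya \approx yxa$ twice and $x^2 y^2 \approx y^2 x^2$ once.

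For part~(iii), the lower bound is immediate: $\Niltwo^1$ is commutative and satisfies $x^3 \approx x^2$, hence every identity of the {\ib} in part~(i), so $\Niltwo^1 \in \var\{\JI, \Nil_n^1\}$; the subvariety $\var\{\Niltwo^1\}$ has infinitely many subvarieties by Figure~\ref{F: N2i}. The upper bound requires every subvariety of $\var\{\JI, \Nil_n^1\}$ to be finitely based, whereupon Lemma~\ref{L: HFB implies countable} finishes the argument. Establishing this hereditary finite basis property is the main obstacle, since the variety is non-commutative and Perkins' theorem does not apply directly; the plan is to use the left-permutative identity $xya \approx yxa$ and the canonical forms developed in part~(i) to show that each subvariety is axiomatized, modulo the base of part~(i), by a single exponent-reduction identity $x^{k+1} \approx x^k$ (for some $1 \leq k \leq n$) together with a system of identities provably equivalent, via those canonical forms, to a commutative identity set, which is finitely based by Perkins' theorem~\cite{Per69}.
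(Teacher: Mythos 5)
Your parts (i) and (ii) are correct and follow essentially the same route as the paper: the same reduction to canonical forms via $xya \approx yxa$, the same two-case split on tails using Lemma~\ref{L: JI word} together with Lemma~\ref{L: LZ LZi N3 Nni Zn word}(iv), and the same derivation of \eqref{id: JI Nni xxyy=yyxx} from $x^2y^2 \approx y^2x^2$ and $xya \approx yxa$ as in Proposition~\ref{P: bases JI Zn}(ii). The observation that $e_\ell, f_k \geq 2$ in Case~2 is exactly the paper's condition (b).

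Part (iii) contains a genuine gap in the upper bound. You correctly identify that Lemma~\ref{L: HFB implies countable} requires every subvariety of $\var\{\JI,\Nil_n^1\}$ to be finitely based, but you then declare this ``the main obstacle'' and offer only a plan --- axiomatize each subvariety by an exponent-reduction identity plus a system reducible to a commutative one --- without carrying it out. That plan is not obviously executable: there is no argument that an arbitrary subvariety admits an identity basis of that restricted shape, and the canonical forms of part~(i) describe identities of the \emph{whole} variety, not of its subvarieties. The paper closes this step with a single citation: by Perkins~\cite{Per69}, any (finitely generated) variety satisfying the permutation identity $xya \approx yxa$ is finitely based, so every subvariety of $\var\{\JI,\Nil_n^1\}$ is finitely based and Lemma~\ref{L: HFB implies countable} applies. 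Your stated worry that ``Perkins' theorem does not apply directly'' because the variety is non-commutative is therefore misplaced: the result being invoked is precisely about varieties satisfying $xya \approx yxa$, not only commutative ones. The lower bound via $\Niltwo^1 \in \var\{\JI,\Nil_n^1\}$ and \cite[Figure~5(b)]{Eva71} matches the paper.
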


\begin{proof}
(i) It is routinely checked that the identities~\eqref{id: JI Nni} are satisfied by the variety $\var\{\JI,\Nil_n^1\}$.
Hence it remains to show that any identity $\bu \approx \bv$ satisfied by $\var\{\JI,\Nil_n^1\}$ is deducible from~\eqref{id: JI Nni}.
By Lemma~\ref{L: JI word}, generality is not lost by assuming that $\con(\bu) = \con(\bv) = \{x_1,x_2,\ldots,x_m \}$, so that $e_i = \occ(x_i,\bu) \geq 1$ and $f_i = \occ(x_i,\bv) \geq 1$.
Further, it follows from Lemma~\ref{L: LZ LZi N3 Nni Zn word}(iv) that
\begin{enumerate}[\ \ (a)]
\item for each~$i$, either $e_i = f_i < n$ or $e_i,f_i \geq n$.
\end{enumerate}
There are two cases.

\noindent\textsc{Case~1}: $\tail(\bu) = \tail(\bv) = x_k$.
Then
\begin{align*}
\bu & \stackrel{\eqref{id: JI Nni xya=yxa}}{\approx} \bigg(\prod_{i \neq k} x_i^{e_i}\bigg) x_k^{e_k} \\
& \stackrel{\eqref{id: JI Nni xn+1=xn}}{\approx} \bigg(\prod_{i \neq k} x_i^{f_i}\bigg) x_k^{f_k} \quad \text{by~(a)} \\
& \stackrel{\eqref{id: JI Nni xya=yxa}}{\approx} \bv.
\end{align*}

\noindent\textsc{Case~2}: $\tail(\bu) = x_k$ and $\tail(\bv) = x_\ell$ with $k < \ell$.
Then by~(a) and Lemma~\ref{L: JI word},
\begin{enumerate}[\ \ (a)]
\item[(b)] $e_k, f_k, e_\ell, f_\ell \geq 2$.
\end{enumerate}
Hence
\begin{align*}
\bu & \stackrel{\eqref{id: JI Nni xya=yxa}}{\approx} \bigg(\prod_{i \neq k,\ell} x_i^{e_i}\bigg) x^{e_\ell} x^{e_k} \\
& \stackrel{\eqref{id: JI Nni xxyy=yyxx}}{\approx} \bigg(\prod_{i \neq k,\ell} x_i^{e_i}\bigg) x^{e_k} x^{e_\ell} \quad \text{by~(b)} \\
& \stackrel{\eqref{id: JI Nni xn+1=xn}}{\approx} \bigg(\prod_{i \neq k,\ell} x_i^{f_i}\bigg) x^{f_k} x^{f_\ell} \quad \text{by~(a)} \\
& \stackrel{\eqref{id: JI Nni xya=yxa}}{\approx} \bv.
\end{align*}

(ii) As shown in the proof of Proposition~\ref{P: bases JI Zn}(ii), the identities~\eqref{id: JI Nni xxyy=yyxx} are deducible from $x^2y^2 \approx y^2x^2$ and $xya \approx yxa$.
The result thus follows from part~(i).

(iii) Any finitely generated variety that satisfies the identity~\eqref{id: JI Nni xya=yxa} is finitely based~\cite{Per69}.
Hence by Lemma~\ref{L: HFB implies countable}, the variety $\var\{\JI,\Nil_n^1\}$ contains countably many subvarieties.
The result then holds since the subvariety $\var\{\Niltwo^1\}$ of $\var\{\JI,\Nil_n^1\}$ contains infinitely many subvarieties \cite[Figure~5(b)]{Eva71}.
\end{proof}

\begin{lemma} \label{L: JI Nni max}
Let $n \geq 2$ be any integer\up.
Then each proper subvariety of $\var\{\JI,\Nil_n^1\}$ satisfies one of the following identities\up:
\begin{align}
x^{n-1}y^n & \approx y^{n-1}x^n, \label{id: JI Nni max xn-1yn=yn-1xn} \\
x^ny & \approx yx^n. \label{id: JI Nni max xny=yxn}
\end{align}
\end{lemma}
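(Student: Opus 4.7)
The plan is to exploit the dichotomy that a proper subvariety $\bW$ of $\var\{\JI,\Nil_n^1\}$ must exclude at least one of the two generators; I will show that excluding $\Nil_n^1$ forces identity~\eqref{id: JI Nni max xn-1yn=yn-1xn} and that excluding $\JI$ forces identity~\eqref{id: JI Nni max xny=yxn}. The engine in both cases is the pair of excluder lemmas (Lemmas~\ref{L: excl Nni} and~\ref{L: excl JI}) applied to~$\bW$, followed by normal-form manipulations inside $\var\{\JI,\Nil_n^1\}$ that use the defining identities $x^{n+1} \approx x^n$, $xya \approx yxa$, and $x^{m_1}y^{m_2} \approx y^{m_2}x^{m_1}$ (for $m_1,m_2 \geq 2$) furnished by Proposition~\ref{P: bases JI Nni}.

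First I would handle the case $\Nil_n^1 \notin \bW$. Here Lemma~\ref{L: excl Nni} with $k=1$ yields $(x^n y)^n x^n \approx (x^n y)^{n-1} x^n$ as an identity of~$\bW$. Since both sides have tail $x$, the identity $xya \approx yxa$ allows every $y$ to be shuffled to the front without disturbing the final letter, while $x^{n+1}\approx x^n$ compresses the trailing block of $x$'s, so that each side reduces to $y^k x^n$ where $k$ is its $y$-count; thus $\bW$ satisfies $y^n x^n \approx y^{n-1} x^n$. Swapping $x$ and $y$ gives $x^n y^n \approx x^{n-1} y^n$, and interleaving these two with the commutativity instance $x^n y^n \approx y^n x^n$ produces the telescoping chain $x^{n-1} y^n \approx x^n y^n \approx y^n x^n \approx y^{n-1} x^n$, which is identity~\eqref{id: JI Nni max xn-1yn=yn-1xn}.

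Next I would handle the case $\JI \notin \bW$. Since $\bW$ satisfies $x^{2n}\approx x^n$, Lemma~\ref{L: excl JI} gives one of $(x^n y)^{n+1} \approx x^n y$ or $x^n y x^n \approx x^n y$ as an identity of~$\bW$. In the second sub-case the same collect-and-compress reduction takes $x^n y x^n$ to $y x^n$, which is identity~\eqref{id: JI Nni max xny=yxn} at once. In the first sub-case the reduction takes $(x^n y)^{n+1}$ to $x^n y^n$, whence $x^n y^n \approx x^n y$ in~$\bW$; to produce a matching right-handed identity I would multiply the original identity by $x$ on the right and reduce both sides analogously, yielding $y^n x^n \approx y x^n$. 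Commutativity of $n$-th powers then closes the chain $x^n y \approx x^n y^n \approx y^n x^n \approx y x^n$.

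The real work lies in the recurring reduction step: every word in $\var\{\JI,\Nil_n^1\}$ is rewritable, via $xya \approx yxa$ followed by $x^{n+1} \approx x^n$, into its tail letter preceded by the freely ordered content where each non-tail variable carries exponent at most $n$. No deep obstacle is expected; the only delicate point is the first sub-case of Case~2, where a single application of the excluder lemma is insufficient and must be twinned with its right-multiple by $x$ before $n$-th-power commutativity can bridge the gap between $x^n y$ and $y x^n$.
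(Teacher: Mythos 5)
Your proof is correct and follows essentially the same route as the paper: the same case split on which generator is excluded from the proper subvariety, the same two excluder lemmas (Lemma~\ref{L: excl Nni} with $k=1$ and Lemma~\ref{L: excl JI}), and the same normalization of words via the basis identities of $\var\{\JI,\Nil_n^1\}$. The only cosmetic difference is in the sub-case $(x^ny)^{n+1}\approx x^ny$, where the paper appends $x^n$ and applies the excluder identity backwards to reach $x^nyx^n$, whereas you derive the two halves $x^ny\approx x^ny^n$ and $y^nx^n\approx yx^n$ and bridge them with $x^ny^n\approx y^nx^n$ --- an equivalent computation.
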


\begin{proof}
Let~$\bW$ be any proper subvariety of $\var\{\JI,\Nil_n^1\}$.
Then either $\JI \notin \bW$ or $\Nil_n^1 \notin \bW$.
First suppose that $\Nil_n^1 \notin \bW$.
Then by Lemma~\ref{L: excl Nni}, the variety~$\bW$ satisfies the identity~\eqref{id: excl Nni} with $k = 1$.
Since
\[
x^{n-1} y^n \stackrel{\eqref{id: JI Nni}}{\approx} (y^nx)^{n-1} y^n \stackrel{\eqref{id: excl Nni}}{\approx} (y^nx)^n y^n \stackrel{\eqref{id: JI Nni}}{\approx} x^n y^n,
\]
the variety~$\bW$ satisfies the identity $\alpha : x^{n-1} y^n \approx x^n y^n$; since
\[
x^{n-1} y^n \stackrel{\alpha}{\approx} x^n y^n \stackrel{\eqref{id: JI Nni xxyy=yyxx}}{\approx} y^n x^n \stackrel{\alpha}{\approx} y^{n-1} x^n,
\]
it also satisfies the identity~\eqref{id: JI Nni max xn-1yn=yn-1xn}.

It remains to assume that $\JI \notin \bW$, so that
by Lemma~\ref{L: excl JI}, the variety~$\bW$ satisfies one of the identities~\eqref{id: excl JI xnyn+1=xny} and~\eqref{id: excl JI xnyxn=xny}.
Since
\begin{align*}
x^ny & \stackrel{\eqref{id: excl JI xnyn+1=xny}}{\approx} (x^ny)^{n+1} \stackrel{\eqref{id: JI Nni}}{\approx} (x^ny)^{n+1} x^n \stackrel{\eqref{id: excl JI xnyn+1=xny}}{\approx} x^nyx^n \stackrel{\eqref{id: JI Nni}}{\approx} yx^n \\
\text{and} \quad x^ny & \stackrel{\eqref{id: excl JI xnyxn=xny}}{\approx} x^nyx^n \stackrel{\eqref{id: JI Nni}}{\approx} yx^n,
\end{align*}
the variety~$\bW$ also satisfies the identity~\eqref{id: JI Nni max xny=yxn}.
\end{proof}

\begin{proposition}
For any integer $n \geq 2$\up, let \[ \bU = \var\{\eqref{id: JI Nni},\eqref{id: JI Nni max xn-1yn=yn-1xn}\} \quad \text{and} \quad \bV = \var\{\eqref{id: JI Nni},\eqref{id: JI Nni max xny=yxn}\}. \]
Then
\begin{enumerate}[\rm(i)]
\item $\bU$ and~$\bV$ are precisely all maximal subvarieties of  $\var\{\JI,\Nil_n^1\}$\up;
\item $\bU$ is not finitely generated\up;
\item $\bV$ is not finitely generated\up.
\end{enumerate}
\end{proposition}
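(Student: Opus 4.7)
The plan is to treat the three parts in sequence, with (i) following quickly from the preceding lemma and (ii), (iii) requiring the standard ``identity parameterised by $m$'' technique used earlier in Section~\ref{app: infinite}.

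For (i), I would first invoke Lemma~\ref{L: JI Nni max}: every proper subvariety of $\var\{\JI,\Nil_n^1\}$ satisfies \eqref{id: JI Nni max xn-1yn=yn-1xn} or \eqref{id: JI Nni max xny=yxn}, so every proper subvariety lies in $\bU\cup\bV$. It then remains to show that $\bU$ and $\bV$ are both \emph{proper} and \emph{incomparable}. For properness: $\Nil_n^1$ violates \eqref{id: JI Nni max xn-1yn=yn-1xn} (checked by substituting $x$ with the identity and $y$ with the generator of $\Nil_n$, where the two sides take values $0$ and a nonzero power, respectively), hence $\Nil_n^1\notin\bU$; while $\JI$ violates \eqref{id: JI Nni max xny=yxn} because its two sides have different tails with tail-occurrence $1$ on the LHS, so Lemma~\ref{L: JI word} forbids equality. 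Incomparability drops out of the same computations: $\JI\in\bU\setminus\bV$ and $\Nil_n^1\in\bV\setminus\bU$. Combined with the Lemma, this forces $\bU$ and $\bV$ to be precisely the maximal subvarieties.

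For (ii), the approach is the now-standard one used in Subsection~\ref{subsec: Zp Nni}: exhibit a family $\{\sigma_m\}_{m\geq 1}$ of identities such that
\begin{enumerate}[(a)]
\item $\bU\not\models\sigma_m$ for every $m\geq 1$, by showing the two sides represent distinct elements of the free $\bU$-semigroup (the natural invariants are the multiset of letters in the ``permutable prefix'' of the normal form and the tail letter, both of which are respected by (B3) and which M1 is too weak to bridge in the absence of a concentrated $v^n$-suffix);
\item every finite $S\in\bU$ of order $N$ satisfies $\sigma_m$ once $m$ is large enough in terms of $N$.
\end{enumerate}
Together, (a) and (b) preclude $\bU=\var\{S\}$ for any finite~$S$, because $\var\{S\}$ would satisfy $\sigma_m$ for all sufficiently large $m$. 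For (b) the mechanism is the same pigeonhole that powers the proofs of the earlier non-finite-generation results: if $m\gg N$, some element of $S$ appears among $y_1,\dots,y_m$ at least $n$ times, and (B3) lets us reorder the first $m+\text{const}$ positions to pull these $n$ copies adjacent, at which point a single application of M1 provides the rewriting that is unavailable generically.

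For (iii), the argument is dual: I would construct a family $\sigma_m'$ by the same recipe but relying on M2 (so the relevant concentration in the pigeonhole step becomes a central $x^n$-block that can be moved across a large product by M2 rather than transformed into a $y^{n-1}x^n$-block by M1), and then verify (a) and (b) in the analogous way.

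The main obstacle, and by far the most delicate part of the argument, is pinning down the correct family $\{\sigma_m\}$ for (ii) (and its dual for (iii)). One must thread the identity so that the exponent/tail imbalance between the two sides cannot be repaired in the free $\bU$-semigroup by any combination of (B1), (B2), (B3), M1 (so that (a) is witnessed), yet can be repaired in every finite model by the pigeonhole-plus-M1 rewriting (so that (b) holds uniformly). I expect this to be accomplished by taking $\sigma_m$ to be a mildly decorated consequence of M1 whose decoration involves the $m$ extra variables in a position where M1 cannot be brought to bear without the finite-alphabet collision supplied by pigeonhole; the verifications of (a) and (b) themselves are then routine inductions on word length using the normal-form description of the free $\bU$-semigroup sketched in Lemma~\ref{L: JI word}-style arguments.
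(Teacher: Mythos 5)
Your treatment of part (i) is correct and is essentially the paper's argument: Lemma~\ref{L: JI Nni max} confines every proper subvariety to $\bU$ or $\bV$, and the witnesses you compute ($\JI\in\bU\setminus\bV$ via Lemma~\ref{L: JI word}, $\Nil_n^1\in\bV\setminus\bU$ via the substitution $x\mapsto 1$, $y\mapsto a$) establish incomparability and hence maximality.

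Parts (ii) and (iii), however, are not proved. You correctly identify the strategy --- exhibit a family of identities $\sigma_m$ violated by the variety but satisfied by every finite member once $m$ exceeds a bound depending on $|S|$ --- and you correctly identify that producing the family is ``by far the most delicate part,'' but you then stop short of producing it, offering only the expectation that a ``mildly decorated consequence'' of \eqref{id: JI Nni max xn-1yn=yn-1xn} (resp.\ \eqref{id: JI Nni max xny=yxn}) will work. That is the entire content of these two parts; without concrete identities neither claim (a) nor claim (b) can be verified, and your proposed invariants for (a) (``multiset of letters in the permutable prefix of the normal form'') presuppose a normal-form description of the free $\bU$-semigroup that you have not established. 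For the record, the families the paper uses are, for (ii), $x^ny_1y_2\cdots y_m \approx x^{n-1}y_1y_2\cdots y_m$, proved in finite members by finding an element $b$ occurring at least $n+1$ times among the $y_i$, using \eqref{id: JI Nni xya=yxa} to concentrate a block $b^n$ (with a spare $b$ surviving to the right so the reordering can be undone), and applying \eqref{id: JI Nni max xn-1yn=yn-1xn} once; and, for (iii), $x_1x_2\cdots x_m yz \approx x_1x_2\cdots x_m zy$, proved by concentrating a block $a^n$, commuting it past $yz$ with \eqref{id: JI Nni max xny=yxn}, swapping $y$ and $z$ with \eqref{id: JI Nni xya=yxa} (now legal because $a^n$ sits to their right), and moving $a^n$ back. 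Note in particular that the identity for (iii) is a tail-commutativity statement, not a decorated form of \eqref{id: JI Nni max xny=yxn}, so your heuristic for guessing $\sigma_m'$ would likely not have led you to it. Your pigeonhole count for (ii) (``at least $n$ times'') is also slightly off: an $(n+1)$-fold repetition is what the paper's rewriting uses, since one application of \eqref{id: JI Nni max xn-1yn=yn-1xn} consumes a $b$ from the block that must be replenished via \eqref{id: JI Nni xn+1=xn}.
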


\begin{proof}
(i) Since the semigroup~$\JI$ satisfies $\{ \eqref{id: JI Nni},\eqref{id: JI Nni max xn-1yn=yn-1xn} \}$ and violates~\eqref{id: JI Nni max xny=yxn}, while the semigroup~$\Nil_n^1$ satisfies $\{ \eqref{id: JI Nni},\eqref{id: JI Nni max xny=yxn} \}$ and violates~\eqref{id: JI Nni max xn-1yn=yn-1xn}, the varieties~$\bU$ and~$\bV$ are incomparable.
The result then follows from Lemma~\ref{L: JI Nni max}.

(ii) It is easily seen that the variety $\bU$ violates the identity
\begin{equation}
x^ny_1 y_2 \cdots y_m \approx x^{n-1}y_1 y_2 \cdots y_m \label{id: JI Nni finite1}
\end{equation}
for any $m \geq 1$.
Hence it suffices to show that each finite semigroup~$S$ in~$\bU$ satisfies the identity~\eqref{id: JI Nni finite1} for all $m \geq (n+1)|S|$.
Choose any $a,b_1,b_2,\ldots,b_m \in S$.
Then the list $b_1,b_2,\ldots,b_m$ contains some element $b \in S$ at least $n+1$ times, due to the magnitude of~$m$.
Therefore $b_1 b_2 \cdots b_m \stackrel{\eqref{id: JI Nni xya=yxa}}{=} b^nsbt$ for some $s,t \in S^1$, whence
\begin{align*}
a^{n-1}b_1b_2 \cdots b_m & \stackrel{\eqref{id: JI Nni xya=yxa}}{=} a^{n-1} b^nsbt \stackrel{\eqref{id: JI Nni xn+1=xn}}{=} a^{n-1} b^nbsbt \stackrel{\eqref{id: JI Nni max xn-1yn=yn-1xn}}{=} b^{n-1} a^nbsbt \\
& \stackrel{\eqref{id: JI Nni xya=yxa}}{=} a^nb^nsbt \stackrel{\eqref{id: JI Nni xya=yxa}}{=} a^nb_1b_2 \cdots b_m.
\end{align*}

(iii) It is easily seen that the variety~$\bV$ violates the identity
\begin{equation}
x_1 x_2 \cdots x_myz \approx x_1 x_2 \cdots x_mzy \label{id: JI Nni finite2}
\end{equation}
for any $m \geq 1$.
Hence it suffices to show that each finite semigroup~$S$ in~$\bV$ satisfies the identity~\eqref{id: JI Nni finite2} for all $m \geq n|S|$.
Choose any elements $a_1,a_2,\ldots,a_m,b,c \in S$.
Then the list $a_1,a_2,\ldots,a_m$ contains some element $a \in S$ at least~$n$ times, due to the magnitude of~$m$.
Thus $a_1 a_2 \cdots a_mb \stackrel{\eqref{id: JI Nni xya=yxa}}{=} sa^nb$ and $a_1 a_2 \cdots a_mc \stackrel{\eqref{id: JI Nni xya=yxa}}{=} sa^nc$ for some $s \in S$, whence
\begin{align*}
a_1a_2 \cdots a_mbc & \stackrel{\eqref{id: JI Nni xya=yxa}}{=} sa^nbc \stackrel{\eqref{id: JI Nni max xny=yxn}}{=} sbca^n \stackrel{\eqref{id: JI Nni xya=yxa}}{=} scba^n \\
& \stackrel{\eqref{id: JI Nni max xny=yxn}}{=} sa^ncb \stackrel{\eqref{id: JI Nni xya=yxa}}{=} a_1a_2 \cdots a_mcb. \qedhere
\end{align*}
\end{proof}

\begin{corollary} \label{C: bases JId Nni} Let $n \geq 2$ be any integer\up.
Then
\begin{enumerate}[\rm(i)]
\item the identities
\[
x^{n+1} \approx x^n, \quad x^2y^2 \approx y^2x^2, \quad axy \approx ayx
\]
constitute an {\ib} for the variety $\var\{\JIdual,\Nil_n^1\}$\up;
\item $\var\{\JIdual,\Nil_n^1\}$ contains countably infinitely many subvarieties\up;
\item $\var\{\JIdual,\Nil_n^1\}$ contains precisely two maximal subvarieties\up.
\end{enumerate}
\end{corollary}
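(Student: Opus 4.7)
The plan is to obtain all three parts of the corollary as immediate consequences of the results established for $\var\{\JI,\Nil_n^1\}$ by applying the duality $\bW \mapsto \dual{\bW}$. The key observation is that the semigroup $\Nil_n^1$ is commutative, and hence coincides with its own dual, while $\JIdual$ is by definition the dual of $\JI$. Consequently the variety $\var\{\JIdual, \Nil_n^1\} = \var\{\dual{\JI}, \dual{\Nil_n^1}\}$ is precisely the dual of $\var\{\JI,\Nil_n^1\}$. Since the anti-isomorphism $S \mapsto \dual{S}$ induces a lattice isomorphism between the subvariety lattices of dual varieties, every structural property of $\sL\big(\var\{\JI,\Nil_n^1\}\big)$ transfers verbatim to $\sL\big(\var\{\JIdual, \Nil_n^1\}\big)$, and every identity $\bu \approx \bv$ holding in the former corresponds to the identity $\dual{\bu} \approx \dual{\bv}$ holding in the latter, where $\dual{\bw}$ denotes the word $\bw$ written in reverse.

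For part~(i), I would dualise the second {\ib} given in Proposition~\ref{P: bases JI Nni}(ii). The one-variable identity $x^{n+1} \approx x^n$ is self-dual, as is $x^2y^2 \approx y^2x^2$ (after swapping the variable names $x$ and $y$, the reversed identity $y^2x^2 \approx x^2y^2$ is the same identity). The remaining identity $xya \approx yxa$ reverses to $ayx \approx axy$, yielding exactly the third identity in the statement of the corollary. Part~(ii) follows at once from Proposition~\ref{P: bases JI Nni}(iii) together with the lattice isomorphism induced by duality. Part~(iii) likewise follows: since the preceding proposition establishes that $\var\{\JI,\Nil_n^1\}$ has precisely two maximal subvarieties $\bU$ and $\bV$, their duals $\dual{\bU}$ and $\dual{\bV}$ are precisely the two maximal subvarieties of $\var\{\JIdual, \Nil_n^1\}$.

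There is no genuine obstacle here, as the argument is a routine invocation of duality. The only subtlety to verify is the self-duality of $\Nil_n^1$, which is immediate from commutativity: the map $x \mapsto x$ is an anti-isomorphism $\Nil_n^1 \to \dual{\Nil_n^1}$. Everything else is automatic from the definition of the dual variety and the fact that a set of identities defines a variety $\bW$ if and only if the set of reversed identities defines $\dual{\bW}$.
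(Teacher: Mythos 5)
Your proposal is correct and is essentially the paper's intended argument: the corollary is stated without proof precisely because it follows from Proposition~\ref{P: bases JI Nni} and the subsequent results on maximal subvarieties by the duality $\bW \mapsto \dual{\bW}$, using that $\Nil_n^1$ is commutative (hence self-dual) and that $\JIdual$ is the dual of $\JI$. Your dualisation of the identities ($xya \approx yxa$ reversing to $axy \approx ayx$, the other two being self-dual) and the transfer of the lattice-theoretic facts via the induced lattice isomorphism are exactly what is required.
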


\subsection{The varieties ${ \var\{\LZ,\Nil_n^1\} }$ and ${ \var\{\RZ,\Nil_n^1\} }$} \label{subsec: LZ Nni}

\begin{proposition} \label{P: bases LZ Nni} Let $n \geq 2$ be any integer\up.
\begin{enumerate}[\rm(i)]
\item The identities
\begin{subequations} \label{id: LZ Nni}
\begin{align}
x^{n+1} & \approx x^n, \label{id: LZ Nni xn+1=xn} \\
axy & \approx ayx. \label{id: LZ Nni axy=ayx}
\end{align}
\end{subequations}
constitute an {\ib} for the variety $\var\{\LZ,\Nil_n^1\}$\up.
\item The variety $\var\{\LZ,\Nil_n^1\}$ contains countably infinitely many subvarieties\up.
\end{enumerate}
\end{proposition}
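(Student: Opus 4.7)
My plan for part~(i) is to first verify that both generators satisfy the two identities, then prove the identities form a complete basis. Both checks on the generators are routine: $\LZ$ is a band in which $ax = a$ for all $a,x$, so $x^{n+1} = x = x^n$ and $axy = a = ayx$, while $\Nil_n^1$ is commutative with the property that every nontrivial power of a non-identity element is absorbing once the exponent reaches~$n$, giving $x^{n+1} \approx x^n$ directly. For the converse, I would consider any identity $\bu \approx \bv$ satisfied in $\var\{\LZ,\Nil_n^1\}$ and invoke Lemma~\ref{L: LZ LZi N3 Nni Zn word}~(i) and~(iv) to obtain $\head(\bu) = \head(\bv) = z$ for some variable~$z$, together with the condition that for every variable~$x$, either $\occ(x,\bu) = \occ(x,\bv) < n$ or both counts are at least~$n$; in particular $\con(\bu) = \con(\bv)$.

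Next, I would repeatedly apply $axy \approx ayx$, which permits interchanging any two adjacent letters strictly past the first position, to sort all letters after the head into a fixed canonical order. This yields forms $\bu \approx z \cdot z^{e_0} x_1^{e_1} \cdots x_k^{e_k}$ and $\bv \approx z \cdot z^{f_0} x_1^{f_1} \cdots x_k^{f_k}$, where $\{z, x_1, \ldots, x_k\} = \con(\bu)$ are listed in some fixed order. Translating the occurrence conditions: for each $i \geq 1$, either $e_i = f_i$ or both are $\geq n$; and since $\occ(z,\bu) = e_0 + 1$, either $e_0 = f_0$ or both are $\geq n - 1$. In each case where exponents differ, one applies $x^{n+1} \approx x^n$ to collapse any exponent at least~$n$ down to exactly~$n$, after which the two canonical forms coincide syntactically.

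For part~(ii), the lower bound is immediate: $\var\{\Nil_2^1\}$ is a subvariety of $\var\{\LZ,\Nil_n^1\}$ and contains infinitely many subvarieties by \cite[Figure~5(b)]{Eva71}. For the upper bound, I would appeal to Perkins~\cite{Per69}: every finitely generated semigroup variety satisfying $axy \approx ayx$ is finitely based. Since $\var\{\LZ,\Nil_n^1\}$ is finitely generated and hence locally finite, every subvariety is locally finite and, via Perkins, finitely based; Lemma~\ref{L: HFB implies countable} then delivers at most countably many subvarieties. The argument is essentially dual to the one given in Proposition~\ref{P: bases JI Nni}(iii).

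The main obstacle I anticipate is the bookkeeping around the head variable in part~(i): the identity $axy \approx ayx$ cannot swap the first two letters, so the leading occurrence of $z$ is syntactically frozen, yet $z$ may still reappear among the sortable letters and contribute to the exponent $e_0 + 1 = \occ(z,\bu)$. The shift from $e_0$ to $e_0 + 1$ is precisely what makes the boundary condition $e_0 \geq n-1$ (rather than $e_0 \geq n$) the right one for $x^{n+1} \approx x^n$ to perform the normalization. The overall strategy closely parallels the proof of Proposition~\ref{P: bases JI Nni}, with $\LZ$ replacing $\JI$; the fact that $\LZ$ imposes only a head constraint (while $\JI$ imposes tail-related conditions) actually simplifies the case analysis here, so no separate cases on $\tail(\bu)$ versus $\tail(\bv)$ are needed.
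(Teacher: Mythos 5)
Your proposal is correct and follows essentially the same route as the paper: part~(i) verifies the identities on the generators, invokes Lemma~\ref{L: LZ LZi N3 Nni Zn word} parts~(i) and~(iv), and normalizes via $axy \approx ayx$ followed by exponent adjustment with $x^{n+1} \approx x^n$; part~(ii) is the same Perkins-plus-Lemma~\ref{L: HFB implies countable} argument combined with the $\var\{\Niltwo^1\}$ lower bound. The only cosmetic difference is your $e_0 = \occ(z,\bu)-1$ bookkeeping for the frozen head letter, where the paper simply places the head variable's full power $x_k^{e_k}$ at the front of the canonical form and applies the occurrence condition to it directly.
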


\begin{proof}
(i) It is routinely checked that the identities~\eqref{id: LZ Nni} are satisfied by the variety $\var\{\LZ,\Nil_n^1\}$.
Therefore it remains to show that any identity $\bu \approx \bv$ satisfied by $\var\{\LZ,\Nil_n^1\}$ is deducible from the identities~\eqref{id: LZ Nni}.
Generality is not lost by assuming that $\bu, \bv \in \{ x_1,x_2,\ldots,x_m\}^*$ with $e_i = \occ(x_i,\bu)$ and $f_i = \occ(x_i,\bv)$.
By Lemma~\ref{L: LZ LZi N3 Nni Zn word} parts~(i) and~(iv),
\begin{enumerate}[\ \ (a)]
\item $\head(\bu) = \head(\bv) = x_k$ for some~$k$;
\item for each~$i$, either $e_i = f_i < n$ or $e_i,f_i \geq n$.
\end{enumerate}
Hence
\begin{align*}
\bu & \stackrel{\eqref{id: LZ Nni axy=ayx}}{\approx} x_k^{e_k} \prod_{i \neq k} x_i^{e_i} \\
& \stackrel{\eqref{id: LZ Nni xn+1=xn}}{\approx} x_k^{f_k} \prod_{i \neq k} x_i^{f_i} \quad \text{by~(b)} \\
& \stackrel{\eqref{id: LZ Nni axy=ayx}}{\approx} \bv.
\end{align*}

(ii) See the proof of Proposition~\ref{P: bases JI Nni}(iii).
\end{proof}

\begin{lemma} \label{L: LZ Nni max}
Let $n \geq 2$ be any integer\up.
Then each proper subvariety of the variety ${ \var\{\LZ,\Nil_n^1\} }$ satisfies one of the following identities\up:
\begin{align}
x^ny^n & \approx y^nx^n, \label{id: LZ Nni max xnyn=ynxn} \\
a^nx^n & \approx a^nx^{n-1}. \label{id: LZ Nni max anxn=anxn-1}
\end{align}
\end{lemma}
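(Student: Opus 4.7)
The plan is to handle separately the two cases arising from which of the two generators is excluded from a proper subvariety $\bW$: either $\Nil_n^1 \notin \bW$, in which case I aim for the identity~\eqref{id: LZ Nni max anxn=anxn-1}, or $\LZ \notin \bW$, in which case I aim for~\eqref{id: LZ Nni max xnyn=ynxn}. Throughout, I exploit the fact that $\bW$ satisfies the two identities \eqref{id: LZ Nni}, so in particular the identity $axy \approx ayx$ permits arbitrary rearrangement of all letters of a word occurring strictly after its first position.

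For the case $\Nil_n^1 \notin \bW$, I would apply Lemma~\ref{L: excl Nni} with $k = 1$ to obtain that $\bW$ satisfies the identity $(x^ny)^{n}x^n \approx (x^ny)^{n-1}x^n$. The right side has first letter $x$ followed by $n^2 - 1$ letters $x$ and $n-1$ letters $y$; using $axy \approx ayx$ to sort everything after the first position and then $x^{n+1} \approx x^n$ to reduce large powers, the right side collapses to $x^n y^{n-1}$. The left side similarly collapses to $x^n y^n$. Renaming variables then yields~\eqref{id: LZ Nni max anxn=anxn-1}.

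For the case $\LZ \notin \bW$, I would use Lemma~\ref{L: LZ LZi N3 Nni Zn word}(i) to obtain an identity $\bu \approx \bv$ satisfied by $\bW$ with distinct heads $u_1 = \head(\bu) \neq \head(\bv) = v_1$. I would then apply the substitution $\varphi$ defined by $\varphi(u_1) = x^n$, $\varphi(v_1) = y^n$, and $\varphi(z) = x^ny^n$ for every other variable $z$. The word $\varphi(\bu)$ begins with $x$, and after $\varphi(\bu)$ is rearranged past the first position using $axy \approx ayx$ and the powers are reduced by $x^{n+1} \approx x^n$, it normalizes to $x^n y^n$. Symmetrically $\varphi(\bv)$ normalizes to $y^n x^n$, yielding~\eqref{id: LZ Nni max xnyn=ynxn}.

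The main subtlety, and what I expect to be the only nontrivial verification, is confirming that the normalization in the second case actually produces nonzero powers of both $x$ and $y$ on each side so that the final form $x^ny^n$ (respectively $y^nx^n$) is reached rather than a pure power of a single variable. This requires showing that both $u_1$ and $v_1$ occur in both $\bu$ and $\bv$, or at least that the totals of $x$'s and $y$'s on each side are at least $n$. The key observation is that $\Nil_n^1 \in \bW$ forces $\bW$ to satisfy $\bu \approx \bv$ through Lemma~\ref{L: LZ LZi N3 Nni Zn word}(iv): since $f_{v_1} = \occ(v_1,\bv) \geq 1$ and $f_{u_1} = \occ(u_1,\bv)$ must match $e_{u_1} \geq 1$ according to the condition of Lemma~\ref{L: LZ LZi N3 Nni Zn word}(iv), we can conclude both $e_{v_1} \geq 1$ and $f_{u_1} \geq 1$, and these in turn guarantee the required lower bounds of $n$ on the totals, so that the final $x^{n+1} \approx x^n$ reductions are legitimate.
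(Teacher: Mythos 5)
Your proposal is correct, and its treatment of the case $\Nil_n^1 \notin \bW$ is essentially the paper's: both invoke Lemma~\ref{L: excl Nni} with $k=1$ and then use~\eqref{id: LZ Nni} to turn $(x^ny)^nx^n \approx (x^ny)^{n-1}x^n$ into~\eqref{id: LZ Nni max anxn=anxn-1}. Where you genuinely diverge is the case $\LZ \notin \bW$. The paper simply cites \cite[Theorem~5.15]{LRS19} for the exclusion identity $x^n(yx^n)^n \approx (yx^n)^n$ and derives~\eqref{id: LZ Nni max xnyn=ynxn} in one line; you instead work directly from the head-characterization of the identities of $\LZ$ (Lemma~\ref{L: LZ LZi N3 Nni Zn word}(i)) and a substitution $u_1 \mapsto x^n$, $v_1 \mapsto y^n$, $z \mapsto x^ny^n$. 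Your route is self-contained (it avoids the external citation) but pays for this with the occurrence-count bookkeeping you correctly flag as the crux: without knowing $\Nil_n^1 \in \bW$, an identity such as $\bu \approx \bv$ with $v_1 \notin \con(\bu)$ would normalize to something like $x^n \approx y^nx^n$ rather than to~\eqref{id: LZ Nni max xnyn=ynxn}. Your fix --- order the cases so that in the second one $\Nil_n^1 \in \bW$ may be assumed, then use Lemma~\ref{L: LZ LZi N3 Nni Zn word}(iv) to force $\occ(u_1,\bv), \occ(v_1,\bu) \geq 1$ and hence at least $n$ occurrences of each of $x$ and $y$ on both sides after substitution --- is valid, but note that it makes your argument depend on the case ordering in a way the paper's does not (the cited theorem needs only $\LZ \notin \bW$). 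Both approaches are sound; the paper's is shorter, yours is more elementary.
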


\begin{proof}
Let~$\bW$ be any proper subvariety of $\var\{\LZ,\Nil_n^1\}$.
Then either $\LZ \notin \bW$ or $\Nil_n^1 \notin \bW$.
First suppose that $\LZ \notin \bW$.
Then the variety~$\bW$ satisfies the identity $\alpha : x^n(yx^n)^n \approx (yx^n)^n$ \cite[Theorem~5.15]{LRS19}.
Since
\[
x^ny^n \stackrel{\eqref{id: LZ Nni}}{\approx} x^n(yx^n)^n \stackrel{\alpha}{\approx} (yx^n)^n \stackrel{\eqref{id: LZ Nni}}{\approx} y^nx^n,
\]
the variety~$\bW$ satisfies the identity~\eqref{id: LZ Nni max xnyn=ynxn}.

It remains to assume that $\Nil_n^1 \notin \bW$.
Then by Lemma~\ref{L: excl Nni}, the variety~$\bW$ satisfies the identity~\eqref{id: excl Nni} with $k = 1$.
Since
\[
a^nx^n \stackrel{\eqref{id: LZ Nni}}{\approx} (a^nx)^n a^n \stackrel{\eqref{id: excl Nni}}{\approx} (a^nx)^{n-1}a^n \stackrel{\eqref{id: LZ Nni}}{\approx} a^nx^{n-1},
\]
the variety~$\bW$ satisfies the identity~\eqref{id: LZ Nni max anxn=anxn-1}.
\end{proof}

\begin{proposition}
For any integer $n \geq 2$\up, let \[ \bU = \var\{\eqref{id: LZ Nni},\eqref{id: LZ Nni max xnyn=ynxn}\} \quad \text{and} \quad \bV = \var\{\eqref{id: LZ Nni},\eqref{id: LZ Nni max anxn=anxn-1}\}. \]
Then
\begin{enumerate}[\rm(i)]
\item $\bU$ and~$\bV$ are the only maximal subvarieties of $\var\{\LZ,\Nil_n^1\}$\up;
\item $\bU = \var\{ \JIdual,\Niltwo^1 \}$ if $n = 2$\up;
\item $\bV$ is not finitely generated\up.
\end{enumerate}
\end{proposition}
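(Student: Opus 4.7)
My overall plan: establish (i) and (ii) quickly from the preceding lemmas, and for (iii) adopt the parameterized-identity strategy familiar from the analogous propositions earlier in this appendix.

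For (i), I invoke Lemma~\ref{L: LZ Nni max}, which asserts that every proper subvariety of $\var\{\LZ,\Nil_n^1\}$ is contained in $\bU$ or in $\bV$. It then suffices to show that $\bU$ and $\bV$ are proper and incomparable. The semigroup $\LZ$ satisfies $a^nx^n\approx a^nx^{n-1}$ (both sides collapse to $a$) but violates $x^ny^n\approx y^nx^n$ (the two sides collapse to $x$ and $y$), placing $\LZ$ in $\bV\setminus\bU$; dually, $\Nil_n^1$ is commutative so it satisfies $x^ny^n\approx y^nx^n$, whereas the substitution $a=1$, $x=a$ in $a^nx^n\approx a^nx^{n-1}$ yields $0\neq a^{n-1}$, placing $\Nil_n^1$ in $\bU\setminus\bV$. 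Hence both $\bU$ and $\bV$ are maximal, and they exhaust the maximal subvarieties.

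For (ii), specialising the basis of $\bU$ to $n=2$ gives exactly the set $\{x^3\approx x^2,\,axy\approx ayx,\,x^2y^2\approx y^2x^2\}$, which by Corollary~\ref{C: bases JId Nni}(i) is an identity basis for $\var\{\JIdual,\Niltwoi\}$; thus the two varieties coincide.

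For (iii)---the main obstacle---I introduce the family
$$\phi_m:\quad y_1y_2\cdots y_m y^n\approx y_1y_2\cdots y_m y^{n-1}\qquad(m\geq 1)$$
and plan to show (a) $\bV$ violates $\phi_m$ for every $m\geq 1$, and (b) every finite $S\in\bV$ satisfies $\phi_m$ once $m\geq|S|(n-1)+1$. Together these force $\var\{S\}\subsetneq\bV$ for every finite $S\in\bV$, establishing non-finite-generation as in analogous earlier proofs. For (a), I analyse the $\bV$-free semigroup on $\{y_1,\ldots,y_m,y\}$, where $axy\approx ayx$ renders each word a pair (head, tail-multiset) and $x^{n+1}\approx x^n$ caps single-letter counts. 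A reduction via $a^nx^n\approx a^nx^{n-1}$ would require a substring of the form $\mathbf{u}^n\mathbf{v}^n$; with the $y_i$'s and $y$ pairwise distinct, the only $n$-fold repetition available is $y^n$, and no reordering of the tail can supply a second adjacent block of $n$ identical letters, so the two tail-multisets (differing only in the exponent of $y$) remain distinct. For (b), pigeonhole produces some $c\in S$ occurring at least $n$ times among $y_1^\ast,\ldots,y_m^\ast$: if $c\neq y^\ast$ I use $axy\approx ayx$ to make $c^n$ adjacent to $y^{\ast n}$, apply $a^nx^n\approx a^nx^{n-1}$ to the substring $c^ny^{\ast n}$, and reorder back to produce the right-hand side, whereas if $c=y^\ast$ the total multiplicity of $y^\ast$ in each side is at least $2n-1\geq n+1$ and iterated use of $x^{n+1}\approx x^n$ collapses both sides to the same canonical form. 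The principal technical difficulty is aligning the substring form of $a^nx^n\approx a^nx^{n-1}$ with the reordered word and correctly analyzing the subcases depending on whether the pigeonholed element $c$ equals the head $y_1^\ast$, equals $y^\ast$, or neither.
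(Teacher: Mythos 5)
Your proof is correct and follows essentially the same route as the paper's: part (i) via Lemma~\ref{L: LZ Nni max} together with the same two witnesses $\LZ$ and $\Nil_n^1$, part (ii) by matching the $n=2$ basis of $\bU$ against Corollary~\ref{C: bases JId Nni}(i), and part (iii) by exhibiting the family $x_1x_2\cdots x_m y^n \approx x_1x_2\cdots x_m y^{n-1}$ that $\bV$ violates for every $m$ while every finite member of $\bV$ satisfies it for sufficiently large $m$ by pigeonhole. The only cosmetic difference is that the paper takes $m\geq (n+1)|S|$ to secure $n+1$ occurrences of the pigeonholed element, which sidesteps the head-position case analysis that your smaller bound $m\geq |S|(n-1)+1$ forces you to carry out.
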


\begin{proof}
(i) Since the semigroup~$\LZ$ satisfies $\{ \eqref{id: LZ Nni},\eqref{id: LZ Nni max anxn=anxn-1} \}$ and violates~\eqref{id: LZ Nni max xnyn=ynxn}, while the semigroup~$\Nil_n^1$ satisfies $\{ \eqref{id: LZ Nni},\eqref{id: LZ Nni max xnyn=ynxn} \}$ and violates~\eqref{id: LZ Nni max anxn=anxn-1}, the varieties~$\bU$ and~$\bV$ are incomparable.
The result then follows from Lemma~\ref{L: LZ Nni max}.

(ii) This follows from the dual of Proposition~\ref{P: bases JI Nni}(ii).

(iii) It is easily seen that the variety $\bV$ violates the identity
\begin{equation}
x_1 x_2 \cdots x_m y^n \approx x_1 x_2 \cdots x_m y^{n-1} \label{id: LZ Nni finite}
\end{equation}
for any $m \geq 1$.
Hence it suffices to show that each finite semigroup~$S$ in~$\bV$ satisfies the identity~\eqref{id: LZ Nni finite} for all $m \geq (n+1)|S|$.
Choose any $a_1,a_2,\ldots,a_m,b \in S$.
Then the list $a_1,a_2,\ldots,a_m$ contains some element $a \in S$ at least $n+1$ times, due to the magnitude of~$m$.
Therefore $a_1 a_2 \cdots a_m \stackrel{\eqref{id: LZ Nni axy=ayx}}{=} sata^n$ for some $s,t \in S^1$, whence
\[
a_1 a_2 \cdots a_mb^n \stackrel{\eqref{id: LZ Nni axy=ayx}}{=} sata^nb^n \stackrel{\eqref{id: LZ Nni max anxn=anxn-1}}{=} sata^nb^{n-1}  \stackrel{\eqref{id: LZ Nni axy=ayx}}{=} a_1 a_2 \cdots a_mb^{n-1}. \qedhere
\]
\end{proof}

\begin{corollary} \label{C: bases RZ Nni} Let $n \geq 2$ be any integer\up.
Then
\begin{enumerate}[\rm(i)]
\item the identities
\[
x^{n+1} \approx x^n, \quad xya \approx yxa
\]
constitute an {\ib} for the variety $\var\{\RZ,\Nil_n^1\}$\up;
\item $\var\{\RZ,\Nil_n^1\}$ contains countably infinitely many subvarieties\up;
\item $\var\{\RZ,\Nil_n^1\}$ contains precisely two maximal subvarieties\up.
\end{enumerate}
\end{corollary}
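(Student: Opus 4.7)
The proof will be immediate from duality, so the plan is to verify that the previous proposition about $\var\{\LZ,\Nil_n^1\}$ dualizes correctly to yield the three claims for $\var\{\RZ,\Nil_n^1\}$.

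First I would observe that the semigroup $\Nil_n^1$ is commutative, hence self-dual: $\dual{\Nil_n^1} \cong \Nil_n^1$. Since $\dual{\LZ} \cong \RZ$, it follows that the dual of the variety $\var\{\LZ,\Nil_n^1\}$ is precisely $\var\{\RZ,\Nil_n^1\}$. Now duality sends an identity $\bu \approx \bv$ to $\ov{\bu} \approx \ov{\bv}$, and a variety~$\bV$ satisfies $\bu \approx \bv$ if and only if $\dual{\bV}$ satisfies $\ov{\bu} \approx \ov{\bv}$. In particular, an {\ib} for $\dual{\bV}$ is obtained by reversing each identity in an {\ib} for~$\bV$.

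For part~(i), I would apply Proposition~\ref{P: bases LZ Nni}(i), which gives the {\ib} $\{x^{n+1} \approx x^n,\, axy \approx ayx\}$ for $\var\{\LZ,\Nil_n^1\}$. Reversing each identity yields $\{x^{n+1} \approx x^n,\, yxa \approx xya\}$, which is exactly the set $\{x^{n+1} \approx x^n,\, xya \approx yxa\}$ claimed in the statement. For part~(ii), the anti-isomorphism $\bV \mapsto \dual{\bV}$ restricts to a lattice anti-isomorphism between $\sL(\var\{\LZ,\Nil_n^1\})$ and $\sL(\var\{\RZ,\Nil_n^1\})$; hence, the countable infinitude of subvarieties established in Proposition~\ref{P: bases LZ Nni}(ii) is transferred. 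Part~(iii) follows similarly, since this same anti-isomorphism sends maximal subvarieties to maximal subvarieties, so the count of two maximal subvarieties (given by the proposition just preceding the corollary) is preserved.

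There is no real obstacle here; the only thing to double-check is that the reversal of the identity $axy \approx ayx$ is genuinely the identity $xya \approx yxa$ (up to renaming of variables), which is a trivial syntactic verification. Thus the corollary is just the formal dual of the results already proved for $\var\{\LZ,\Nil_n^1\}$.
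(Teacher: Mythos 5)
Your proposal is correct and matches the paper's (implicit) justification: the corollary is stated without proof precisely because it is the formal dual of Proposition~\ref{P: bases LZ Nni} and the accompanying results on maximal subvarieties, exactly as you argue. One small terminological point: the map $\bV \mapsto \dual{\bV}$, though induced by semigroup anti-isomorphisms, is inclusion-preserving and hence a lattice \emph{isomorphism} between $\sL(\var\{\LZ,\Nil_n^1\})$ and $\sL(\var\{\RZ,\Nil_n^1\})$ --- which is in fact what you need for it to send maximal subvarieties to maximal subvarieties.
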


\subsection{The varieties ${ \var\{\LZi,\Nil_n^1\} }$ and ${ \var\{\RZi,\Nil_n^1\} }$} \label{subsec: LZi Nni}

\begin{proposition} \label{P: bases LZi Nni} Let $n \geq 2$ be any integer\up.
\begin{enumerate}[\rm(i)]
\item The identities
\begin{subequations} \label{id: LZi Nni}
\begin{align}
x^{n+1} & \approx x^n, \label{id: LZi Nni xn+1=xn} \\
xyx & \approx x^2y. \label{id: LZi Nni xyx=xxy}
\end{align}
\end{subequations}
constitute an {\ib} for the variety $\var\{\LZi,\Nil_n^1\}$\up.
\item The variety $\var\{\LZi,\Nil_n^1\}$ contains countably infinitely many subvarieties\up.
\end{enumerate}
\end{proposition}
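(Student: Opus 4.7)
My plan for proving Proposition~\ref{P: bases LZi Nni} follows the same template as Propositions~\ref{P: bases JI Nni} and~\ref{P: bases LZ Nni}, organized into the basis part (i) and the subvariety count (ii).

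For part (i), I would first verify directly that the two identities in~\eqref{id: LZi Nni} hold in both $\LZi$ and $\Nil_n^1$. The semigroup $\LZi$ is a left regular band, so $x^2 \approx x$ (giving $x^{n+1} \approx x^n$) and $xyx \approx xy \approx x^2 y$. The semigroup $\Nil_n^1$ is commutative with $x^{n+1} \approx x^n$, so $xyx \approx x^2 y$ reduces to commutativity. The nontrivial direction is to show that any identity $\bu \approx \bv$ satisfied by both semigroups is deducible from the basis. By Lemma~\ref{L: LZ LZi N3 Nni Zn word}(ii) and~(iv), such an identity must have $\ini(\bu) = \ini(\bv) = y_1 y_2 \cdots y_k$ and, setting $e_i = \occ(y_i,\bu)$ and $f_i = \occ(y_i,\bv)$, for each $i$ either $e_i = f_i$ or $e_i, f_i \geq n$.

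The main step is a normal-form reduction lemma: using $xyx \approx x^2 y$, any word $\bw$ with $\ini(\bw) = y_1 y_2 \cdots y_k$ and $\occ(y_i,\bw) = e_i$ can be brought into the form $y_1^{e_1} y_2^{e_2} \cdots y_k^{e_k}$. I would first prove, by induction on the number of occurrences $m$, that any word of shape $y_1 \bw_1 y_1 \bw_2 \cdots y_1 \bw_m$ with $y_1 \notin \con(\bw_j)$ reduces to $y_1^m \bw_1 \bw_2 \cdots \bw_m$: the substitution instance $y_1(\bw_j)y_1 \approx y_1^2 \bw_j$ of $xyx \approx x^2 y$ allows one to consolidate occurrences of $y_1$ leftward one pair at a time. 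Iterating this procedure variable-by-variable through $y_1, y_2, \ldots, y_k$ puts both $\bu$ and $\bv$ into the normal forms $y_1^{e_1} \cdots y_k^{e_k}$ and $y_1^{f_1} \cdots y_k^{f_k}$. Then, whenever $e_i, f_i \geq n$, the identity $x^{n+1} \approx x^n$ reduces both $y_i^{e_i}$ and $y_i^{f_i}$ to $y_i^n$, while the case $e_i = f_i$ requires no modification. Thus $\bu \approx \bv$ is deducible from the basis.

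For part (ii), the lower bound is immediate: the subvariety $\var\{\Niltwo^1\}$ is contained in $\var\{\LZi,\Nil_n^1\}$ and has infinitely many subvarieties by \cite[Figure~5(b)]{Eva71}. For the upper bound of countably many, I would attempt to mimic the strategy of Proposition~\ref{P: bases JI Nni}(iii), showing that every subvariety of $\var\{\LZi, \Nil_n^1\}$ is finitely based and then invoking Lemma~\ref{L: HFB implies countable}. Local finiteness is encouraging here: the normal-form analysis shows that the $k$-generated relatively free semigroup has at most $k! \cdot n^k$ elements, represented by admissible sequences $(y_1,e_1)(y_2,e_2)\cdots(y_k,e_k)$ with distinct $y_i$ and $e_i \in \{1,\ldots,n\}$. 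The main obstacle will be precisely this step: unlike the identities $xya \approx yxa$ and $axy \approx ayx$ used for $\var\{\JI,\Nil_n^1\}$ and $\var\{\LZ,\Nil_n^1\}$, the identity $xyx \approx x^2 y$ is not a permutation identity, so Perkins's theorem does not apply directly. I would therefore need either an alternative hereditary finite basis theorem applicable to this non-permutational setting, or a direct combinatorial analysis showing that only countably many fully invariant congruences exist on the countable relatively free semigroup parameterized by these admissible sequences.
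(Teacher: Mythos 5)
Your part~(i) is essentially the paper's own argument: the same appeal to Lemma~\ref{L: LZ LZi N3 Nni Zn word}(ii) and~(iv), followed by reduction of both sides to the normal form $y_1^{e_1}\cdots y_k^{e_k}$ via $xyx\approx x^2y$ and then equalization of exponents via $x^{n+1}\approx x^n$; the paper simply asserts the normal-form step where you spell out the (correct) induction that consolidates the occurrences of each variable leftward one at a time.

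For part~(ii) your lower bound is exactly the paper's, and your upper-bound strategy is also the right one, but the obstacle you flag at the end is not actually an obstacle: the ``alternative hereditary finite basis theorem applicable to this non-permutational setting'' that you hope for exists and is precisely what the paper cites. Poll\'{a}k~\cite{Pol82} proved that every variety satisfying the identity $xyx\approx x^2y$ is finitely based (this identity belongs to one of his two classes of hereditarily finitely based semigroup identities), so every subvariety of $\var\{\LZi,\Nil_n^1\}$ is finitely based and Lemma~\ref{L: HFB implies countable} yields at most countably many subvarieties. You are right that Perkins's theorem on permutation identities does not apply here, and your fallback suggestion of bounding fully invariant congruences via the normal-form parameterization would be much harder than necessary (local finiteness alone does not bound the number of subvarieties). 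So the only thing missing from your proposal is this one citation; no new argument is required.
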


\begin{proof}
(i) It is routinely checked that the identities~\eqref{id: LZi Nni} are satisfied by the variety $\var\{\LZi,\Nil_n^1\}$.
Therefore it remains to show that any identity $\bu \approx \bv$ satisfied by $\var\{\LZi,\Nil_n^1\}$ is deducible from the identities~\eqref{id: LZi Nni}.
In view of Lemma~\ref{L: LZ LZi N3 Nni Zn word}(ii), generality is not lost by assuming that
\begin{enumerate}[\ \ (a)]
\item $\ini(\bu) = \ini(\bv) = \prod_{i=1}^m x_i$,
\end{enumerate}
so that $e_i = \occ(x_i,\bu) \geq 1$ and $f_i = \occ(x_i,\bv) \geq 1$.
By Lemma~\ref{L: LZ LZi N3 Nni Zn word}(iv),
\begin{enumerate}[\ \ (a)]
\item[(b)] for each~$i$, either $e_i = f_i < n$ or $e_i,f_i \geq n$.
\end{enumerate}
Hence
\begin{align*}
\bu & \stackrel{\eqref{id: LZi Nni xyx=xxy}}{\approx} \makebox[0.6in][l]{$\displaystyle\prod_{i=1}^m x_i^{e_i}$} \text{by~(a)} \\
& \stackrel{\eqref{id: LZi Nni xn+1=xn}}{\approx} \makebox[0.6in][l]{$\displaystyle\prod_{i=1}^m x_i^{f_i}$} \text{by~(b)} \\
& \stackrel{\eqref{id: LZi Nni xyx=xxy}}{\approx} \makebox[0.6in][l]{$\bv$} \text{by~(a)}.
\end{align*}

(ii) Any variety that satisfies the identity~\eqref{id: LZi Nni xyx=xxy} is finitely based~\cite{Pol82}.
Hence by Lemma~\ref{L: HFB implies countable}, the variety $\var\{\LZi,\Nil_n^1\}$ contains countably many subvarieties.
The result then holds since the subvariety $\var\{\Niltwo^1\}$ of $\var\{\LZi,\Nil_n^1\}$ contains infinitely many subvarieties \cite[Figure~5(b)]{Eva71}.
\end{proof}

\begin{lemma} \label{L: LZi Nni max}
Let $n \geq 2$ be any integer\up.
Then each proper subvariety of the variety $\var\{\LZi,\Nil_n^1\}$ satisfies one of the following identities\up:
\begin{align}
a^nx^ny^n & \approx a^ny^nx^n, \label{id: LZi Nni max anxnyn=anynxn} \\
a^nx^n & \approx a^nx^{n-1}. \label{id: LZi Nni max anxn=anxn-1}
\end{align}
\end{lemma}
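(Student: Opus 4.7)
The argument will parallel the preceding Lemma~\ref{L: LZ Nni max}. Let $\bW$ be a proper subvariety of $\var\{\LZi,\Nil_n^1\}$, so that either $\Nil_n^1 \notin \bW$ or $\LZi \notin \bW$; the two cases will yield~\eqref{id: LZi Nni max anxn=anxn-1} and~\eqref{id: LZi Nni max anxnyn=anynxn} respectively.

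For the first case, Lemma~\ref{L: excl Nni} with $k=1$ implies that $\bW$ satisfies $\alpha : (x^ny)^n x^n \approx (x^ny)^{n-1} x^n$, and substituting $x \to a$, $y \to x$ yields $(a^nx)^n a^n \approx (a^nx)^{n-1} a^n$. I will first establish the auxiliary identity $a^n w a^n \approx a^n w$ for every word $w$: iterated application of $xyx \approx x^2y$ with $x \to a$ successively shifts each trailing $a$ to the left, giving $a^n w a^n \approx a^{2n} w$, and $a^{2n} \approx a^n$ then follows from $x^{n+1} \approx x^n$. A short induction on $k$ yields $(a^nx)^k a^n \approx a^n x^k$, since $(a^nx)^k a^n \approx (a^nx)(a^nx)^{k-1} a^n \approx (a^n x)\cdot a^n x^{k-1} \approx (a^n x a^n)\, x^{k-1} \approx a^n x^k$. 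Applying this with $k = n$ and $k = n-1$ reduces the substituted $\alpha$ to~\eqref{id: LZi Nni max anxn=anxn-1}.

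For the second case, Lemma~\ref{L: LZ LZi N3 Nni Zn word}(ii) supplies an identity $\bu \approx \bv$ satisfied by $\bW$ with $\ini(\bu) \neq \ini(\bv)$. Using $x^{n+1} \approx x^n$ and $xyx \approx x^2y$ (the defining identities of $\var\{\LZi,\Nil_n^1\}$, available in $\bW$), every word $\bw$ is $\bW$-equivalent to its block canonical form $y_1^{e_1}\cdots y_p^{e_p}$ where $y_1 y_2 \cdots y_p = \ini(\bw)$ with the $y_i$ distinct; this is the block-collection reduction already used in Proposition~\ref{P: bases LZi Nni}. Normalize $\bu$ and $\bv$ into block form and locate the first index $k$ at which the sequences $\ini(\bu)$ and $\ini(\bv)$ disagree. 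I will then apply a substitution sending the common initial variables to $a$, the diverging variable $u_k$ to $x$, the diverging variable $v_k$ to $y$, and every remaining variable to whichever of $a$, $x$, $y$ avoids cross-interference; after renormalizing via block collection and $x^{n+1} \approx x^n$, both sides collapse to $a^n x^n y^n$ and $a^n y^n x^n$, establishing~\eqref{id: LZi Nni max anxnyn=anynxn}.

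The principal obstacle will lie in the second case: rigorously verifying that the three-letter substitution produces the clean target identity across every configuration of how $\ini(\bu)$ and $\ini(\bv)$ can disagree---same content with permuted block order, differing content, differing length, and with heads possibly disagreeing (so that $\LZ \notin \bW$ as well). Trailing blocks of $a$ appearing after the divergence are particularly delicate, because $xyx \approx x^2y$ only absorbs an interior $a$ flanked on both sides. If a uniform direct substitution proves unwieldy, a cleaner route is to first establish an auxiliary lemma analogous to Theorem~5.15 of Lee~\etal~\cite{LRS19}: exclusion of $\LZi$ combined with $x^{n+1} \approx x^n$ forces $\bW$ to satisfy a canonical mixing identity such as $a^n(x^n y)^n \approx a^n(y x^n)^n$, from which~\eqref{id: LZi Nni max anxnyn=anynxn} follows by routine absorption using $xyx \approx x^2y$.
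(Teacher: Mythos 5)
Your treatment of the case $\Nil_n^1 \notin \bW$ is correct and coincides with the paper's: Lemma~\ref{L: excl Nni} with $k=1$ yields $(x^ny)^nx^n \approx (x^ny)^{n-1}x^n$, and the absorption $(a^nx)^ka^n \approx a^nx^k$, which you justify carefully via $xyx \approx x^2y$ and $x^{n+1}\approx x^n$ and which the paper performs silently under the label~\eqref{id: LZi Nni}, converts this into~\eqref{id: LZi Nni max anxn=anxn-1}.

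The case $\LZi \notin \bW$ is where the real content lies, and it is not established in your proposal. The paper settles it in one step by invoking Lee \etal.~\cite[Theorem~5.17]{LRS19}: any variety excluding $\LZi$ satisfies $a^n(xa^n)^n\big(ya^n(xa^n)^n\big)^n \approx a^n\big(ya^n(xa^n)^n\big)^n$, and both sides reduce under~\eqref{id: LZi Nni} to $a^nx^ny^n$ and $a^ny^nx^n$. This is exactly the ``auxiliary lemma'' you defer to as a fallback, but you neither state it precisely nor prove it. Your primary route---substituting $a,x,y$ into an identity $\bu\approx\bv$ with $\ini(\bu)\neq\ini(\bv)$---is viable, but your claim that ``both sides collapse to $a^nx^ny^n$ and $a^ny^nx^n$'' is not true uniformly: when $\ini(\bu)$ is a proper prefix of $\ini(\bv)$, the substitution only yields an identity of the shape $a^n \approx a^n\bw$, which must first be iterated against $x^{n+1}\approx x^n$ to obtain $a^nx^n \approx a^n$ before~\eqref{id: LZi Nni max anxnyn=anynxn} can be extracted; and when a diverging variable occurs on only one side, or occurs with multiplicity less than $n$, the block exponents after substitution are not automatically $n$, so further substitutions by $n$-th powers are needed. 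None of these obstructions is fatal, but they constitute precisely the work the lemma requires, and the proposal leaves that work undone.
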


\begin{proof}
Let~$\bW$ be any proper subvariety of $\var\{\LZi,\Nil_n^1\}$.
Then either $\LZi \notin \bW$ or $\Nil_n^1 \notin \bW$.
First suppose that $\LZi \notin \bW$.
Then the variety~$\bW$ satisfies the identity $\alpha : a^n(xa^n)^n \big(ya^n(xa^n)^n\big)^n \approx a^n\big(ya^n(xa^n)^n\big)^n$ \cite[Theorem~5.17]{LRS19}.
Since
\[
a^nx^ny^n \stackrel{\eqref{id: LZi Nni}}{\approx} a^n(xa^n)^n \big(ya^n(xa^n)^n\big)^n \stackrel{\alpha}{\approx} a^n\big(ya^n(xa^n)^n\big)^n \stackrel{\eqref{id: LZi Nni}}{\approx} a^ny^nx^n,
\]
the variety~$\bW$ satisfies the identity~\eqref{id: LZi Nni max anxnyn=anynxn}.

It remains to assume that $\Nil_n^1 \notin \bW$.
Then by Lemma~\ref{L: excl Nni}, the variety~$\bW$ satisfies the identity~\eqref{id: excl Nni} with $k = 1$.
Since
\[
a^nx^n \stackrel{\eqref{id: LZi Nni}}{\approx} (a^nx)^n a^n \stackrel{\eqref{id: excl Nni}}{\approx} (a^nx)^{n-1}a^n \stackrel{\eqref{id: LZi Nni}}{\approx} a^nx^{n-1},
\]
the variety~$\bW$ satisfies the identity~\eqref{id: LZi Nni max anxn=anxn-1}.
\end{proof}

\begin{proposition}
For any integer $n \geq 2$\up, let \[ \bU = \var\{\eqref{id: LZi Nni},\eqref{id: LZi Nni max anxnyn=anynxn}\} \quad \text{and} \quad \bV = \var\{\eqref{id: LZi Nni},\eqref{id: LZi Nni max anxn=anxn-1}\}. \]
Then
\begin{enumerate}[\rm(i)]
\item $\bU$ and~$\bV$ are the only maximal subvarieties of $\var\{\LZi,\Nil_n^1\}$\up;
\item $\bU$ is not finitely generated\up;
\item $\bV$ is not finitely generated\up.
\end{enumerate}
\end{proposition}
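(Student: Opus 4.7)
My plan is to handle (i) directly by exhibiting separating semigroups, and to handle (ii) and (iii) by the standard ``pigeonhole plus collapse'' template from the $\var\{\LZ,\Nil_n^1\}$-analogue, adapted to the weaker structural identity $xyx \approx x^2y$ that here replaces full commutativity $axy \approx ayx$. For (i), I would first check that $\bU$ and $\bV$ are both proper and incomparable. The monoid $\Nil_n^1$ lies in $\bU$ (being commutative, the commutativity of $n$-th powers is automatic) but violates $\eqref{id: LZi Nni max anxn=anxn-1}$ under the assignment $a = 1$, $x$ a nilpotent generator, since then $x^n = 0 \ne x^{n-1}$; conversely, $\LZi$ lies in $\bV$ (as a band, both sides of $\eqref{id: LZi Nni max anxn=anxn-1}$ collapse to $ax$) but violates $\eqref{id: LZi Nni max anxnyn=anynxn}$ by taking $a$ to be the monoid identity and $x$, $y$ distinct left zeros, giving $axy = x \ne y = ayx$. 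Lemma~\ref{L: LZi Nni max} then forces every proper subvariety into $\bU$ or $\bV$, so these are the only maximal ones.

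For (ii), I propose the family
\[
\alpha_m : x_1 x_2 \cdots x_m y^n z^n \approx x_1 x_2 \cdots x_m z^n y^n, \qquad m \ge 1.
\]
The variety $\bU$ violates each $\alpha_m$: in its relatively free semigroup, iterated use of $xyx \approx x^2y$ reduces any word to the block form $x_{i_1}^{k_1} x_{i_2}^{k_2} \cdots x_{i_r}^{k_r}$ (variables in order of first appearance, $k_j \le n$), after which the only remaining identification is the commutativity axiom, which can swap two adjacent $n$-th-power blocks only when preceded by an $n$-th-power block; in $\alpha_m$ each prefix variable has multiplicity one, so no such swap is available and the two sides retain distinct normal forms. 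Conversely, for any finite $S \in \bU$ of order $N$ and any $a_1, \dots, a_m, b, c \in S$ with $m \ge (n-1)N+1$, pigeonhole produces some $d \in S$ occurring at least $n$ times among the $a_i$, so iterated $xyx \approx x^2y$ collapses these to the left, yielding $a_1 \cdots a_m = \bu d^n w$ with $\bu, w \in S^1$. Using the consequence $d^n w d^n = d^n w$ of $xyx \approx x^2y$ (obtained by pulling the right-hand $d$'s leftward one at a time), one computes in $\bU$
\[
d^n w b^n c^n \,=\, d^n w d^n b^n c^n \,=\, d^n w d^n c^n b^n \,=\, d^n w c^n b^n,
\]
the middle step being the commutativity axiom; left-multiplying by $\bu$ gives $\alpha_m$ in $S$, so $\var\{S\} \ne \bU$.

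For (iii), the analogous family is
\[
\beta_m : x_1 x_2 \cdots x_m y^n \approx x_1 x_2 \cdots x_m y^{n-1}, \qquad m \ge 1.
\]
The failure of $\beta_m$ in $\bV$ follows from the same normal-form analysis, the key point now being that the only reduction in $\bV$ that can lower the exponent of $y$ from $n$ to $n-1$ is $a^n x^n \approx a^n x^{n-1}$, and this requires an $n$-th-power block immediately before $y^n$, which $x_1 \cdots x_m$ does not provide. For a finite $S \in \bV$ of order $N$ and $m \ge (n-1)N+1$, the same pigeonhole-and-collapse yields $a_1 \cdots a_m = \bu d^n w$, and then
\[
d^n w b^n \,=\, d^n w d^n b^n \,=\, d^n w d^n b^{n-1} \,=\, d^n w b^{n-1}
\]
uses $d^n w d^n = d^n w$ twice and $\bV$'s defining identity $d^n b^n = d^n b^{n-1}$ once, delivering $\beta_m$ in $S$.

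The main obstacle will be the normal-form analyses that rigorously establish failure of $\alpha_m$ in the $\bU$-free semigroup and of $\beta_m$ in the $\bV$-free semigroup; once those are in place, the semigroup-theoretic halves of (ii) and (iii) both hinge on the clean derived identity $x^n w x^n \approx x^n w$, which is the ``missing piece'' that replaces the absent commutativity $axy \approx ayx$ that made the $\var\{\LZ,\Nil_n^1\}$ version work.
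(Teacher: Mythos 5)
Your proposal is correct and follows essentially the same route as the paper: part (i) via incomparability of $\bU$ and $\bV$ (witnessed by $\LZi$ and $\Nil_n^1$) combined with Lemma~\ref{L: LZi Nni max}, and parts (ii) and (iii) via the very same families of identities $x_1\cdots x_m y^nz^n \approx x_1\cdots x_m z^ny^n$ and $x_1\cdots x_m y^n \approx x_1\cdots x_m y^{n-1}$, a pigeonhole argument to extract a factor $d^n$, and the derived identity $d^nwd^n = d^nw$ (which the paper invokes implicitly under the label~\eqref{id: LZi Nni}). The only differences are cosmetic: a marginally sharper pigeonhole threshold and a more explicit (though still sketched) normal-form justification of the step the paper dismisses as ``easily seen.''
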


\begin{proof}
(i) Since the semigroup~$\LZi$ satisfies $\{ \eqref{id: LZi Nni},\eqref{id: LZi Nni max anxn=anxn-1} \}$ and violates~\eqref{id: LZi Nni max anxnyn=anynxn}, while the semigroup~$\Nil_n^1$ satisfies $\{ \eqref{id: LZi Nni},\eqref{id: LZi Nni max anxnyn=anynxn} \}$ and violates~\eqref{id: LZi Nni max anxn=anxn-1}, the varieties~$\bU$ and~$\bV$ are incomparable.
The result then follows from Lemma~\ref{L: LZi Nni max}.

(ii) It is easily seen that the variety~$\bU$ violates the identity
\begin{equation}
x_1 x_2 \cdots x_m y^nz^n \approx x_1 x_2 \cdots x_m z^ny^n \label{id: LZi Nni finite1}
\end{equation}
for any $m \geq 1$.
Hence it suffices to show that each finite semigroup~$S$ in the variety~$\bU$ satisfies the identity~\eqref{id: LZi Nni finite1} for all $m \geq n|S|$.
Choose any elements $a_1,a_2,\ldots,a_m,b,c \in S$.
Then the list $a_1,a_2,\ldots,a_m$ contains some element $a \in S$ at least~$n$ times, due to the magnitude of~$m$.
Therefore $a_1 a_2 \cdots a_m \stackrel{\eqref{id: LZi Nni xyx=xxy}}{=} sa^nt$ for some $s,t \in S^1$, whence
\begin{align*}
a_1 a_2 \cdots a_mb^nc^n & \stackrel{\eqref{id: LZi Nni xyx=xxy}}{=} sa^ntb^nc^n \stackrel{\eqref{id: LZi Nni}}{=} sa^nta^nb^nc^n \stackrel{\eqref{id: LZi Nni max anxnyn=anynxn}}{=} sa^nta^nc^nb^n \\
& \stackrel{\eqref{id: LZi Nni}}{=} sa^ntc^nb^n \stackrel{\eqref{id: LZi Nni xyx=xxy}}{=} a_1 a_2 \cdots a_mc^nb^n.
\end{align*}

(iii) It is easily seen that the variety $\bV$ violates the identity
\begin{equation}
x_1 x_2 \cdots x_m y^n \approx x_1 x_2 \cdots x_m y^{n-1} \label{id: LZi Nni finite2}
\end{equation}
for any $m \geq 1$.
Hence it suffices to show that each finite semigroup~$S$ in the variety~$\bV$ satisfies the identity~\eqref{id: LZi Nni finite2} for all $m \geq n|S|$.
Choose any elements $a_1,a_2,\ldots,a_m,b \in S$.
Then the list $a_1,a_2,\ldots,a_m$ contains some element $a \in S$ at least~$n$ times, due to the magnitude of~$m$.
Therefore $a_1 a_2 \cdots a_m \stackrel{\eqref{id: LZi Nni xyx=xxy}}{=} sa^nt$ for some $s,t \in S^1$, whence
\begin{align*}
a_1 a_2 \cdots a_mb^n & \stackrel{\eqref{id: LZi Nni xyx=xxy}}{=} sa^ntb^n \stackrel{\eqref{id: LZi Nni}}{=} sa^nta^nb^n \stackrel{\eqref{id: LZi Nni max anxn=anxn-1}}{=} sa^nta^nb^{n-1} \\
& \stackrel{\eqref{id: LZi Nni}}{=} sa^ntb^{n-1} \stackrel{\eqref{id: LZi Nni xyx=xxy}}{=} a_1 a_2 \cdots a_mb^{n-1}. \qedhere
\end{align*}
\end{proof}

\begin{corollary} \label{C: bases RZi Nni} Let $n \geq 2$ be any integer\up.
Then
\begin{enumerate}[\rm(i)]
\item the identities
\[
x^{n+1} \approx x^n, \quad xyx \approx yx^2
\]
constitute an {\ib} for the variety $\var\{\RZi,\Nil_n^1\}$\up;
\item $\var\{\RZi,\Nil_n^1\}$ contains countably infinitely many subvarieties\up;
\item $\var\{\RZi,\Nil_n^1\}$ contains precisely two maximal subvarieties\up.
\end{enumerate}
\end{corollary}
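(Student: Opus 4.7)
The plan is to derive Corollary~\ref{C: bases RZi Nni} entirely by duality from Proposition~\ref{P: bases LZi Nni} (and its accompanying lemma and theorem concerning maximal subvarieties).  The key structural observation is that $\Nil_n^1$ is commutative, hence self-dual (that is, $\dual{\Nil_n^1} \cong \Nil_n^1$), while $\RZi = \dual{\LZi}$ by direct inspection of their multiplication tables.  Consequently, on the level of varieties one has
\[
\var\{\RZi, \Nil_n^1\} \;=\; \dual{\var\{\LZi, \Nil_n^1\}}.
\]

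First, I would verify at the level of identities that $x^{n+1} \approx x^n$ is self-dual (both sides are powers of a single variable) while the dual of $xyx \approx x^2y$ is precisely $xyx \approx yx^2$.  Thus applying the anti-isomorphism $\bw \mapsto \ov{\bw}$ to the {\ib} of Proposition~\ref{P: bases LZi Nni}(i) yields exactly the identities in the statement, which establishes part~(i).  Part~(ii) is immediate, since the lattice $\sL(\var\{\RZi,\Nil_n^1\})$ is anti-isomorphic (in fact, isomorphic via the duality functor) to $\sL(\var\{\LZi,\Nil_n^1\})$, so countable infiniteness transfers.

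For part~(iii), the two maximal subvarieties of $\var\{\LZi,\Nil_n^1\}$ identified in the preceding proposition are defined by the identities~\eqref{id: LZi Nni max anxnyn=anynxn} and~\eqref{id: LZi Nni max anxn=anxn-1}; taking duals of these identities produces the two analogous maximal subvarieties of $\var\{\RZi,\Nil_n^1\}$ (the first is self-dual up to renaming variables, since it just asserts that $n$th powers commute after a prefix; the second becomes $x^na^n \approx x^{n-1}a^n$).  The incomparability of these two dual subvarieties is witnessed by $\RZi$ and $\Nil_n^1$, and Lemma~\ref{L: LZi Nni max} transferred through duality shows that every proper subvariety lies in one of them.

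No real obstacle is expected here since duality is a straightforward lattice anti-isomorphism on varieties of semigroups; the only mild care required is in formally verifying that each identity and each invoked result respects the duality.  In particular, one should check that the commutativity of $\Nil_n^1$ (and hence the self-duality of the generator list $\{\RZi,\Nil_n^1\}$ versus $\{\LZi,\Nil_n^1\}$) is correctly used, so that all three conclusions can be read off simply by writing out the dual formulation.
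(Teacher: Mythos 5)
Your proposal is correct and coincides with the paper's intended argument: the result is stated as a corollary of the $\var\{\LZi,\Nil_n^1\}$ results precisely because it follows by the duality $\RZi=\dual{\LZi}$, the self-duality of the commutative semigroup $\Nil_n^1$, and the word-reversal action on identities (sending $xyx\approx x^2y$ to $xyx\approx yx^2$ and fixing $x^{n+1}\approx x^n$). Your explicit verification that the lattice of subvarieties and the two maximal subvarieties transfer under this anti-isomorphism is exactly the routine check the paper leaves to the reader.
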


\subsection{The varieties ${ \bV_{\ref{V: B0}} = \var\{\Bz\} }$ and ${ \bV_{\ref{V: A0}} = \var\{\Az\} }$} \label{subsec: B0 A0}

\begin{proposition}[Edmunds~{\cite[Semigroups $\mathtt{S(4,21)}$ and $\mathtt{S(4,22)}$ on page~70]{Edm80}}; Lee~\cite{Lee04}] \quad
\begin{enumerate}[\rm(i)]
\item The identities \[ x^3 \approx x^2, \quad x^2yx^2 \approx yxy, \quad x^2y^2 \approx y^2x^2 \] constitute an {\ib} for the variety $\bV_{\ref{V: B0}} = \var\{\Bz\}$\up.
\item The identities \[ x^3 \approx x^2, \quad x^2yx^2 \approx yxy \] constitute an {\ib} for the variety $\bV_{\ref{V: A0}} = \var\{\Az\}$\up.
\item The varieties $\var\{\Bz\}$ and $\var\{\Az\}$ each contains countably infinitely many subvarieties\up.
\end{enumerate}
\end{proposition}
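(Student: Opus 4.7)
The plan is to establish the three parts in sequence, following the standard template for such results.

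For parts (i) and (ii), I would proceed in two stages. First, soundness: verify by direct computation on the Cayley tables of $\Bz$ and $\Az$ that the proposed identities indeed hold. The checks for $x^3 \approx x^2$ and $x^2 y x^2 \approx yxy$ require testing finitely many substitutions, and for $\Bz$ the additional identity $x^2 y^2 \approx y^2 x^2$ is an easy finite check. Second, completeness: show that every identity of $\Az$ (resp.\ $\Bz$) is derivable from the proposed basis. My approach is to introduce a normal form. Using $x^3 \approx x^2$, every exponent can be reduced to $\{1,2\}$. Using $x^2 y x^2 \approx yxy$ as a rewriting rule, blocks framed by squared letters can be collapsed. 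For $\Bz$, the identity $x^2 y^2 \approx y^2 x^2$ allows all squared letters to be gathered and sorted into a canonical order. Once a canonical form for each $\approx$-class is defined, I would show that any two words with distinct canonical forms can be separated by a specific substitution into $\Az$ or $\Bz$, typically by assigning one variable the ``active'' element of the semigroup and the rest to idempotents of the Cayley table.

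For part (iii), I would first observe the easy inclusion $\var\{\Bz\} \subseteq \var\{\Az\}$: every identity in the proposed basis for $\var\{\Az\}$ is contained in the proposed basis for $\var\{\Bz\}$, so it suffices to prove that $\var\{\Bz\}$ already has countably infinitely many subvarieties. To do this I would exhibit an explicit infinite strictly descending chain $\var\{\Bz\} = \bW_0 \supsetneq \bW_1 \supsetneq \bW_2 \supsetneq \cdots$, where $\bW_n$ is obtained from $\var\{\Bz\}$ by adjoining an identity $\sigma_n$ of increasing complexity, say of the form
\[
x_1 y x_1 \cdot x_2 y x_2 \cdots x_n y x_n \approx x_1 y^2 x_1 \cdot x_2 y^2 x_2 \cdots x_n y^2 x_n.
\]
One shows that in the presence of the basis of $\var\{\Bz\}$, $\sigma_{n+1}$ implies $\sigma_n$ (a short derivation using $x^3 \approx x^2$ and $x^2 y x^2 \approx yxy$), and that the implication is strict by exhibiting, for each $n$, a finite semigroup $S_n \in \bW_n \setminus \bW_{n+1}$. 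Since $\var\{\Bz\}$ is finitely based, countability of the subvariety lattice follows from Lemma~\ref{L: HFB implies countable} combined with local finiteness.

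The main obstacle will be step (iii), specifically constructing the separating semigroups $S_n$; the rewriting and normal-form analysis in (i) and (ii) is largely bookkeeping once the canonical form is chosen, but the chain of subvarieties requires a careful Rees-matrix-style construction tailored to the Brandt-like structure of $\Bz$ and $\Az$. An alternative, possibly cleaner route for (iii) is to locate a known variety with countably infinitely many subvarieties that already sits inside $\var\{\Bz\}$; verifying such a containment would bypass the explicit chain construction, but identifying the right candidate and checking the containment is itself nontrivial.
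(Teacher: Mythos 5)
The paper offers no proof of this proposition: parts (i) and (ii) are quoted from Edmunds and part (iii) from Lee, so you are reconstructing arguments whose full details occupy separate papers. Your outline for (i) and (ii) has the right shape, but note that the normal-form and separation step is the entire content of Edmunds' proof rather than bookkeeping: $x^2yx^2 \approx yxy$ is neither length-preserving nor obviously confluent as a rewriting rule, and defining a canonical form and then separating distinct canonical words by substitutions into a four-element semigroup is where all the work lies.

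The genuine gap is in (iii), in the upper bound. You assert that countability of the subvariety lattice ``follows from Lemma~\ref{L: HFB implies countable} combined with local finiteness'' because $\var\{\Bz\}$ is finitely based. That lemma requires \emph{every subvariety} to be finitely based --- a hereditary property --- not merely the variety itself; a finitely generated (hence locally finite and finitely based, by hypothesis here) variety can still have continuum many subvarieties, and the paper itself cites order-$6$ semigroups generating such varieties. Everywhere else in the appendix the authors first invoke a hereditary finite-basis theorem (Perkins for permutative varieties, Poll\'{a}k for $xyx \approx x^2y$) before applying the lemma; neither applies to $\Az$ or $\Bz$, and the hereditary finite-basis property of $\var\{\Az\}$ is itself a nontrivial theorem of Lee --- precisely the citation attached to the proposition. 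Your lower bound is also only a plan: the chain $\sigma_n$ comes with no verification that the inclusions are strict, and the ``cleaner alternative'' used elsewhere in the paper --- observing that $\var\{\Niltwoi\}$ has infinitely many subvarieties and lies inside the given variety --- is unavailable here, since $\Niltwoi$ violates $x^2yx^2 \approx yxy$ (take $x=1$ and $y=a$) and hence does not belong to $\var\{\Az\}$. So both halves of (iii) require substantive input that the proposal does not supply.
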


\begin{proposition}[Lee~\cite{Lee04,Lee06}] \quad
\begin{enumerate}[\rm(i)]
\item The variety $\var\{\Bz\}$ is the unique maximal subvariety of $\var\{\Az\}$\up.
\item The identities
\begin{equation}
x^3 \approx x^2, \quad x^2yx^2 \approx yxy, \quad x^2y^2 \approx y^2x^2, \quad a^2x^2b^2 \approx a^2xb^2. \label{id: B0 max}
\end{equation}
constitute an {\ib} for the unique maximal subvariety of $\var\{\Bz\}$\up.
\item The unique maximal subvariety of $\var\{\Bz\}$ is not finitely generated\up.
\end{enumerate}
\end{proposition}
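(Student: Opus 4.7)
The plan is to prove the three parts in sequence, leveraging the {\ibs} already established for $\bV_{\ref{V: A0}} = \var\{\Az\}$ and $\bV_{\ref{V: B0}} = \var\{\Bz\}$ in the preceding proposition.

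For Part (i), I would first note that the strict inclusion $\var\{\Bz\} \subsetneq \var\{\Az\}$ is immediate: $\Az$ violates the identity $x^2y^2 \approx y^2x^2$, as can be read off its Cayley table by evaluating at suitable elements, while $\Bz$ satisfies it by hypothesis. For uniqueness, let $\bW$ be any proper subvariety of $\var\{\Az\}$, and fix an identity $\alpha : \bu \approx \bv$ holding in $\bW$ but violated in $\Az$. The goal is to deduce $x^2y^2 \approx y^2x^2$ from $\{x^3 \approx x^2,\ x^2yx^2 \approx yxy,\ \alpha\}$, which would force $\bW \subseteq \var\{\Bz\}$. I would establish this by first developing a word problem for $\var\{\Az\}$: the identities $x^3 \approx x^2$ and $x^2yx^2 \approx yxy$ impose an index-$2$ periodicity and a sandwich rewriting, suggesting a normal form tracking content, initial/final segments, and occurrence multiplicities capped at $2$. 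Then, by specializing the variables of $\alpha$ via substitutions that map all but two letters into idempotent values of the normal form, I would extract from $\alpha$ a two-variable consequence which, combined with the sandwich identity, must collapse to $x^2y^2 \approx y^2x^2$.

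For Part (ii), let $\bU$ denote the maximal subvariety of $\var\{\Bz\}$ and let $\Sigma$ be the set~\eqref{id: B0 max}. Soundness (i.e.\ $\bU \models \Sigma$) is immediate: the first three identities form an {\ib} for $\var\{\Bz\}$ by Proposition~(i) of the earlier proposition, and the fourth is the Max-identity listed for $\var\{\Bz\}$. For completeness, I would suppose that $\bu \approx \bv$ holds in $\bU$ and reduce both words to a common $\Sigma$-canonical form. The identity $a^2x^2b^2 \approx a^2xb^2$ allows any squared letter that is sandwiched between two squares to be reduced, while $x^2yx^2 \approx yxy$ permits moving sandwiched letters across squares; in tandem with $x^2y^2 \approx y^2x^2$, which commutes squares, this should produce a normal form whose data consists of an initial segment of first occurrences, a finite multiset of \emph{squared content}, and a final segment, with bounded internal multiplicities. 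Routine verification that the canonical form is determined by the identities satisfied in $\bU$ finishes this part.

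For Part (iii), to show $\bU$ is not finitely generated I would exhibit, for each $m \geq 1$, an identity $\alpha_m$ that is satisfied by every finite member of $\bU$ but violated in $\bU$ itself. A natural candidate family is
\[
\alpha_m\colon\ a^2 x_1 x_2 \cdots x_m y^2 b^2 \approx a^2 x_1 x_2 \cdots x_m y\, b^2,
\]
or a close variant obtained by encoding the defining identity $a^2x^2b^2 \approx a^2xb^2$ with many intermediate letters. For any finite $S \in \bU$ and any $m \geq c\,|S|$ (for a suitable constant $c$), a pigeonhole argument locates a repeated letter among the $x_i$'s, and iterated applications of the basis identities reassemble a sandwich $a^2 \cdots y^2 \cdots b^2$ that can be reduced; hence $\alpha_m$ holds in every finite semigroup of $\bU$. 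On the other hand, I would exhibit an infinite (or sufficiently large) model of $\bU$—constructed via a Rees-matrix or semigroup-algebra presentation built on top of the normal form from Part (ii)—in which $\alpha_m$ fails for every $m$.

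The main obstacle will be Part (i): extracting commutativity of squares from an arbitrary identity $\alpha$ failing in $\Az$ requires a sharp syntactic invariant for $\var\{\Az\}$, and the mixture of $x^2yx^2 \approx yxy$ with $x^3 \approx x^2$ does not admit an obvious well-behaved word problem. Secondary obstacles are pinning down and verifying the canonical form in Part (ii), and choosing $\alpha_m$ in Part (iii) so that both the pigeonhole argument and the separating infinite model go through cleanly.
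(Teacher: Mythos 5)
The paper does not prove this proposition at all: it is imported wholesale from Lee~\cite{Lee04,Lee06}, so your attempt can only be judged on its own merits. As a plan it follows the template the appendix uses for analogous results (an exclusion lemma identifying the extra identity forced on every proper subvariety, followed by a pigeonhole argument against finite generation), but in each part the step that carries the actual mathematical weight is asserted rather than proved, and in part~(ii) the logical structure is off.

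Concretely: for part~(i) the entire content is the claim that $x^2y^2\approx y^2x^2$ is deducible from $\{x^3\approx x^2,\ x^2yx^2\approx yxy,\ \alpha\}$ for \emph{every} identity $\alpha$ violated by $\Az$; this is an exclusion lemma of exactly the kind proved in Lemmas~\ref{L: excl Nni} and~\ref{L: excl JI}, and you give no argument for it beyond gesturing at an unspecified normal form --- as you concede, this is the main obstacle, and without it part~(i) is unproved. In part~(ii) your ``soundness/completeness'' framing misses the point: showing that the identities \eqref{id: B0 max} form a basis for the variety they define is vacuous, and your completeness step presupposes a description of the identities of the abstractly defined variety $\bU$, which is circular. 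What is actually needed is a second exclusion lemma --- that every proper subvariety of $\var\{\Bz\}$ satisfies $a^2x^2b^2\approx a^2xb^2$ --- together with the (easy, and true: take $a=3$, $x=2$, $b=4$ in the table of $\Bz$) observation that $\Bz$ itself violates it; neither appears in your proposal. Part~(iii) has the right shape, mirroring the non-finite-generation arguments in Sections~\ref{subsec: LZ Nni}--\ref{subsec: JIi}, but the choice of $\alpha_m$ and the claim that the pigeonhole repetition can be converted into a reducible sandwich $a^2\cdots y^2\cdots b^2$ using only the four identities of \eqref{id: B0 max} is left unchecked, and the ``separating infinite model'' should simply be the relatively free semigroup (i.e., a syntactic non-deducibility argument), which again requires the solved word problem you have not supplied. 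In short, all three parts reduce to the same missing ingredient --- a worked-out word problem for $\var\{\Az\}$, $\var\{\Bz\}$ and the variety of \eqref{id: B0 max} --- and until that is supplied the proposal is an outline, not a proof.
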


\subsection{The varieties ${ \bV_{\ref{V: JIi}} = \var\{\JIi\} }$ and ${ \bV_{\ref{V: JIid}} = \var\{\protect\JIidual\} }$} \label{subsec: JIi}

\begin{proposition} \label{P: JIi} \quad
\begin{enumerate}[\rm(i)]
\item The identities
\begin{subequations} \label{id: JIi}
\begin{align}
x^3 & \approx x^2, \label{id: JIi xxx=xx} \\
x^2y^2 & \approx y^2x^2, \label{id: JIi xxyy=yyxx} \\
xyx & \approx yx^2. \label{id: JIi xyx=yxx}
\end{align}
\end{subequations}
constitute an {\ib} for the variety $\var\{\JIi\}$\up.
\item The variety $\var\{\JIi\}$ contains countably infinitely many subvarieties\up.
\end{enumerate}
\end{proposition}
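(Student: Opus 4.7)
The plan is to treat the two parts separately. For (i), let $\Sigma$ denote the three listed identities. The easy direction is to verify that $\JIi$ satisfies $\Sigma$; as $\JIi$ has only four elements, this is a finite case-check. For the nontrivial direction, my approach is to exploit the fact that $\JIi = \JI^1$ is a monoid completion. By the standard description of identities of monoid completions, an identity $\bu \approx \bv$ holds in $\JIi$ if and only if $\con(\bu) = \con(\bv)$ and, for every nonempty $Y \subseteq \con(\bu)$, the identity $\bu_Y \approx \bv_Y$ obtained by deleting all variables outside $Y$ holds in $\JI$. Combined with Lemma~\ref{L: JI word}, this yields a purely combinatorial description of $\JIi$-identities in terms of contents and tail conditions on all variable-subsets.

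The deducibility step itself is most cleanly done by producing a normal form for words modulo $\Sigma$. Using $x^3 \approx x^2$, the multiplicity of each variable may be capped at two. The key rewriting rule is $xyx \approx yx^2$, applied systematically from left to right: whenever a variable $x$ has an occurrence followed by a later one, the earlier occurrence can be shifted rightward (collapsing with the help of $x^3 \approx x^2$) so that all multiple occurrences of $x$ are consolidated into a single factor $x^2$, placed immediately before the letter that originally followed the last occurrence of $x$. Finally, $x^2 y^2 \approx y^2 x^2$ is used only to sort adjacent squared blocks into a canonical order. I claim this yields a well-defined normal form $\bw^{*}$ with the properties that (a)~every word is $\Sigma$-provably equal to its normal form, and (b)~two words with distinct normal forms are separated by an evaluation in $\JIi$; for (b), the combinatorial description of the previous paragraph is invoked, after restricting to the smallest variable-subset $Y$ on which the normal forms disagree and applying Lemma~\ref{L: JI word} to $\JI$.

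For part (ii), the lower bound on the subvariety count is immediate: $\{1,2,4\}$ is closed in $\JIi$, and the assignment sending $1 \mapsto 0$, $2 \mapsto a$, $4 \mapsto 1$ identifies it with $\Niltwoi$, so $\var\{\Niltwoi\} \subseteq \var\{\JIi\}$; by Evans~\cite[Figure~5(b)]{Eva71}, the subvariety lattice $\sL(\var\{\Niltwoi\})$ is already infinite. For the matching upper bound, I plan to invoke Lemma~\ref{L: HFB implies countable} after establishing that every subvariety of $\var\{\JIi\}$ is finitely based; this should follow from the mirror-image of Poll\'ak's theorem~\cite{Pol82} applied to the identity $xyx \approx yx^2$ of $\Sigma$, the analogous argument having already been used in Proposition~\ref{P: bases LZi Nni} for the dual identity $xyx \approx x^2 y$.

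The main obstacle will be the verification of clauses (a) and (b) of the normal-form argument. The rewriting system induced by $\Sigma$ is not obviously confluent, and the canonical position of each $x^2$ factor must be pinned down with care: a direct computation shows that $a x^2 b \not\approx a b x^2$ in $\JIi$ (take $a = x = 3$ and $b = 2$, giving $a x^2 b = 2$ but $a b x^2 = 1$), so $x^2$ blocks cannot be freely commuted past singleton variables, and the relative ordering of squared blocks and the singleton letters between them is a genuine invariant of the normal form. Matching this invariant with the subset-restriction condition coming from Lemma~\ref{L: JI word}, so that any disagreement of normal forms survives a suitable deletion and is detected by the $\JI$-tail condition, is where the technical work will concentrate.
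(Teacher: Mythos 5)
Your part~(ii) is essentially the paper's own argument: the authors likewise obtain the upper bound from Poll\'ak's theorem~\cite{Pol82} applied to $xyx \approx yx^2$ together with Lemma~\ref{L: HFB implies countable}, and the lower bound from the subvariety $\var\{\Niltwoi\}$ via Evans~\cite[Figure~5(b)]{Eva71}; your explicit identification of $\{1,2,4\}$ as a copy of $\Niltwoi$ inside $\JIi$ is correct and merely makes the containment visible. For part~(i), however, you take a genuinely different route: the paper simply cites Edmunds~\cite[Semigroup $\mathtt{S(4,23)}$]{Edm80}, whereas you propose a self-contained syntactic proof via a normal form. Your approach does work, and the obstacle you flag resolves positively: combining the restriction criterion for identities of $\JI^1$ with Lemma~\ref{L: JI word} shows that $\JIi$ satisfies $\bu\approx\bv$ exactly when $\con(\bu)=\con(\bv)$, the sets of variables occurring once agree, and for each once-occurring variable $z$ the set of variables having an occurrence after $z$ is the same in $\bu$ and $\bv$ (the non-singleton subsets $Y$ witnessing any failure can always be taken of size two). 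This data is precisely what your normal form records --- the order of the singleton letters, and, for each variable occurring at least twice, which gap between consecutive singletons its consolidated $x^2$ block occupies --- since the last occurrence of $x$ lies after a singleton $z$ iff some occurrence does, and the $x^2y^2\approx y^2x^2$ sorting forgets exactly the order within a gap and nothing more. Confluence is therefore not needed as a separate verification: every word is $\Sigma$-provably equal to a word of this shape, and two such words carrying the same data are $\Sigma$-provably equal by sorting, so the normal form is a complete invariant. What your approach buys is a proof independent of~\cite{Edm80}; what it costs is the bookkeeping above, which you have correctly located but should write out.
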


\begin{proof}
(i) See Edmunds~\cite[Semigroup $\mathtt{S(4,23)}$ on page~70]{Edm80}.

(ii) Any variety that satisfies the identity~\eqref{id: JIi xyx=yxx} is finitely based~\cite{Pol82}.
Hence by Lemma~\ref{L: HFB implies countable}, the variety $\var\{\JIi\}$ contains countably many subvarieties.
The result then holds since the subvariety $\var\{\Niltwo^1\}$ of $\var\{\JIi\}$ contains infinitely many subvarieties \cite[Figure~5(b)]{Eva71}.
\end{proof}

\begin{lemma} \label{L: JIi max}
Each proper subvariety of $\var\{\JIi\}$ satisfies the identity
\begin{equation}
x^2ya^2 \approx yx^2 a^2. \label{id: JIi max}
\end{equation}
\end{lemma}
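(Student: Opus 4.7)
The plan is to derive $x^2 y a^2 \approx y x^2 a^2$ in any proper subvariety $\bW$ of $\var\{\JIi\}$, which amounts to showing that this identity defines the unique maximal subvariety within $\var\{\JIi\}$. Given such $\bW$, we have $\JIi \notin \bW$, so there exists an identity $\alpha : \bu \approx \bv$ that is satisfied by $\bW$ but violated by $\JIi$; the goal is to deduce \eqref{id: JIi max} from $\alpha$ together with the basis \eqref{id: JIi}.

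The first step is to classify how $\alpha$ can fail in $\JIi$. Using the fact that $\JIi$ has a zero ($1$) and an identity element ($4$), together with the nilpotent element $2$ (satisfying $2^2 = 1$) and the idempotent $3$ (for which $3 \cdot 2 = 2$ but $2 \cdot 3 = 1$), routine substitution arguments -- sending one variable to the chosen element and the rest to $4$ -- yield that any identity satisfied by $\JIi$ obeys: (i) $\con(\bu) = \con(\bv)$, and (ii) for each variable $z$, either $\occ(z, \bu) = \occ(z, \bv) = 1$ or both are at least $2$. Consequently, an identity $\alpha$ failing in $\JIi$ either violates (i) or (ii), or else respects both conditions but fails because of the order-sensitive interaction between $2$ and $3$. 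I would then handle each of these three cases separately.

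The easier cases, namely the failure of (i) or of (ii), should be tractable by exploiting the basis identities to collapse long words. A content violation, used in conjunction with \eqref{id: JIi xxx=xx}, forces words of sufficient length to reduce to powers of a single variable, making $x^2 y a^2 \approx y x^2 a^2$ immediate. An occurrence-multiplicity violation, combined with \eqref{id: JIi xyx=yxx}, yields an analogous collapse. The main obstacle will be the residual case, when $\alpha$ satisfies (i) and (ii) but encodes an order-sensitive failure in $\JIi$. Here one has to construct an explicit substitution $\sigma : \sX \to \{x, y, a\}^+$ modelled on the $\JIi$-assignment $\varphi_0$ witnessing the failure, mapping each variable $z$ to $x^2$, to $y$, or to $a^2$ according as $\varphi_0(z)$ equals $3$, $2$, or $4$. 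One must then verify that, under the basis identities, $\sigma(\bu)$ reduces to $x^2 y a^2$ while $\sigma(\bv)$ reduces to $y x^2 a^2$. The crucial algebraic tool is the consequence $a \cdot w \cdot a \approx w \cdot a^2$ of \eqref{id: JIi xyx=yxx} (obtained by substituting $w$ for the middle variable), which lets us absorb surrounding context around a ``pivot'' variable; combined with the commutativity of squares \eqref{id: JIi xxyy=yyxx} to shuffle the squared contexts, this should effect the desired reduction. Carrying out this bookkeeping rigorously -- ensuring that every order-sensitive witness of the failure of $\alpha$ in $\JIi$ can be faithfully mimicked by such a $\sigma$, and that the reduction lands precisely on the target words -- is the delicate heart of the argument and where the bulk of the work will lie.
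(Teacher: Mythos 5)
Your overall strategy (take an identity $\alpha\colon\bu\approx\bv$ holding in $\bW$ but failing in $\JIi$, and deduce \eqref{id: JIi max} from $\alpha$ together with the basis \eqref{id: JIi}) is the right shape, but as written the proposal has a genuine gap: the case you yourself identify as ``the delicate heart of the argument'' is not carried out, and it is precisely the case that carries all the difficulty. Conditions (i) and (ii) are easy necessary conditions for an identity of $\JIi$, so the substance of the lemma lives entirely in the residual case where $\alpha$ respects content and the $1$-versus-$\geq 2$ occurrence dichotomy but fails for order-sensitive reasons. There the proposed substitution $\sigma$ (variables $\mapsto x^2$, $y$, $a^2$ according to the witnessing assignment) is only the beginning: you must still split according to whether one of $\varphi_0(\bu),\varphi_0(\bv)$ is the zero or both lie in $\{2,3,4\}$ and differ, handle several variables being sent to the same element (so that $\sigma(\bu)$ and $\sigma(\bv)$ are long words in $x,y,a$ rather than the three-letter targets), control the positions of the unique letter $y$ relative to the blocks $x^2$ and $a^2$ on each side, and then verify that the two reduced words are \emph{exactly} $x^2ya^2$ and $yx^2a^2$ (and not, say, equal to each other, which would derive nothing). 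None of this is done, so what you have is a plausible plan rather than a proof.

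For comparison, the paper sidesteps this entire analysis by quoting Almeida \cite[Proposition~11.7.9]{Alm94}, which says that any variety satisfying \eqref{id: JIi} but excluding $\JIi$ must satisfy either \eqref{id: JIi max} itself or $x^2y^2\approx xy^2$; in the second case the target follows in three lines, since $x^2ya^2\approx x^2y^2a^2\approx y^2x^2a^2\approx yx^2a^2$ using $x^2y^2\approx xy^2$ twice and the commutativity of squares \eqref{id: JIi xxyy=yyxx} once. If you want a self-contained argument along your lines, the cleanest fix is to aim your case analysis not directly at \eqref{id: JIi max} but at showing that $\alpha$ forces one of these two ``canonical'' consequences, which is essentially a syntactic re-proof of Almeida's proposition; otherwise, cite it and reduce your work to the short computation above.
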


\begin{proof}
Let~$\bW$ be any proper subvariety of $\var\{\JIi\}$, so that $\JIi \notin \bW$.
Then it follows from Almeida~\cite[Proposition~11.7.9]{Alm94} that~$\bW$ satisfies either~\eqref{id: JIi max} or $\alpha: x^2y^2 \approx xy^2$.
Since
\[
x^2ya^2 \stackrel{\alpha}{\approx} x^2y^2a^2 \stackrel{\eqref{id: JIi xxyy=yyxx}}{\approx} y^2x^2a^2 \stackrel{\alpha}{\approx} yx^2a^2,
\]
the variety~$\bW$ always satisfies the identity~\eqref{id: JIi max}.
\end{proof}

\begin{proposition}
Let $\bU = \var\{ \eqref{id: JIi},\eqref{id: JIi max} \}$\up.
Then
\begin{enumerate}[\rm(i)]
\item $\bU$ is the unique maximal subvariety of $\bV_{\ref{V: JIi}} = \var\{\JIi\}$\up;
\item $\bU$ is not finitely generated\up.
\end{enumerate}
\end{proposition}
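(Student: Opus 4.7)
The plan for part~(i) is to appeal directly to Lemma~\ref{L: JIi max}, which already guarantees that every proper subvariety of $\var\{\JIi\}$ satisfies~\eqref{id: JIi max} and is therefore contained in $\bU$. It then only remains to verify that $\bU$ itself is a proper subvariety, i.e.\ $\JIi \notin \bU$, by a direct Cayley-table check: taking $x = 3$, $y = 2$, and $a = 4$ in the table $\JIi = [1111,1112,1233,1234]$ gives $x^2 = 3$ and $a^2 = 4$, whence $x^2ya^2 = 3\cdot 2\cdot 4 = 2$ while $yx^2a^2 = 2\cdot 3\cdot 4 = 1$. This falsifies~\eqref{id: JIi max} in $\JIi$, so combined with Lemma~\ref{L: JIi max} it confirms that $\bU$ is the unique maximal subvariety.

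For part~(ii), my plan is to mirror the non-finite-generation template used throughout this appendix (compare Subsections~\ref{subsec: JI Nni}, \ref{subsec: LZ Nni}, and~\ref{subsec: LZi Nni}). For each $m \geq 1$ I would introduce the identity
\[
\alpha_m \ : \ x^2 y z_1 z_2 \cdots z_m \approx y x^2 z_1 z_2 \cdots z_m,
\]
and aim to establish that (a)~$\bU$ violates $\alpha_m$ for every $m \geq 1$, and (b)~every finite semigroup $S \in \bU$ satisfies $\alpha_m$ whenever $m \geq |S|+1$. Together, (a) and~(b) preclude finite generation: were $\bU = \var\{T\}$ for some finite $T$, then $T \in \bU$ and~(b) would force $\bU$ to satisfy $\alpha_{|T|+1}$, contradicting~(a).

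The main technical content lies in step~(b). Given $u, v, a_1, \ldots, a_m \in S$ with $m \geq |S|+1$, the pigeonhole principle supplies indices $i < j$ with $a_i = a_j =: b$. The $\var\{\JIi\}$-identity $xyx \approx yx^2$, evaluated in $S$ with $y$ taken to be the product $a_{i+1} \cdots a_{j-1}$ (or handled trivially when $j = i+1$, since then $a_i a_j = b^2$ outright), rewrites $a_i a_{i+1} \cdots a_j$ as $(a_{i+1} \cdots a_{j-1}) b^2$, so that $a_1 \cdots a_m = u' b^2 v'$ for suitable $u', v' \in S^1$. Two applications of~\eqref{id: JIi max} then reduce both $u^2 v \cdot a_1 \cdots a_m$ and $v u^2 \cdot a_1 \cdots a_m$ to the common value $v u' u^2 b^2 v'$ in $S$: the first via the substitution $x = u$, $y = vu'$, $a = b$, and the second via $x = u$, $y = u'$, $a = b$. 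This yields $S \models \alpha_m$.

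The harder step will be~(a), where the obstacle is to show that no combination of $\bU$'s defining identities $\{x^3 \approx x^2,\ x^2 y^2 \approx y^2 x^2,\ xyx \approx yx^2,\ x^2 y a^2 \approx y x^2 a^2\}$ can derive $\alpha_m$. Following the style of the analogous arguments in this appendix, I would justify this informally by observing that the only defining identity capable of swapping $x^2$ past $y$ is~\eqref{id: JIi max}, and that identity demands an explicit square $a^2$ immediately to the right of $y$; the suffix $z_1 z_2 \cdots z_m$ of pairwise distinct variables contains no such square, nor can one be produced by rewriting via the remaining identities, none of which introduce new squares. Hence $x^2 y z_1 \cdots z_m$ and $y x^2 z_1 \cdots z_m$ are distinct in the relatively free $\bU$-semigroup, so $\bU \not\models \alpha_m$, completing the argument.
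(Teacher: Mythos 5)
Your proposal is correct and follows essentially the same route as the paper: part~(i) is exactly the combination of Proposition~\ref{P: JIi}(i) and Lemma~\ref{L: JIi max} (your Cayley-table check that $\JIi$ violates~\eqref{id: JIi max} just makes explicit what the paper leaves implicit), and part~(ii) uses the very same witness identity~\eqref{id: JIi finite} with the same pigeonhole argument, differing only in that you move the repeated letter's square to its original position via $xyx\approx yx^2$ rather than copying a square $c^2$ to the front as the paper does. Your step~(a) is argued at the same informal level as the paper's ``it is easily seen,'' so nothing is missing relative to the published proof.
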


\begin{proof}
(i) This follows from Proposition~\ref{P: JIi}(i) and Lemma~\ref{L: JIi max}.

(ii) It is easily seen that the variety~$\bU$ violates the identity
\begin{equation}
x^2yz_1 z_2 \cdots z_m \approx yx^2z_1 z_2 \cdots z_m \label{id: JIi finite}
\end{equation}
for any $m \geq 1$.
Hence it suffices to show that each finite semigroup~$S$ in~$\bU$ satisfies the identity~\eqref{id: JIi finite} for all $m > |S|$.
Choose any elements $a,b,c_1,c_2,\ldots,c_m \in S$.
The list $c_1,c_2,\ldots,c_m$ contains some element $c \in S$ twice, due to the magnitude of~$m$.
Therefore $c_1 c_2 \cdots c_m = s_1cs_2cs_3$ for some $s_i \in S^1$, whence
\begin{align*}
a^2bc_1 c_2 \cdots c_m = a^2bs_1cs_2cs_3 & \stackrel{\eqref{id: JIi}}{=} a^2bc^2s_1cs_2cs_3 \stackrel{\eqref{id: JIi max}}{=} ba^2c^2s_1cs_2cs_3 \\
& \stackrel{\eqref{id: JIi}}{=} ba^2s_1cs_2cs_3 = ba^2c_1 c_2 \cdots c_m. \qedhere
\end{align*}
\end{proof}

\begin{corollary} \quad
\begin{enumerate}[\rm(i)]
\item The identities
\[
x^3 \approx x^2, \quad x^2y^2 \approx y^2x^2, \quad xyx \approx x^2y.
\]
constitute an {\ib} for the variety $\bV_{\ref{V: JIid}} = \var\{\JIidual\}$\up.
\item The variety $\var\{\JIidual\}$ contains countably infinitely many subvarieties\up.
\item The variety $\var\{\JIidual\}$ contains a unique maximal subvariety\up.
\end{enumerate}
\end{corollary}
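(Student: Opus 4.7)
The plan is to derive each of the three parts by dualization, invoking the corresponding results for $\var\{\JIi\}$ established in Proposition~\ref{P: JIi} and in the unnamed proposition preceding the corollary. The underlying tool is the standard duality principle: if $S$ is a semigroup and $\bu \approx \bv$ is an identity, then $S$ satisfies $\bu \approx \bv$ if and only if $\overleftarrow{S}$ satisfies the dual identity $\overleftarrow{\bu} \approx \overleftarrow{\bv}$, where $\overleftarrow{\bw}$ denotes the reverse of the word $\bw$. Consequently, word reversal induces a lattice isomorphism $\bW \mapsto \overleftarrow{\bW}$ between $\sL(\var\{\JIi\})$ and $\sL(\var\{\overleftarrow{\JIi}\}) = \sL(\var\{\JIidual\})$.

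For part (i), first I would observe that the proposed basis consists exactly of the duals of the identities~\eqref{id: JIi}: the identities $x^3 \approx x^2$ and $x^2y^2 \approx y^2x^2$ are self-dual, while the dual of $xyx \approx yx^2$ is precisely $xyx \approx x^2y$. Since~\eqref{id: JIi} is an identity basis for $\var\{\JIi\}$ by Proposition~\ref{P: JIi}(i), it follows by the duality principle that the dual set is an identity basis for $\var\{\JIidual\}$.

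Parts~(ii) and~(iii) follow immediately from the same lattice isomorphism. Since $\var\{\JIi\}$ has countably infinitely many subvarieties by Proposition~\ref{P: JIi}(ii), the image lattice $\sL(\var\{\JIidual\})$ has the same cardinality, giving~(ii). Likewise, the isomorphism sends maximal subvarieties to maximal subvarieties, so the uniqueness of the maximal subvariety of $\var\{\JIi\}$ transports to $\var\{\JIidual\}$, yielding~(iii); in fact the unique maximal subvariety of $\var\{\JIidual\}$ is defined within it by the identity $a^2yx^2 \approx a^2x^2y$, the dual of~\eqref{id: JIi max}.

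There is no real obstacle here, as the argument is purely formal once the duality principle is invoked; the only point demanding minor care is verifying that the three displayed identities in the corollary are literally the reverses of~\eqref{id: JIi xxx=xx}, \eqref{id: JIi xxyy=yyxx}, and~\eqref{id: JIi xyx=yxx}, so that no additional equational deduction is required.
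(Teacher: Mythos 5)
Your proposal is correct and is exactly the argument the paper intends: the corollary is stated without proof precisely because it follows from Proposition~\ref{P: JIi} and the accompanying results on $\var\{\JIi\}$ by the standard duality principle, and your verification that the displayed identities are the word-reversals of~\eqref{id: JIi} (with $x^3 \approx x^2$ and $x^2y^2 \approx y^2x^2$ self-dual and $xyx \approx yx^2$ dualizing to $xyx \approx x^2y$) is the only checking required. Your identification of the dual of~\eqref{id: JIi max} as the defining identity of the unique maximal subvariety also agrees with the (Max) entry for Variety~\ref{V: JIid}.
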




\end{document}